\def\blfootnote{\xdef\@thefnmark{}\@footnotetext}
\newcommand\ccnote{
    \blfootnote{\copyright\,\, Alessandro Pigati and Daniel Stern}
    \blfootnote{\ccLogo\, \ccAttribution\,\, Licensed under a \href{https://creativecommons.org/licenses/by/4.0/}{Creative Commons Attribution License (CC-BY)}.}
}
\numberwithin{equation}{section}
\renewcommand{\le}{\leqslant}
\renewcommand{\leq}{\leqslant}
\renewcommand{\ge}{\geqslant}
\renewcommand{\geq}{\geqslant}
\renewcommand{\mathbb}{\varmathbb}
\newtheorem{theorem}{Theorem}[section]
\newtheorem{lemma}[theorem]{Lemma}
\newtheorem{corollary}[theorem]{Corollary}
\newtheorem{proposition}[theorem]{Proposition}
\newtheorem{definition}[theorem]{Definition}
\newtheorem{remark}[theorem]{Remark}
\newtheorem{question}[theorem]{Question}
\newcommand{\N}{\mathbb{N}}
\newcommand{\Z}{\mathbb{Z}}
\newcommand{\R}{\mathbb{R}}
\newcommand{\C}{\mathbb{C}}
\newcommand{\de}{\partial}
\newcommand{\mz}{\frac{1}{2}}
\newcommand{\nin}{\not\in}
\newcommand{\mrestr}{\mathbin{\vrule height 1.6ex depth 0pt width
		0.13ex\vrule height 0.13ex depth 0pt width 1.3ex}}
\renewcommand{\bar}{\overline}
\DeclareMathOperator{\dist}{dist}
\newcommand{\ang}[1]{\langle #1\rangle}
\renewcommand{\epsilon}{\varepsilon}
\newcommand{\leps}{\lvert\log\epsilon\rvert}
\newcommand{\lteps}{\lvert\log\tilde\epsilon\rvert}
\address{Alessandro Pigati, New York University, Courant Institute of Mathematical Sciences, 251 Mercer Street, New York, NY 10003, United States of America.}
\email{ap6968@nyu.edu}
\address{Daniel Stern, University of Chicago, Department of Mathematics, 5734 S University Ave, \linebreak Chicago, IL 60637, United States of America.} 
\email{dstern@uchicago.edu}
\begin{document}

\thispagestyle{empty}

\begin{minipage}{0.28\textwidth}
\begin{figure}[H]
\includegraphics[width=2.5cm,height=2.5cm,left]{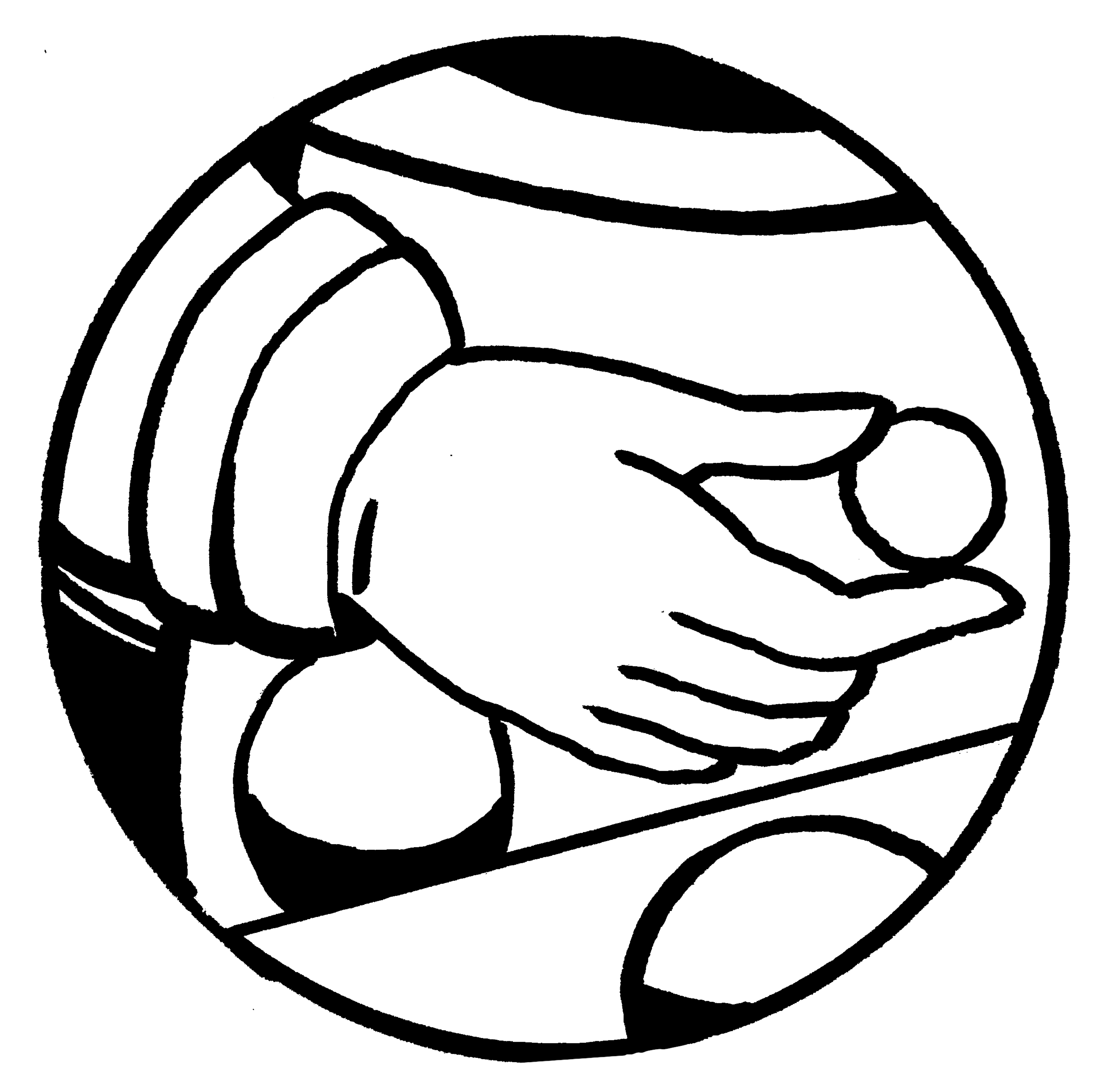}
\end{figure}
\end{minipage}
\begin{minipage}{0.7\textwidth} 
\begin{flushright}
Ars Inveniendi Analytica (2023), Paper No. 3, 55 pp.
\\
DOI 10.15781/g5bs-0m80
\\
ISSN: 2769-8505
\end{flushright}
\end{minipage}

\ccnote

\vspace{1cm}


\begin{center}
\begin{huge}
\textit{Quantization and non-quantization}

\textit{of energy for higher-dimensional}

\textit{Ginzburg--Landau vortices}

\end{huge}
\end{center}

\vspace{1cm}


\begin{minipage}[t]{.28\textwidth}
\begin{center}
{\large{\bf{Alessandro Pigati}}} \\
\vskip0.15cm
\footnotesize{Courant Institute of Mathematical Sciences}
\end{center}
\end{minipage}
\hfill
\noindent
\begin{minipage}[t]{.28\textwidth}
\begin{center}
{\large{\bf{Daniel Stern}}} \\
\vskip0.15cm
\footnotesize{University of Chicago}
\end{center}
\end{minipage}

\vspace{1cm}


\begin{center}
\noindent \em{Communicated by Peter Sternberg}
\end{center}
\vspace{1cm}


\noindent \textbf{Abstract.} \textit{Given a family of critical points
	$u_{\epsilon}:M^n\to\C$ for the complex Ginzburg--Landau energies
	\[ E_\epsilon(u)=\int_{M}\left(\frac{|du|^2}{2}+\frac{(1-|u|^2)^2}{4\epsilon^2}\right), \]
	on a manifold $M$, with natural energy growth $E_{\epsilon}(u_{\epsilon})=O(\leps )$, it is known that the vorticity sets $\{|u_\epsilon|\le\mz\}$ converge subsequentially to the support of a stationary, rectifiable $(n-2)$-varifold $V$ in the interior, characterized as the concentrated portion of the limit $\lim_{\epsilon\to 0} \frac{e_\epsilon(u_\epsilon)}{\pi\leps }$ of the normalized energy measures. 
	When $n=2$ or the solutions $u_{\epsilon}$ are energy-minimizing, it is known moreover that this varifold $V$ is \emph{integral}; i.e., the $(n-2)$-density $\Theta^{n-2}(|V|,x)$ of $|V|$ takes values in $\mathbb{N}$ at $|V|$-a.e.\ $x\in M$. In the present paper, we show that for a general family of critical points with $E_{\epsilon}(u_{\epsilon})=O(\leps )$ in dimension $n\geq 3$, this energy quantization phenomenon only holds where the density is less than $2$: namely, we prove that the density $\Theta^{n-2}(|V|,x)$ of the limit varifold takes values in $\{1\}\cup [2,\infty)$ at $|V|$-a.e.\ $x\in M$, and show that this is sharp, in the sense that for any $n\geq 3$ and $\theta\in \{1\}\cup [2,\infty)$, there exists a family of critical points $u_{\epsilon}$ for $E_{\epsilon}$ in the ball $B_1^n(0)$ with concentration varifold $V$ given by an $(n-2)$-plane with density $\theta$.}
\vskip0.3cm

\noindent \textbf{Keywords.} Ginzburg--Landau vortices, quantization, varifolds, minimal surfaces, calculus of variations of the area. 
\vspace{0.5cm}


\section{Introduction}

\subsection{The Ginzburg--Landau equations and the integrality question}
A complex valued map $u: M\to \mathbb{C}$ on a Riemannian manifold (or Euclidean domain) $M$ is said to satisfy the \emph{Ginzburg--Landau equations} with parameter $\epsilon>0$ if 
\begin{equation}\label{gl.eqn}
	\epsilon^2\Delta u=DW(u)=-(1-|u|^2)u,
\end{equation}
where $\Delta=-d^*d$ and $W: \mathbb{R}^2\to \mathbb{R}^2$ is the nonlinear potential $W(u)=\frac{1}{4}(1-|u|^2)^2$. The system \cref{gl.eqn} arises as the Euler--Lagrange equations for the energy functional
\begin{equation}\label{gl.en.def}
	E_{\epsilon}(u):=\int_Me_{\epsilon}(u)=\int_M\left(\frac{1}{2}|du|^2+\frac{W(u)}{\epsilon^2}\right),
\end{equation}
which combines the usual Dirichlet energy $\int \frac{1}{2}|du|^2$ with a nonlinear term $\int \frac{(1-|u|^2)^2}{4\epsilon^2}$ which penalizes the deviation of the values $u$ from the unit circle $S^1\subset \mathbb{C}$, with increasing severity as $\epsilon\to 0$.

While the study of the system \cref{gl.eqn} can be traced back to Ginzburg and Landau's work on superconductivity in the 1950s, the subject captured the attention of the geometric analysis community about thirty years ago, with the publication of the influential monograph \cite{BBH} by Bethuel--Brezis--H\'elein. The investigations of \cite{BBH, Struwe}, and others of this period focused on solutions $u_{\epsilon}:\Omega \to \mathbb{R}^2$ on simply connected planar domains $\Omega\subset \mathbb{R}^2$ obtained by minimizing $E_{\epsilon}$ with prescribed boundary data $g: \partial\Omega \to S^1$ of nonzero degree $\deg(g,\partial\Omega)\neq 0$, motivated by the search for a canonical `energy-minimizing' extension $u_*:\Omega\to S^1$ of $g$, in a setting where no finite-energy extension exists. It was shown that these maps $u_{\epsilon}$ converge as $\epsilon\to 0$ to a singular harmonic $S^1$-valued extension $u_*:\Omega\to S^1$ of $g$, whose singularities minimize a certain interaction energy between points in the plane. Moreover, these maps have energy $E_{\epsilon}(u_{\epsilon})=\pi \lvert\deg(g)\rvert\log(1/\epsilon)+O(1)$ as $\epsilon\to 0$, with the normalized energy measures $\frac{e_{\epsilon}(u_{\epsilon})}{\pi\log(1/\epsilon)}\,dx$ converging to a sum of Dirac masses at the $\lvert\deg(g)\rvert$ singular points of $u_*$. Non-minimizing critical points on two-dimensional domains were also studied, e.g., in \cite{Bar} and \cite{CM}.

Later, attention turned to solutions of \cref{gl.eqn} in dimension $n\geq 3$, with the work of Rivi\`ere \cite{Riv}, Lin--Rivi\`ere \cite{LR.jems, LR}, Jerrard--Soner \cite{JSoner}, Bethuel--Brezis--Orlandi \cite{BBO}, and others. For solutions $u_{\epsilon}:\Omega\subset\mathbb{R}^n\to \mathbb{C}$ of \cref{gl.eqn} in higher dimensional domains, satisfying the natural energy growth $E_{\epsilon}(u_{\epsilon})=O(\leps )$, it was shown that the zero sets $u_{\epsilon}^{-1}\{0\}$ converge (roughly speaking) to the support of a generalized minimal submanifold of codimension two. In particular, following the analysis of \cite{BBO} (see also \cite{Stern.thesis}), one arrives at the following asymptotic description of solutions as $\epsilon\to 0$.

\begin{theorem}\label{asymp}
	Given a manifold $M^n$ without boundary, of dimension $n\ge 3$,
	assume that we have a sequence of maps $u_\epsilon:M\to\C$ (indexed by a sequence $\epsilon\to 0$) solving \cref{gl.eqn}, with respect to a smoothly converging sequence of metrics $g_\epsilon\to g_0$, such that
	\begin{align*}
		&\limsup_{\epsilon\to 0}\frac{1}{\leps }\int_K\left(\mz|du_\epsilon|_{g_\epsilon}^2+\frac{W(u_\epsilon)}{\epsilon^2}\right) d\operatorname{vol}_{g_\epsilon}<\infty
	\end{align*}
	for all compact $K\subseteq M$. Then, up to a subsequence,
	the normalized energy densities
	\begin{align*}
		&\mu_\epsilon:=\frac{e_\epsilon(u_\epsilon)}{\pi\leps }\,\operatorname{vol}_{g_\epsilon},
		\quad\text{where } e_\epsilon(u_\epsilon):=\frac{|du_\epsilon|_{g_\epsilon}^2}{2}+\frac{W(u_\epsilon)}{\epsilon^2},
	\end{align*}
	converge to a Radon measure $\mu$ which decomposes as
	\begin{align}
		&\mu=|V|+f\operatorname{vol}_{g_0},
	\end{align}
	for a suitable smooth nonnegative function $f:M\to\R$ and a stationary rectifiable $(n-2)$-varifold with density $\Theta^{n-2}(|V|,\cdot)\ge c(n)>0$ on its support.
	Also, the measures $\frac{W(u_\epsilon)}{\epsilon^2}\,\operatorname{vol}_{g_\epsilon}$ converge to a limit measure satisfying
	\begin{align}\label{w.nu}
		&\lim_{\epsilon \to 0}\frac{W(u_\epsilon)}{\epsilon^2}\,\operatorname{vol}_{g_\epsilon}\le C(K)|V|
	\end{align}
	for all compact $K\subseteq M$.
	Finally, $\operatorname{spt}(|V|)$ is the limit of the sets $\{|u_\epsilon|\le\beta\}$ in the local Hausdorff topology, for any $\beta\in(0,1)$.
\end{theorem}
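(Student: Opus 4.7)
The plan is to assemble the standard analysis developed by Lin--Rivi\`ere and Bethuel--Brezis--Orlandi for Ginzburg--Landau critical points in dimension $n\geq 3$, with essentially straightforward modifications to handle the converging sequence of metrics $g_\epsilon\to g_0$. Since the hypothesis provides uniform bounds on $\mu_\epsilon(K)$ and on $\int_K\frac{W(u_\epsilon)}{\epsilon^2}\,\operatorname{vol}_{g_\epsilon}$ for each compact $K\subseteq M$, Banach--Alaoglu yields a subsequence along which both families converge weakly-$*$ to Radon measures $\mu$ and $\nu$ on $M$.

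The central ingredient is the $\eta$-ellipticity (or clearing-out) estimate: there is a dimensional $\eta_0=\eta_0(n)>0$ such that, whenever $r^{2-n}\mu_\epsilon(B_r(x))<\eta_0$ for all sufficiently small $\epsilon$, $|u_\epsilon|$ tends to $1$ smoothly on $B_{r/2}(x)$ and (after a local lift) $u_\epsilon$ converges smoothly to an $S^1$-valued harmonic map with bounded energy. Setting
\[ \Sigma := \{x\in M : \Theta^{n-2}_*(\mu,x)\geq\eta_0\}, \]
the restriction $\mu\mrestr(M\setminus\Sigma)$ then coincides with the smooth subsequential limit of $e_\epsilon(u_\epsilon)/(\pi\leps)$, and one reads off the smooth nonnegative function $f$; the contrapositive yields both the a priori density lower bound $\Theta^{n-2}(|V|,\cdot)\geq\eta_0>0$ on $\Sigma$ and the identification of $\Sigma$ with the local Hausdorff limit of $\{|u_\epsilon|\leq\beta\}$ for any $\beta\in(0,1)$.

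To analyse the concentrated part $|V|:=\mu\mrestr\Sigma$, one exploits the Pohozaev/stress-energy identity satisfied by solutions of \cref{gl.eqn}, which yields almost-monotonicity of $r\mapsto r^{2-n}\mu_\epsilon(B_r(x))$ and, in the limit, an upper density bound together with $\hau^{n-2}(\Sigma)<\infty$. Rectifiability of $|V|$ is then obtained from a blow-up analysis at typical points of $\Sigma$, combining the Jerrard--Soner--type Jacobian estimates (controlling the vorticity measure $\tfrac{1}{2}\,d(iu_\epsilon,du_\epsilon)$) with Preiss/Ambrosio--Soner rectifiability criteria. Stationarity of the resulting $(n-2)$-varifold $V$ is inherited by passing to the limit in the inner-variation identity
\[ \int_M\bigl(\operatorname{div}_{g_\epsilon}(X)\,e_\epsilon(u_\epsilon)-\langle du_\epsilon\otimes du_\epsilon,\nabla X\rangle_{g_\epsilon}\bigr)\,\operatorname{vol}_{g_\epsilon}=0, \]
valid for every compactly supported vector field $X$ on $M$, after subtracting the smooth contribution reconstructing the first variation of $f\operatorname{vol}_{g_0}$. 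Finally, the bound \cref{w.nu} comes from a scaled Pohozaev identity on balls, controlling the potential energy in $B_r(x)$ linearly by the total energy in a neighbourhood, which in the limit gives the desired pointwise domination of $\nu$ by $|V|$.

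The principal obstacle is the rectifiability of $|V|$: this is the deepest step in the BBO programme and demands the fine interplay between monotonicity, vorticity/Jacobian estimates, and measure-theoretic rectifiability criteria; every other assertion is comparatively soft once this is in hand. The passage to variable metrics $g_\epsilon\to g_0$ contributes only perturbative error terms and does not affect the structure of the argument, provided one is careful to track metric quantities uniformly.
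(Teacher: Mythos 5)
Your outline is structurally close to the paper's argument and gets the division of labor right (clearing-out gives the density threshold, concentration set, and Hausdorff characterization; the stress–energy tensor gives stationarity; Ambrosio--Soner gives rectifiability; monotonicity and the BOS potential estimate give \cref{w.nu}). However, there is a concrete gap in how you propose to extract the diffuse part $f\operatorname{vol}_{g_0}$ and to justify stationarity of $V$.

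You claim that on a ball $B_r(x)$ with $r^{2-n}\mu_\epsilon(B_r(x))<\eta_0$, ``$u_\epsilon$ converges smoothly to an $S^1$-valued harmonic map with bounded energy,'' from which $f$ is read off. This is not correct: the hypothesis only bounds the energy on $B_r(x)$ by $\eta_0\pi\leps r^{n-2}$, which is still of order $\leps$, and the diffuse density $f$ arises exactly from phase oscillation at this $\sqrt{\leps}$-scale (cf.\ the example $u_\epsilon=\sqrt{1-\epsilon^2 k_\epsilon^2}e^{ik_\epsilon x}$ with $k_\epsilon\sim\sqrt{\leps}$ in the paper's Remark 1.2). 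After clearing-out, $|u_\epsilon|\to 1$, but $u_\epsilon$ itself does not converge; what converges is the \emph{normalized} phase gradient. The paper handles this via a Hodge decomposition
$$jv_\epsilon = d\psi_\epsilon + d^*\xi_\epsilon + h_\epsilon,$$
with $\|d\psi_\epsilon\|_{L^2}=O(1)$, $d^*\xi_\epsilon$ capturing the vortex contribution (supported near the concentration set after normalizing), and $h_\epsilon$ a harmonic one-form of size $O(\leps^{1/2})$. One then shows $\langle d^*\xi_\epsilon,h_\epsilon\rangle$ is negligible at the $\leps$ scale, and concludes
$$\mu = \lim_\epsilon\frac{|d^*\xi_\epsilon|^2}{2\pi\leps}\,dx + \frac{1}{2\pi}|h_0|^2\,dx,\qquad h_0:=\lim_\epsilon\frac{h_\epsilon}{\leps^{1/2}},$$
so that $f=\frac{1}{2\pi}|h_0|^2$ is smooth because $h_0$ is harmonic. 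This mechanism is absent from your proposal, and without it you do not actually obtain a well-defined smooth $f$ nor a clean splitting of $\mu$.

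The same issue infects the stationarity step. You propose to pass to the limit in the inner-variation identity ``after subtracting the smooth contribution reconstructing the first variation of $f\operatorname{vol}_{g_0}$,'' but this only works if the diffuse part of the limiting stress tensor is itself divergence-free, and that is precisely because $h_0$ is a \emph{strongly} harmonic one-form ($dh_0=0$ and $d^*_{g_0}h_0=0$), so that $\frac{|h_0|^2}{2}I - h_0\otimes h_0$ has vanishing divergence. Merely knowing $f$ is a nonnegative function does not give this; the tensor structure of the diffuse part has to be identified, which again requires the Hodge-theoretic analysis of $jv_\epsilon$. I would recommend incorporating the Hodge decomposition of $jv_\epsilon$ (as in BBO/BOS and as carried out in the paper) as the organizing tool for both the decomposition $\mu=|V|+f\operatorname{vol}_{g_0}$ and the stationarity of $V$; the rest of your outline then falls into place.
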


\begin{remark}
	Since the variants of \cref{asymp} appearing in \cite{BBO,Stern.thesis} are not quite stated in this form, we later include a sketch of the proof for the reader's convenience. We note also that the last statement is true even for $\beta=0$; in this case, it follows from some arguments contained in the present paper (see \cref{beta0} below). 
\end{remark}

The simple example $u_\epsilon(x)=\sqrt{1-\epsilon^2k_\epsilon^2}\,e^{ik_\epsilon x}$ on the circle $M=\R/2\pi\Z$, with $k_\epsilon\in\N$ satisfying $k_\epsilon\sim\sqrt{\leps }$,
shows that the limit measure $\mu$ can be completely diffuse. In \cite{Stern.jdg} (see also \cite{Cheng}), it was shown however that any closed Riemannian manifold $(M,g_0)$ of dimension $\geq 2$ admits a family of solutions satisfying the hypotheses of \cref{asymp} with $g_{\epsilon}=g_0$ for which the energy concentration varifold $V$ is nonzero, and it is expected that many such families exist. 

While results like \cref{asymp} reveal a strong link between solutions of \cref{gl.eqn} and minimal varieties of codimension two, the result sheds little light on the structure of the limit varifold. In particular, the weakest notion of minimal variety typically considered in geometric measure theory is not the stationary rectifiable varifold, but the slightly stronger stationary \emph{integral} varifold, which satisfies the additional condition that the density of its weight $\Theta^{n-2}(|V|,x)$ takes values in $\mathbb{N}$ for $|V|$-almost every $x$.

For some formally similar (though qualitatively rather different) families of equations like the Allen--Cahn equations or the self-dual abelian Higgs equations, results analogous to \cref{asymp} do indeed give energy concentration along stationary \emph{integral} varifolds of codimension one \cite{HT} and codimension two \cite{PS}, respectively. Moreover, the results of \cite{LR.jems} for $E_{\epsilon}$-minimizing solutions of \cref{gl.eqn} and \cite{CM} for general solutions in dimension two reveal that integrality of the limit varifold holds in these cases. All of these observations naturally lead us to the following question.

\begin{question}\label{integral.q} When $n\geq 3$, is the stationary varifold $V$ arising from a family of solutions to \cref{gl.eqn} as in \cref{asymp} necessarily integral? In other words, is the energy of $u_{\epsilon}$ quantized along the concentration set?
\end{question}

For an equivalent formulation, consider the set $\mathcal{D}\subset (0,\infty)$ of positive real numbers $\theta\in (0,\infty)$ for which there exists a family $u_{\epsilon}:B_1^n(0)\to \mathbb{R}^2$ of solutions to \cref{gl.eqn} in the unit $n$-ball whose energy concentrates along an $(n-2)$-plane $P\subset \mathbb{R}^n$ with (necessarily constant) density $\theta$, in the sense that
$$\mu_{\epsilon}\rightharpoonup^* \theta\mathcal{H}^{n-2}\mrestr P.$$
By a straightforward blow-up argument, it is easy to check that \cref{integral.q} has a positive answer if and only if $\mathcal{D}=\mathbb{N}\setminus\{0\}$.

In the present paper, we answer \cref{integral.q} in the negative, proving instead that
$$\mathcal{D}=\{1\}\cup[2,\infty)\supsetneq \mathbb{N}\setminus\{0\}.$$
In other words, we prove that the density $\Theta^{n-2}(|V|,\cdot)$ of the energy concentration varifold $V$ in \cref{asymp} takes values in $\{1\}\cup [2,\infty)$ almost everywhere, and give examples to show that this cannot be improved in general.

\subsection{Quantization and non-quantization results.}
The bulk of the paper is devoted the proof of the following theorem, showing that $\mathcal{D}\subseteq \{1\}\cup [2,\infty)$, and hence that the density of the limiting energy measure in \cref{asymp} is indeed quantized where $\Theta^{n-2}(|V|,\cdot)\leq 2$.

\begin{theorem}\label{dens.gap}
	In the setting of \cref{asymp}, assume moreover that $M=B_2^n(0)$ and $g_0$ is the Euclidean metric. If the energy densities concentrate along the plane $P=\mathbb{R}^{n-2}\times\{0\}$ with constant multiplicity $\theta\in(0,\infty)$, i.e.,
	$$\frac{e_{\epsilon}(u_\epsilon)}{\pi\leps }\,\operatorname{vol}_{g_\epsilon}\rightharpoonup \theta \mathcal{H}^{n-2}\mrestr P\cap B_2^n(0)$$
	in $C^0_c(M)^*$, 
	then $\theta \in \{1\}\cup [2,\infty)$.
\end{theorem}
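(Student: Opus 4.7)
The plan is to rule out $\theta\in(0,1)\cup(1,2)$, leaving $\theta=1$ or $\theta\ge 2$. The strategy combines Jacobian integrality with a quantitative two-dimensional rigidity argument along slices transverse to $P$.

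First I would extract an integer-rectifiable limit current from the Jacobians of $u_\epsilon$. By the Jerrard--Soner compactness, up to a subsequence the normalized Jacobians $\frac{1}{\pi}Ju_\epsilon$ converge (in a suitable flat norm) to an integer-rectifiable $(n-2)$-current $T$. The pointwise inequality $|Ju_\epsilon|\le e_\epsilon(u_\epsilon)$, combined with the hypothesis $\mu_\epsilon\rightharpoonup\theta\hau^{n-2}\mrestr P$, gives that the mass measure of $T$ is bounded by $\theta\hau^{n-2}\mrestr P$, and integrality on the plane $P$ then forces $T=k\hau^{n-2}\mrestr P$ for an integer $k$ with $|k|\le\theta$.

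Next, assume for contradiction that $\theta\in(0,2)$; then $k\in\{-1,0,1\}$. I would slice transversally: for $\hau^{n-2}$-a.e. $x'\in B^{n-2}_2$, the 2D restriction $v_\epsilon^{x'}(y):=u_\epsilon(x',y)$ maps $B^2\to\C$, has normalized energy concentrating at the origin with density $\theta$, and has Jacobian converging to $k\pi\delta_0$. Each slice satisfies a 2D Ginzburg--Landau equation with transverse forcing $-\epsilon^2\Delta_{x'}u_\epsilon$, which by Fubini is $L^2$-small on a positive-measure set of ``good'' slices. A quantitative 2D rigidity result---in the spirit of the Bethuel--Brezis--H\'elein and Struwe asymptotics---then says that a nearly-2D-critical configuration of degree $\pm 1$ with normalized energy $\theta\in[1,2)$ is close to a single standard vortex profile and carries density exactly $1$, giving $\theta=1$. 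The analogous rigidity for degree $0$ configurations forces any concentrated slice to split into a cancelling pair of vortices, each of normalized density $\ge 1$, so $\theta\ge 2$, ruling out $k=0$. (The sub-case $\theta\in(0,1)$ is already immediate from the $\eta$-ellipticity/clearing-out of Bethuel--Brezis--Orlandi, which forces $\theta\ge 2$ whenever the limit Jacobian vanishes at a concentration point.)

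The main obstacle will be the quantitative 2D rigidity for the $|k|=1$ case. Standard asymptotics identify the single-vortex energy as $\pi\leps+O(1)$, and the $O(1)$ error vanishes after dividing by $\leps$, which yields no information beyond $\theta\ge 1$. To exclude $\theta\in(1,2)$ one needs a sharper statement locating a \emph{full unit gap} in the normalized density between a single vortex and any other competitor. I expect this to require subtracting a reference single-vortex profile on each good slice and showing that the excess energy either vanishes in the limit or itself concentrates with its own nontrivial integer Jacobian, contributing an additional unit of density. Ensuring this works uniformly on a positive-measure family of slices, while accounting for the fact that slices of higher-dimensional critical points are not themselves 2D critical points, is the central difficulty; it will likely exploit the stationarity of $V$ from \cref{asymp} together with Pohozaev- or monotonicity-type identities.
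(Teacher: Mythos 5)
Your high-level framework---extract integer degrees via Jerrard--Soner, restrict attention to good transversal slices where the $P$-derivatives are small, and try to leverage 2D rigidity---lines up with the paper's scaffolding (Lemmas \ref{par.van.lem}--\ref{many.good.slices} and Proposition \ref{pre.ener.id}). You also correctly identify the sticking point: the single-vortex energy is $\pi\leps + O(1)$, so dividing by $\leps$ only yields $\theta\ge 1$. But your resolution of this sticking point has a genuine gap, and the step where you ``force a cancelling pair, each of normalized density $\ge 1$'' is in fact wrong.

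The issue is \emph{near-cancellation at intermediate scales}. Along a good slice, the vorticity set is contained in finitely many $O(\epsilon)$-disks with degrees $\kappa_j^\epsilon$, and (as the paper's Proposition \ref{pre.ener.id} makes precise) the normalized density is
\[
\theta=\lim_{\epsilon\to 0}\Bigl(\textstyle\sum_j(\kappa_j^\epsilon)^2 + 2\sum_{j<\ell}\kappa_j^\epsilon\kappa_\ell^\epsilon\,\frac{\lvert\log|p_j^\epsilon-p_\ell^\epsilon|\rvert}{\leps}\Bigr).
\]
If you place a $+1$ and a $-1$ vortex at mutual distance $\sim\epsilon^\alpha$, this gives $\theta\approx 2-2\alpha$, which sweeps through the \emph{entire} interval $(0,2)$ as $\alpha$ ranges over $(0,1)$. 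So a ``cancelling pair'' does \emph{not} automatically carry density $\ge 2$, nor does a single degree-$1$ vortex plus a nearby $0$-total-degree dipole automatically carry density $\ge 2$; the interaction term can eat arbitrarily much of the self-energy. Your proposed rigidity step (``excess energy either vanishes or contributes its own integer unit of Jacobian'') is exactly the assertion that such near-cancellation never occurs in a critical point, but you have offered no mechanism to exclude it, and it cannot be excluded by Jacobian integrality or $\eta$-ellipticity alone---both are compatible with this configuration.

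The paper's way around this is a new estimate that your proposal does not contain: a lower bound on the \emph{potential} $\int W(u_\epsilon)/\epsilon^2$ in terms of the \emph{sum of absolute degrees} $\sum_j|\kappa_j^\epsilon|$ (Proposition \ref{w.deg.prop}), paired with an \emph{upper} bound on the same potential in terms of the \emph{square of the total degree} $|\kappa|^2$, coming from the monotonicity formula applied between scales $\epsilon^{\tau_0}$ and $1$ (Lemma \ref{ann.en.lem} and Proposition \ref{w.upper.bd}). This forces $\sum_j|\kappa_j^\epsilon|\lesssim 2|\kappa|^2$. Since $\theta<2$ forces $|\kappa|\le 1$ (Jerrard--Soner, as in your step), and $\kappa\neq 0$ (by \cref{w.quant.lem}, which precludes a degree-$0$ slice meeting the vorticity set with small potential), one gets $|\kappa|=1$ and, by parity, $\sum_j|\kappa_j^\epsilon|=1$: exactly one degree-$\pm1$ vortex, hence $\theta=1$ via the energy identity. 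The essential missing idea in your sketch is this \emph{unsigned} degree control via the potential term, which is what shuts down the interaction cancellation. Without something playing that role, the step you flag as ``the central difficulty'' really is a hole, and the rest of the argument cannot close.

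One smaller remark: your initial reduction from $\theta\in(0,1)$ via clearing-out only gives $\theta\ge c(n)$ for some non-explicit $c(n)>0$; it does not directly give $\theta\ge 1$ or $\theta\ge 2$. The paper handles this by first proving the density gap abstractly (Proposition \ref{low.dens.prop}: ``$\theta<2\theta_{min}\Rightarrow\theta=1$''), from which $\theta_{min}=1$ follows as a corollary, rather than assuming $\theta\ge 1$ a priori.
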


In the general setting of \cref{asymp}, by applying \cref{dens.gap} to a family of rescaled solutions in balls centered at a point where the varifold $V$ has flat tangent cone, we arrive at the following corollary.

\begin{corollary}\label{dens.cor}
	Under the hypotheses of \cref{asymp}, the $(n-2)$-varifold $V$ has density
	$$\Theta^{n-2}(|V|,x)=\lim_{r\to 0}\frac{|V|(B_r(x))}{\omega_{n-2}r^{n-2}}\in \{1\}\cup [2,\infty)$$
	for $|V|$-a.e.\ $x$.
\end{corollary}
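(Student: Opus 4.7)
The plan is to deduce \cref{dens.cor} from \cref{dens.gap} by a blow-up argument at a generic point of $|V|$. Choose $x\in M$ at which the density $\theta:=\Theta^{n-2}(|V|,x)$ exists and $|V|$ admits a unique tangent measure of the form $\theta\,\hau^{n-2}\mrestr P$ for some unoriented $(n-2)$-plane $P\subset T_xM$; since $V$ is rectifiable, such $x$ form a set of full $|V|$-measure. Pick $g_0$-normal coordinates at $x$ so that $P=\R^{n-2}\times\{0\}$, and for small $r>0$ set
\[\Phi_r(y):=\exp_x(ry),\qquad\tilde u_{\epsilon,r}(y):=u_\epsilon(\Phi_r(y)),\qquad\tilde g_{\epsilon,r}:=r^{-2}\Phi_r^*g_\epsilon.\]
Then $\tilde u_{\epsilon,r}$ solves \cref{gl.eqn} with parameter $\tilde\epsilon:=\epsilon/r$ on $(B_2^n(0),\tilde g_{\epsilon,r})$, and the metrics $\tilde g_{\epsilon,r}$ converge smoothly to the Euclidean metric as $\epsilon,r\to 0$.

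A direct scaling computation gives the identity
\[\tilde\mu_{\epsilon,r}:=\frac{e_{\tilde\epsilon}(\tilde u_{\epsilon,r})}{\pi\lteps}\,\operatorname{vol}_{\tilde g_{\epsilon,r}}=\frac{\leps}{\lteps}\,r^{2-n}\,\Phi_r^*\mu_\epsilon,\]
and the prefactor $\leps/\lteps$ tends to $1$ whenever $\epsilon/r\to 0$. For each fixed small $r$, letting $\epsilon\to 0$ along the original subsequence of \cref{asymp} yields
\[\tilde\mu_{\epsilon,r}\weakto r^{2-n}\Phi_r^*|V|+r^2\,f(\Phi_r(\cdot))\,\operatorname{vol}_{g_0}+o_\epsilon(1).\]
The smooth piece is $O(r^2)$ and vanishes as $r\to 0$, while the choice of $x$ guarantees $r^{2-n}\Phi_r^*|V|\weakto\theta\,\hau^{n-2}\mrestr P$ in $C^0_c(B_2^n(0))^*$ as $r\to 0$.

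Metrize weak-$*$ convergence of Radon measures on $B_2^n(0)$, and for each $r$ in a null sequence pick $\epsilon(r)$ small enough that $\tilde\mu_{\epsilon(r),r}$ lies within $r$ of the $\epsilon\to 0$ limit above in this metric. The diagonal family $\tilde u_{\epsilon_k,r_k}$ with $r_k\to 0$ and $\epsilon_k:=\epsilon(r_k)$ then satisfies $\epsilon_k/r_k\to 0$ together with
\[\tilde\mu_{\epsilon_k,r_k}\weakto\theta\,\hau^{n-2}\mrestr(P\cap B_2^n(0))\qquad\text{in }C^0_c(B_2^n(0))^*,\]
which puts it squarely in the setting of \cref{dens.gap} on $B_2^n(0)$; that theorem then forces $\theta\in\{1\}\cup[2,\infty)$, as desired.

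The nontrivial inputs are all either already at hand or standard: the scaling identity above is pure bookkeeping for how the Laplacian, the volume form, the potential term, and $\leps$ behave under $y\mapsto ry$; the diagonal extraction is a routine metrization argument; and the existence of a unique multiplicity-$\theta$ tangent plane at $|V|$-a.e.\ point is a general consequence of rectifiability, without needing stationarity. The entire substance of the corollary is therefore inherited from \cref{dens.gap}.
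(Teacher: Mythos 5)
Your proof is correct and is precisely the argument the paper has in mind: the paper states the corollary as following "by applying \cref{dens.gap} to a family of rescaled solutions in balls centered at a point where the varifold $V$ has flat tangent cone," and your blow-up plus diagonal extraction carries this out in full, including the bookkeeping for the scaled energy measure and the observation that the hypotheses of \cref{asymp} (energy bound via monotonicity, smooth convergence of the rescaled metrics to Euclidean) pass to the rescaled family. The only remark worth recording is that, while you note uniqueness of tangent measures follows from rectifiability alone, in the present setting one could just as well invoke stationarity and Allard's monotonicity to get existence of the density at every point and a flat tangent cone at $|V|$-a.e.\ point, which is what the paper's phrasing alludes to; either route is valid.
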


Previously, the best known lower bound for non-minimizing solutions $u_{\epsilon}$ in dimension $n\geq 3$ was the non-explicit lower bound $\Theta^{n-2}(|V|,\cdot)\geq c(n)>0$, a consequence of the following important result, obtained by different methods in \cite{LR} when $n=3$, and in \cite{BBO} for $n\ge 3$, which is the key ingredient in the proof of \cref{asymp}. In later works it was suggestively called \emph{clearing-out} for the vorticity. Here, for simplicity, we state it for the flat Euclidean metric.

\begin{theorem}\cite{LR, BBO}\label{clearing.flat}
	There exists a constant $\eta(n)>0$ such that, if
	$$E_\epsilon(u_\epsilon;B_r(x))\le\eta r^{n-2}\log(r/\epsilon),$$
	for a ball $B_r(x)$ in the domain with $r\ge\epsilon$, then $|u_\epsilon(x)|>\mz$.
\end{theorem}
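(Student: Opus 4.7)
The plan is to prove the contrapositive: if $|u_\epsilon(x)|\le\mz$, then $E_\epsilon(u_\epsilon;B_r(x))\ge\eta r^{n-2}\log(r/\epsilon)$. After the rescaling $v(y):=u_\epsilon(x+ry)$, which solves \cref{gl.eqn} on $B_1^n(0)$ with parameter $\tilde\epsilon:=\epsilon/r\in(0,1]$, it suffices to show that $|v(0)|\le\mz$ forces $E_{\tilde\epsilon}(v;B_1^n(0))\ge\eta\lteps$.

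The first ingredient is a standard $\epsilon$-scale regularity estimate. The maximum principle applied to $1-|v|^2$ gives $|v|\le 1$, and bootstrapping the equation $\tilde\epsilon^2\Delta v=-(1-|v|^2)v$ yields $\tilde\epsilon\,\|dv\|_{L^\infty(B_{3/4}^n(0))}\le C(n)$. Hence $|v(0)|\le\mz$ propagates to $|v|\le\tfrac34$ on a microscopic ball $B_{c\tilde\epsilon}^n(0)$, producing a lower bound $E_{\tilde\epsilon}(v;B_{c\tilde\epsilon}^n(0))\ge c'\tilde\epsilon^{n-2}$ with no logarithm. The required $\log$ factor must come from the macroscopic propagation of the vortex through all intermediate scales.

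The logarithm is extracted via a two-dimensional reduction. Fix a splitting $\R^n=\R^{n-2}_y\times\R^2_z$ and consider the parallel slices $P_y:=\{y\}\times\R^2$. On each such slice, $v|_{P_y}$ satisfies an approximate 2D Ginzburg--Landau equation, and the Jerrard--Sandier vortex-ball construction produces disjoint 2-disks $\{D_{i,y}\}\subset P_y\cap B_{1/2}^n(0)$ covering the slice vorticity set $\{|v|\le\mz\}\cap P_y$ and satisfying $\sum_i\operatorname{diam}(D_{i,y})\le\tfrac14$ together with
\[ \int_{D_{i,y}}e_{\tilde\epsilon}(v|_{P_y})\ge\pi |d_{i,y}|(\log(r_{i,y}/\tilde\epsilon)-C), \]
where $d_{i,y}\in\Z$ is the degree of $v/|v|$ on $\partial D_{i,y}$. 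Summing in $i$ and then integrating in $y$ by Fubini would then yield the desired $\lteps$-lower bound, \emph{provided} one knows that on a set of $y$ of positive $\hau^{n-2}$-measure in a fixed macroscopic region, the total slice degree is nonzero.

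The principal obstacle is precisely this degree-propagation step. From the pointwise hypothesis $|v(0)|\le\mz$ alone, only slices $P_y$ with $y\in B_{c\tilde\epsilon}^{n-2}(0)$ are guaranteed to meet the vorticity set, contributing at most $\tilde\epsilon^{n-2}\lteps$ of energy --- far short of $\lteps$ when $n\ge 3$. To upgrade to a macroscopic set of slices, one must use the Ginzburg--Landau equation itself, via a Pohozaev-type monotonicity formula for the scaled energy $s\mapsto s^{2-n}E_{\tilde\epsilon}(v;B_s^n(0))$ (modified to absorb the potential term) together with the Jerrard--Soner compactness estimate for the Jacobian $Jv=\tfrac12 d(iv,dv)$. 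These tools combine to force the microscopic vorticity at $0$ to persist as an integer $(n-2)$-cycle out to the unit scale, so that a positive $\hau^{n-2}$-measure family of slices carries nonzero topological degree. The essential role of the PDE (as opposed to a mere energy bound) is what makes clearing-out for general non-minimizing critical points genuinely subtle, and accounts for the substantially different proofs in \cite{LR} ($n=3$) and \cite{BBO} (general $n$).
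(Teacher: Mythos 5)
The statement you are asked to prove is cited in the paper to \cite{LR, BBO}; the paper never proves it, and the Riemannian variant \cref{clearing} in the appendix is dispatched by reference to \cite[Section~4.3]{Stern.thesis}. So there is no in-paper proof to match against. Evaluating your sketch on its own terms, it is an accurate survey of what the theorem is about but not a proof, and the gap lies exactly where you say it does.

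The critical step -- showing that $|v(0)|\le\tfrac12$ forces a set of slices $\{y\}\times D^2$ of \emph{uniformly positive} $\hau^{n-2}$-measure to carry nonzero slice degree -- is asserted, not proven. Writing that "these tools combine to force the microscopic vorticity at $0$ to persist as an integer $(n-2)$-cycle out to the unit scale" is a statement of the conclusion, not a derivation of it: neither the monotonicity formula nor the Jerrard--Soner Jacobian estimate by itself produces a degree-propagation statement, and the way they combine in the actual proofs is nontrivial. In \cite{BBO} the argument does not proceed by 2D slicing and vortex balls at all; instead it runs through the Hodge decomposition of $jv_\epsilon$, a pointwise bound $|\xi_\epsilon(x)|\le CE_\epsilon(u;B_{1/4}(x))$ of the type recorded as \cref{xi.ptwise.1} here, the monotonicity formula, and an iteration showing the scale-invariant energy ratio improves across dyadic scales until it drops below the 2D vortex threshold. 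In \cite{LR} (for $n=3$) a separate argument based on the Jacobian as a current is used, but again the degree persistence across scales is the substance of the proof, occupying most of the paper. Your sketch correctly diagnoses the difficulty and correctly notes that the PDE, not merely the energy bound, must be invoked; but as a proof proposal it stops before the theorem's content begins.

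A secondary point: your Step 2 (deriving $|v|\le\tfrac34$ on $B_{c\tilde\epsilon}(0)$ from $|v(0)|\le\tfrac12$ and $\|dv\|_{L^\infty}\le C/\tilde\epsilon$) and the per-slice lower bound via Jerrard/Sandier balls are both sound and do give $E_{\tilde\epsilon}(v;B_{c\tilde\epsilon}(0))\gtrsim\tilde\epsilon^{n-2}$, but as you correctly compute this is $o(\lteps)$ for $n\ge 3$. So the sketch delivers no lower bound beyond the trivial microscopic one, and the whole logarithmic factor remains unaccounted for.
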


By a trivial covering argument, up to changing the constant $\eta(n)$, one then obtains that $|u_\epsilon|>\mz$ on the entire smaller ball $B_{r/2}(x)$. When $u_\epsilon$ is a typical two-dimensional vortex centered at $x$, then the energy is $\sim r^{n-2}\log(r/\epsilon)$.
Thus, \cref{clearing.flat} essentially says that, if the energy is much smaller than the expected one, then indeed there cannot be any vortex on a smaller ball.

While it is possible to obtain explicit lower bounds for the energy threshold $\eta(n)$ using the arguments of \cite{BBO}, the resulting bounds are non-sharp. As a simple consequence of \cref{dens.cor}, we obtain the following sharp version of \cref{clearing.flat}.

\begin{corollary}\label{eta.sharp}
	For any $\eta<\pi\cdot \omega_{n-2}$ there exists $\delta(\eta,n)>0$ such that, if
	\begin{align*}
		&E_\epsilon(u_\epsilon;B_r(x))\le\eta r^{n-2}\log(r/\epsilon)
	\end{align*}
	and $\epsilon\le\delta r$, then $|u_\epsilon(x)|>\mz$.
\end{corollary}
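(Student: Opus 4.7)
The plan is a blow-up/contradiction argument that converts the energy bound into a density bound for the limit varifold and then invokes \cref{dens.cor} together with the monotonicity formula.

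Suppose toward contradiction that the statement fails for some $\eta_0 < \pi\omega_{n-2}$. Then there exist sequences $\epsilon_k, r_k>0$ and points $x_k$ with $\epsilon_k/r_k\to 0$, and solutions $u_{\epsilon_k}$ of \cref{gl.eqn} on $B_{r_k}(x_k)$ satisfying
\[
E_{\epsilon_k}(u_{\epsilon_k};B_{r_k}(x_k))\le \eta_0\, r_k^{n-2}\log(r_k/\epsilon_k), \quad |u_{\epsilon_k}(x_k)|\le\mz.
\]
Rescaling, the maps $\tilde u_k(y):=u_{\epsilon_k}(x_k+r_k y)$ solve \cref{gl.eqn} on $B_1(0)$ with parameter $\tilde\epsilon_k:=\epsilon_k/r_k\to 0$ in the Euclidean metric; they satisfy the scale-invariant bound $E_{\tilde\epsilon_k}(\tilde u_k;B_1(0))\le \eta_0\lteps$ and $|\tilde u_k(0)|\le\mz$.

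I would then apply \cref{asymp} on $B_1(0)$: up to a subsequence, the normalized measures $\mu_k:=\frac{e_{\tilde\epsilon_k}(\tilde u_k)}{\pi\lteps}\,dx$ converge weakly$^*$ to $\mu=|V|+f\,dx$ for a stationary rectifiable $(n-2)$-varifold $V$ on $B_1(0)$. Lower semicontinuity on the open set $B_1(0)$ yields
\[
|V|(B_1(0))\le \mu(B_1(0))\le\liminf_{k\to\infty}\mu_k(B_1(0))\le \eta_0/\pi.
\]
The last assertion of \cref{asymp} applied with $\beta=\mz$ forces $0\in\operatorname{spt}(|V|)$, since $0\in\{|\tilde u_k|\le\mz\}$ for every $k$.

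The key step is to promote the $|V|$-a.e.\ density lower bound from \cref{dens.cor} to an inequality at the specific point $0$. Since $0\in\operatorname{spt}(|V|)$, we can choose a sequence of $|V|$-typical points $y_k\to 0$ with $\Theta^{n-2}(|V|,y_k)\ge 1$ (as $\{1\}\cup[2,\infty)\subset[1,\infty)$). The Euclidean monotonicity formula for stationary rectifiable varifolds implies upper semicontinuity of the density, giving $\Theta^{n-2}(|V|,0)\ge\limsup_k\Theta^{n-2}(|V|,y_k)\ge 1$. Plugging this into monotonicity,
\[
\frac{|V|(B_r(0))}{\omega_{n-2}r^{n-2}}\ge \Theta^{n-2}(|V|,0)\ge 1 \quad \text{for all } r\in(0,1),
\]
and letting $r\uparrow 1$ yields $|V|(B_1(0))\ge \omega_{n-2}$. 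Combined with the previous estimate, this forces $\eta_0\ge \pi\omega_{n-2}$, contradicting the choice of $\eta_0$.

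The only delicate point is passing from the $|V|$-a.e.\ statement in \cref{dens.cor} to information at the center of the blow-up; this is handled by the upper semicontinuity of density, which for stationary rectifiable varifolds is a direct consequence of the monotonicity formula. The rest is a standard compactness/rescaling argument, so no serious additional obstacle is expected.
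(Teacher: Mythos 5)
Your proposal is correct and follows essentially the same blow-up argument as the paper: rescale to the unit ball, invoke \cref{asymp} to place $0$ in $\operatorname{spt}(|V|)$, use \cref{dens.cor} together with upper semicontinuity of density (a consequence of the monotonicity formula for stationary varifolds) to force $\Theta^{n-2}(|V|,0)\ge 1$ and hence $|V|(B_1(0))\ge\omega_{n-2}$, and contradict the energy upper bound $|V|(B_1(0))\le \eta_0/\pi<\omega_{n-2}$. The only difference is that you spell out the semicontinuity step explicitly via a sequence of $|V|$-typical points, which the paper compresses into the phrase ``upper semicontinuity of density.''
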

For otherwise, since by scaling we can assume that $x=0$ and $r=1$, there would exist a sequence $u_\epsilon$ with $\epsilon\to 0$ and energy at most $\eta\leps $ on $B_1(0)$,
but with $|u_\epsilon(0)|\le\mz$. By \cref{asymp}, the point $0$ would belong to the support of the energy concentration varifold $V$. Since $V$ is stationary, \cref{dens.cor} and upper semicontinuity of density give $\Theta^{n-2}(|V|,0)\ge 1$, which gives $|V|(B_1(0))\ge\omega_{n-2}$ by the monotonicity formula.
However, this contradicts the fact that
$$|V|(B_1(0))\le\mu(B_1(0))\le\liminf_{\epsilon\to 0}\frac{E_\epsilon(u_\epsilon;B_1(0))}{\pi\leps }\le\frac{\eta}{\pi}<\omega_{n-2}.$$
Note that for any $\beta<1$ the same argument gives $|u_{\epsilon}(x)|>\beta$ provided that we assume $E_{\epsilon}(u_{\epsilon};B_r(x))\leq \eta r^{n-2}\log(r/\epsilon)$ for $\eta<\pi\omega_{n-2}$ and $\epsilon\leq \delta(\beta,\eta,n) r$.

\begin{remark} It seems likely that variants of Theorem \ref{dens.gap} and Corollary \ref{eta.sharp} should hold in the parabolic setting as well, yielding, e.g., a sharp version of \cite[Theorem 1]{BOSpar}.
\end{remark}

Building on these observations, one can also easily obtain sharp lower bounds on the energy of nontrivial solutions to the Ginzburg--Landau equations in several settings. For instance, one obtains the following sharp lower bound on the energy of nonconstant entire solutions, which was already shown in \cite{SS} when $n=3$ and $u$ is energy-minimizing.

\begin{corollary} For $n\geq 2$, any entire solution $u:\mathbb{R}^n\to \mathbb{C}$ of
	$$\Delta u=-(1-|u|^2)u$$
	for which
	\begin{equation}\label{growth.bd}
		\limsup_{R\to\infty}\frac{\int_{B_R(0)}(\frac{1}{2}|du|^2+W(u))}{R^{n-2}\log R}<\pi\omega_{n-2}
	\end{equation}
	must be a constant map $u\equiv e^{i\alpha}$ for some $\alpha\in [0,2\pi)$.
\end{corollary}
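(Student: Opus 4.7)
The plan is to leverage the sharp clearing-out \cref{eta.sharp} and a subharmonicity/mean-value argument to force $|u|\equiv 1$, at which point \cref{gl.eqn} collapses to $\Delta u=0$ and a standard lifting argument finishes things.

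First, I would fix $\eta$ with $\limsup_{R\to\infty}E_1(u;B_R(0))/(R^{n-2}\log R)<\eta<\pi\omega_{n-2}$. For any $x\in\R^n$, the containment $B_R(x)\subset B_{R+|x|}(0)$ together with the asymptotic $(R+|x|)^{n-2}\log(R+|x|)/(R^{n-2}\log R)\to 1$ shows that, after possibly enlarging $\eta$ slightly (still below $\pi\omega_{n-2}$), $E_1(u;B_R(x))\le \eta R^{n-2}\log R$ for all sufficiently large $R$. The scale condition $1\le \delta(\beta,\eta,n)R$ is automatic once $R$ is large, so the $\beta$-refinement of \cref{eta.sharp} (stated in the remark following it), applied to $u_\epsilon=u$ with $\epsilon=1$, yields $|u(x)|>\beta$ for every $\beta<1$. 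Letting $\beta\nearrow 1$ gives $|u|\ge 1$ on all of $\R^n$.

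Second, with $|u|\ge 1$ globally, the identity
$$\Delta|u|^2 = 2|u|^2(|u|^2-1) + 2|\nabla u|^2\ge 0$$
makes $|u|^2$ subharmonic on $\R^n$. The mean value inequality then gives
$$|u|^2(x)\le \frac{1}{|B_R|}\int_{B_R(x)}|u|^2 = 1 + \frac{1}{|B_R|}\int_{B_R(x)}(|u|^2-1),$$
and the last term is $O((\log R/R^2)^{1/2})\to 0$ by Cauchy--Schwarz combined with $\int_{B_R(x)}(1-|u|^2)^2\le 4E_1(u;B_R(x))=O(R^{n-2}\log R)$ from the potential part of the energy bound. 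Hence $|u|\le 1$, so $|u|\equiv 1$.

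Third, \cref{gl.eqn} reduces to $\Delta u=0$. Since $\R^n$ is simply connected and $u$ is nowhere vanishing, one may write $u=e^{i\phi}$ for a smooth $\phi:\R^n\to\R$; expanding $0=\Delta u=(i\Delta\phi-|\nabla\phi|^2)u$ and separating real and imaginary parts forces $|\nabla\phi|^2=0$, so $\phi$ is constant and $u\equiv e^{i\alpha}$. The only mildly technical step is the clearing-out application in step one---transferring the origin-centered growth bound to balls centered at an arbitrary $x$ while applying the refinement uniformly in $\beta<1$; once $|u|\ge 1$ is in hand pointwise, the remaining subharmonic/mean-value argument is entirely standard.
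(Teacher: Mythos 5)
Your proposal is correct, and its crucial first step — transferring the growth bound \cref{growth.bd} to balls $B_R(x)$ centered at an arbitrary fixed $x$ and invoking the $\beta$-refinement of \cref{eta.sharp} (with $\epsilon=1$, $r=R\to\infty$) to conclude $|u|\ge 1$ everywhere — is exactly the argument the paper intends. Where you diverge is in the two easier steps. For the reverse inequality the paper simply rescales the pointwise bound \cref{bound.general.critical} from the appendix to get $|u|\le 1+CR^{-2}$ on $B_{R/2}$ and lets $R\to\infty$; you instead observe that $\Delta|u|^2=2|u|^2(|u|^2-1)+2|\nabla u|^2\ge 0$ once $|u|\ge 1$, and combine the mean value inequality with the Cauchy--Schwarz bound $\fint_{B_R}(|u|^2-1)\le(\fint_{B_R}(1-|u|^2)^2)^{1/2}=O(R^{-1}(\log R)^{1/2})$ coming from the potential part of the energy. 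Both are valid; the paper's route is a one-line citation, while yours is self-contained and avoids the appendix entirely. For the final rigidity step the paper appeals to harmonicity "together with \cref{growth.bd}", whereas your lifting argument $u=e^{i\phi}$ with $0=(i\Delta\phi-|\nabla\phi|^2)e^{i\phi}$ shows that \emph{any} entire harmonic map into $S^1$ is constant, with no growth hypothesis needed — a slightly cleaner conclusion. One caveat you share with the paper: \cref{eta.sharp} is derived from results stated for $n\ge 3$, so the case $n=2$ of the corollary implicitly rests on the two-dimensional quantization literature; since the paper's own sketch has the same gap, this is not a defect of your write-up relative to the source.
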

It is well known \cite{HH} that there exist nonconstant solutions for which equality holds. The proof is a straightforward consequence of the previous corollary: if the strict inequality \cref{growth.bd} holds, then the arguments of the preceding paragraph can be employed to show that $|u|\ge 1$ everywhere on $\mathbb{R}^n$.
Since we also have $|u|\le 1+CR^{-2}$ on $B_{R/2}$ (by rescaling the bound \cref{bound.general.critical} from the appendix), we obtain $|u|\equiv 1$.
Hence, $u$ is harmonic as a map to $\mathbb{R}^2$, which together with \cref{growth.bd} clearly shows that $u$ must be a constant map to $S^1$. 

The other main result gives a converse to \cref{dens.gap}, showing that $\{1\}\cup[2,\infty)\subseteq \mathcal{D}$, the novel observation here being that $[2,\infty)\setminus \mathbb{N}\subseteq \mathcal{D}$. 

\begin{theorem}\label{prescribe.dens}
	For any $\theta\in \{1\}\cup [2,\infty)$, there exists a family of solutions satisfying the hypotheses of \cref{dens.gap} (with $n \geq 3$ and $g_\epsilon=g_0$), with limit density $\theta$.
\end{theorem}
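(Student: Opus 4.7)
The plan is to reduce the problem to a two-dimensional construction via the product ansatz $u_\epsilon(x',z):=v_\epsilon(z)$. Any critical point $v_\epsilon:B_2^2\to\C$ of the 2D Ginzburg--Landau functional automatically yields an $n$D critical point $u_\epsilon$ on $B_2^n\subset\R^{n-2}_{x'}\times\R^2_z$, with energy density $e_\epsilon(u_\epsilon)(x',z)=e_\epsilon^{2D}(v_\epsilon)(z)$. By Fubini, if the normalized 2D measures $\frac{e_\epsilon^{2D}(v_\epsilon)}{\pi\leps}$ concentrate at the origin with mass $\theta$, then the $n$D normalized measures concentrate with density $\theta$ on $P\cap B_2^n$. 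So it suffices to construct 2D critical points whose energy concentrates at a single point with any prescribed multiplicity $\theta\in\{1\}\cup[2,\infty)$.

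For $\theta=1$ we take the classical degree-one radial vortex $v_\epsilon(z)=f_\epsilon(|z|)\cdot z/|z|$, and for $\theta=N^2$ with integer $N\ge 2$ the degree-$N$ radial vortex $f_{N,\epsilon}(|z|)\cdot(z/|z|)^N$. For the remaining $\theta\in[N,N^2)$ with integer $N\ge 2$, write $\theta=N+N(N-1)\gamma$ with $\gamma\in[0,1)$ and set $r_\epsilon:=\epsilon^\gamma$ (or $r_\epsilon:=1/\leps$ when $\gamma=0$). Let $\zeta_k:=e^{2\pi ik/N}$ and consider the holomorphic polynomial $h_{r_\epsilon}(z):=z^N-r_\epsilon^N$, whose zeros are the $N$ vertices $r_\epsilon\zeta_k$ of a regular $N$-gon. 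On $\partial B_2^2$ the map $g_\epsilon:=h_{r_\epsilon}/|h_{r_\epsilon}|$ is a $\Z/N$-invariant $S^1$-valued map of degree $N$ (well-defined since $r_\epsilon<2$), and we define $v_\epsilon$ to be the minimizer of $E_\epsilon^{2D}$ on $B_2^2$ with Dirichlet data $g_\epsilon$. In particular $v_\epsilon$ solves the 2D GL equation, and one expects it to have $N$ degree-one vortices collapsing onto $\{r_\epsilon\zeta_k\}$ as $\epsilon\to 0$: indeed, $g_\epsilon$ is by construction the boundary trace of the canonical $S^1$-harmonic map with precisely those singularities. Since $\{1\}\cup\bigcup_{N\ge 2}[N,N^2]=\{1\}\cup[2,\infty)$, this exhausts the desired range.

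The energy is estimated by a two-scale decomposition: inside $B_{Cr_\epsilon}^2$ (for a large constant $C$), $v_\epsilon$ resembles $N$ well-separated degree-one vortex cores at positions $\sim r_\epsilon\zeta_k$, contributing $\pi N\log(r_\epsilon/\epsilon)+O(1)=\pi N(1-\gamma)\leps+O(1)$; outside $B_{Cr_\epsilon}^2$, $v_\epsilon$ is close to the degree-$N$ $S^1$-harmonic map $(z/|z|)^N$ on the annulus $\{Cr_\epsilon<|z|<2\}$, contributing $\pi N^2\log(2/(Cr_\epsilon))+O(1)=\pi N^2\gamma\leps+O(1)$. Summing,
\[E_\epsilon^{2D}(v_\epsilon)=\pi\bigl[N(1-\gamma)+N^2\gamma\bigr]\leps+O(1)=\pi\theta\leps+O(1),\]
and since the vortex cluster shrinks to the origin, the concentration is at $z=0$ with mass $\theta$.

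The main technical obstacle is justifying the two-scale decomposition rigorously, which requires extending the Bethuel--Brezis--H\'elein asymptotic analysis to $\epsilon$-dependent boundary data and vortex configurations collapsing at a controlled rate $r_\epsilon\to 0$. The upper bound $E_\epsilon^{2D}(v_\epsilon)\le\pi\theta\leps+O(1)$ is obtained by testing against an explicit ansatz built from $h_{r_\epsilon}/|h_{r_\epsilon}|$ regularized by a standard vortex profile near each $r_\epsilon\zeta_k$. The matching lower bound -- equivalently, showing that the vortices of $v_\epsilon$ genuinely lie at the scale $r_\epsilon$ rather than escaping to an $O(1)$ scale -- is the delicate step: by symmetric criticality, the $\Z/N$-symmetry of $g_\epsilon$ reduces the analysis of vortex positions to a one-parameter family of $N$-gons, and one must track the $\pi N(N-1)\log(1/r_\epsilon)$ contribution from the vortex-vortex repulsion $-2\pi\sum_{i<j}\log|a_i-a_j|$ in the renormalized energy $W_{g_\epsilon}$.
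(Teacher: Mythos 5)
Your reduction to two dimensions via the product ansatz $u_\epsilon(x',z)=v_\epsilon(z)$ is where the argument breaks down, and the obstruction is not merely technical. In dimension two, energy quantization holds for \emph{arbitrary} critical points with the natural energy bound (this is the result of \cite{CM} invoked in the introduction): a Pohozaev identity applied on a ball $B_\rho$ with $r_\epsilon\ll\rho\ll 1$ forces $\int_{B_\rho}\frac{(1-|v_\epsilon|^2)^2}{2\epsilon^2}\approx\pi\kappa^2$, where $\kappa$ is the total enclosed degree, whereas $N$ degree-one cores only supply $\approx\pi N$; for $N\ge 2$ this is a contradiction, so a cluster of same-sign vortices collapsing at rate $\epsilon^\gamma$ (indeed at any rate) simply cannot occur for exact two-dimensional solutions. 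Consequently no product of a 2D critical point can concentrate on a single plane with non-integer density --- in fact only the densities $d^2$ are reachable this way --- and your claimed energy expansion $E^{2D}_\epsilon(v_\epsilon)=\pi\theta\leps+O(1)$ with $\theta=N+N(N-1)\gamma>N$ is false for the minimizer you construct. Concretely, since $g_\epsilon\to (z/|z|)^N$ on $\partial B_2$ as $r_\epsilon\to 0$, testing against the canonical harmonic map with $N$ singularities at an $O(1)$-separated configuration shows $E^{2D}_\epsilon(v_\epsilon)\le\pi N\leps+O(1)$; the minimizer's vortices therefore escape to $O(1)$ mutual distance (the repulsion term $-2\pi\sum_{i<j}\log|a_i-a_j|$ in the renormalized energy is unbounded below as the cluster spreads, and the boundary term cannot confine it at scale $r_\epsilon$), and the limiting measure is $\sum_{k=1}^N\delta_{b_k}$ rather than $\theta\delta_0$. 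Passing to the $\Z/N$-equivariant minimizer does not help: among regular $N$-gons of radius $\rho$ the renormalized energy is still minimized at $\rho=O(1)$.

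This is precisely why the paper's construction must be genuinely three-dimensional: the examples are obtained from the entire solutions of \cite{DDMR} in $\R^3$, whose zero set consists of $\kappa$ degree-one \emph{helical} filaments held at mutual distance $\sim\leps^{-1/2}$ by a balance between vortex repulsion and the curvature/twisting of the filaments --- a balance unavailable to translation-invariant (i.e.\ product) solutions, for which the Pohozaev obstruction above applies slice by slice. An anisotropic rescaling by $\epsilon^\tau$ then turns the separation into $\tilde\epsilon^{\tau/(1+\tau)}$ up to logarithms, and \cref{pre.ener.id} gives $\theta=\kappa+\kappa(\kappa-1)\frac{\tau}{1+\tau}$, which sweeps out $[\kappa,\kappa(\kappa+1)/2)$ as $\tau$ ranges over $[0,1)$. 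Your cases $\theta=1$ and $\theta=N^2$ (single radial vortices) are correct but cover only a discrete set; for the rest of $[2,\infty)$ the two-dimensional route cannot be repaired.
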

The examples provided by \cref{prescribe.dens} are obtained by scaling down certain entire solutions in $\R^3$ with helical symmetry, constructed in \cite{DDMR}.
In particular, we see that the conclusion of \cref{dens.gap} is sharp in dimension $\geq 3$, without additional constraints on the family of solutions.

\subsection{Proof ideas}
Unlike in the asymptotic analysis of the Allen--Cahn or $U(1)$-Higgs equations, where most of the energy concentrates at the $O(\epsilon)$ scale about the zero sets of solutions, the main contribution to the $\leps $ energy blow-up for solutions of the Ginzburg--Landau equations as in \cref{asymp} comes from the annular regions of distance $\epsilon^{1-\delta}\leq r\leq \epsilon^{\delta}$ about the zero set of a solution $u_{\epsilon}$ (for $\delta\in(0,\mz)$ small), where $u_{\epsilon}$ resembles a harmonic $S^1$-valued map. In particular, for any $\alpha\in (0,1)$, interactions between distinct components of the zero sets $u_{\epsilon}^{-1}\{0\}$ separated by a distance $\sim \epsilon^{\alpha}$ influence the leading-order behavior of the energy, and the key point in the proofs of \cref{dens.gap} and \cref{prescribe.dens} is to understand which kinds of interactions are permissible for solutions of \cref{gl.eqn} in higher dimension.

Given a family of solutions $u_{\epsilon}$ in $B_1^n(0)$ with energy concentrating on an $(n-2)$-plane $P=\mathbb{R}^{n-2}\times \{0\}$ as in \cref{dens.gap}, we show that the limiting multiplicity $\theta\in (0,\infty)$ for which
$$\frac{e_{\epsilon}(u_{\epsilon})}{\pi\log(1/\epsilon)}\,\operatorname{vol}_{g_\epsilon}\rightharpoonup^* \theta \mathcal{H}^{n-2}\mrestr P$$
can be computed via the following preliminary energy identity. After passing to a subsequence, for a generic sequence $y_{\epsilon}\in B_1^{n-2}$, the zero set $\{z\in D_1^2 : u_{\epsilon}(y_{\epsilon},z)=0\}$ of the solutions $u_{\epsilon}$ in the two-dimensional slice $\{y_{\epsilon}\}\times D_1^2$ is contained in a collection of $m$ disks $D_{C\epsilon}(z^{\epsilon}_1),\ldots, D_{C\epsilon}(z^{\epsilon}_m)$ of radius $O(\epsilon)$ with centers $z_1^{\epsilon},\ldots, z_m^{\epsilon}$. Denoting by $\kappa_j^{\epsilon}\in \mathbb{Z}$ the local degree 
$$\kappa_j^{\epsilon}:=\deg(u_{\epsilon}/|u_{\epsilon}|, \partial D_{C\epsilon}(z_j^{\epsilon}))\in \mathbb{Z},$$
we then find that (after passing to a subsequence)
\begin{equation}\label{intro.theta.comp}
	\theta=\lim_{\epsilon\to 0}\left({\textstyle\sum_{j=1}^m}(\kappa_j^{\epsilon})^2+2{\textstyle\sum_{1\leq i<j\leq m}}\kappa_i^{\epsilon}\kappa_j^{\epsilon}\frac{\lvert\log |z_i^{\epsilon}-z_j^{\epsilon}|\rvert}{\leps }\right).
\end{equation}

Note that if all of these degrees $\kappa_j^{\epsilon}$ had the same sign, it would follow from \cref{intro.theta.comp} that $\theta \geq m$, and the conclusion of \cref{dens.gap} would follow easily, since $\theta<2$ would imply that there is only $m=1$ such disk $D_{C\epsilon}(z_1^{\epsilon})$, with degree $(\kappa_1^{\epsilon})^2<2$, and therefore $\theta=(\kappa_1^{\epsilon})^2=1$. The difficulty in proving \cref{dens.gap} therefore lies in the case where the degrees $\kappa_j^{\epsilon}$ have different signs, so that the interaction terms $\kappa_i^{\epsilon}\kappa_j^{\epsilon}\frac{\lvert\log|z_i^{\epsilon}-z_j^{\epsilon}|\rvert}{\leps }$ subtract from the limiting density $\theta$. 

After some reductions, in the proof of \cref{dens.gap} we may assume that $|z_i^{\epsilon}|\leq \epsilon^{\delta}$ for some fixed $\delta>0$ for all $1\leq i,j\leq m$, and denoting by $\kappa$ the total degree
$$\kappa:={\textstyle\sum_{j=1}^m}\kappa_j,$$
we argue that (possibly after precomposing $u_{\epsilon}$ with a small translation) the energy density drop of $u_{\epsilon}$ between the scales $1$ and $\epsilon^{\delta}$ is given by
$$E_{\epsilon}(u_{\epsilon};B_1(0))-(\epsilon^{\delta})^{2-n}E_{\epsilon}(u_{\epsilon};B_{\epsilon^{\delta}}(0))=\pi\omega_{n-2}|\kappa|^2\log(1/\epsilon^{\delta})+o(\leps ).$$
By the well-known monotonicity formula for solutions of \cref{gl.eqn}, it then follows that 
$$\pi\omega_{n-2}|\kappa|^2\log(1/\epsilon^{\delta})+o(\leps )\geq \int_{\epsilon^{\delta}}^1\frac{2}{r^{n-1}}\int_{B_r(0)}\frac{W(u_{\epsilon})}{\epsilon^2},$$
and one easily concludes that there exists a sequence $r_{\epsilon}\in [\epsilon^{\delta},1]$ for which
\begin{equation}\label{w.ubd.intro}
	\frac{2}{r_{\epsilon}^{n-2}}\int_{B_{r_{\epsilon}}(0)}\frac{W(u_{\epsilon})}{\epsilon^2}\leq \pi\omega_{n-2}|\kappa|^2+o(1).
\end{equation}
Note that in the two-dimensional setting, a simple argument via a Pohozaev identity upgrades the inequality \cref{w.ubd.intro} to \emph{equality}, which forms the basis for the quantization results in \cite{CM}.

We then introduce new estimates relating the potential energy $\int_{B_r(0)}\frac{W(u_{\epsilon})}{\epsilon^2}$ to the degrees $\kappa_j^{\epsilon}$, concluding roughly that
\begin{equation}\label{w.lbd.intro}
	\frac{2}{r_{\epsilon}^{n-2}}\int_{B_{r_{\epsilon}}(0)}\frac{W(u_{\epsilon})}{\epsilon^2}\geq \frac{\pi\omega_{n-2}}{2}{\textstyle\sum_{j=1}^m}|\kappa_j^{\epsilon}|.
\end{equation}
Combining this with \cref{w.ubd.intro}, we deduce in particular that
$${\textstyle\sum_{j=1}^m}|\kappa^{\epsilon}_j|\leq 2|\kappa|^2=2\left|{\textstyle\sum_{j=1}^m}\kappa_j^{\epsilon}\right|^2.$$
On the other hand, if $\theta<2$, then the results of \cite{JSoner} imply that $\kappa=\pm 1$ or $0$, and by the preceding inequality, it follows that the only possibility is that, for some $1\leq i\leq m$, $\kappa_i^{\epsilon}=\pm 1$ and $\kappa_j^{\epsilon}=0$ for all $j\neq i$; hence, $\theta=1$ by \cref{intro.theta.comp}. This suffices for the proof of \cref{dens.gap}, showing that $\theta<2$ forces the vortex to occur with multiplicity one.

To prove \cref{prescribe.dens}, we observe that the families of entire solutions of \cref{gl.eqn} in $\mathbb{R}^3$ constructed in \cite{DDMR}, whose zero sets consist of $m\geq 2$ degree-one helices separated by a distance $\sim \frac{1}{\sqrt{\leps }}$ collapsing to the line $L=\{0\}\times \mathbb{R}$, can be blown down by a factor of $\epsilon^{\tau}$ for any fixed $\tau\in [0,1)$, to obtain a new family of solutions to \cref{gl.eqn} with parameter $\tilde{\epsilon}=\epsilon^{1+\tau}$. The zero sets of these blown-down solutions are then separated by a distance $\sim \epsilon^{\tau}\leps ^{-1/2}=\tilde{\epsilon}^{\frac{\tau}{1+\tau}}\leps ^{-1/2}$, and we can use \cref{intro.theta.comp} to deduce that the limiting energy measure on the line $L$ has density
$$\theta(m,\tau)={\textstyle\sum_{j=1}^m} 1+2{\textstyle\sum_{1\leq i<j\leq m}}1\cdot 1\cdot \frac{\tau}{\tau+1}=m+(m^2-m)\frac{\tau}{\tau+1}.$$
In particular, since
$$\{\theta(m,\tau)\mid \tau \in [0,1)\}=[m,{\textstyle\frac{1}{2}}(m^2+m) )$$
and $\bigcup_{m=2}^{\infty}[m,\frac{1}{2}(m^2+m))=[2,\infty)$, \cref{prescribe.dens} follows.

Note that the solutions constructed in \cite{DDMR} appear to be quite unstable at large scales; in particular, it should be possible to decrease the energy via a perturbation that spreads the $m$ components of the zero set farther apart. From a variational perspective, it would be very interesting to understand whether an additional assumption of \emph{stability} or \emph{bounded Morse index} of solutions allows one to refine the conclusions of \cref{dens.gap}, perhaps even giving a positive answer to \cref{integral.q} in this case. 

\section*{Acknowledgements} The authors thank Manuel Del Pino for helpful correspondence about the results of \cite{DDMR}, as well as Guido De Philippis, Aria Halavati, Fanghua Lin and Tristan Rivi\`ere, for their interest in this work and related discussions.
They are also grateful to the anonymous referees, whose careful reading and thoughtful comments helped improve the presentation.
DS acknowledges the support of the National Science Foundation under grant DMS-2002055.

\section{Preliminary estimates}

In this section we prove \cref{asymp} and, later, we obtain additional information in the special situation of \cref{dens.gap}.
While the proof of \cref{asymp} is simply a localization of some arguments from \cite{BBO,BOS,LR} and \cite{Cheng,Stern.jdg}, we summarize it here as a convenient way to fix some notation which will be used in the next sections.

\subsection{Proof of \cref{asymp}}
Since the statement is local, we can assume that $M=B_2^n(0)$, and prove that the conclusions hold on $B_{3/2}^n(0)$. In the appendix, we recall some fundamental estimates from \cite{BOS}, stating them in the case of a general metric. Thus, we are considering a family of solutions 
$$u_{\epsilon}: B_2^n(0)\to \mathbb{C}$$
to the complex Ginzburg--Landau equation
\begin{equation}\label{gl.e}
	\epsilon^2\Delta_{g_{\epsilon}} u_{\epsilon}=-(1-|u_{\epsilon}|^2)u_{\epsilon}
\end{equation}
with
$$\int_{B_2^n(0)}\left(\frac{|du_\epsilon|_{g_{\epsilon}}^2}{2}+\frac{W(u_\epsilon)}{\epsilon^2}\right)d\operatorname{vol}_{g_{\epsilon}}\le C\leps $$
for some $C>0$ independent of $\epsilon$.

First of all, from the local bounds stated in the appendix, it follows that
\begin{align}\label{u.bd}
	&|u_\epsilon|\le 1+C\epsilon^2,\quad |du_\epsilon|_{g_{\epsilon}}^2\le\frac{1-|u_\epsilon|^2}{\epsilon^2}+C
\end{align}
on the smaller ball $B_{7/4}=B_{7/4}^n(0)$, as well as
\begin{align}\label{w.bd}
	&\int_{B_{7/4}}\left(|d|u_\epsilon||_{g_{\epsilon}}^2+\frac{W(u_\epsilon)}{\epsilon^2}\right)d\operatorname{vol}_{g_{\epsilon}}\le C,
\end{align}
where, throughout the paper, $C$ denotes different constants which do not depend on $\epsilon$, but possibly on our sequence of solutions (we will, however, emphasize whether such constants depend on additional parameters introduced later on). Henceforth, we suppress the subscript $g_{\epsilon}$ in quantities depending on the metric when the meaning is clear from context, as well as the volume form.

As in the works quoted above, we introduce the one-forms
$$ju_{\epsilon}:=u_{\epsilon}^*(r^2\,d\theta)=u^1\,du^2-u^2\,du^1,$$
and observe that 
$$|du_{\epsilon}|^2=|d|u_{\epsilon}||^2+\frac{|ju_{\epsilon}|^2}{|u_{\epsilon}|^2}$$
on $\{u_\epsilon\neq 0\}$, and consequently
\begin{align}\label{du.ju}
	&||du_{\epsilon}|^2-|ju_{\epsilon}|^2|
	\leq |d|u_{\epsilon}||^2+|1-|u_{\epsilon}|^2||du_{\epsilon}|^2
	\leq |d|u_{\epsilon}||^2+\frac{(1-|u_{\epsilon}|^2)^2}{\epsilon^2}+C.
\end{align}
Hence, it follows from \cref{w.bd} that
\begin{equation}\label{ju.main}
	\int_{B_{7/4}}\left|e_{\epsilon}(u_{\epsilon})-\frac{1}{2}|ju_{\epsilon}|^2\right|\leq C.
\end{equation}

Note moreover that we have
\begin{equation}
	d^*ju_{\epsilon}=0,
\end{equation}
as an easy consequence of \cref{gl.e}.
Now, for each $u_{\epsilon}$, we denote by $\mathcal{V}(u_{\epsilon})\subseteq B_2$ the \emph{vorticity set}
$$\mathcal{V}(u_\epsilon):=\{|u_\epsilon|\leq \tfrac{1}{2}\},$$
and define a perturbed map $v_{\epsilon}:B_2\to \mathbb{C}$ by
$$v_{\epsilon}:=\chi(|u_\epsilon|)u_\epsilon,$$
where $\chi:\R\to\R$ is smooth and such that $\chi(t)=1$ on $[0,\frac{1}{4}]$ and $\chi(t)=\frac{1}{t}$ on $[\mz,\infty)$.
In particular,
$$v_{\epsilon}(x)=\frac{u_\epsilon(x)}{|u_\epsilon(x)|}\in S^1\quad\text{for }x\in B_2\setminus \mathcal{V}(u_{\epsilon})$$
and $|v_{\epsilon}|\le C|u_{\epsilon}|$ on $\mathcal{V}(u_{\epsilon})$.
As in \cite{BBO,BOS,LR}, a suitable Hodge decomposition of the one-forms
\begin{equation}\label{jv.def}
	jv_{\epsilon}=v_{\epsilon}^*(r^2\,d\theta)=\chi(|u_{\epsilon}|)^2ju_{\epsilon}
\end{equation}
plays a central role in the analysis. To obtain the exact part of the decomposition, first consider a solution $\psi_{\epsilon}\in C^{\infty}(B_{7/4})$ to the boundary value problem
\begin{align}\label{psi.def}
	\left\{
	\begin{aligned}
		d^*d\psi_{\epsilon}&=d^*(jv_{\epsilon})=d^*(jv_{\epsilon}-ju_{\epsilon})&&\quad\text{in }B_{7/4}, \\
		\psi_\epsilon&=0&&\quad\text{on }\partial B_{7/4}.
	\end{aligned}
	\right.
\end{align}
Note then that
\begin{align*}
	\int_{B_{7/4}}|d\psi_{\epsilon}|^2&\leq \int_{B_{7/4}}|jv_{\epsilon}-ju_{\epsilon}|^2\\
	&\leq \int_{B_{7/4}}|\chi(|u_{\epsilon}|)^2-1|^2|u_{\epsilon}|^2|du_{\epsilon}|^2\\
	&\leq C\int_{B_{7/4}}|1-|u_{\epsilon}|^2||du_{\epsilon}|^2\\
	\text{(by \cref{u.bd}) }&\leq C\int_{B_{7/4}}\frac{W(u_{\epsilon})}{\epsilon^2}+C,
\end{align*}
which together with \cref{w.bd} gives
\begin{equation}\label{dpsi.est}
	\|jv_\epsilon-ju_\epsilon\|_{L^2(B_{7/4})}+\|d\psi_{\epsilon}\|_{L^2(B_{7/4})}\leq C.
\end{equation}

Next, let $\varphi\in C_c^{\infty}(B_{7/4})$ be a cutoff function with $B_{3/2}\subset\subset \{\varphi = 1\}$, and consider the two-form
\begin{align}\label{xi.def}
	&\xi_{\epsilon}:=\Delta_H^{-1}(\varphi\,djv_{\epsilon})=G*(\varphi\,djv_{\epsilon}),
\end{align}
given by convolution of $djv_\epsilon=2dv_\epsilon^1\wedge dv_\epsilon^2$ (multiplied by the cutoff $\varphi$) where
$$(G*\zeta)(x):=\sum_{i\in I}\int_{p\in B_{7/4}} G_{i,p}(x)\langle\zeta(p),\omega_i(p)\rangle\,d\operatorname{vol}_{g_{\epsilon}}$$
is the local Green's operator for the Hodge Laplacian $\Delta_H=d^*d+dd^*$ with respect to the metric $g_{\epsilon}$ as described in \cref{green} (with $U:=B_2$ and $K:=\bar B_{7/4}$), so that 
$$\Delta_H\xi_{\epsilon}=d^*d\xi_{\epsilon}+dd^*\xi_{\epsilon}=\varphi\,djv_{\epsilon}.$$

It is easy to see that $\varphi\,djv_\epsilon$ is supported in $\mathcal{V}(u_{\epsilon})\cap B_{7/4}(0)$, where
\begin{align}\label{jv.w}
	&|djv_\epsilon|\leq C|du_{\epsilon}|^2\leq C\frac{W(u_{\epsilon})}{\epsilon^2}
\end{align}
(since $|du_\epsilon|\le\frac{C}{\epsilon}$ and $1-|u_{\epsilon}|^2\geq \frac{3}{4}$ on $\mathcal{V}(u_{\epsilon})$).
In particular, using \cref{green} to bound $|\nabla G_{i,p}|(x)\leq C\operatorname{dist}_{g_{\epsilon}}(x,p)^{1-n}$, we have
\begin{equation}\label{xi.conv}
	|\xi_\epsilon|(x)+|\nabla \xi_{\epsilon}|(x)\leq C\int_{\mathcal{V}(u_{\epsilon})\cap B_{7/4}}\operatorname{dist}_{g_{\epsilon}}(x,y)^{1-n}\frac{W(u_{\epsilon}(y))}{\epsilon^2}\,dy,
\end{equation}
and as an easy consequence,
\begin{equation}\label{nabxi.l1}
	\|\xi_{\epsilon}\|_{L^p(B_{7/4})}+\|\nabla\xi_{\epsilon}\|_{L^p(B_{7/4})}\leq C(p)\int_{B_{7/4}}\frac{W(u_{\epsilon})}{\epsilon^2}\leq C(p)
\end{equation}
for any $p\in[1,\frac{n}{n-1})$.

Finally, letting
\begin{equation}\label{h.def}
	h_{\epsilon}:=jv_{\epsilon}-d^*\xi_{\epsilon}-d\psi_{\epsilon},
\end{equation}
observe that $h_\epsilon$ is harmonic on the interior of $\{\varphi=1\}$, since here
\begin{align*}
	\Delta_H h_{\epsilon}&=d^*d(jv_{\epsilon}-d^*\xi_{\epsilon})+dd^*(jv_{\epsilon}-d\psi_{\epsilon})\\
	&=d^*(djv_{\epsilon}-\Delta_H\xi_{\epsilon})\\
	&=d^*(djv_{\epsilon}-\varphi\,djv_{\epsilon})\\
	&=0.
\end{align*}
In particular, elliptic estimates give
\begin{align*}
	&\|h_{\epsilon}\|_{C^1(B_{3/2})}
	\le C\|h_{\epsilon}\|_{L^1(B_{7/4})},
\end{align*}
and using \cref{dpsi.est} and \cref{nabxi.l1}, we deduce that
\begin{align}\label{h.bd}
	&\|h_{\epsilon}\|_{C^1(B_{3/2})}\le\|jv_\epsilon\|_{L^1(B_{7/4})}+C\le C\leps ^{1/2}.
\end{align}
Using again \cref{dpsi.est}, \cref{h.def}, and the trivial bound $\|ju_\epsilon\|_{L^2(B_2)}\le C\leps^{1/2}$, this also implies
\begin{align}\label{nabxi.l2}
	&\|d^*\xi_\epsilon\|_{L^2(B_{3/2})}\le \|jv_\epsilon\|_{L^2(B_{3/2})}+C+C\leps^{1/2}\le C\leps^{1/2}.
\end{align}

Now, let $S$ be the (subsequential) limit of the sets $\mathcal{V}(u_\epsilon)$, in the Hausdorff topology on $B_2$. (Note that the metrics $g_{\epsilon}$ in the family are uniformly equivalent to the Euclidean metric $\delta$, i.e., $C^{-1}g_{\epsilon}\leq \delta\leq Cg_{\epsilon}$ on the ball $B_{7/4}$, so Hausdorff convergence can be considered with respect to the Euclidean metric.)
This set will be useful in the proof of the following statement.

\begin{lemma}
	As $\epsilon\to 0$, we have
	\begin{align}\label{orth}
		&\lim_{\epsilon \to 0}\frac{1}{\leps }\int_{B_{3/2}}|d^*\xi_\epsilon||h_\epsilon|=0.
	\end{align}
\end{lemma}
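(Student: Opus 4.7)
The plan is a direct $L^1$--$L^\infty$ Hölder estimate using bounds already established in the text. From the $p=1$ case of \cref{nabxi.l1} one has
$$\|d^*\xi_\epsilon\|_{L^1(B_{3/2})}\le\|\nabla\xi_\epsilon\|_{L^1(B_{7/4})}\le C,$$
\emph{independently of $\epsilon$}: indeed, applying Fubini to the pointwise bound \cref{xi.conv} reduces this to $\int_{B_{7/4}}W(u_\epsilon)/\epsilon^2\le C$, which is \cref{w.bd}. On the other side, \cref{h.bd} gives the uniform sup bound $\|h_\epsilon\|_{L^\infty(B_{3/2})}\le\|h_\epsilon\|_{C^1(B_{3/2})}\le C\leps^{1/2}$. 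Pairing them,
$$\int_{B_{3/2}}|d^*\xi_\epsilon|\,|h_\epsilon|\le\|d^*\xi_\epsilon\|_{L^1(B_{3/2})}\|h_\epsilon\|_{L^\infty(B_{3/2})}\le C\leps^{1/2},$$
so after dividing by $\leps$ the quantity decays like $\leps^{-1/2}$ and \cref{orth} follows.

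The only conceptual remark I would add is why the symmetric Cauchy--Schwarz pairing is too weak for this purpose: the bound \cref{nabxi.l2} gives $\|d^*\xi_\epsilon\|_{L^2}\le C\leps^{1/2}$, and the same order holds in $L^2$ for $h_\epsilon$ by \cref{h.bd}, so Cauchy--Schwarz would only produce the borderline estimate $\int|d^*\xi_\epsilon||h_\epsilon|\le C\leps$, which is precisely of order $\leps$ and thus insufficient after normalization by $\leps$. The crucial asymmetry is that, while the $L^2$ norm of $d^*\xi_\epsilon$ blows up as $\leps^{1/2}$, its $L^1$ norm remains \emph{uniformly} bounded --- the $\leps^{1/2}$ growth in $L^2$ reflects concentration of $\varphi\,djv_\epsilon$ near the shrinking vorticity set rather than growth of total mass. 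This imbalance, combined with the fact that the harmonic form $h_\epsilon$ is as well controlled in $L^\infty$ as in $L^2$ by interior elliptic regularity, is exactly what makes the $L^1$--$L^\infty$ pairing yield a gain of a full power of $\leps^{1/2}$. No information about the Hausdorff limit set $S$ or any finer structure of $h_\epsilon$ is required for the argument.
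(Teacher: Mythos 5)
Your proof is correct, and it is genuinely simpler than the one in the paper. The two approaches differ in an instructive way. The paper splits the integral into a $\delta$-neighborhood $B_\delta(S)$ of the Hausdorff limit $S=\lim \mathcal{V}(u_\epsilon)$ and its complement: near $S$ it pairs $\|d^*\xi_\epsilon\|_{L^2}\le C\leps^{1/2}$, $\|h_\epsilon\|_{L^\infty}\le C\leps^{1/2}$ and the factor $|B_\delta(S)|^{1/2}$ (via Cauchy--Schwarz), while away from $S$ it exploits the pointwise bound \cref{xi.van}, which makes $d^*\xi_\epsilon$ uniformly bounded there, together with $\|h_\epsilon\|_{L^1}\le C\leps^{1/2}$; the conclusion then follows by sending $\epsilon\to 0$ and afterwards $\delta\to 0$. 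Your argument bypasses $S$ entirely and instead exploits the strong asymmetry in how $d^*\xi_\epsilon$ is controlled in $L^1$ versus $L^2$: the $p=1$ case of \cref{nabxi.l1}, which is just Tonelli applied to \cref{xi.conv} combined with \cref{w.bd}, gives the uniform bound $\|d^*\xi_\epsilon\|_{L^1(B_{3/2})}\le C$, and the $L^1$--$L^\infty$ pairing with \cref{h.bd} then yields the stronger quantitative decay $\frac{1}{\leps}\int_{B_{3/2}}|d^*\xi_\epsilon||h_\epsilon|\le C\leps^{-1/2}$, whereas the paper's route only exhibits an $o(1)$ rate. Your observation that symmetric $L^2$--$L^2$ pairing gives only the borderline $O(\leps)$ and is therefore useless here correctly identifies the crux. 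The only thing worth noting is that the paper's more elaborate proof is not wasted work: the facts it establishes along the way about $S$, in particular \cref{dens.lb.K} and \cref{loc.hau}, are reused later in the proof of \cref{asymp}; but they are not logically needed for \cref{orth} itself, and your streamlined argument is the right one to use if the lemma is to stand alone.
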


\begin{proof}
	If $x\in S$ then we can find $x_\epsilon\in\mathcal{V}(u_\epsilon)$ such that $x_\epsilon\to x$,
	and by \cref{clearing} we then have
	\begin{align}\label{dens.lb.K}
		&\mu(\bar B_r(x))\ge\limsup_{\epsilon\to 0}\mu_\epsilon(B_{r-|x_\epsilon-x|}(x_\epsilon))\ge c(n)\lim_{\epsilon\to 0}(r-|x_\epsilon-x|)^{n-2}=c(n)r^{n-2}
	\end{align}
	for any $r<2-|x|$. By a simple Vitali covering argument, this implies that
	\begin{align}\label{loc.hau}
		&\mathcal{H}^{n-2}\mrestr S\le C(n)\mu,
	\end{align}
	and in particular $S$ is negligible with respect to the Lebesgue measure.
	
	Now, for any $\delta>0$, denoting by $B_{\delta}(S)$ the $\delta$-neighborhood of $S$, we can bound
	$$\int_{B_{3/2}}|d^*\xi_\epsilon||h_\epsilon|
	\le\int_{B_{3/2}\cap B_{\delta}(S)}|d^*\xi_\epsilon||h_\epsilon|
	+\|d^*\xi_\epsilon\|_{L^\infty(B_{3/2}\setminus B_\delta(S))}\|h_\epsilon\|_{L^1(B_{3/2})}.$$
	By Cauchy--Schwarz, \cref{h.bd}, and \cref{nabxi.l2}, the first term is bounded by
	$$\|d^*\xi_\epsilon\|_{L^2(B_{3/2})}|B_{\delta}(S)|^{1/2}\cdot \|h_\epsilon\|_{L^\infty(B_{3/2})}
	\le C\leps |B_\delta(S)|^{1/2}.$$
	Moreover, recalling the definition of $S$, \cref{xi.conv} gives
	\begin{align}\label{xi.van}
		&\limsup_{\epsilon\to 0}\|d^*\xi_\epsilon\|_{L^\infty(B_{3/2}\setminus B_\delta(S))}\le C(\delta)\int_{B_{7/4}}\frac{W(u_\epsilon)}{\epsilon^2}\le C(\delta),
	\end{align}
	which implies that the second term above is at most $C(\delta)\leps ^{1/2}$, and \cref{orth} follows by letting $\epsilon\to 0$ and then $\delta\to 0$.
\end{proof}

By \cref{ju.main}, \cref{dpsi.est}, and \cref{h.def}, we have
\begin{align*}
	&\mu=\lim_{\epsilon\to 0}\frac{|jv_\epsilon|^2}{2\pi\leps }\,dx
	=\lim_{\epsilon\to 0}\frac{|d\psi_\epsilon+d^*\xi_\epsilon+h_\epsilon|^2}{2\pi\leps }\,dx
	=\lim_{\epsilon\to 0}\frac{|d^*\xi_\epsilon+h_\epsilon|^2}{2\pi\leps }\,dx.
\end{align*}
The previous lemma, together with \cref{h.bd} and \cref{nabxi.l2}, implies that
\begin{align*}
	&\mu=\lim_{\epsilon\to 0}\frac{|d^*\xi_\epsilon+h_\epsilon|^2}{2\pi\leps }\,dx=\nu+\frac{1}{2\pi}|h_0|^2\,dx\quad\text{on }B_{3/2}
\end{align*}
up to subsequences,
where $h_0:=\lim_{\epsilon \to 0}\frac{h_\epsilon}{\leps ^{1/2}}$ is a harmonic one-form, while
$$\nu:=\lim_{\epsilon \to 0}\frac{|d^*\xi_\epsilon|^2}{2\pi\leps }\,dx.$$
From \cref{xi.van} it follows that $\operatorname{spt}(\nu)\subseteq S$, while \cref{dens.lb.K} and the structure of $\mu$ imply
\begin{align}\label{nu.lb}
	&\lim_{r\to 0}\frac{\nu(B_r(x))}{r^{n-2}}\ge c(n)>0,
\end{align}
which forces the reverse inclusion to hold on $B_{3/2}$. Thus,
\begin{align*}
	&S=\operatorname{spt}(\nu)\quad\text{on }B_{3/2}.
\end{align*}
Note that the previous argument also shows that $\lim_{\epsilon \to 0}\{|u_\epsilon|\le\beta\}=\operatorname{spt}(\nu)$ on $B_{3/2}$ for any $\beta\in(0,1)$
(without the need of a further subsequence, as any subsequential limit of $\{|u_\epsilon|\le\beta\}$ equals $\operatorname{spt}(\nu)$).

To prove \cref{w.nu}, define the measure $\lambda:=\lim_{\epsilon\to 0}\frac{W(u_\epsilon)}{\epsilon^2}\,dx$, which exists up to subsequences by \cref{w.bd}.
Note that, by the monotonicity formula \cref{monotonicity}, the rescaled maps $\tilde u_{\tilde\epsilon}(x):=u_\epsilon(p+rx)$ (with $\tilde\epsilon=\epsilon/r$) have energy at most $C\leps $ on $B_1(0)$,
for $p\in B_{3/2}(0)$ and $r<\frac{1}{4}$.
Applying \cref{bos2} and scaling back, it follows that
$r^{2-n}\lambda(B_{r/2}(p))\le C$. Also, from \cite[Theorem~2.1]{BOS} it easily follows that $\lambda=0$
on $B_{3/2}\setminus S$ (since $S$ is the limit of the sets $\{|u_\epsilon|\le\beta\}$ for any $\beta\in(0,1)$).
Hence,
\begin{align*}
	&\lambda\le C\mathcal{H}^{n-2}\mrestr S
\end{align*}
on $B_{3/2}$, while \cref{loc.hau} and the structure of $\mu$ imply that the right-hand side is bounded above by $C\nu$.

This proves \cref{w.nu} and the theorem, provided that we check that $\nu=\mu\mrestr S$ coincides with the weight of a stationary $(n-2)$-varifold $V$ for the limit metric $g_0=\lim_{\epsilon\to 0}g_{\epsilon}$.
On $B_{3/2}$ we introduce the \emph{stress-energy tensor}
\begin{align*}
	&T_\epsilon:=e_\epsilon(u_\epsilon)I-du_\epsilon\otimes du_\epsilon,
\end{align*}
with the implicit scalar product $du_\epsilon\otimes du_\epsilon=du_\epsilon^1\otimes du_\epsilon^1+du_\epsilon^2\otimes du_\epsilon^2$, and define the measure
\begin{align*}
	&T_0=\lim_{\epsilon \to 0}\frac{T_\epsilon}{\pi\leps }\,\operatorname{vol}_{g_{\epsilon}},
\end{align*}
taking values in symmetric bilinear forms.
The fact that $u_\epsilon$ is critical with respect to inner variations gives $\operatorname{div}_{g_{\epsilon}}T_\epsilon=0$,
which implies that $T_0$ is also divergence-free, in the sense that the pairing $\langle T_0,\nabla_{g_0} X\rangle$ vanishes for any $C^1$ vector field $X$ supported in $B_{3/2}$.

A computation similar to \cref{du.ju}, together with \cref{w.bd}, shows that
\begin{align*}
	&T_0=\lim_{\epsilon \to 0}\frac{1}{\pi\leps }\left(\frac{|ju_\epsilon|^2}{2}I-ju_\epsilon\otimes ju_\epsilon\right)\operatorname{vol}_{g_{\epsilon}}.
\end{align*}
Also, \cref{dpsi.est} and \cref{orth} give
\begin{align*}
	&T_0=V+\left(\frac{|h_0|^2}{2}I-h_0\otimes h_0\right)\operatorname{vol}_{g_0},
\end{align*}
with
\begin{align*}
	&V:=\lim_{\epsilon \to 0}\frac{1}{\pi\leps }\left[\frac{|d^*\xi_\epsilon|^2}{2}I-(d^*\xi_\epsilon)\otimes (d^*\xi_\epsilon)\right]\,\operatorname{vol}_{g_{\epsilon}}.
\end{align*}
Note that $h_0$ is strongly harmonic, meaning that $dh_0=0$ and $d_{g_0}^*h_0=0$:
indeed, we already have $d_{g_{\epsilon}}^*h_\epsilon=d_{g_{\epsilon}}^*(jv_\epsilon-d\psi_\epsilon)=0$ by \cref{psi.def};
also, \eqref{nabxi.l1} gives $\|d\xi_\epsilon\|_{L^1(B_{3/2})}\le C$, and hence
$$ dh_0=\lim_{\epsilon\to 0}\frac{dh_\epsilon}{\leps^{1/2}}=\lim_{\epsilon\to 0}\frac{djv_\epsilon-dd^*\xi_\epsilon}{\leps^{1/2}}
=\lim_{\epsilon\to 0}\frac{\Delta_H\xi_\epsilon-dd^*\xi_\epsilon}{\leps^{1/2}}=\lim_{\epsilon\to 0}\frac{d^*d\xi_\epsilon}{\leps^{1/2}}=0 $$
on $B_{3/2}$, with the limits understood distributionally (where we used \cref{h.def} in the second equality and \cref{xi.def} in the third one).
Since $h_0$ is strongly harmonic, the term $\frac{|h_0|^2}{2}I-h_0\otimes h_0$ is divergence-free, and thus $\operatorname{div}V=0$, as well.

Since $\operatorname{tr}(V)\ge(n-2)\nu$ and $|\langle V w,w\rangle|\le|w|^2\nu$ for any $w\in\R^n$, the measure $V$ can be identified with a generalized stationary $(n-2)$-varifold with weight $\nu$, according to the definition from \cite[Section~A.2]{Stern.jdg}.
Since it has positive density on its support by \cref{nu.lb}, it now follows from the classical result by Ambrosio--Soner \cite[Theorem~3.8]{AS} that $V$ is actually a rectifiable varifold.

\subsection{Additional bounds in the situation of \cref{dens.gap}}
Suppose now that we are in the setting of \cref{dens.gap}. Henceforth, we will assume for simplicity of notation that $g_\epsilon$ is in fact equal to the flat Euclidean metric; it is an easy exercise to extend the arguments to metrics converging smoothly to the Euclidean metric, and we will comment occasionally on the necessary modifications for this case. Thus, we are considering a family of solutions $u_{\epsilon}: B_2^n(0)\to \mathbb{C}$ to \cref{gl.e},
for which the normalized energy measures
$$\mu_{\epsilon}:=\frac{e_{\epsilon}(u_{\epsilon})}{\pi\leps }\,dx$$
converge weakly in $C^0_c(B_2)^*$ to a multiple of the $(n-2)$-plane $P=\mathbb{R}^{n-2}\times \{0\}$
$$\mu_{\epsilon}\rightharpoonup \theta \mathcal{H}^{n-2}\mrestr P$$
as $\epsilon\to 0$. 

On any domain compactly contained in $B_2=B_2(0)$, such as $B_{7/4}$, the following is a simple consequence of the last assumption.

\begin{lemma}\label{par.van.lem}
	Writing $|du_{\epsilon}(P)|^2:={\textstyle\sum_{i=1}^{n-2}}|du_{\epsilon}(e_i)|^2$, we have
	\begin{equation}\label{par.van}
		\lim_{\epsilon\to 0}\frac{1}{\leps }\int_{B_{7/4}}|du_{\epsilon}(P)|^2=0.
	\end{equation}
\end{lemma}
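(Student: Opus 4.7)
The plan is to extract the claimed vanishing from the identification of the limit of the stress-energy tensor
$$T_\epsilon := e_\epsilon(u_\epsilon)I-du_\epsilon\otimes du_\epsilon$$
obtained in the proof of \cref{asymp}, by pairing it with the orthogonal projection $\pi_P=\sum_{i=1}^{n-2}e_i\otimes e_i$ onto $P$.

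First, I would note that, since $\mu=\theta\hau^{n-2}\mrestr P$ is singular with respect to the Lebesgue measure, the decomposition $\mu=\nu+\frac{|h_0|^2}{2\pi}\,dx$ established on the interior in the proof of \cref{asymp} (with $\nu$ the weight of the limit varifold and $h_0$ the harmonic one-form arising there) forces $h_0\equiv 0$ and $\nu=\theta\hau^{n-2}\mrestr P$. Consequently, the tensor-valued limit reduces to the varifold piece, $T_0=V$.

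Next, because $|V|=\theta\hau^{n-2}\mrestr P$ is supported on the affine plane $P$, the approximate tangent plane of the rectifiable varifold $V$ is $P$ itself at $|V|$-a.e.\ point. Combined with the pointwise bounds $\operatorname{tr}(V)\ge (n-2)|V|$ and $|\langle Vw,w\rangle|\le|w|^2|V|$ from the proof of \cref{asymp}, which saturate precisely at $\pi_P$, this yields
$$T_0=V=\theta\,\pi_P\,\hau^{n-2}\mrestr P$$
as a symmetric-bilinear-form-valued measure.

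Finally, pairing $T_0$ with $\pi_P$ and using the pointwise identity
$$\sum_{i=1}^{n-2}T_\epsilon(e_i,e_i)=(n-2)e_\epsilon(u_\epsilon)-|du_\epsilon(P)|^2,$$
the weak-$*$ convergence $\frac{T_\epsilon}{\pi\leps}\,dx\weakto T_0$, combined with the hypothesis $\frac{e_\epsilon(u_\epsilon)}{\pi\leps}\,dx\weakto\theta\hau^{n-2}\mrestr P$, gives upon subtracting
$$\frac{|du_\epsilon(P)|^2}{\pi\leps}\,dx\weakto 0\quad\text{in }C^0_c(B_2)^*.$$
Testing against any cutoff $\phi\in C^0_c(B_2)$ with $\phi\equiv 1$ on $\bar B_{7/4}$ then yields \cref{par.van}. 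The only mild point to check is that the stress-energy decomposition of \cref{asymp}, stated there on $B_{3/2}$, also holds on $B_{7/4}$; this is obtained by repeating the proof of \cref{asymp} with all auxiliary cutoff radii slightly enlarged (e.g.\ choosing $\varphi\in C^\infty_c(B_{15/8})$ with $\varphi\equiv 1$ on $\bar B_{7/4}$, and correspondingly enlarging the ball on which elliptic estimates for $h_\epsilon$ are invoked), after which every step goes through verbatim.
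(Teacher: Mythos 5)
Your proposal is correct and follows essentially the same route as the paper: both arguments identify the limit of the normalized stress-energy tensor with the matrix-valued measure $\theta M\,\hau^{n-2}\mrestr P$ ($M$ the projection onto $P$) and subtract this from the convergence of the energy densities. The only cosmetic differences are that you pair with the full projection $\pi_P$ rather than with individual unit vectors $w\in P$ (equivalent after summing over an orthonormal basis of $P$), and that you spell out explicitly the points the paper leaves implicit, namely that $h_0\equiv 0$ because $\mu$ is singular and that the localization radii must be enlarged to reach $B_{7/4}$.
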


\begin{proof}
	Since the stationary varifold $V$ from \cref{asymp} coincides with a multiple of $P$, viewing $V$ as a matrix-valued measure we can write
	\begin{align}\label{V.matrix}
		&V=\theta M\,\mathcal{H}^{n-2}\mrestr P,
	\end{align}
	where $M\in\R^{n\times n}$ is the orthogonal projection onto $P$. As seen in the proof of \cref{asymp}, the normalized stress-energy tensors $\frac{T_\epsilon}{\pi\leps }$ converge to $V$, and by \cref{V.matrix} this implies that
	$$\lim_{\epsilon \to 0}\int_{B_2}\chi\frac{\langle T_\epsilon w,w\rangle}{\pi\leps }\,dx=\int_{B_2}\chi\,d\langle Vw,w\rangle=\int_{B_2}\chi\,d|V|$$
	for any $\chi\in C^0_c(B_2)$ and any unit vector $w\in P$. Recalling the definition of $T_\epsilon$ and the fact that
	\begin{align*}
		&\lim_{\epsilon \to 0}\int_{B_2}\chi\frac{e_\epsilon(u_\epsilon)}{\pi\leps }=\int_{B_2}\chi\,d|V|,
	\end{align*}
	we deduce that $\lim_{\epsilon \to 0}\int_{B_2}\chi\frac{|du_\epsilon(w)|^2}{\pi\leps }=\lim_{\epsilon\to 0}\int_{B_2}\chi\frac{e_{\epsilon}(u_{\epsilon})-\langle T_{\epsilon}w,w\rangle}{\pi \leps}=0$, as desired.
\end{proof}

Using the preceding bounds, we can prove the following key estimates, showing that the limiting energy density can be computed in terms of the one-form $d^*\xi_{\epsilon}$. 

\begin{lemma}\label{coex.dom} As $\epsilon\to 0$, we have
	$$\lim_{\epsilon\to 0} \frac{1}{\leps }\int_{B_{3/2}}|ju_{\epsilon}-d^*\xi_{\epsilon}|^2=0.$$
	In particular, combining with \cref{ju.main} gives
	\begin{equation}\label{en.is.coex}
		\lim_{\epsilon\to 0}\frac{1}{\leps }\int_{B_{3/2}}\left|e_{\epsilon}(u_{\epsilon})-\frac{1}{2}|d^*\xi_{\epsilon}|^2\right|=0.
	\end{equation}
\end{lemma}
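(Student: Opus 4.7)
The plan is to exploit the Hodge-type decomposition $jv_\epsilon=d\psi_\epsilon+d^*\xi_\epsilon+h_\epsilon$ from \cref{h.def}, which gives
$$ ju_\epsilon-d^*\xi_\epsilon=(ju_\epsilon-jv_\epsilon)+d\psi_\epsilon+h_\epsilon. $$
By \cref{dpsi.est}, the first two pieces on the right are bounded in $L^2(B_{7/4})$ by a constant independent of $\epsilon$, so their contribution to $\frac{1}{\leps}\int_{B_{3/2}}|ju_\epsilon-d^*\xi_\epsilon|^2$ is $O(1/\leps)\to 0$. The entire difficulty therefore reduces to upgrading the a priori bound $\|h_\epsilon\|_{L^2(B_{3/2})}^2=O(\leps)$, coming from \cref{h.bd}, to the stronger $o(\leps)$.

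For this upgrade I would use that the hypothesis of \cref{dens.gap} forces $h_\epsilon/\leps^{1/2}$ to vanish in the limit. From the proof of \cref{asymp}, the rescalings $h_\epsilon/\leps^{1/2}$ are uniformly bounded in $C^1(B_{3/2})$, so by Arzel\`a--Ascoli they subsequentially converge in $C^0$ to a harmonic (hence continuous) one-form $h_0$, and the limit energy measure decomposes as $\mu=\nu+\frac{1}{2\pi}|h_0|^2\,dx$ on $B_{3/2}$. Under our hypothesis $\mu=\theta\hau^{n-2}\mrestr P$ is singular with respect to Lebesgue measure, so the absolutely continuous part $\frac{1}{2\pi}|h_0|^2\,dx$ must vanish identically, which by continuity forces $h_0\equiv 0$. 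Since this forces every subsequential $C^0$-limit of $h_\epsilon/\leps^{1/2}$ to be zero, the full sequence satisfies $\|h_\epsilon\|_{L^\infty(B_{3/2})}=o(\leps^{1/2})$, and hence $\|h_\epsilon\|_{L^2(B_{3/2})}^2=o(\leps)$, completing the proof of the first assertion.

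For the second assertion \cref{en.is.coex}, I would expand the pointwise difference $|ju_\epsilon|^2-|d^*\xi_\epsilon|^2$ as the inner product $\langle ju_\epsilon-d^*\xi_\epsilon,\,ju_\epsilon+d^*\xi_\epsilon\rangle$ and apply Cauchy--Schwarz, using $\|ju_\epsilon\|_{L^2(B_{3/2})}+\|d^*\xi_\epsilon\|_{L^2(B_{3/2})}=O(\leps^{1/2})$ from the standing energy bound and \cref{nabxi.l2}. This yields $\int_{B_{3/2}}\bigl||ju_\epsilon|^2-|d^*\xi_\epsilon|^2\bigr|=o(\leps^{1/2})\cdot O(\leps^{1/2})=o(\leps)$, and a triangle inequality with \cref{ju.main} then gives \cref{en.is.coex}.

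The main conceptual point, and the only nontrivial step, is recognizing that the singularity of the limit measure---enforced by the hypothesis of \cref{dens.gap}---is exactly what removes the otherwise merely $O(\leps^{1/2})$ harmonic remainder $h_\epsilon$. Once this observation is in place, everything reduces to routine rearrangement of estimates already established in the proof of \cref{asymp}.
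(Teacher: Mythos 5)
Your proof is correct, and its skeleton coincides with the paper's: both reduce, via the decomposition $ju_\epsilon-d^*\xi_\epsilon=(ju_\epsilon-jv_\epsilon)+d\psi_\epsilon+h_\epsilon$ and \cref{dpsi.est}, to showing $\|h_\epsilon\|_{L^2(B_{3/2})}^2=o(\leps)$, and the Cauchy--Schwarz step for \cref{en.is.coex} is the standard one. Where you genuinely diverge is the mechanism for that key step. The paper argues quantitatively: concentration on $P$ gives $\|du_\epsilon\|_{L^1(B_{7/4})}=o(\leps^{1/2})$ by Cauchy--Schwarz on the thin tube $B_\delta(P)$ (whose volume is $O(\delta^2)$) together with the vanishing of the energy outside the tube, and this feeds directly into the elliptic estimate $\|h_\epsilon\|_{C^1(B_{3/2})}\le\|jv_\epsilon\|_{L^1(B_{7/4})}+C$ from \cref{h.bd}. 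You instead argue softly: any subsequential limit $h_0$ of $h_\epsilon/\leps^{1/2}$ contributes the absolutely continuous piece $\frac{1}{2\pi}|h_0|^2\,dx$ to $\mu$, and the singularity of $\mu=\theta\mathcal{H}^{n-2}\mrestr P$ with respect to Lebesgue measure forces $h_0\equiv 0$. Both routes are valid and both ultimately use the same hypothesis (concentration on a Lebesgue-null set); yours is conceptually transparent ($h_0$ is exactly the diffuse part of the energy, which the hypothesis excludes) but leans on more of the machinery from the proof of \cref{asymp} --- in particular the decomposition $\mu=\nu+\frac{1}{2\pi}|h_0|^2\,dx$, which itself rests on the cross-term estimate \cref{orth} and is established only along subsequences, so your final subsequence-extraction step implicitly re-runs that construction along each further subsequence. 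That is legitimate but worth stating; the paper's direct $L^1$ estimate avoids it.
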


\begin{proof} In view of \cref{dpsi.est}, it suffices to show that
	\begin{align}\label{jv.xi}
		&\|jv_{\epsilon}-d^*\xi_{\epsilon}\|_{L^2(B_{3/2})}^2=o(\leps )
	\end{align}
	as $\epsilon\to 0$. Note that
	$$jv_{\epsilon}-d^*\xi_{\epsilon}=h_{\epsilon}+d\psi_{\epsilon},$$
	and we know from \cref{dpsi.est} that $\|d\psi_{\epsilon}\|_{L^2(B_{7/4})}^2\leq C$, so all that remains is to show that
	$$\|h_{\epsilon}\|_{L^2(B_{3/2})}^2=o(\leps ).$$
	Since the energy concentrates along $P$, note that, for any fixed $\delta>0$,
	\begin{align*}
		\limsup_{\epsilon\to 0}\frac{\|du_{\epsilon}\|_{L^1(B_{7/4})}}{\leps ^{1/2}}
		&=\limsup_{\epsilon\to 0} \frac{\|du_{\epsilon}\|_{L^1(B_{7/4}\cap B_{\delta}(P))}}{\leps ^{1/2}}\\
		&\leq \limsup_{\epsilon\to 0}\frac{\|du_{\epsilon}\|_{L^2(B_{7/4}\cap B_{\delta}(P))}}{\leps ^{1/2}}|B_{7/4}\cap B_{\delta}(P)|^{1/2}\\
		&\leq C\delta,
	\end{align*}
	so that $\|du_{\epsilon}\|_{L^1(B_{7/4})}^2=o(\leps )$. Using \cref{h.bd}, we arrive at
	$$\|h_{\epsilon}\|_{L^2(B_{3/2})}\le\|ju_\epsilon\|_{L^1(B_{7/4})}+C=o(\leps ^{1/2}),$$
	and the claim follows.
\end{proof}

Now, denote by $Q$ the cylinder
$$Q:=B_1^{n-2}(0)\times D_1^2(0)\subset B_{3/2}^n(0),$$
and fix a large constant $K>0$, which will be specified in the final part of the proof. In what follows, we identify a distinguished family of two-dimensional slices perpendicular to the $(n-2)$-plane $P$ of concentration, such that the energy density $\theta$ can be computed in terms of the behavior of the solutions $u_{\epsilon}$ along these slices. (Cf., e.g., \cite{LR.hm} or \cite{Riv.ym} for similar arguments in the harmonic map or Yang--Mills settings.)

\begin{definition}
	Given $y\in B_{1/2}^{n-2}(0)$, we say that $P_y^{\perp}:=\{y\}\times D_1^2(0)$ (or simply $y$) is a \emph{$\delta$-good slice} for $u_\epsilon$ if 
	$$\sup_{0<r<1/2}\left|r^{2-n}\int_{B_r^{n-2}(y)\times D_1^2}e_\epsilon(u_\epsilon)-\pi\omega_{n-2}\theta\leps \right|<\delta\leps ,$$
	$$\sup_{0<r<1/2}r^{2-n}\int_{B_r^{n-2}(y)\times D_1^2}\left(|du_{\epsilon}(P)|^2+|jv_\epsilon-d^*\xi_\epsilon|^2+\left|e_\epsilon(u_\epsilon)-\mz|d^*\xi_\epsilon|^2\right|\right)<\delta\leps ,$$
	and
	$$\sup_{0<r<1/2}r^{2-n}\int_{B_r^{n-2}(y)\times D_1^2}\left(\frac{W(u_{\epsilon})}{\epsilon^2}+|\xi_\epsilon|\right)<K.$$
\end{definition}
The first and second conditions require uniform $L^2$ vanishing of $du_{\epsilon}(P)$ on the cylinders $B_r^{n-2}(y)\times D_1^2$ centered at $y$ at all small scales $r$, and assert that the density $\theta$ can be computed by integrating any one of $e_{\epsilon}(u_{\epsilon})$, $\frac{1}{2}|d^*\xi_{\epsilon}|^2$, or $\frac{1}{2}|jv_{\epsilon}|^2$ along the cylinders $B_r^{n-2}(y)\times D_1^2$, or (letting $r\to 0$) along the slice $P_y^{\perp}=\{y\}\times D_1^2$, while the third condition enforces uniform $L^1$ control on the potential $\frac{W(u)}{\epsilon^2}$ and $|\xi_{\epsilon}|$ over the same cylinders at all scales.

\begin{lemma}\label{many.good.slices}
	Denote by $\mathcal{G}_{\epsilon,\delta}\subseteq B_{1/2}^{n-2}(0)$ the collection of $\delta$-good slices for $u_{\epsilon}$. Then,
	for any $\delta>0$, we have
	$$\limsup_{\epsilon\to 0}|B_{1/2}^{n-2}\setminus \mathcal{G}_{\epsilon,\delta}|\le \frac{C}{K}.$$
\end{lemma}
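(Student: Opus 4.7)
The plan is to show that the set of $y \in B_{1/2}^{n-2}$ violating each of the three conditions has small measure, via a weak-type maximal function estimate, and then to take the union of the bad sets. The central tool is the cylindrical maximal function
\[
(M_P g)(y) := \sup_{0<r<1/2} r^{2-n}\int_{B_r^{n-2}(y) \times D_1^2} g \, dx, \qquad y \in B_{1/2}^{n-2},
\]
defined for any nonnegative $g \in L^1(B_1^{n-2} \times D_1^2)$. Setting $\bar g(y) := \int_{D_1^2} g(y,z)\,dz$, Fubini gives $r^{2-n}\int_{B_r^{n-2}(y) \times D_1^2} g = r^{2-n}\int_{B_r^{n-2}(y)} \bar g$, which equals $\omega_{n-2}$ times the average of $\bar g$ over $B_r^{n-2}(y)$. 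Hence $M_P g$ is, up to a factor $\omega_{n-2}$, the usual centered Hardy--Littlewood maximal function of $\bar g$ on $\mathbb{R}^{n-2}$, and the standard weak-$(1,1)$ inequality gives
\[
\bigl|\{y \in B_{1/2}^{n-2} : (M_P g)(y) > \lambda\}\bigr| \le \frac{C(n)}{\lambda}\,\|g\|_{L^1(B_1^{n-2} \times D_1^2)}.
\]

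The third condition is immediate: apply this with $\lambda = K$ to $g \in \{W(u_\epsilon)/\epsilon^2,\, |\xi_\epsilon|\}$, whose $L^1$ norms on $B_{7/4}$ are $O(1)$ by \cref{w.bd} and \cref{nabxi.l1}, producing a bad set of measure $\le C/K$. The second condition is equally routine: apply with $\lambda = \delta\leps$ to $g \in \{|du_\epsilon(P)|^2,\, |jv_\epsilon - d^*\xi_\epsilon|^2,\, |e_\epsilon - \tfrac{1}{2}|d^*\xi_\epsilon|^2|\}$, whose $L^1$ norms on $B_{3/2}$ are all $o(\leps)$ by \cref{par.van.lem}, \cref{coex.dom}, and \cref{en.is.coex} (using that $O(1) = o(\leps)$ as $\epsilon\to 0$ where needed), producing a bad set of measure $o(1)$.

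The main obstacle is the first condition, which is two-sided and for which $\|e_\epsilon\|_{L^1} \sim \leps$, so the direct maximal-function estimate gives no smallness. The starting point is that each cylinder $B_r^{n-2}(y) \times D_1^2$ is a continuity set for $\theta\,\mathcal{H}^{n-2}\mrestr P$: its boundary meets $P$ in the $(n-3)$-sphere $\partial B_r^{n-2}(y) \times \{0\}$, which is $\mathcal{H}^{n-2}$-null. Hence weak-$*$ convergence of $\mu_\epsilon$ gives, for each fixed $(y,r)$, convergence of $r^{2-n}\int_{B_r^{n-2}(y) \times D_1^2} e_\epsilon / \leps$ to $\pi\omega_{n-2}\theta$. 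To upgrade to uniformity in $r \in (0,1/2)$, I would split into two regimes: on $r \in [r_0, 1/2]$, compactness of the parameter $(y,r) \in \bar B_{1/2}^{n-2} \times [r_0, 1/2]$ and continuity of the limit density force uniform convergence in $y$; on $r \in (0, r_0]$, the monotonicity formula (that $r^{2-n} E_\epsilon(u_\epsilon; B_r^n((y,0)))$ is non-decreasing in $r$, by Pohozaev) propagates the upper bound from scale $r_0$ down to all smaller scales for the \emph{ball} integral, and the second- and third-condition estimates (smallness of $|du_\epsilon(P)|^2$ and boundedness of $W/\epsilon^2$) let one compare the cylinder and ball integrals up to errors that are negligible for $y$ outside a further bad set of measure $o(1)$.

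Summing the three contributions, the total bad set in $B_{1/2}^{n-2}$ has measure at most $C/K + o(1)$ as $\epsilon \to 0$; taking $\limsup$ yields the claim. The hardest step is the uniform-in-$r$ control at small $r$ for the first condition: monotonicity naturally applies to ball integrals, but transferring it to the cylindrical geometry at scale $r \ll 1$ requires the slice-structure information encoded in the other two conditions together with the fact that the energy concentrates on $P$.
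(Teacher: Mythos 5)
Your treatment of the second and third conditions is correct and is exactly the paper's argument (Fubini plus the weak-$(1,1)$ maximal inequality applied to the slice integrals, with $L^1$ bounds $O(1)$ for $W(u_\epsilon)/\epsilon^2$ and $|\xi_\epsilon|$ and $o(\leps)$ for the quantities in the second condition). The gap is in the first condition, precisely at the step you flag as hardest, and the proposed route would not close it. First, the monotonicity formula propagates only the \emph{upper} bound on the ball ratio $r^{2-n}E_\epsilon(u_\epsilon;B_r^n((y,0)))$ downward in $r$; the first condition is two-sided, and the lower bound at scales $r=\epsilon^{\alpha}$ genuinely fails for ball integrals (around a single degree-one vortex, $r^{2-n}E_\epsilon(B_r)\approx\pi\omega_{n-2}\log(r/\epsilon)=(1-\alpha)\pi\omega_{n-2}\leps$, well below $\pi\omega_{n-2}\theta\leps$). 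Second, the comparison between the cylinder $B_r^{n-2}(y)\times D_1^2$ and the ball $B_r^n((y,0))$ cannot be made with negligible errors: their difference contains $B_{r/2}^{n-2}(y)\times[D_1^2\setminus D_r^2]$, where the angular part of $|du_\epsilon|^2$ contributes on the order of $\kappa^2 r^{n-2}\log(1/r)\sim r^{n-2}\leps$ when $r$ is a power of $\epsilon$. This contribution is controlled neither by $|du_\epsilon(P)|^2$ (tangential derivatives only) nor by $W(u_\epsilon)/\epsilon^2$ (concentrated at the core), so it is not an error term relative to $\delta\leps r^{n-2}$.

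More fundamentally, as $r\to 0$ the first condition controls the slice energy function $y\mapsto\int_{\{y\}\times D_1^2}e_\epsilon(u_\epsilon)$ at Lebesgue points, i.e.\ it is an a.e.\ statement about that function rather than about averages at fixed scales. Weak-$*$ convergence of $\mu_\epsilon$ to $\theta\,\mathcal{H}^{n-2}\mrestr P$ alone is compatible with this function oscillating between, say, $0$ and $2\pi\theta\leps$ on finely interleaved sets of positive measure, in which case a positive fraction of slices would fail the first condition for small $\delta$ no matter how one argues at fixed scales. Some input from the equation is therefore indispensable here. The paper supplies it by testing the divergence-free stress-energy tensor $T_\epsilon$ against fields $\psi(y)\chi(z)e_k$, which shows that the tangential distributional derivatives of $f_\epsilon(y)/\leps:=\frac{1}{\leps}\int_{\{y\}\times D_1^2}\chi e_\epsilon$ are divergences of vector measures of vanishing mass (via \cref{par.van} and Cauchy--Schwarz), and then invoking Allard's strong constancy lemma to conclude $\frac{1}{\leps}\|f_\epsilon-\pi\theta\leps\|_{L^1(B_1^{n-2})}\to 0$; the first condition then follows by the same maximal-function argument you already use for the other two. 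Your compactness argument at fixed scales $r\in[r_0,1/2]$ is fine but does not substitute for this step.
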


\begin{proof}
	Let $F_\epsilon^1$, $F_\epsilon^2$, and $F_\epsilon^3$ be the sets of slices where the first, second, and third requirements fail, respectively.
	In view of \cref{w.bd} and \cref{nabxi.l1}, we have
	\begin{align*}
		&\int_{B_1^{n-2}}\int_{D_1^2}\left(\frac{W(u_\epsilon)}{\epsilon^2}+|\xi_\epsilon|\right)(y,z)\,dz\,dy
		=\int_Q\left(\frac{W(u_\epsilon)}{\epsilon^2}+|\xi_\epsilon|\right)
		\le C,
	\end{align*}
	for some constant $C$ independent of $\epsilon$. The Hardy--Littlewood weak $(1,1)$ estimate for the maximal function of
	\begin{align*}
		&y\mapsto\int_{D_1^2}\left(\frac{W(u_\epsilon)}{\epsilon^2}+|\xi_\epsilon|\right)(y,z)\,dz
	\end{align*}
	then implies that
	\begin{align*}
		&|F_\epsilon^3|\le\frac{C}{K}.
	\end{align*}
	Similarly, from \cref{par.van}, \cref{en.is.coex}, \cref{jv.xi}, and the maximal inequality, it follows that
	\begin{align*}
		&|F_\epsilon^2|
		\le \frac{C}{\delta\leps }\int_Q\left(|du_{\epsilon}(P)|^2+|jv_\epsilon-d^*\xi_\epsilon|^2+\left|e_\epsilon(u_\epsilon)-\mz|d^*\xi_\epsilon|^2\right|\right)
		\to 0.
	\end{align*}
	
	In order to bound the measure of $F_\epsilon^{1}$, we observe that $F_\epsilon^1\subseteq F_\epsilon^{1a}\cup F_\epsilon^{1b}$, where we denote by $F_{\epsilon}^{1a}$ and $F_{\epsilon}^{1b}$ the sets of slices $y\in B_{1/2}^{n-2}$ such that
	\begin{align*}
		&\sup_{0<r<1/2}r^{2-n}\int_{B_r^{n-2}(y)\times[D_1^2\setminus D_{1/2}^2]}e_\epsilon(u_\epsilon)\ge\frac{\delta}{2}\leps 
	\end{align*}
	and
	\begin{align*}
		&\sup_{0<r<1/2}\left|r^{2-n}\int_{B_r^{n-2}(y)\times D_1^2}\chi e_\epsilon(u_\epsilon)-\pi\omega_{n-2}\theta\leps \right|\ge\frac{\delta}{2}\leps ,
	\end{align*}
	respectively, where $\chi=\chi(z)$ is a cutoff function supported in $D_1^2$, with $\chi\equiv 1$ on $D_{1/2}^2$ and $0\le\chi\le 1$.
	
	Since the energy concentrates along $P$, we have
	$$\frac{1}{\leps }\int_{B_1^{n-2}\times[D_1^2\setminus D_{1/2}^2]}e_\epsilon(u_\epsilon)\to 0,$$
	which implies that $|F_\epsilon^{1a}|\to 0$, again by the Hardy--Littlewood maximal inequality.
	Finally, for $y\in B_{5/4}^{n-2}(0)$, let
	\begin{align*}
		&f_\epsilon(y):=\int_{\{y\}\times D_1^2}\chi e_\epsilon(u_\epsilon)\,dz.
	\end{align*}
	Recall that the stress-energy tensor $T_\epsilon=e_\epsilon(u_\epsilon)I-du_\epsilon^*du_\epsilon$ has zero divergence.
	Hence, testing against the vector field $\psi(y)\chi(z)e_k$ gives
	\begin{align*}
		&\int_{B_2^n}e_\epsilon(u_\epsilon)\de_k(\psi\chi)
		=\int_{B_2^n}{\textstyle\sum_{j=1}^n}\de_j u_\epsilon\de_k u_\epsilon\de_j(\psi\chi),
	\end{align*}
	for $\psi\in C^1_c(B_{5/4}^{n-2})$. In particular, for $k=1,\dots,n-2$, \cref{par.van} and Cauchy--Schwarz imply that
	\begin{align*}
		&\left|\int_{B_{5/4}^{n-2}}f_\epsilon\de_k\psi\right|
		\le C\|du_\epsilon\|_{L^2(B_{7/4})}\|\de_k u_\epsilon\|_{L^2(B_{7/4})}\|\psi\|_{C^1}
		\le o(\leps )\|d\psi\|_{C^0}
	\end{align*}
	(as $B_{5/4}^{n-2}\times D_1^2\subseteq B_{7/4}^n$ and $\|\psi\|_{C^1}\le C\|d\psi\|_{C^0}$). In particular, applying the Hahn--Banach extension theorem to the functionals
	$$\nabla\psi\mapsto \frac{1}{\leps}\int_{B_{5/4}^{n-2}}f_{\epsilon}\partial_k\psi$$
	on the subspace
	$$\{\nabla \psi\mid \psi\in C_c^1(B_{5/4}^{n-2})\}\subset C_0(B_{5/4}^{n-2},\mathbb{R}^{n-2}),$$
	where $C_0(B_{5/4}^{n-2},\mathbb{R}^{n-2})=\overline{C_c^{\infty}(B_{5/4}^{n-2},\mathbb{R}^{n-2})}^{C^0}$, it follows that there exist vector-valued Radon measures $X_k^{\epsilon}\in C_0(B_{5/4}^{n-2},\mathbb{R}^{n-2})^*$ such that
	$$\lim_{\epsilon\to 0}\| X_k^{\epsilon}\|=0$$
	and
	$$\langle X_k^{\epsilon},\nabla \psi\rangle=\frac{1}{\leps}\int f_{\epsilon}\partial_k\psi,$$
	so that $\operatorname{div}(X_k^{\epsilon})=\partial_k\left(\frac{f_{\epsilon}}{\leps}\right)$ distributionally.
	
	We can then apply Allard's strong constancy lemma \cite[Theorem~1.(4)]{Allard.constancy} and deduce that
	\begin{align*}
		&\frac{1}{\leps }\|f_\epsilon-\alpha_\epsilon\|_{L^1(B_1^{n-2})}\to 0,
	\end{align*}
	for suitable constants $\alpha_\epsilon$. In fact, since $\frac{1}{\leps }\int_{B_1^{n-2}}f_\epsilon\to\pi\omega_{n-2}\theta$, the same conclusion must hold with $\alpha_\epsilon=\pi\theta\leps $. This implies that $|F_\epsilon^{1b}|\to 0$, which completes the proof.
\end{proof}

Next, we record the following lemma about the small-scale behavior of the two-form $djv_{\epsilon}$ near a good slice, which we will use repeatedly in subsequent sections to refine our characterization of the limiting energy measure.

\begin{lemma}\label{lim.jac.lem}
	For any $\alpha\in (0,1)$ and $\gamma>0$, there exists $\delta_1(\alpha,\gamma)\in(0,1)$ such that if $B_{2r}(x)\subset Q$ is a ball with $r\ge\epsilon^{\alpha}$ for which
	\begin{equation}\label{lim.jac.lem.h1}
		r^{2-n}\int_{B_{2r}(x)}|du_{\epsilon}(P)|^2\le\delta_1\leps ,
	\end{equation}
	then for $\epsilon$ small enough (depending on $\alpha$ and $\gamma$)
	\begin{equation}\label{par.small}
		\left|r^{2-n}\int_{B_r(x)}(djv_{\epsilon}(x'))_{ab}\frac{x-x'}{|x-x'|}\,dx'\right|<\gamma\quad\text{for }(a,b)\neq (n-1,n).
	\end{equation}
	Moreover,
	if $\mathcal{V}(u_{\epsilon})\cap B_{2r}(x)\cap P^{\perp}_x\subseteq B_{\delta_1r}(x)$, with $\kappa:=\deg(v_{\epsilon},x+\{0\}\times rS^1)$ we have
	\begin{equation}\label{vort.char}
		\left|r^{2-n}\int_{B_r(x)}(djv_{\epsilon}(x'))_{n-1,n} -2\pi \kappa\omega_{n-2}\right|<\gamma.
	\end{equation}
\end{lemma}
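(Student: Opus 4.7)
The plan is to reduce to the scale-invariant case $r=1$ via the rescaling $\tilde u(\tilde x) := u_\epsilon(x+r\tilde x)$, $\tilde\epsilon := \epsilon/r$, which gives $\tilde\epsilon\to 0$ (by $r\ge\epsilon^\alpha$), preserves the natural $\log$-normalized energy bound and the smallness of $\int_{B_2}|d\tilde u(P)|^2$ up to a factor $(1-\alpha)^{-1}$, and leaves both integrals in \cref{par.small} and \cref{vort.char} invariant under direct computation of the Jacobian factors. The crucial preliminary is an $\epsilon$-uniform $L^1$ bound $\|djv_\epsilon\|_{L^1(B_1)}\leq C$ \emph{without any} $\leps$ factor: $djv_\epsilon = 2\,dv_\epsilon^1\wedge dv_\epsilon^2$ vanishes on $\{|u_\epsilon|\geq 1/2\}$ (where $|v_\epsilon|=1$) and is pointwise bounded by $CW(u_\epsilon)/\epsilon^2$ on the vorticity set $\mathcal{V}(u_\epsilon)$, while $\int_{B_1}W(u_\epsilon)/\epsilon^2\leq C$ by \cref{w.bd}. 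In particular, $\{djv_\epsilon\}$ is precompact as two-form-valued Radon measures, and any subsequential limit $J$ is supported on $P\cap B_1$ by the analysis in the proof of \cref{asymp}.

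For the first claim \cref{par.small}, I would argue by contradiction and compactness. If no $\delta_1$ works for some fixed $\alpha,\gamma$, a sequence $\epsilon_k\to 0$, $r_k\geq\epsilon_k^\alpha$, $x_k$ with $\delta_{1,k}\to 0$ yields, after rescaling, critical points $\tilde u_k$ on $B_2$ with $\int|d\tilde u_k(P)|^2 = o(\log(1/\tilde\epsilon_k))$. The stress-energy argument in the proof of \cref{par.van.lem} then forces the limiting varifold $\tilde V$ to be $P$-tangent $|\tilde V|$-a.e. Combined with the $L^1$-compactness above, one shows that the $(a,b)$-component $J_{ab}$ of the limit measure vanishes for $(a,b)\neq (n-1,n)$, via the pointwise factorization $|(djv)_{ab}|\leq C|du(P)||du|\,\chi_\mathcal{V}$ (each such $(djv)_{ab}$ involves at least one $P$-directional derivative). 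Since the weight $\frac{x-x'}{|x-x'|}$ is bounded and continuous away from $x'=x$ (a $J$-null point), the weighted integrals converge to $\int J_{ab}\cdot\frac{-\tilde x}{|\tilde x|}\,d\tilde x = 0$, contradicting the choice of the sequence.

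For the second claim \cref{vort.char}, the central-slice vorticity localization $\mathcal{V}\cap B_2\cap P_0^\perp\subset B_{\delta_1}(0)$ is propagated to most parallel slices $\{y\}\times D_1^2$, $y\in B_1^{n-2}(0)$, via a Fubini plus continuity-of-degree argument in the spirit of \cref{many.good.slices}: the degree $\deg(\tilde v,\{y\}\times\partial D_1^2)$ equals $\kappa$ off an exceptional set whose measure is controlled by $\int|d\tilde u(P)|^2$. On each good slice, Stokes's theorem gives $\int_{\{y\}\times D_1^2}(dj\tilde v)_{n-1,n}\,dz = 2\pi\kappa$, and integration over $y\in B_1^{n-2}(0)$ combined with the uniform $L^1$ bound to control the exceptional-set contribution yields the desired $2\pi\kappa\omega_{n-2} + o(1)$ identity.

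The main obstacle is rigorously justifying $J_{ab}=0$ in the first claim: the naive bound $\|(djv)_{ab}\|_{L^1}\leq C\|du(P)\|_{L^2}\|du\|_{L^2(\mathcal{V})}$ combined with the pointwise estimate $|du|^2\leq C(W(u)/\epsilon^2 + 1)$ on $\mathcal{V}$ only gives $\|(djv)_{ab}\|_{L^1}\leq C\sqrt{\delta_1\leps}$, which actually grows with $\epsilon$; the improved bound $\|du\|_{L^2(\mathcal{V})}\leq C$ from $|\mathcal{V}|\leq C\epsilon^2$ pushes this to $O(1)$ but not to the desired $o(1)$. Overcoming this requires either a delicate multiscale comparison of $u_\epsilon$ to slice-wise 2D vortices with slowly-varying centers, or a finer analysis exploiting the Hodge decomposition $jv = d\psi + d^*\xi + h$ from the proof of \cref{asymp}—in particular, leveraging that $\xi_\epsilon$ is a Green's-function convolution of $djv_\epsilon$ and therefore inherits the $P$-tangent structure of $\tilde V$, which suffices to kill $J_{ab}$ off the $(n-1,n)$ component.
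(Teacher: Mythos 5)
Your overall architecture — rescale $B_{2r}(x)$ to $B_2(0)$ with $\tilde\epsilon=\epsilon/r\le\epsilon^{1-\alpha}$, argue by contradiction/compactness, use the $\epsilon$-uniform $L^1$ bound $|djv_\epsilon|\le CW(u_\epsilon)/\epsilon^2$ together with \cref{w.bd} and \cref{w.nu} to get precompactness of $dj\tilde v_{\tilde\epsilon}$ as measures concentrated on planes parallel to $P$ — matches the paper. But the step you yourself flag as the "main obstacle," namely proving that the limit $J_{ab}$ vanishes for $(a,b)\neq(n-1,n)$, is a genuine gap, and neither of your proposed escapes closes it. The pointwise factorization $|(djv_\epsilon)_{ab}|\le C|du_\epsilon(P)||du_\epsilon|\chi_{\mathcal V}$ gives, as you correctly compute, only $O(\sqrt{\delta_1\leps}\,)$ or at best $O(1)$, not $o(1)$; and the Hodge-decomposition suggestion is circular, since the "$P$-tangent structure" of $\xi_\epsilon$ is inherited precisely from the structure of the limit of $djv_\epsilon$, which is what is to be proved. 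The missing idea is to view $*dj\tilde v_{\tilde\epsilon}$ as \emph{closed} $(n-2)$-currents (cycles, since $djv_\epsilon$ is exact) converging weakly to a cycle supported on the union $\bigcup_j(P+x_j)$, and to invoke the \emph{constancy theorem for cycles}: the limit must be $\sum_j 2\pi\kappa_j\,(P+x_j)$ for constants $\kappa_j$. A constant multiple of a plane parallel to $P$, read back as a two-form-valued measure, has only the $(n-1,n)$ component, so $(dj\tilde v_{\tilde\epsilon})_{ab}\to 0$ distributionally for $(a,b)\neq(n-1,n)$. The bounded densities $f_j$ of the total-variation limit then control the contribution near $x'=x$, where the weight $\frac{x-x'}{|x-x'|}$ is discontinuous, and the contradiction follows. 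Without the constancy theorem (or an equivalent rigidity statement for closed currents supported on planes), your argument does not conclude.

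For \cref{vort.char}, the paper does not slice: it runs the same current-convergence argument, observing that the extra hypothesis $\mathcal{V}(u_\epsilon)\cap B_{2r}(x)\cap P_x^{\perp}\subseteq B_{\delta_1 r}(x)$ forces all the limit planes to collapse onto $P$ itself with total multiplicity $2\pi\kappa$, after which $r^{2-n}\int_{B_r(x)}(djv_\epsilon)_{n-1,n}\to 2\pi\kappa\,\mathcal{H}^{n-2}(P\cap B_1)=2\pi\kappa\omega_{n-2}$ (the weight is now continuous, so no cutoff is needed). Your Fubini-plus-degree proposal has its own difficulty: the hypothesis localizes the vorticity only in the \emph{central} slice, so the degree of $\tilde v$ on $\partial(\{y\}\times D^2_{\sqrt{1-|y|^2}})$ for other $y$ is not directly controlled, and smallness of $\int|d\tilde u(P)|^2$ alone does not keep the vorticity set away from those boundary circles. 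The current-theoretic route circumvents this entirely.
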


\begin{proof}
	We first prove \cref{par.small}, via a contradiction argument. If the statement fails, then (passing to a subsequence) there exist balls $B_{2r_{\epsilon}}(x_{\epsilon})\subset Q$ with $r_{\epsilon}\ge\epsilon^{\alpha}$ such that
	$$\lim_{\epsilon\to 0}r_{\epsilon}^{2-n}\frac{1}{\leps }\int_{B_{2r_{\epsilon}}(x_\epsilon)}|du_{\epsilon}(P)|^2=0$$
	but 
	\begin{equation}\label{non.van}
		\left|r_{\epsilon}^{2-n}\int_{B_{r_{\epsilon}}(x_\epsilon)}(djv_{\epsilon}(x'))_{ab}\frac{x_\epsilon-x'}{|x_\epsilon-x'|}\,dx'\right|
		\geq \gamma
	\end{equation}
	for some $(a,b)\neq (n-1,n)$. Rescaling $B_{2r_{\epsilon}}(x_{\epsilon})$ to $B_2(0)$, we obtain a sequence of solutions $\tilde{u}_{\tilde{\epsilon}}$ of the Ginzburg--Landau equation on $B_2(0)$, with $\tilde\epsilon={\epsilon}/{r_\epsilon}\le\epsilon^{1-\alpha}$ and
	$$E_{\tilde{\epsilon}}(\tilde{u}_{\tilde{\epsilon}})\le C\leps \le \frac{C}{1-\alpha}\lteps ,$$
	as well as
	$$\frac{1}{\lteps }\int_{B_2(0)}|d\tilde{u}_{\tilde{\epsilon}}(P)|^2\to 0$$
	as $\tilde{\epsilon}\to 0$.
	
	By \cref{asymp}, the limit of the normalized energy densities of $\tilde{u}_{\tilde\epsilon}$ has a concentrated part $|\tilde V|$, for a rectifiable stationary varifold $\tilde V$,
	and by reversing the proof of \cref{par.van.lem}, we know that the tangent plane to $\tilde V$ at $x$ coincides with $P$, for $|\tilde V|$-a.e.\ $x$. Together with stationarity, this easily implies (testing stationarity against vector fields of the form $X=\phi v$ for fixed vectors $v\in P$) that $\tilde V$ is invariant under translations by vectors in $P$, and therefore coincides on $B_2(0)$ with a locally finite union of planes $P+x_j$ parallel to $P$ (with multiplicity at least $c(n)>0$).
	
	Moreover, as in \cref{jv.w},
	\begin{equation}\label{jac.ptwise}
		|dj\tilde{v}_{\tilde\epsilon}|\leq C\frac{W(\tilde{u}_{\tilde\epsilon})}{\tilde{\epsilon}^2},
	\end{equation}
	and by \cref{w.nu} we deduce that
	\begin{equation}\label{meas.cvg}
		|dj\tilde{v}_{\tilde{\epsilon}}|\rightharpoonup {\textstyle\sum_j} f_j \mathcal{H}^{n-2}\mrestr(P+x_j)
	\end{equation}
	weakly in $C^0_c(B_2)^*$, with $f_j$ locally bounded.
	
	Hence, the rescaled $(n-2)$-cycles $*dj\tilde{v}_{\tilde{\epsilon}}$ converge weakly as currents in $B_2$ to a cycle supported on $\bigcup_j(P+x_j)$.
	By the constancy theorem for cycles, it follows that
	\begin{equation}\label{curr.cvg}
		*(dj\tilde{v}_{\tilde{\epsilon}})\rightharpoonup {\textstyle\sum_j} 2\pi\kappa_j(P+x_j)\quad\text{on }B_2,
	\end{equation}
	for suitable constants $\kappa_j\in\R$ (actually $\kappa_j\in\Z$, by \cite[Theorem~5.2]{JSoner}, or by a slicing argument similar to the proof of \cref{kappa.at.most.1} below,
	which reveals that $\kappa_j$ is the degree of $\tilde v_{\tilde\epsilon}$ around $P+x_j$).
	
	If \cref{non.van} holds, then rescaling gives
	\begin{equation}\label{non.van.rescale}
		\left|\int_{B_1}(dj\tilde{v}_{\tilde{\epsilon}}(x'))_{ab}\frac{x'}{|x'|}\,dx'\right|
		\geq \gamma,
	\end{equation}
	but fixing $\chi\in C_c^{\infty}(B_1)$, $0\le\chi\le 1$ be some test function supported away from $0$ such that 
	$${\textstyle\sum_j}\int_{(P+x_j)\cap B_1}(1-\chi)f_j\,d\mathcal{H}^{n-2}<\frac{\gamma}{2},$$
	it follows from the distributional convergence \cref{curr.cvg} that, for this couple $(a,b)\neq(n-1,n)$, $(dj\tilde{v}_{\tilde{\epsilon}})_{ab}$ vanishes distributionally as $\tilde{\epsilon}\to 0$, so
	$$\lim_{\tilde{\epsilon} \to 0}\int_{B_1}(dj\tilde{v}_{\tilde{\epsilon}}(x'))_{ab}\chi(x')\frac{x'}{|x'|}\,dx'=0.$$
	On the other hand, it then follows from \cref{meas.cvg} that
	\begin{align*}
		\limsup_{\tilde{\epsilon} \to 0}\left|\int_{B_1}(dj\tilde{v}_{\tilde{\epsilon}}(x'))_{ab}\frac{x'}{|x'|}\,dx'\right|
		&=\limsup_{\tilde{\epsilon} \to 0}\left|\int_{B_1}(dj\tilde{v}_{\tilde{\epsilon}}(x'))_{ab}(1-\chi(x'))\frac{x'}{|x'|}\,dx'\right|\\
		&\leq {\textstyle\sum_j}\int_{(P+x_j)\cap B_1}(1-\chi)f_j\,d\mathcal{H}^{n-2}\\
		&<\gamma/2,
	\end{align*}
	contradicting \cref{non.van.rescale}.
	
	The proof of \cref{vort.char} is similar, where in the limiting rescaled picture we have simply ${\textstyle\sum_j} 2\pi \kappa_j(P+x_j)=2\pi \kappa P$.
\end{proof}

In the following section, we will use this together with the following formulas for $\xi_\epsilon$ and $\nabla\xi_\epsilon$, which follow simply from \cref{xi.def} and the formula for the Euclidean Green's function $G(x,y)$ in $\mathbb{R}^n$, together with a simple integration by parts
(recall that $n\omega_{n}=2\pi\omega_{n-2}$). 

\begin{lemma}\label{xi.comp.lem} For any pair $(a,b)$ with $1\leq a<b\leq n$, we have
	\begin{equation}\label{xicomp.1}
		2\pi\omega_{n-2}(\xi_\epsilon)_{ab}(x)=\int_0^{\infty}\frac{1}{r}\left(r^{2-n}\int_{B_r(x)}\varphi(x')[djv_{\epsilon}(x')]_{ab}\,dx'\right) dr,
	\end{equation}
	\begin{equation}\label{nabla.xicomp}
		\nabla(\xi_\epsilon)_{ab}(x)=\frac{n-1}{2\pi\omega_{n-2}}\int_0^{\infty}\frac{1}{r^2}\left(r^{2-n}\int_{B_r(x)}\varphi(x')[djv_{\epsilon}(x')]_{ab}\frac{x-x'}{|x-x'|}\,dx'\right) dr.
	\end{equation}
\end{lemma}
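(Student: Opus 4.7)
The plan is to combine the explicit Newtonian-potential representation of $\xi_\epsilon$ with a coarea-plus-integration-by-parts manipulation that converts the singular integral over $\R^n$ into an integral over scales $r$. Since $g_\epsilon$ is flat and the Hodge Laplacian acts componentwise on $2$-forms in $\R^n$, one has $\Delta_H(\xi_\epsilon)_{ab} = -\Delta(\xi_\epsilon)_{ab} = \varphi\,[djv_\epsilon]_{ab}$, so writing $f := \varphi\,[djv_\epsilon]_{ab}$ I would start from the Newtonian potential
\[
(\xi_\epsilon)_{ab}(x) \;=\; \frac{1}{(n-2)\,n\omega_n}\int_{\R^n} |x-y|^{2-n}\,f(y)\,dy,
\]
which, using the identity $n\omega_n = 2\pi\omega_{n-2}$, already has the stated constant $\tfrac{1}{(n-2)\,2\pi\omega_{n-2}}$ built in.

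For \eqref{xicomp.1}, my idea is to introduce the cumulative mass $F(r):=\int_{B_r(x)}f$, rewrite the potential as $\int_0^\infty r^{2-n}F'(r)\,dr$ by coarea, and then integrate by parts in $r$ using $\tfrac{d}{dr}(r^{2-n}F) = r^{2-n}F' - (n-2)r^{1-n}F$. This produces exactly $(n-2)\int_0^\infty \tfrac{1}{r}(r^{2-n}F(r))\,dr$, provided the two boundary terms vanish: at $r=0$ this follows from the pointwise bound $|djv_\epsilon|\le C\epsilon^{-2}$ in \eqref{jv.w}, which forces $F(r)=O(r^n)$, and at $r=\infty$ from the fact that $\varphi$ has compact support (so $F$ stays bounded) while $r^{2-n}\to 0$ as $n\ge 3$.

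For \eqref{nabla.xicomp}, I would differentiate the Newtonian potential under the integral to obtain
\[
\nabla(\xi_\epsilon)_{ab}(x) \;=\; -\frac{1}{2\pi\omega_{n-2}}\int f(y)\,\frac{x-y}{|x-y|^n}\,dy,
\]
the factor $(n-2)$ from $\nabla|x-y|^{2-n}$ cancelling the $\tfrac{1}{n-2}$ in the prefactor, and then represent the scalar $|x-y|^{1-n}=(n-1)\int_{|x-y|}^\infty r^{-n}\,dr$ and swap the order of integration. This immediately yields
\[
\int f(y)\,\tfrac{x-y}{|x-y|^n}\,dy = (n-1)\int_0^\infty \tfrac{1}{r^2}\Bigl(r^{2-n}\!\int_{B_r(x)} f(y)\,\tfrac{x-y}{|x-y|}\,dy\Bigr)\,dr,
\]
which matches \eqref{nabla.xicomp} up to the orientation convention for $\tfrac{x-x'}{|x-x'|}$ in the statement.

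Neither step presents any real obstacle; the only mildly delicate point is the vanishing of the two boundary terms in the radial integration by parts, which uses $n\ge 3$, the compact support of $\varphi$, and the $L^\infty$-bound on $djv_\epsilon$ available for each fixed $\epsilon>0$.
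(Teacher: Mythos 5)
Your proof is correct and follows exactly the route the paper intends (the paper only remarks that the lemma "follows from \cref{xi.def}, the Euclidean Green's function, and a simple integration by parts," which is precisely your radial integration by parts for \eqref{xicomp.1} and the layer-cake identity $|x-y|^{1-n}=(n-1)\int_{|x-y|}^\infty r^{-n}\,dr$ for \eqref{nabla.xicomp}). Your sign observation is also right: differentiating \eqref{xicomp.1} gives the outward vector $\frac{x'-x}{|x'-x|}$, so the formula as printed is off by a sign, which is immaterial since \eqref{nabla.xicomp} is only ever used through absolute values.
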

\begin{remark} For non-Euclidean metrics $g_{\epsilon}$ converging smoothly to the Euclidean metric, \cref{green} shows that \eqref{xicomp.1} and \eqref{nabla.xicomp} hold up to errors of size
	$$o(1) \cdot \int_0^{\infty}r^{2-n}\left(\int_{B_r(x)}|\varphi(x')djv_{\epsilon}|\,dx'\right)dr,\quad o(1)\cdot\int_0^{\infty}r^{1-n}\left(\int_{B_r(x)}|\varphi(x')djv_{\epsilon}|\,dx'\right)dr,$$
	both of which can be seen to be $o(\leps)$ (as in the proof of \cref{xi.ptwise.1} below).
\end{remark}

As in \cite{BBO}, note that combining \cref{xicomp.1} with \cref{jv.w} gives, for any $x\in B_{3/2}(0)$,
$$|\xi_\epsilon(x)|\leq C\int_0^{1/4}\frac{1}{r^{n-1}}\left(\int_{B_r(x)}\frac{W(u_{\epsilon})}{\epsilon^2}\right)dr+C\int_{B_{7/4}(0)}\frac{W(u_{\epsilon})}{\epsilon^2},$$
which together with the monotonicity formula \cref{monotonicity} (integrated over $s\in (0,\frac{1}{4})$) gives as in \cite{BBO} the pointwise estimate
\begin{equation}\label{xi.ptwise.1}
	|\xi_\epsilon(x)|\leq CE_{\epsilon}(u_{\epsilon};B_{1/4}(x))+C\int_{B_{7/4}(0)}\frac{W(u_{\epsilon})}{\epsilon^2}
	\leq CE_{\epsilon}(u_{\epsilon}; B_{7/4}(0)).
\end{equation}
Interpolating with \cref{nabxi.l1}, we obtain
\begin{align*}
	&\|\xi_\epsilon\|_{L^{2n}(B_{3/2})}
	\le \|\xi_\epsilon\|_{L^{1}(B_{3/2})}^{1/2n}\|\xi_\epsilon\|_{L^{\infty}(B_{3/2})}^{1-1/2n}
	\le C\leps ^{1-1/2n},
\end{align*}
and using again \cref{nabxi.l1} we get
\begin{align}\label{nabxi.weird}
	&\int_{B_{3/2}}|\xi_\epsilon||d^*\xi_\epsilon|
	\le\|\xi_\epsilon\|_{L^{2n}(B_{3/2})}\|d^*\xi_\epsilon\|_{L^{2n/(2n-1)}(B_{3/2})}
	\le C\leps ^{1-1/2n}.
\end{align}

For a given cutoff function $\chi\in C_c^{\infty}(B_{3/2}(0))$ with $\chi\equiv 1$ on $B_{1}(0)$, a simple integration by parts yields
\begin{align*}
	\int_{B_{3/2}} \chi^2 |d^*\xi_{\epsilon}|^2
	&=\int_{B_{3/2}} \langle \xi_{\epsilon},d(\chi^2\,d^*\xi_{\epsilon})\rangle\\
	&=\int_{B_{3/2}} \langle\xi_{\epsilon},2\chi\,d\chi\wedge d^*\xi_{\epsilon}\rangle+\int_{B_{3/2}} \langle \chi^2\,\xi_{\epsilon},dd^*\xi_{\epsilon}\rangle\\
	&=\int_{B_{3/2}} \langle\xi_{\epsilon},2\chi\,d\chi\wedge d^*\xi_{\epsilon}\rangle-\int_{B_{3/2}}\langle \chi^2\,\xi_{\epsilon},dh_{\epsilon}\rangle
	+\int_{B_{3/2}} \langle \chi^2\,\xi_{\epsilon},djv_{\epsilon}\rangle.
\end{align*}
It follows from \cref{nabxi.l1}, \cref{h.bd}, and \cref{nabxi.weird} that the first two terms on the last line are $o(\leps )$ as $\epsilon\to 0$, so that
\begin{align*}
	\lim_{\epsilon\to 0}\frac{1}{\leps }\int_{B_{3/2}} \chi^2|d^*\xi_{\epsilon}|^2
	&=\lim_{\epsilon\to 0}\frac{1}{\leps }\int_{B_{3/2}}\chi^2\langle \xi_{\epsilon},djv_{\epsilon}\rangle\\
	\text{(using \cref{xi.ptwise.1})}\ &\leq \liminf_{\epsilon\to 0}\frac{CE_{\epsilon}(u_{\epsilon};B_{7/4}(0))}{\leps }\int_{B_{3/2}}|djv_{\epsilon}|\\
	\text{(using \cref{jv.w})}\ &\leq \liminf_{\epsilon\to 0}\frac{CE_{\epsilon}(u_{\epsilon};B_{7/4}(0))}{\leps }\int_{B_{3/2}}\frac{W(u_{\epsilon})}{\epsilon^2}.
\end{align*}
In particular, using \cref{coex.dom} and recalling that $\chi\equiv 1$ on $B_{1}$, we deduce that
$$\lim_{\epsilon\to 0}\frac{E_{\epsilon}(u_{\epsilon};B_{1})}{\leps }
\leq \liminf_{\epsilon\to 0}\frac{CE_{\epsilon}(u_{\epsilon};B_{2})}{\leps }\int_{B_2}\frac{W(u_{\epsilon})}{\epsilon^2}.$$
This computation holds for any sequence of solutions $u_\epsilon:B_2\to\C$ obeying an energy bound $E_\epsilon(u_\epsilon;B_2(0))\le\Lambda\leps $.

Combining this observation with \cref{clearing} and a trivial rescaling, we obtain the following lemma, which will be useful later.
\begin{lemma}\label{w.quant.lem} There exists $c(\Lambda,n)>0$
	such that if $u_{\epsilon}$ solves the Ginzburg--Landau equation on $B_r(x)$, with $r\ge\epsilon$, $B_{r/4}(x)\cap\mathcal{V}(u_{\epsilon})\neq\emptyset$, and
	$$r^{2-n}E_{\epsilon}(u_{\epsilon};B_r(x))\leq \Lambda\log(r/\epsilon),$$
	then
	\begin{equation}
		r^{2-n}\int_{B_r(x)}\frac{W(u_{\epsilon})}{\epsilon^2}>c(\Lambda,n).
	\end{equation}
	Moreover, the simpler estimate
	\begin{equation}
		\epsilon^{2-n}\int_{B_{\epsilon}(x)}\frac{W(u_{\epsilon})}{\epsilon^2}>c(n)
	\end{equation}
	holds for $x\in \mathcal{V}(u_{\epsilon})$ without any additional assumptions.
\end{lemma}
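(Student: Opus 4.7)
The plan is to prove the two estimates separately: the simpler (second) one by a direct pointwise argument based on the gradient bound, and the main (first) one by rescaling to a fixed domain and combining the asymptotic identity derived immediately before the lemma with a compactness argument.

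\textbf{Simpler estimate.} The pointwise bound $|du_\epsilon|^2\le(1-|u_\epsilon|^2)/\epsilon^2+C\le 2/\epsilon^2$ from \cref{u.bd} (valid for $\epsilon$ small) gives a Lipschitz estimate $|u_\epsilon(y)-u_\epsilon(x)|\le 2|y-x|/\epsilon$. Hence, if $x\in\mathcal{V}(u_\epsilon)$, there is a universal $c_0>0$ with $|u_\epsilon|\le 3/4$ on $B_{c_0\epsilon}(x)\subseteq B_\epsilon(x)$; on this ball $W(u_\epsilon)\ge c_1>0$, and integrating yields $\epsilon^{2-n}\int_{B_\epsilon(x)}W(u_\epsilon)/\epsilon^2\ge c_1\omega_nc_0^n=:c(n)$.

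\textbf{Main estimate.} I would set $\tilde u(y):=u_\epsilon(x+(r/2)y)$ on $B_2(0)$, which solves Ginzburg--Landau with parameter $\tilde\epsilon:=2\epsilon/r\le 2$. A scaling computation gives $E_{\tilde\epsilon}(\tilde u;B_2)=2^{n-2}r^{2-n}E_\epsilon(u_\epsilon;B_r(x))\le 2^{n-2}\Lambda(\log 2+\lteps)$ and $r^{2-n}\int_{B_r(x)}W(u_\epsilon)/\epsilon^2=2^{2-n}\int_{B_2}W(\tilde u)/\tilde\epsilon^2$, while $\mathcal{V}(\tilde u)\cap\bar B_{1/2}(0)\ne\emptyset$. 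So it suffices to produce $c'(\Lambda,n)>0$ with $\int_{B_2}W(\tilde u)/\tilde\epsilon^2\ge c'$ under these rescaled hypotheses. I would argue by contradiction: suppose a sequence $\tilde u_k$ violates this with $\int W(\tilde u_k)/\tilde\epsilon_k^2\to 0$, and pass to a subsequence with $\tilde\epsilon_k\to\tilde\epsilon_*\in[0,2]$.

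If $\tilde\epsilon_*=0$, then choosing $y_k\in\mathcal{V}(\tilde u_k)\cap\bar B_{1/2}(0)$, the contrapositive of clearing-out (\cref{clearing.flat}) applied on $B_{1/2}(y_k)\subseteq B_1(0)$ gives $\liminf_kE_{\tilde\epsilon_k}(\tilde u_k;B_1)/\lteps_k>0$. But the asymptotic identity derived just before the lemma (for sequences obeying the rescaled energy bound) yields
$$\lim_{k\to\infty}\frac{E_{\tilde\epsilon_k}(\tilde u_k;B_1)}{\lteps_k}\le\liminf_{k\to\infty}\frac{CE_{\tilde\epsilon_k}(\tilde u_k;B_2)}{\lteps_k}\int_{B_2}\frac{W(\tilde u_k)}{\tilde\epsilon_k^2}\le C\tilde\Lambda\cdot o(1)=0,$$
a contradiction. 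If instead $\tilde\epsilon_*>0$, then $\tilde\epsilon_k$ stays bounded away from $0$, so interior Schauder estimates give uniform $C^\infty_{\text{loc}}$ bounds on $\tilde u_k$; after a further subsequence $\tilde u_k\to\tilde u_*$ smoothly on $B_2$, with $\tilde u_*$ a GL solution at parameter $\tilde\epsilon_*$. The hypothesis forces $W(\tilde u_*)\equiv 0$, i.e.\ $|\tilde u_*|\equiv 1$ on $B_2$; but $y_k$ accumulates at some $y_*\in\bar B_{1/2}$ with $|\tilde u_*(y_*)|\le\mz$, a contradiction.

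The main obstacle is that the pre-lemma identity is only an asymptotic statement as $\tilde\epsilon\to 0$, so it cannot be invoked directly at fixed finite $\tilde\epsilon$; the case-split above circumvents this by handling the bounded-$1/\tilde\epsilon$ regime separately through smooth subconvergence and exploiting the defining property of the vorticity set in the limit.
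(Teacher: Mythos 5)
Your proof is essentially the paper's: the lemma is proved there by exactly your rescaling-plus-contradiction scheme, playing the computation preceding the lemma (the potential controls the concentrated part of the energy) against clearing-out, while the regime where $r/\epsilon$ stays bounded --- your case $\tilde\epsilon_*>0$ --- and the second estimate are both disposed of by the same direct pointwise observation you use for the latter ($|du_\epsilon|\le C/\epsilon$ forces $W(u_\epsilon)\ge\frac{1}{10}$ on a $c(n)\epsilon$-ball around any vorticity point, whence $r^{2-n}\int_{B_r}W(u_\epsilon)/\epsilon^2\ge c(n)(\epsilon/r)^{n-2}$). Your smooth-subconvergence treatment of the $\tilde\epsilon_*>0$ case is a valid, if heavier, substitute for that one-line bound. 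One caveat, which you inherit from the paper's own phrasing: the displayed inequality with $E_{\tilde\epsilon}(\tilde u;B_1)$ on the left-hand side was obtained via \cref{coex.dom}, whose proof uses the concentration-on-a-plane hypothesis; for an arbitrary sequence satisfying only the energy bound, what the computation actually controls is $\int\chi^2|d^*\xi_\epsilon|^2$, i.e.\ the concentrated measure $\nu$, while the diffuse harmonic contribution $\frac{1}{2\pi}|h_0|^2\,dx$ to the limit energy is \emph{not} dominated by $\int W(u_\epsilon)/\epsilon^2$ (consider $u_\epsilon=\sqrt{1-\epsilon^2k_\epsilon^2}\,e^{ik_\epsilon x_1}$ with $k_\epsilon\sim\sqrt{\leps}$, for which the right-hand side vanishes but the left-hand side does not). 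The contradiction should therefore be drawn not from ``$E(\tilde u_k;B_1)/\lteps_k\to 0$'' but from ``$\nu=0$ on $B_1$'', which contradicts the fact, established in the proof of \cref{asymp}, that the Hausdorff limit $S$ of the vorticity sets equals $\operatorname{spt}(\nu)$ and meets $\bar B_{1/2}$; equivalently, the clearing-out density lower bound \cref{dens.lb.K} must be charged to $\nu$ rather than to the full measure $\mu$, since the absolutely continuous part has vanishing $(n-2)$-density.
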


Note that the second conclusion (as well as the first one, when $r$ is comparable to $\epsilon$) follows directly from the bound $|du_\epsilon|\le\frac{C(n)}{\epsilon}$,
which implies that $W(u_\epsilon)\ge\frac{1}{10}$ in a $c(n)\epsilon$-neighborhood of any point in $B_{r/4}(x)\cap\mathcal{V}(u_\epsilon)$.

\section{Preliminary energy identity and related bounds}
In this section we establish two of the key ingredients for the proof of \cref{dens.gap}, proving the preliminary energy identity \eqref{intro.theta.comp} and the new potential bound  \eqref{w.lbd.intro} for arbitrary families satisfying the hypotheses of \cref{dens.gap}. 

Let $u_{\epsilon}$ be a family of solutions satisfying the assumptions of \cref{dens.gap}. Appealing to Lemma \ref{many.good.slices}, choose a family $\delta_{\epsilon}\to 0$ such that
\begin{equation}\label{good.full}
	\liminf_{\epsilon\to 0}|\mathcal{G}_{\epsilon,\delta_{\epsilon}}|\ge\frac{\omega_{n-2}}{2^{n-2}}-\frac{C}{K}>0\quad\text{as }\epsilon\to 0,
\end{equation}
and fix $y_{\epsilon}\in \mathcal{G}_{\epsilon,\delta_{\epsilon}}$.

Next, given $\delta>0$, consider the set $U_{\delta,\epsilon}\subseteq P_{y_{\epsilon}}^{\perp}$ given by
$$U_{\delta,\epsilon}:=\{x=(y_{\epsilon},z)\in P_{y_{\epsilon}}^{\perp} : |u_{\epsilon}(x)|<1-\delta\}.$$

\begin{proposition}\label{cover.claim} There exists $C=C(\delta)$ independent of $\epsilon>0$ and points $p_1^{\epsilon},\ldots, p_k^{\epsilon}\in U_{\delta,\epsilon}$ with $k\leq C$ such that
	$$U_{\delta,\epsilon}\subseteq \bigsqcup_{j=1}^kB_{C\epsilon}(p_j^{\epsilon}),$$
	up to a subsequence, for a \emph{disjoint} family of balls $B_{C\epsilon}(p_j^{\epsilon})$.
\end{proposition}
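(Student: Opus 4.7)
The plan is to first bound the number of well-separated ``vortex points'' in $U_{\delta,\epsilon}$ using the $L^1$ control on $W(u_\epsilon)/\epsilon^2$ coming from the good-slice condition, and then to run a clustering/pigeonhole argument on a maximal net of $U_{\delta,\epsilon}$ to produce a uniform-radius disjoint cover.

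First, the gradient bound $|du_\epsilon|\le C/\epsilon$ from \cref{u.bd} implies that if $p\in U_{\delta,\epsilon}$ then $|u_\epsilon|<1-\delta/2$, and hence $W(u_\epsilon)\ge c'(\delta)>0$, throughout the $n$-dimensional ball $B_{c\delta\epsilon}(p)$, where $c=c(n)>0$. Since the sets $\{|u_\epsilon|\le 1-\delta/2\}$ converge to $P$ in the local Hausdorff topology (last statement of \cref{asymp}), for $\epsilon$ small we may also assume that $U_{\delta,\epsilon}\subseteq\{y_\epsilon\}\times D_{1/4}^2$, so $B_{c\delta\epsilon}(p)\subseteq B_{c\delta\epsilon}^{n-2}(y_\epsilon)\times D_1^2$ for every $p\in U_{\delta,\epsilon}$. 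Therefore, for any family $q_1,\dots,q_N\in U_{\delta,\epsilon}$ with pairwise distances $\ge 2c\delta\epsilon$, the balls $B_{c\delta\epsilon}(q_i)$ are pairwise disjoint and each contributes at least $c''(\delta)\epsilon^{n-2}$ to $\int W/\epsilon^2$, so the good-slice bound at scale $r=c\delta\epsilon$ forces $N\le N_0$ for an $\epsilon$-independent constant $N_0=N_0(\delta,K,n)$.

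To convert this count into a disjoint cover, take $Q_0\subseteq U_{\delta,\epsilon}$ maximal with pairwise distances $\ge 2c\delta\epsilon$; then $|Q_0|\le N_0$ and, by maximality, every point of $U_{\delta,\epsilon}$ lies within $2c\delta\epsilon$ of $Q_0$. For $s>0$, declare $q\sim_s q'$ if $q$ and $q'$ can be chained in $Q_0$ by steps of length $\le s$; the resulting partition only coarsens as $s$ grows and has at most $N_0$ parts, so it changes at most $N_0-1$ times in total. Applying pigeonhole to the $N_0+1$ geometric scales $s_k:=(10N_0)^k\epsilon$ for $k=0,\dots,N_0$, one obtains $k^*\in\{0,\dots,N_0-1\}$ with the partition constant on $[s_{k^*},s_{k^*+1}]$. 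At this scale each cluster has diameter at most $N_0\, s_{k^*}$ (chain of $\le N_0-1$ edges each of length $\le s_{k^*}$), while distinct clusters have mutual distance greater than $s_{k^*+1}=10N_0\, s_{k^*}$ (else a single edge of length $\le s_{k^*+1}$ would merge them). Picking a representative $p_j^\epsilon\in Q_0$ of each cluster and setting $R_\epsilon:=2N_0\, s_{k^*}$, the balls $B_{R_\epsilon}(p_j^\epsilon)$ are pairwise disjoint (since $2R_\epsilon=4N_0\, s_{k^*}<10N_0\, s_{k^*}$) and cover $U_{\delta,\epsilon}$ (since any point lies within $2c\delta\epsilon+N_0\, s_{k^*}\le R_\epsilon$ of some representative). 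A subsequence in $\epsilon$ then fixes both $k^*$ and the number $k\le N_0$ of clusters, so $R_\epsilon=2N_0(10N_0)^{k^*}\epsilon=C\epsilon$ with $C=C(\delta,K,n)$ independent of $\epsilon$, as required.

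The only substantive ingredient is the clustering: the pigeonhole over the geometric scales $s_k$ is what allows a single uniform radius $C\epsilon$ to simultaneously give pairwise disjointness of the balls and full coverage of $U_{\delta,\epsilon}$.
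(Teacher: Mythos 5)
Your proof is correct. The skeleton is the same as the paper's (first bound the number of $O(\epsilon)$-separated points of $U_{\delta,\epsilon}$ via the potential energy, then cluster them into disjoint balls of radius $C\epsilon$), but both steps are executed differently. For the count, the paper takes a Vitali subcover of $\{B_\epsilon(p)\}_{p\in U_{\delta,\epsilon}}$ and bounds the number of centers by the \emph{two-dimensional} slice integral $\int_{P_{y_\epsilon}^\perp}W(u_\epsilon)/\epsilon^2$ (invoking \cref{better.slice}); you instead use a maximal $2c\delta\epsilon$-net and the \emph{third good-slice condition} at scale $r=c\delta\epsilon$, comparing the $n$-dimensional lower bound $c''(\delta)\epsilon^{n-2}$ on each ball against $Kr^{n-2}$ — this keeps the argument self-contained at this point of the paper and simply makes the constant depend on $K$ as well, which is harmless since $K$ is ultimately fixed as $K(\delta)$. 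For the clustering, the paper passes to a subsequence along which the normalized distances $|p_i^\epsilon-p_j^\epsilon|/\epsilon$ converge in $[0,\infty]$ and groups indices by finiteness of the limit, so its $C(\delta)$ depends on the subsequential limits $\gamma_{ij}$; your pigeonhole over the geometric scales $s_k=(10N_0)^k\epsilon$ replaces this with a deterministic gap-scale argument giving the explicit a priori bound $C\le 2N_0(10N_0)^{N_0}$, at the cost of the same kind of subsequence extraction (to freeze $k^*$ and the number of clusters). All the individual steps check out: the single-linkage partition can change at most $N_0-1$ times, so one of the $N_0$ intervals $(s_{k^*},s_{k^*+1}]$ contains no merge; cluster diameters are then $\le N_0 s_{k^*}$ while inter-cluster distances exceed $10N_0 s_{k^*}$, so radius $2N_0 s_{k^*}$ gives both disjointness and (using $2c\delta\epsilon\le N_0 s_{k^*}$) coverage.
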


\begin{proof}
	By a simple Vitali covering argument applied to the covering $\{B_{\epsilon}(p)\mid p\in U_{\delta,\epsilon}\}$, it is clear that there exist $p_1^{\epsilon},\ldots,p_m^{\epsilon}\in U_{\delta,\epsilon}$ such that 
	$$B_{\epsilon}(p_i^{\epsilon})\cap B_{\epsilon}(p_j^{\epsilon})=\varnothing\quad\text{for }i\neq j$$
	and
	$$U_{\delta,\epsilon}\subseteq \bigcup_{j=1}^mB_{5\epsilon}(p_j^{\epsilon}),$$
	where $m=m_{\epsilon}$ depends on $\epsilon$ a priori. On the other hand, since $|u_{\epsilon}(p_j^{\epsilon})|<1-\delta$ and $|du_{\epsilon}|\leq \frac{C}{\epsilon}$, writing $p_j^\epsilon=(y_\epsilon,z_j^\epsilon)$ it is clear that
	$$\int_{\{y_\epsilon\}\times D_\epsilon(z_j^\epsilon)}\frac{W(u_{\epsilon})}{\epsilon^2}\geq c(\delta)$$
	for some $c(\delta)>0$, and summing over $1\leq j\leq m_{\epsilon}$ and using \cref{better.slice} gives
	$$m_{\epsilon}c(\delta)\leq\int_{P_{y_{\epsilon}}^\perp}\frac{W(u_{\epsilon})}{\epsilon^2}\leq 2\omega_n,$$
	hence
	$$m_{\epsilon}\leq C(\delta).$$
	In particular, passing to a subsequence, we may assume that $m_{\epsilon}=m$ is fixed independent of $\epsilon$, and that the (possibly infinite) limits
	\begin{equation}\label{dist.lims}
		\gamma_{ij}:=\lim_{\epsilon\to 0}\frac{|p_i^{\epsilon}-p_j^{\epsilon}|}{\epsilon}
	\end{equation}
	exist. It is then easy to see that the desired conclusion holds with
	\begin{align*}
		&C(\delta):=10+\max_{\ell\in F}\ell,\quad\text{where }F:=\{\gamma_{ij}\mid \gamma_{ij}<\infty\}.
	\end{align*}
	Indeed, we can form an equivalence relation on $\{1,\dots,m\}$ by declaring that $i\sim j$ precisely when $\gamma_{ij}<\infty$,
	and we can take a set of representatives $S\subseteq\{1,\dots,m\}$; with the previous choice of $C(\delta)$, we have
	$$ U_{\delta,\epsilon}\subseteq\bigcup_{j=1}^m B_{5\epsilon}(p_j^\epsilon)\subseteq\bigcup_{i\in S}B_{C(\delta)\epsilon}(p_i^\epsilon), $$
	since if $i$ represents the class of $j$ then $|p_i^\epsilon-p_j^\epsilon|\le(\gamma_{ij}+1)\epsilon$ for $\epsilon$ small enough,
	and the last union is disjoint since for $i,i'\in S$ we have $\frac{|p_i^\epsilon-p_{i'}^\epsilon|}{\epsilon}\to\gamma_{ii'}=\infty$ (unless $i=i'$).
\end{proof}

Now, for $k\le C(\delta)$ and points $p_1^{\epsilon},\dots,p_k^{\epsilon}$ as in \cref{cover.claim}, denote by $D_{j,\epsilon}$ the disks
$$D_{j,\epsilon}:=B_{C\epsilon}(p_j^{\epsilon})\cap P_{y_{\epsilon}}^\perp$$
(note that eventually $D_{j,\epsilon}$ is compactly included in $P_{y_\epsilon}^\perp$, as $|u_\epsilon|\to 1$ on $B_1^{n-2}\times\de D_1$),
and consider the degrees
$$\kappa_j^{\epsilon}:=\deg\left(\frac{u_{\epsilon}}{|u_{\epsilon}|},\partial D_{j,\epsilon}\right).$$

The following proposition gives a very useful (though non-sharp) bound on the degrees $\kappa_j^{\epsilon}$ in terms of the potential $\frac{W(u_{\epsilon})}{\epsilon^2}$, which plays a crucial role in ruling out energy-cancellation due to interactions between vortices with degrees of opposite signs in the proof of \cref{dens.gap}

\begin{proposition}\label{w.deg.prop} For $\epsilon$ sufficiently small (depending on $\delta$),
	\begin{equation}
		\int_{D_{j,\epsilon}}\frac{2W(u_{\epsilon})}{\epsilon^2}\geq \frac{\pi}{2}|\kappa_j^{\epsilon}|(1-5\delta).
	\end{equation}
\end{proposition}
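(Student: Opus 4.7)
The plan is to argue by contradiction, via a blow-up at the vortex scale $\epsilon$. Suppose that along some subsequence $\epsilon_k\to 0$ one has $\int_{D_{j,\epsilon_k}}\frac{2W(u_{\epsilon_k})}{\epsilon_k^2}<\frac{\pi}{2}|\kappa_j^{\epsilon_k}|(1-5\delta)$, and set $\tilde u_k(x):=u_{\epsilon_k}(p_j^{\epsilon_k}+\epsilon_k x)$ for $x\in B_R^n(0)$ with $R$ arbitrary but fixed. The a priori bounds $|\tilde u_k|\leq 1+O(\epsilon_k^2)$ and $|d\tilde u_k|\leq C$ recalled in the appendix give uniform $C^{1,\alpha}_{loc}$ estimates, so (after passing to a further subsequence) $\tilde u_k\to\tilde u_\infty$ in $C^1_{loc}$ to an entire solution of $\Delta\tilde u_\infty=-(1-|\tilde u_\infty|^2)\tilde u_\infty$ on all of $\mathbb{R}^n$.

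The second step is to reduce to a genuinely two-dimensional picture. The good-slice condition, combined with the fact that the limit energy concentrates only along $P$ with constant multiplicity $\theta$, should force $\tilde u_\infty$ to be invariant in the $P$-directions, so that its restriction to $\{\hat y=0\}$ is an entire planar critical point of the unit-scale Ginzburg--Landau functional. The disjointness of the cover in \cref{cover.claim} together with continuity of the degree then ensures that $|\tilde u_\infty|\geq 1-\delta$ on the rescaled boundary $\partial D_{C(\delta)}(0)$ and that the winding number of $\tilde u_\infty$ there equals $\kappa:=\lim_k\kappa_j^{\epsilon_k}$.

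To close the contradiction I would apply the 2D Pohozaev identity to $\tilde u_\infty$:
\begin{equation*}
\int_{D_R} 2W(\tilde u_\infty)\,dA = R\int_{\partial D_R}\left[W(\tilde u_\infty)+\frac{1}{2}\left(|\partial_\tau\tilde u_\infty|^2-|\partial_\nu\tilde u_\infty|^2\right)\right]dS.
\end{equation*}
A Cauchy--Schwarz estimate on the boundary phase yields $\int_{\partial D_R}|\partial_\tau\tilde u_\infty|^2\geq 2\pi(1-\delta)^2\kappa^2/R$, while an averaging argument over radii in $[C(\delta)/2,C(\delta)]$ absorbs the boundary contribution from $|\partial_\nu\tilde u_\infty|^2$ (which decays off the vortex core). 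Combined with $\int_{\mathbb{R}^2}2W(\tilde u_\infty)=\pi\kappa^2$ (from Pohozaev at infinity) and $|\kappa|\geq 1$, this produces $\int_{D_{C(\delta)}}2W(\tilde u_\infty)\geq\frac{\pi}{2}|\kappa|(1-5\delta)$; since the integrand is literally the rescaled $\frac{2W(u_{\epsilon_k})}{\epsilon_k^2}$, this contradicts the hypothesized failure of the estimate.

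The hardest step is the $P$-invariance in the second paragraph: the good-slice hypothesis is only an integrated bound, so deducing pointwise $P$-invariance of the rescaled limit is delicate. A natural way around this is to perform a secondary averaging of the blow-up over $\hat y$ in a ball of rescaled radius of order $\leps_k^{-1/2}$, or to invoke a stationarity/constancy argument for the limit stationary varifold, which has flat tangent $P$ and constant density and therefore forces the limit $\tilde u_\infty$ to be translation-invariant along $P$.
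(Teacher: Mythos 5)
Your strategy diverges entirely from the paper's, and the divergence is not benign: the paper proves \cref{w.deg.prop} by a direct, scale-invariant computation on the two-dimensional slice --- the coarea formula for $|u_{\epsilon}|$ over the sublevel sets $\{|u_{\epsilon}|<t\}\cap D_{j,\epsilon}$, combined with Cauchy--Schwarz on each level curve $S_t$, the pointwise gradient bound $|du_{\epsilon}|^2\le\frac{1-|u_{\epsilon}|^2}{\epsilon^2}+C$, and the degree formula $2\pi\kappa_j^{\epsilon}=\int_{S_t}(u_{\epsilon}/|u_{\epsilon}|)^*(d\theta)$ valid for every regular value $t\in(0,1-\delta)$. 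No blow-up, no compactness, no Pohozaev identity, and no dimension reduction is needed; the constant $\frac{\pi}{2}|\kappa_j^{\epsilon}|(1-5\delta)$ drops out of $\int_0^{1-\delta}(1-r^2)r\,dr$. Your blow-up route has two gaps that I do not see how to close. First, the $P$-invariance of the $\epsilon$-scale limit, which you yourself flag as the hardest step, does not follow from the available hypotheses: the good-slice conditions are all normalized by $\leps$ (e.g.\ $r^{2-n}\int|du_{\epsilon}(P)|^2<\delta\leps$), so after rescaling by $\epsilon$ they degenerate --- the rescaled maps $\tilde u_k$ have energy of order $R^{n-2}\leps_k\to\infty$ on $B_R(0)$, the normalized energy measures no longer converge to anything useful at this scale, and neither the varifold constancy argument nor an averaging in $\hat y$ over radius $\leps_k^{-1/2}$ (which is still $o(1)$ after rescaling by $\epsilon_k^{-1}$... it is $\epsilon_k^{-1}\leps_k^{-1/2}\to\infty$, but the energy bound needed to run the argument is absent) yields translation invariance of $\tilde u_{\infty}$ along $P$. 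Indeed the paper only ever extracts $P$-invariance for blow-ups at scales $r_\epsilon\ge\epsilon^{\alpha}$ with $\alpha<1$, precisely because the $\leps$-normalized information survives there.

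Second, even granting a genuinely two-dimensional entire limit of degree $\kappa$, the quantization $\int_{\R^2}2W(\tilde u_{\infty})=\pi\kappa^2$ is a statement about the total potential, whereas you need a lower bound on $\int_{D_{C(\delta)}}2W(\tilde u_{\infty})$ for the \emph{specific} radius $C(\delta)$ produced by the Vitali construction in \cref{cover.claim}; that radius is determined by the mutual distances $\gamma_{ij}$ of the zeros and has no relation to how far the potential of $\tilde u_{\infty}$ spreads, so the required fraction $(1-5\delta)$ of the potential may simply lie outside $D_{C(\delta)}$ (and $C^1_{loc}$ convergence cannot rule out additional loss in the limit). Relatedly, your Pohozaev step needs to discard $\int_{\partial D_R}|\partial_{\nu}\tilde u_{\infty}|^2$, which is not small for a general non-radial entire solution; averaging over $R\in[C(\delta)/2,C(\delta)]$ only bounds it by the Dirichlet energy of the annulus, which is of order $1$, not $\delta$. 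The paper's level-set argument sidesteps all of this because it localizes in the \emph{value} of $|u_{\epsilon}|$ rather than in space, and only aims for the non-sharp constant $\frac{\pi}{2}|\kappa_j^{\epsilon}|$ rather than $\pi(\kappa_j^{\epsilon})^2$.
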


\begin{proof}
	By \cref{cover.claim}, we know that $\partial D_{j,\epsilon}\subseteq\{|u_{\epsilon}|\geq 1-\delta\}$. For each $0<t<1-\delta$, consider the set
	$$\Omega_t:=\{|u_{\epsilon}|<t\}\cap D_{j,\epsilon},$$
	and if $t$ is a regular value for $|u_{\epsilon}|$, consider also the boundary
	$$S_t:=\partial\Omega_t\subset D_{j,\epsilon}.$$
	By the coarea formula for $|u_{\epsilon}|$ on $D_{j,\epsilon}$, we have
	\begin{align*}
		\int_{\Omega_{1-\delta}}\frac{W(u_{\epsilon})}{\epsilon^2}
		&=\int_0^{1-\delta}\frac{(1-r^2)^2}{4\epsilon^2}\left(\int_{S_r}\frac{1}{|d|u_{\epsilon}||}\right) dr.
	\end{align*}
	Next, note that for each regular value $t\in (0,1-\delta)$ of $|u_{\epsilon}|$, a few simple applications of the Cauchy--Schwarz inequality give
	\begin{align*}
		|S_t|&=\int_{S_t}|d|u_{\epsilon}||^{-1/2}|d|u_{\epsilon}||^{1/2}\\
		&\leq \left(\int_{S_t}\frac{1}{|d|u_{\epsilon}||}\right)^{1/2}\left(\int_{S_t}|d|u_{\epsilon}||\right)^{1/2}\\
		&\leq \left(\int_{S_t}\frac{1}{|d|u_{\epsilon}||}\right)^{1/2}\left(\int_{S_t}|d|u_{\epsilon}||^2\right)^{1/4}|S_t|^{1/4},
	\end{align*}
	which we can write equivalently as
	$$\int_{S_t}\frac{1}{|d|u_{\epsilon}||}\geq |S_t|^{3/2}\left(\int_{S_t}|d|u_{\epsilon}||^2\right)^{-1/2},$$
	and applying this in the preceding computation yields
	\begin{equation}\label{w.low.1}
		\int_{\Omega_{1-\delta}}\frac{W(u_{\epsilon})}{\epsilon^2}\geq \int_0^{1-\delta}\frac{(1-r^2)^2}{4\epsilon^2}|S_r|^{3/2}\left(\int_{S_r}|d|u_{\epsilon}||^2\right)^{-1/2}dr.
	\end{equation}
	
	Now, since
	$$|du_{\epsilon}|^2\leq \frac{1-|u_{\epsilon}|^2}{\epsilon^2}+C$$
	on $D_{j,\epsilon}$, we have for any regular value $t\in (0,1-\delta)$ of $|u_{\epsilon}|$ that
	\begin{equation}
		\int_{S_t}|du_{\epsilon}|^2\le\left(\frac{1-t^2}{\epsilon^2}+C\right)|S_t|.
	\end{equation}
	In particular, writing
	$$|du_{\epsilon}|^2=|u_{\epsilon}|^2|d(u_{\epsilon}/|u_{\epsilon}|)|^2+|d|u_{\epsilon}||^2,$$
	it follows that
	$$t^2\int_{S_t}|d(u_{\epsilon}/|u_{\epsilon}|)|^2+\int_{S_t}|d|u_{\epsilon}||^2
	\leq \left(\frac{1-t^2}{\epsilon^2}+C\right)|S_t|,$$
	and an application of Young's inequality on the left-hand side gives
	$$2t\left(\int_{S_t}|d(u_{\epsilon}/|u_{\epsilon}|)|^2\right)^{1/2}\left(\int_{S_t}|d|u_{\epsilon}||^2\right)^{1/2}
	\leq \left(\frac{1-t^2}{\epsilon^2}+C\right)|S_t|.$$
	Moreover, since
	$$2\pi \kappa_j^{\epsilon}=\int_{S_t}(u_{\epsilon}/|u_{\epsilon}|)^*(d\theta)$$
	for each regular value $t\in (0,1-\delta)$, we have that
	\begin{align*}
		2\pi |\kappa_j^{\epsilon}|&\leq \int_{S_t}|d(u_{\epsilon}/|u_{\epsilon}|)|
		\leq |S_t|^{1/2}\left(\int_{S_t}|d(u_{\epsilon}/|u_{\epsilon}|)|^2\right)^{1/2},
	\end{align*}
	which we can combine with the preceding computation to see that
	$$4\pi |\kappa_j^{\epsilon}|t\cdot \left(\int_{S_t}|d|u_{\epsilon}||^2\right)^{1/2}\leq \left(\frac{1-t^2}{\epsilon^2}+C\right)|S_t|^{3/2}.$$
	Rearranging, we see that
	\begin{equation}\label{deg.ubd}
		|S_t|^{3/2}\left(\int_{S_t}|d|u_{\epsilon}||^2\right)^{-1/2}\geq \left(\frac{1-t^2}{\epsilon^2}+C\right)^{-1}\cdot 4\pi |\kappa_j^{\epsilon}|t.
	\end{equation}
	
	Applying \cref{deg.ubd} to the integrand on the right-hand side of \cref{w.low.1}, we deduce that
	\begin{align*}
		\int_{\Omega_{1-\delta}}\frac{W(u_{\epsilon})}{\epsilon^2}
		&\geq \int_0^{1-\delta}\frac{(1-r^2)^2}{4\epsilon^2}\cdot\left(\frac{1-r^2}{\epsilon^2}+C\right)^{-1}\cdot 4\pi |\kappa_j^{\epsilon}| r\,dr\\
		&\geq \pi |\kappa_j^{\epsilon}|\int_0^{1-\delta}(1-r^2-C\epsilon^2)r\,dr\\
		&\geq \pi |\kappa_j^{\epsilon}|\cdot\frac{1}{4}(1-4\delta-C\epsilon^2),
	\end{align*}
	and choosing $\epsilon$ small enough so that $C\epsilon^2\leq \delta,$ we deduce that
	$$\int_{D_{j,\epsilon}}\frac{2W(u_{\epsilon})}{\epsilon^2}\geq \frac{\pi}{2}|\kappa_j^{\epsilon}|(1-5\delta),$$
	as claimed.
\end{proof}

In particular, summing the estimate from \cref{w.deg.prop} over $j=1,\ldots, k$, we deduce that
\begin{align*}
	\frac{\pi}{2}(1-5\delta){\textstyle\sum_{j=1}^k}|\kappa_j^{\epsilon}|
	&\leq {\textstyle\sum_{j=1}^k}\int_{D_{j,\epsilon}}\frac{2W(u_{\epsilon})}{\epsilon^2}
	\leq \int_{P_{y_{\epsilon}}^{\perp}}\frac{2W(u_{\epsilon})}{\epsilon^2}.
\end{align*}
Later, in the proof of \cref{dens.gap}, we will use this together with sharp upper bounds on $\int_{P_{y_{\epsilon}}^{\perp}}\frac{2W(u_{\epsilon})}{\epsilon^2}$ to show that there can be only one zero of nonzero degree in a good slice when the density $\theta<2$.

Now, since $y_{\epsilon}\in \mathcal{G}_{\epsilon,\delta_{\epsilon}}$, we know already that
\begin{equation}\label{theta.xi}
	\pi\theta=\lim_{\epsilon\to 0}\frac{1}{\leps }\int_{P_{y_{\epsilon}}^{\perp}}\frac{1}{2}|d^*\xi_{\epsilon}|^2.
\end{equation}
On our way to proving \eqref{intro.theta.comp}, we show next that the only terms in $d^*\xi_{\epsilon}$ which contribute nontrivially to the limit are those of the form $\partial_a(\xi_\epsilon)_{ab}$, where $\{a,b\}=\{n-1,n\}$.

\begin{lemma}\label{slice.perp.small} For $y_{\epsilon}\in \mathcal{G}_{\epsilon,\delta_{\epsilon}}$ as above, and for any pair $(a,b)\neq(n-1,n)$, we have
	$$\lim_{\epsilon\to 0}\frac{1}{\leps }\int_{P_{y_{\epsilon}}^{\perp}}|\nabla (\xi_{\epsilon})_{ab}|^2=0.$$
\end{lemma}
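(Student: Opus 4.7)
The plan is to apply the representation formula \cref{nabla.xicomp} of \cref{xi.comp.lem} and control the slice $L^2$-norm via Minkowski's integral inequality, splitting the $r$-integral into three scales with a parameter $\alpha = \alpha_\epsilon \in (0,1)$ to be taken close to $1$. Writing $F(x, r) := r^{2-n}\int_{B_r(x)}\varphi[djv_\epsilon]_{ab}\frac{x-x'}{|x-x'|}\,dx'$, one obtains
\[
\|\nabla(\xi_\epsilon)_{ab}\|_{L^2(P_{y_\epsilon}^\perp)} \le C\int_0^\infty \|F(\cdot, r)\|_{L^2(P_{y_\epsilon}^\perp)}\,r^{-2}\,dr.
\]
For large scales $r \ge 1/4$, the trivial bound $|F| \le C r^{2-n}$ from $\int|djv_\epsilon|\le C$ contributes a uniformly bounded amount. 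For the medium range $r \in [\epsilon^\alpha, 1/4]$, I would apply \cref{lim.jac.lem}: the good slice condition yields $r^{2-n}\int_{B_{2r}(x)}|du_\epsilon(P)|^2 \le 2^{n-2}\delta_\epsilon\leps$ uniformly for $x = (y_\epsilon, z) \in P_{y_\epsilon}^\perp$ with $|z|$ bounded away from $1$ (so $B_{2r}(x) \subset Q$), so choosing $\delta_\epsilon \le \delta_1(\alpha, \gamma)/2^{n-2}$ gives $|F(x, r)| < \gamma$ uniformly. Integrating $\gamma/r^2$ then contributes at most $C\gamma/\epsilon^\alpha$ to $\|\nabla(\xi_\epsilon)_{ab}\|_{L^\infty(P_{y_\epsilon}^\perp)}$, and hence the same order to its $L^2$-norm.

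For the small scales $r \le \epsilon^\alpha$, the key ingredient is a refined $L^1$ estimate: since $(a, b) \ne (n-1, n)$ forces at least one index, say $a$, to lie in $\{1, \dots, n-2\}$, we have $|(djv_\epsilon)_{ab}| \le C|\partial_a u_\epsilon||\partial_b u_\epsilon|$, so Cauchy--Schwarz combined with the good slice bounds on $|du_\epsilon(P)|^2$ and $|du_\epsilon|^2$ yields
\[
\int_{B_\rho^{n-2}(y_\epsilon)\times D_1^2}|(djv_\epsilon)_{ab}| \le C\sqrt{\delta_\epsilon}\,\leps\,\rho^{n-2}.
\]
Combined with the pointwise bound $|(djv_\epsilon)_{ab}| \le C/\epsilon^2$, an application of Cauchy--Schwarz to $|F(x, r)|^2 \le |B_r|\,r^{2(2-n)}\int_{B_r(x)}|(djv_\epsilon)_{ab}|^2$ followed by Fubini on the slice gives $\|F(\cdot, r)\|_{L^2(P_{y_\epsilon}^\perp)} \le C\delta_\epsilon^{1/4}\leps^{1/2}r^2/\epsilon$. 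Integrating $r^{-2}$ over $[0, \epsilon^\alpha]$ bounds the small-scale contribution by $C\delta_\epsilon^{1/4}\leps^{1/2}\epsilon^{\alpha - 1}$.

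Assembling the three contributions and dividing by $\leps$,
\[
\frac{\|\nabla(\xi_\epsilon)_{ab}\|_{L^2(P_{y_\epsilon}^\perp)}^2}{\leps} \le \frac{C\gamma^2}{\epsilon^{2\alpha}\leps} + C\sqrt{\delta_\epsilon}\,\epsilon^{2(\alpha-1)} + o(1).
\]
Taking $\alpha_\epsilon = 1 - 1/\leps$ so that $\epsilon^{1-\alpha_\epsilon} = e^{-1}$ remains bounded, then $\gamma_\epsilon = \epsilon\leps^{1/4}$ (killing the first term), and finally $\delta_\epsilon \to 0$ subject to $\delta_\epsilon \le \delta_1(\alpha_\epsilon, \gamma_\epsilon)/2^{n-2}$ (achievable by diagonal extraction while preserving the positive-measure bound for $\mathcal{G}_{\epsilon, \delta_\epsilon}$ from \cref{many.good.slices}), both remaining terms tend to $0$. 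Slice points with $|z|$ close to $1$ are treated separately using elliptic estimates, since there $\xi_\epsilon$ is smooth (the vorticity concentrates in a shrinking neighborhood of $P$). The principal technical obstacle is reconciling the three parameter choices against the $1/r^2$ weight in the representation formula, which forces the medium-scale contribution to scale like $\gamma/\epsilon^\alpha$ and the small-scale contribution like $\sqrt{\delta_\epsilon}\,\epsilon^{-2(1-\alpha)}$; the refined Cauchy--Schwarz $L^1$-estimate for $(djv_\epsilon)_{ab}$ is the essential new ingredient that makes this balancing act feasible.
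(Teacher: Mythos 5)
Your small-scale ($r\le\epsilon^{\alpha}$) estimate via the refined bound $|(djv_\epsilon)_{ab}|\le C|\partial_au_\epsilon||\partial_bu_\epsilon|$ and Cauchy--Schwarz against the good-slice controls is a reasonable computation, but the medium-range ($\epsilon^{\alpha}\le r\le 1/4$) part of your argument has a genuine gap that I do not see how to close within your framework. You bound $|F(x,r)|<\gamma$ uniformly in $x$ via \cref{lim.jac.lem} and then integrate $\gamma r^{-2}\,dr$ down to $r=\epsilon^{\alpha}$, producing $C\gamma\epsilon^{-\alpha}$; for any \emph{fixed} $\gamma>0$ this diverges badly even after dividing by $\leps$. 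Your proposed fix, taking $\gamma_\epsilon=\epsilon\leps^{1/4}$, is not legitimate: \cref{lim.jac.lem} is proved by a compactness/contradiction argument and its conclusion holds only ``for $\epsilon$ small enough depending on $\alpha$ and $\gamma$,'' with no quantitative rate. A diagonal extraction can produce \emph{some} sequence $\gamma_\epsilon\to 0$ along which the conclusion holds, but it cannot deliver a prescribed polynomial decay like $\epsilon\leps^{1/4}$, and your balancing of the three terms requires exactly that. The same objection applies to taking $\alpha_\epsilon=1-1/\leps$ inside that lemma.

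The missing idea is that $djv_\epsilon$ is supported in the vorticity set $\mathcal{V}(u_\epsilon)$, so $F(x,r)=0$ for all $r<\rho_\epsilon(x):=\dist(x,\mathcal{V}(u_\epsilon))$; the $r$-integration therefore starts at $\rho_\epsilon(x)$, not at $\epsilon^{\alpha}$, giving the \emph{pointwise} bound $|\nabla(\xi_\epsilon)_{ab}(x)|\le C[1+\gamma/\rho_\epsilon(x)]$ off the set $F_{\epsilon^{\alpha}}=\{\rho_\epsilon\le\epsilon^{\alpha}\}$. One then squares and integrates this over the slice using the coarea formula together with the measure bound $\mathcal{H}^2(F_r)\le C(\alpha)r^2$ (a consequence of clearing-out), obtaining $C(\alpha)\gamma\leps$ with $\gamma$ fixed, so the limits can be taken in the soft order $\epsilon\to 0$, then $\gamma\to 0$, then $\alpha\to 1$. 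Note also that even if you inserted the support restriction into your Minkowski decomposition (so that $\|F(\cdot,r)\|_{L^2}\le C\gamma r$), Minkowski's inequality in $r$ would still give $\|\nabla(\xi_\epsilon)_{ab}\|_{L^2}\le C\gamma\leps$ and hence $\|\nabla(\xi_\epsilon)_{ab}\|_{L^2}^2/\leps\le C\gamma^2\leps\to\infty$: exchanging the $r$-integral and the $L^2(P_{y_\epsilon}^{\perp})$ norm loses a full factor of $\leps$ compared with the fix-$x$-first route, because for each $x$ the integral $\int_{\rho_\epsilon(x)}^{1/4}\gamma r^{-2}\,dr$ is dominated by its lower endpoint rather than accumulating a logarithm. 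Finally, on the remaining set $F_{\epsilon^{\alpha}}$ the paper uses the $L^{2,\infty}$ Riesz-potential bound of \cref{lorentz} together with the $L^\infty$ bound $|\nabla\xi_\epsilon|\le C(K)/\epsilon$ and a layer-cake argument, which yields a contribution $C(K)(1-\alpha)\leps$; your $L^1$--$L^\infty$ interpolation for the small scales is an interesting alternative there, but it cannot rescue the medium-range step.
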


\begin{proof} To begin, fix $\alpha\in (0,1)$ close to $1$, and consider the distance function
	$$\rho_\epsilon(x):=\dist(x,\mathcal{V}(u_{\epsilon})).$$
	For each $\epsilon^{\alpha}\le r\le\frac{1}{4}$, consider the set $F_r\subset P_{y_{\epsilon}}^{\perp}$ given by
	$$F_r:=\{x\in \{y_\epsilon\}\times D_{1/2} : \rho_\epsilon(x)\leq r\}.$$
	We claim first that, for $\epsilon$ sufficiently small (depending on $\alpha$) and $r\geq \epsilon^{\alpha}$, we have
	\begin{equation}\label{fr.ubd}
		\mathcal{H}^2(F_r)\leq C(\alpha)r^2
	\end{equation}
	for a constant $C(\alpha)$ independent of $\epsilon$. To see this, note that for each $x\in F_r$, there exists $x'\in \bar B_r(x)\cap \mathcal{V}(u_{\epsilon})$.
	In particular, by \cref{clearing}, it follows that
	\begin{align*}
		&r^{2-n}E_\epsilon(u_\epsilon;B_{2r}(x))\ge c(n)\log(2r/\epsilon)
		\ge c(n)(1-\alpha)\leps .
	\end{align*}
	Now, Vitali's covering lemma gives $x_1,\ldots, x_N\in F_r$ such that the balls $B_{2r}(x_1),\ldots, B_{2r}(x_N)$ are disjoint and $F_r\subseteq \bigcup_{j=1}^NB_{10r}(x_j)$. From the disjointness of the balls $B_{2r}(x_j)$ we deduce that
	\begin{align*}
		N\cdot c(n)(1-\alpha)\leps 
		\le{\textstyle\sum_{j=1}^N} r^{2-n}E_\epsilon(u_\epsilon;B_{2r}(x_j))
		\le r^{2-n}E_\epsilon(u_\epsilon;B_{2r}^{n-2}(y_\epsilon)\times D_1).
	\end{align*}
	In particular, since $y_{\epsilon}\in \mathcal{G}_{\epsilon,\delta_{\epsilon}}$, the right-hand side is bounded by $C\leps$, with $C$ independent of $r$ and $\epsilon$, and therefore $N\leq C(\alpha)$ for $\epsilon$ small enough. Since $F_r$ is covered by the $N$ balls $B_{10r}(x_j)$ of radius $10r$, the bound \cref{fr.ubd} follows.
	
	Next, let $(a,b)$ be a pair of indices with $a<n-1$, and fix an arbitrary small $\gamma>0$. Since $y_{\epsilon}\in \mathcal{G}_{\epsilon,\delta_{\epsilon}}$, for $\epsilon$ sufficiently small, the hypotheses of (the first part of) \cref{lim.jac.lem} hold for every ball $B_r(x)$ with $x\in \{y_\epsilon\}\times D_{1/2}$ and $\epsilon^{\alpha}\le r\le\frac{1}{4}$, so that
	\begin{equation}\label{lim.jac.cons}
		\left|r^{2-n}\int_{B_r(x)}(djv_{\epsilon}(x'))_{ab}\frac{x-x'}{|x-x'|}\,dx'\right|<\gamma.
	\end{equation}
	As a consequence, if $x \nin F_{\epsilon^{\alpha}}$, it follows from \cref{xi.comp.lem} that
	\begin{align*}
		|\nabla (\xi_\epsilon)_{ab}(x)|
		&\leq C+C\int_{\rho_\epsilon(x)}^{1/4}\frac{1}{r^2}\left|r^{2-n}\int_{B_r(x)}[djv_{\epsilon}(x')]_{ab}\frac{x-x'}{|x-x'|}\,dx'\right|dr\\
		&\leq C+C\int_{\rho_\epsilon(x)}^{1/4}\frac{\gamma}{r^2}\,dr\\
		&\leq C[1+\gamma/\rho_\epsilon(x)]
	\end{align*}
	(or $|\nabla\xi_\epsilon|\le C$ if $\rho_\epsilon(x)\ge\frac{1}{4}$),
	while clearly $|\nabla(\xi_\epsilon)_{ab}(x)|\le C$ if $x\in\{y_\epsilon\}\times[D_1\setminus D_{1/2}]$, by \cref{xi.comp.lem}
	and the fact that here $\rho_{\epsilon}\ge\frac{1}{4}$ (eventually).
	Combining this with \cref{fr.ubd} and an application of the coarea formula, since $|d\rho_\epsilon|=1$ (a.e.) we then see that
	\begin{align*}
		\limsup_{\epsilon\to 0}\frac{1}{\leps }\int_{P_{y_{\epsilon}}^{\perp}\setminus F_{\epsilon^{\alpha}}}|\nabla (\xi_{\epsilon})_{ab}|^2
		&\leq \limsup_{\epsilon\to 0}\frac{C}{\leps }\int_{P_{y_{\epsilon}}^{\perp}\setminus F_{\epsilon^{\alpha}}}\left(1+\frac{\gamma^2}{\rho_\epsilon(x)^2}\right)\\
		\text{(by the coarea formula) }&\leq \limsup_{\epsilon\to 0}\frac{C}{\leps }\int_{\epsilon^{\alpha}}^{1/4}\frac{\gamma}{r^2}\frac{d}{dr}(\mathcal{H}^2(F_r))\,dr\\
		\text{(integrating by parts) }&\leq \limsup_{\epsilon\to 0}\frac{C}{\leps }\int_{\epsilon^{\alpha}}^{1/4}\frac{2\gamma}{r^3}\mathcal{H}^2(F_r)\,dr\\
		\text{(by \cref{fr.ubd}) }&\leq \limsup_{\epsilon\to 0}\frac{C(\alpha)\gamma}{\leps }\int_{\epsilon^{\alpha}}^{1/4}\frac{dr}{r}\\
		&=C(\alpha)\gamma.
	\end{align*}
	And since $\gamma>0$ was arbitrary, it follows that
	\begin{equation}\label{off.f.van}
		\lim_{\epsilon\to 0}\frac{1}{\leps }\int_{P_{y_{\epsilon}}^{\perp}\setminus F_{\epsilon^{\alpha}}}|\nabla(\xi_{\epsilon})_{ab}|^2=0.
	\end{equation}
	
	To estimate the integral of $|\nabla(\xi_{\epsilon})_{ab}|^2$ on $F_{\epsilon^{\alpha}}$, we first
	observe that, by definition of $\xi_\epsilon$,
	\begin{align*}
		&|\nabla\xi_\epsilon|\le\frac{1}{|x|^{n-1}}*|\varphi\,djv_\epsilon|.
	\end{align*}
	We can then invoke \cref{lorentz} from the appendix and \cref{jv.w} to see that, since $y_{\epsilon}\in \mathcal{G}_{\epsilon,\delta_{\epsilon}}$, the gradient $|\nabla \xi_{\epsilon}|$ satisfies a uniform $L^{2,\infty}$ bound
	$$\|\nabla \xi_{\epsilon}\|_{L^{2,\infty}(P_{y_{\epsilon}}^{\perp})}\leq C(K)$$
	independent of $\epsilon$ along the slice $P_{y_{\epsilon}}^{\perp}$. In other words, we have the uniform estimate
	\begin{equation}\label{l2.infty.bd}
		\mathcal{H}^2(P_{y_\epsilon}^\perp\cap\{|\nabla \xi_{\epsilon}| > \lambda\})\leq \frac{C(K)}{\lambda^2}
	\end{equation}
	for every $\lambda\in (0,\infty)$. Moreover, since $|djv_{\epsilon}|\leq C\frac{W(u_\epsilon)}{\epsilon^2}$, we see from \cref{xi.comp.lem} that
	\begin{align*}
		|\nabla\xi_{\epsilon}| &\leq \int_0^\epsilon\frac{C}{\epsilon^2}+\int_\epsilon^{1/2}\frac{C}{r^2}\cdot r^{2-n}\int_{B_r(x)}\frac{W(u_\epsilon)}{\epsilon^2}+C \\
		&\le\frac{C(K)}{\epsilon}
	\end{align*}
	on $\{y_\epsilon\}\times D_{1/2}$, by definition of $\mathcal{G}_{\epsilon,\delta_\epsilon}$ (which gives $\int_{B_r(x)}\frac{W(u_\epsilon)}{\epsilon^2}\le Kr^{n-2}$), while $|\nabla\xi_\epsilon|\le C$ on the rest of the slice $P_{y_\epsilon}^\perp$.
	Hence, writing 
	$$A_{\lambda}:=\{x\in P_{y_{\epsilon}}^{\perp} : |\nabla\xi_{\epsilon}(x)|> \lambda\},$$
	we find that
	\begin{align*}
		\int_{F_{\epsilon^{\alpha}}}|\nabla\xi_{\epsilon}|^2
		&=\int_0^{C(K)/\epsilon}2\lambda\mathcal{H}^2(F_{\epsilon^{\alpha}}\cap A_\lambda)\,d\lambda \\
		&\leq \epsilon^{-2\alpha}\mathcal{H}^2(F_{\epsilon^\alpha})+\int_{\epsilon^{-\alpha}}^{C(K)\epsilon^{-1}}\frac{C(K)}{\lambda}\,d\lambda\\
		&\le C(\alpha)+C(K)\log(C(K)\epsilon^{\alpha-1}),
	\end{align*}
	thanks to \cref{fr.ubd} and \cref{l2.infty.bd}.
	Combining this with \cref{off.f.van}, we deduce that
	\begin{align*}
		\limsup_{\epsilon\to 0}\frac{1}{\leps }\int_{P_{y_{\epsilon}}^{\perp}}|\nabla(\xi_{\epsilon})_{ab}|^2
		\le C(K)(1-\alpha).
	\end{align*}
	Finally, since $\alpha\in (0,1)$ was arbitrary, we can take $\alpha\to 1$ to deduce that the limit vanishes, as desired.
\end{proof}

With the above preparations in place, we are now ready to prove the identity \eqref{intro.theta.comp}.

\begin{proposition}\label{pre.ener.id} For $u_{\epsilon}$ satisfying the hypotheses of \cref{dens.gap} and a sequence of slices $y_{\epsilon}\in \mathcal{G}_{\epsilon,\delta_{\epsilon}}$ with $p_j^{\epsilon}\in P_{y_{\epsilon}}^{\perp}$ and degrees $\kappa_j^{\epsilon}$ as above, we have
	$$\theta=\lim_{\epsilon \to 0}\left({\textstyle\sum_{j}}(\kappa_j^\epsilon)^2+{\textstyle\sum_{j<\ell}}2\kappa_j^\epsilon\kappa_\ell^\epsilon\frac{\lvert\log|p_j^\epsilon-p_\ell^\epsilon|\rvert}{\leps }\right).$$
\end{proposition}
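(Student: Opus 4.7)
The strategy is to compute $\theta$ by identifying the two-dimensional slice energy $\tfrac{1}{2}\int_{P_{y_\epsilon}^\perp}|\nabla_\perp \xi|^2$, where $\xi := (\xi_\epsilon)_{n-1,n}$ and $\nabla_\perp$ denotes the two-dimensional gradient in the slice, with $\pi\theta\leps + o(\leps)$ via the good-slice identities on one hand, and on the other hand with a classical renormalized vortex energy for the logarithmic model $\Phi(z) := -\sum_j \kappa_j^\epsilon \log|z-p_j^\epsilon|$ attached to the configuration $\{(p_j^\epsilon,\kappa_j^\epsilon)\}$.

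\emph{Reduction to $|\nabla_\perp\xi|^2$ on the slice.} Letting $r\to 0$ in the first two good-slice conditions for $y_\epsilon\in\mathcal{G}_{\epsilon,\delta_\epsilon}$ yields $\int_{P_{y_\epsilon}^\perp} e_\epsilon(u_\epsilon) = \pi\theta\leps + o(\leps)$ and $\int_{P_{y_\epsilon}^\perp}|e_\epsilon-\tfrac{1}{2}|d^*\xi_\epsilon|^2| = o(\leps)$. Using $(d^*\xi_\epsilon)_b = -\sum_a \partial_a(\xi_\epsilon)_{ab}$ and \cref{slice.perp.small}, only the components $(d^*\xi_\epsilon)_{n-1} = \partial_n\xi + (\text{small})$ and $(d^*\xi_\epsilon)_n = -\partial_{n-1}\xi + (\text{small})$ contribute non-trivially in $L^2$ on the slice, so $|d^*\xi_\epsilon|^2 = |\nabla_\perp\xi|^2 + o(1)$ in $L^1$ on the slice and
\[ \pi\theta\leps = \tfrac{1}{2}\int_{P_{y_\epsilon}^\perp}|\nabla_\perp\xi|^2 + o(\leps). \]

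\emph{Identification with the vortex energy.} By \cref{cover.claim} the chosen representatives $p_j^\epsilon$ satisfy $|p_j^\epsilon-p_\ell^\epsilon|/\epsilon\to\infty$ for $j\ne\ell$, so the disks $D_{C\epsilon}(p_j^\epsilon)$ are eventually disjoint, and on $\Omega_\epsilon := D_1^2\setminus \bigsqcup_j \bar D_{C\epsilon}(p_j^\epsilon)$ the function $\Phi$ is harmonic with $\partial_\nu\Phi = -\kappa_j^\epsilon/(C\epsilon) + O(1)$ on each inner circle. A direct Green's-theorem computation (using these boundary asymptotics and boundedness on $\partial D_1^2$) gives
\[ \tfrac{1}{2}\int_{\Omega_\epsilon}|\nabla_\perp\Phi|^2 = \pi\leps\sum_j(\kappa_j^\epsilon)^2 + 2\pi\sum_{j<\ell}\kappa_j^\epsilon\kappa_\ell^\epsilon|\log|p_j^\epsilon-p_\ell^\epsilon|| + O(1). \]
The matching step is to establish $\tfrac{1}{2}\int_{P_{y_\epsilon}^\perp}|\nabla_\perp\xi|^2 = \tfrac{1}{2}\int_{\Omega_\epsilon}|\nabla_\perp\Phi|^2 + o(\leps)$. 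I would first prove the pointwise estimate $\xi(z) = \Phi(z) + O(1)$ uniformly on $\Omega_\epsilon$, working from \eqref{xicomp.1} by decomposing the radial integral into three regimes: very small scales $r\leq\epsilon$ controlled by $|djv_\epsilon|\leq C W(u_\epsilon)/\epsilon^2$ together with monotonicity; intermediate scales $\epsilon^\alpha\leq r\leq\min_{\ell\ne j}|z-p_\ell^\epsilon|$ (with $\alpha$ close to $1$) controlled by the single-vortex case of \cref{lim.jac.lem}, which gives $r^{2-n}\int_{B_r(x)}\varphi(djv_\epsilon)_{n-1,n}\approx 2\pi\omega_{n-2}\kappa_j^\epsilon$; and larger $r$ controlled by the multi-vortex case of \cref{lim.jac.lem}, producing the expected $-\kappa_j^\epsilon\log|z-p_j^\epsilon|$ term from each vortex line after integration against $dr/r$. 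Since $\xi-\Phi$ satisfies $-\Delta_\perp(\xi-\Phi) = \sum_{a\leq n-2}\partial_a^2\xi$ on $\Omega_\epsilon$ (because $djv_\epsilon\equiv 0$ where $|u_\epsilon|\geq 1/2$), with small tangential-derivative source on a good slice, the pointwise estimate upgrades to $\int_{\Omega_\epsilon}|\nabla_\perp(\xi-\Phi)|^2 = o(\leps)$ via the annular energy estimate (a bounded harmonic function on a disk with a hole of radius $C\epsilon$ has Dirichlet energy $O(1/\leps)$). The cores $D_{C\epsilon}(p_j^\epsilon)$ themselves contribute $O(1)$ to $\int|\nabla_\perp\xi|^2$ via $|jv_\epsilon|\leq C/\epsilon$ and \cref{coex.dom}. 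Dividing the resulting identity by $\pi\leps$ gives the claim.

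\emph{Main obstacle.} The technical heart is the pointwise asymptotic $\xi = \Phi + O(1)$ uniformly on $\Omega_\epsilon$, which requires the careful multi-scale analysis of \eqref{xicomp.1} sketched above, together with the auxiliary estimate on the tangential-derivative source $\sum_{a\leq n-2}\partial_a^2\xi$ needed to upgrade from the pointwise to the $L^2$-gradient comparison between $\xi$ and $\Phi$.
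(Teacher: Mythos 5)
Your opening reduction (using the good-slice conditions and \cref{slice.perp.small} to get $\pi\theta\leps=\tfrac12\int_{P_{y_\epsilon}^\perp}|\nabla_\perp\xi|^2+o(\leps)$ with $\xi=(\xi_\epsilon)_{n-1,n}$) is sound and is essentially the paper's starting point, and the multi-scale analysis of \eqref{xicomp.1} via \cref{lim.jac.lem} is the right tool. The gap is in the matching step $\tfrac12\int_{P_{y_\epsilon}^\perp}|\nabla_\perp\xi|^2=\tfrac12\int_{\Omega_\epsilon}|\nabla_\perp\Phi|^2+o(\leps)$, in two places. First, the pointwise asymptotic you aim for is too strong: \cref{lim.jac.lem} gives $\lvert r^{2-n}\int_{B_r}(djv_\epsilon)_{n-1,n}-2\pi\omega_{n-2}\kappa^\epsilon(r)\rvert<\gamma$ only for $r\ge\epsilon^\alpha$ and $\epsilon$ small depending on $(\alpha,\gamma)$, so after integrating against $dr/r$ the error is $\gamma\leps+C(1-\alpha)\leps+C(\alpha,\gamma)$, i.e.\ $o(\leps)$ after a diagonal argument, not $O(1)$. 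Second, and more seriously, an $L^\infty$ bound on $\xi-\Phi$ (whether $O(1)$ or $o(\leps)$) does not upgrade to $\int_{\Omega_\epsilon}|\nabla_\perp(\xi-\Phi)|^2=o(\leps)$: the difference is not $\Delta_\perp$-harmonic but satisfies $-\Delta_\perp(\xi-\Phi)=\sum_{a\le n-2}\partial_a^2\xi$, and nothing in the good-slice conditions or in \cref{slice.perp.small} controls second tangential derivatives of the $(n-1,n)$-component (that lemma bounds first derivatives of the \emph{other} components only). This is not a technicality: the slice is not a self-contained 2D problem, which is exactly what separates $n\ge3$ from the BBH renormalized-energy picture. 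Also, ``bounded harmonic on $D_1\setminus D_{C\epsilon}$ has Dirichlet energy $O(1/\leps)$'' is false as stated ($\mathrm{Re}(z^k)$ is bounded by $1$ on $D_1$ with energy $\pi k$); it holds only for competitors with slowly varying boundary data.

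The paper circumvents precisely this difficulty. Instead of comparing Dirichlet energies on the annular region, it uses $\int\chi|d^*\xi_\epsilon|^2\approx\int\chi\langle d^*\xi_\epsilon,jv_\epsilon\rangle$ and integrates by parts to obtain $\pi\theta=\lim\frac1{\leps}\int_{P_{y_\epsilon}^\perp}\tfrac12\beta_\epsilon\,djv_\epsilon(\partial_{n-1},\partial_n)$ with $\beta_\epsilon=(\xi_\epsilon)_{n-1,n}$. Since $djv_\epsilon$ is supported in the disks $D_{j,\epsilon}$ of radius $C\epsilon$, on which $\beta_\epsilon$ oscillates by $O(1)$ and $\int_{D_{j,\epsilon}}djv_\epsilon(\partial_{n-1},\partial_n)=2\pi\kappa_j^\epsilon$, the proposition reduces to the single asymptotic $\beta_\epsilon(p_i^\epsilon)=\kappa_i^\epsilon\leps+\sum_{j\ne i}\kappa_j^\epsilon\lvert\log|p_i^\epsilon-p_j^\epsilon|\rvert+o(\leps)$ --- your multi-scale computation, but needed only at the $k$ vortex centers and only to precision $o(\leps)$. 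To repair your argument, either establish genuine control on $\sum_{a\le n-2}\partial_a^2\xi$ on the slice (not available from the hypotheses), or replace the energy comparison by this duality pairing against the concentrated, quantized Jacobian.
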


\begin{proof} 
	
	To begin, fix a cutoff function $\chi(y,z)=\chi(z)$ on $\mathbb{R}^{n-2}\times \mathbb{R}^2$ satisfying $\chi(z)=1$ for $|z|\le\frac{1}{2}$, $\chi(z)=0$ for $|z|\ge\frac{3}{4}$, and $|d\chi|\leq C$.
	Since $y_{\epsilon}\in \mathcal{G}_{\epsilon,\delta_{\epsilon}}$, we have
	$$\int_{P_{y_{\epsilon}}^{\perp}}|jv_\epsilon-d^*\xi_\epsilon|^2=o(\leps );$$
	together with \cref{theta.xi}, this implies in particular that
	$$\|jv_\epsilon\|_{L^2(P_{y_\epsilon}^\perp)}\le C\leps ^{1/2}.$$
	Moreover, using \eqref{xi.conv} and the fact that $\mathcal{V}(u_{\epsilon})\cap P_{y_{\epsilon}}^{\perp}\subset \{y_\epsilon\}\times D_{1/4}$, it is also easy to check that
	$$\int_{\{y_\epsilon\}\times[D_1\setminus D_{1/2}]}(|\xi_\epsilon|^2+|d^*\xi_\epsilon|^2)\le C.$$
	
	As a consequence, using again \cref{theta.xi}, we have
	$$\pi\theta=\lim_{\epsilon \to 0}\frac{1}{\leps }\int_{P_{y_{\epsilon}}^{\perp}}\mz\chi|d^*\xi_\epsilon|^2
	=\lim_{\epsilon\to 0}\frac{1}{\leps }\int_{P_{y_{\epsilon}}^{\perp}}\frac{1}{2}\chi\langle d^*\xi_{\epsilon},jv_{\epsilon}\rangle.$$
	By \cref{slice.perp.small}, we can further refine this to see that
	$$\pi\theta=-\lim_{\epsilon\to 0}\frac{1}{\leps }\int_{P_{y_{\epsilon}}^{\perp}}\frac{1}{2}\chi[\partial_{n-1}(\xi_{\epsilon})_{n-1,n}jv_{\epsilon}(\partial_n)+\partial_n(\xi_{\epsilon})_{n,n-1}jv_{\epsilon}(\partial_{n-1})].$$
	Writing $\beta_{\epsilon}:=(\xi_{\epsilon})_{n-1,n}$ and integrating by parts on $P_{y_{\epsilon}}^{\perp}=\{y_{\epsilon}\}\times D_1$, we obtain
	\begin{align*}
		\pi\theta&=\lim_{\epsilon\to 0}\frac{1}{\leps }\int_{P_{y_{\epsilon}}^{\perp}}\frac{1}{2}\beta_{\epsilon}[\chi\,djv_{\epsilon}+d\chi\wedge jv_{\epsilon}].
	\end{align*}
	Since $\chi\equiv 1$ on $\{y_\epsilon\}\times D_{1/2}$, using the previous bounds and the fact that $|\beta_\epsilon|\le|\xi_\epsilon|$, we see that the second term gives no contribution in the limit. Also, $\chi\equiv 1$ on $\operatorname{spt}(djv_\epsilon)\cap P_{y_\epsilon}^\perp$. We conclude that
	\begin{equation}\label{theta.lim.comp}
		\pi\theta=\lim_{\epsilon\to 0}\frac{1}{\leps }\int_{P_{y_{\epsilon}}^{\perp}}\frac{1}{2}\beta_{\epsilon}\,djv_{\epsilon}(\partial_{n-1},\partial_n).
	\end{equation}
	
	Next, note that $djv_{\epsilon}|_{P_{y_{\epsilon}}^\perp}$ is supported in the set
	$$U_{\delta,\epsilon}=\{|u_{\epsilon}|<1-\delta\}\cap P_{y_{\epsilon}}^{\perp},$$
	and recall from \cref{cover.claim} that there exists a constant $C=C(\delta)$ and points $p_1^{\epsilon},\dots,p_k^{\epsilon}$ with $k\leq C$ such that
	\begin{align}\label{vort.in.disks}
		&\mathcal{V}(u_{\epsilon})\cap P_{y_\epsilon}^\perp\subseteq U_{\delta,\epsilon}\subseteq B_{C\epsilon}(p_1^{\epsilon})\cup\cdots\cup B_{C\epsilon}(p_k^{\epsilon}).
	\end{align}
	Using this in the right-hand side of \cref{theta.lim.comp}, and writing $D_{j,\epsilon}:=B_{C\epsilon}(p_j^{\epsilon})\cap P_{y_{\epsilon}}^{\perp}$, we see that
	\begin{align*}
		\int_{P_{y_{\epsilon}}^{\perp}}\frac{1}{2}\beta_{\epsilon}djv_{\epsilon}(\partial_{n-1},\partial_n)
		&={\textstyle\sum_{j=1}^k}\int_{D_{j,\epsilon}}\frac{1}{2}\beta_{\epsilon}djv_{\epsilon}(\partial_{n-1},\partial_n)\\
		&={\textstyle\sum_{j=1}^k}\beta_{\epsilon}(p_j^{\epsilon})\int_{D_{j,\epsilon}}\frac{1}{2}djv_{\epsilon}(\partial_{n-1},\partial_n)\\
		&\quad+{\textstyle\sum_{j=1}^k}\frac{1}{2}\int_{D_{j,\epsilon}}[\beta_{\epsilon}-\beta_{\epsilon}(p_j^{\epsilon})]djv_{\epsilon}(\partial_{n-1},\partial_n).
	\end{align*}
	Now, since $|d\beta_{\epsilon}|\leq |\nabla \xi_{\epsilon}|\leq C/\epsilon$ (as observed while proving the previous lemma), we have a pointwise bound of the form $|\beta_{\epsilon}-\beta_{\epsilon}(p_j^{\epsilon})|\leq C$ on $D_{j,\epsilon}$. Also, $\int_{P_{y_{\epsilon}}^{\perp}}|djv_\epsilon|\le C$ (by the pointwise bound $|djv_\epsilon|\le C\frac{W(u_\epsilon)}{\epsilon^2}$), and we deduce that
	\begin{align*}
		\pi\theta=\lim_{\epsilon \to 0}\frac{1}{\leps }{\textstyle\sum_{j=1}^k}\beta_{\epsilon}(p_j^\epsilon)\int_{D_{j,\epsilon}}\frac{1}{2}djv_\epsilon(\partial_{n-1},\partial_n).
	\end{align*}
	Moreover, noting that
	$$\int_{D_{j,\epsilon}}djv_{\epsilon}(\partial_{n-1},\partial_n)=2\pi \deg(v_{\epsilon},\partial D_{j,\epsilon})=2\pi \kappa_j^{\epsilon},$$
	it follows that
	\begin{equation}\label{theta.char.rev}
		\pi \theta=\lim_{\epsilon\to 0}\frac{1}{\leps}{\textstyle\sum_{i=1}^k}\beta_{\epsilon}(p_i^{\epsilon})\pi \kappa_i^{\epsilon}.
	\end{equation}
	
	With \eqref{theta.char.rev} in hand, to complete the proof, it suffices to show that
	\begin{equation}\label{beta.lim.char}
		\lim_{\epsilon\to 0}\frac{1}{\leps}(\beta_{\epsilon}(p_i^{\epsilon})-\kappa_i^{\epsilon}\leps-{\textstyle\sum_{j\neq i}} \kappa_j^{\epsilon}\lvert\log|p_i^{\epsilon}-p_j^{\epsilon}|\rvert)=0
	\end{equation}
	for every $i\in \{1,\ldots,k\}$. Up to relabeling the indices, it of course suffices to treat the case $i=1$, and assume that the distances
	$$r_j^{\epsilon}:=|p_1^{\epsilon}-p_j^{\epsilon}|$$
	are in increasing order $0=r_1^{\epsilon}<r_2^{\epsilon}\leq r_3^{\epsilon}\leq\cdots\leq r_k^{\epsilon}.$

	Now, fix $\gamma>0$ small and $\alpha\in (0,1)$ close to $1$, and observe that for $\epsilon$ sufficiently small (depending on $\alpha$ and $\gamma$), the ball $B_r(p_1^{\epsilon})$ satisfies the full hypotheses of \cref{lim.jac.lem}, with degree
	$$\kappa^{\epsilon}(r):={\textstyle\sum_{r_j^{\epsilon}<r}}\kappa_j^{\epsilon},$$
	whenever
	$$r\in \left[\epsilon^{\alpha},\frac{1}{4}\right]\setminus\bigcup_{j=2}^k\left[\frac{r_j^{\epsilon}-C\epsilon}{2},\frac{r_j^{\epsilon}+C\epsilon}{\delta_1}\right],$$
	where $\delta_1=\delta_1(\alpha,\gamma)>0$ is the constant from the hypotheses of \cref{lim.jac.lem}, since this ensures that, for each $j>1$, either $B_{2r}(p_1^\epsilon)$ is disjoint from $B_{C\epsilon}(p_j^\epsilon)$ if $r<r_j^{\epsilon}$ or $B_{C\epsilon}(p_j^\epsilon)\subseteq B_{\delta_1 r}(p_1^\epsilon)$ if $r\ge r_j^{\epsilon}$. In particular, for every such $r$, we have
	\begin{equation}\label{deg.jac}
		\left|2\pi\omega_{n-2}\kappa^{\epsilon}(r)-r^{2-n}\int_{B_r(p_1^{\epsilon})}[djv_{\epsilon}(x')]_{n-1,n}\,dx'\right|<\gamma.
	\end{equation}
	Now, it follows from \cref{xi.comp.lem} (and the bound $|djv_{\epsilon}|\leq C\frac{W(u_\epsilon)}{\epsilon^2}\le\frac{C}{\epsilon^2}$) that
	$$\left|2\pi\omega_{n-2}\beta_{\epsilon}(p_1^{\epsilon})-\int_{\epsilon}^{1/4}\frac{1}{r}\left(r^{2-n}\int_{B_r(p_1^{\epsilon})}[djv_{\epsilon}(x')]_{n-1,n}\,dx'\right) dr\right|\leq C.$$
	
	Combining this with \cref{deg.jac}, we have
	\begin{align*}
		&\left|2\pi\omega_{n-2}\beta_{\epsilon}(p_1^\epsilon)-2\pi\omega_{n-2}\int_{\epsilon}^{1/4}\frac{\kappa^{\epsilon}(r)}{r}\,dr\right| \\
		&\le C+\gamma\log(1/4\epsilon)
		+\int_I\frac{1}{r}\left|2\pi\omega_{n-2}\kappa^{\epsilon}(r)-r^{2-n}\int_{B_r(p_1^{\epsilon})}[djv_{\epsilon}(x')]_{n-1,n}\,dx'\right|,
	\end{align*}
	where
	\begin{align*}
		&I:=(\epsilon,\epsilon^\alpha)\cup\bigcup_{j=2}^k\left(\frac{r_j^{\epsilon}-C\epsilon}{2},\frac{r_j^{\epsilon}+C\epsilon}{\delta_1}\right).
	\end{align*}
	Appealing once more to the uniform bound 
	$$r^{2-n}\int_{B_r(p_1^{\epsilon})}|djv_{\epsilon}|\leq r^{2-n}\int_{B_r(p_1^{\epsilon})}\frac{W(u_{\epsilon})}{\epsilon^2}\leq C(K)=C$$
	(by definition of $\mathcal{G}_{\epsilon,\delta_\epsilon}$) and noting that each $r_j^\epsilon\ge 2C\epsilon$ for $j\geq 2$, it therefore follows that
	\begin{align*}
		&\left|\beta_{\epsilon}(p_1^{\epsilon})-\int_{\epsilon}^{1/4}\frac{\kappa^{\epsilon}(r)}{r}\,dr\right| \\
		&\leq C+\gamma\leps +\int_I\frac{C}{r}\,dr\\
		&\le C+\gamma\leps +C\log(\epsilon^{\alpha-1})+{\textstyle\sum_{j=1}^k}\log\left(\frac{(r_j^{\epsilon}+C\epsilon)/\delta_1}{(r_j^{\epsilon}-C\epsilon)/2}\right)\\
		&\le C+\gamma\leps +C(1-\alpha)\leps +C(\alpha,\gamma).
	\end{align*}
	
	Dividing through by $\leps $ and passing to the limit $\epsilon\to 0$, we deduce that
	$$\limsup_{\epsilon\to 0}\frac{1}{\leps}\left|\beta_{\epsilon}(p_1^{\epsilon})-\int_{\epsilon}^{1/4}\frac{\kappa^{\epsilon}(r)}{r}\,dr\right|\leq \gamma+C(1-\alpha)$$
	for any $\gamma>0$ and $\alpha\in (0,1)$. In particular, taking $\gamma\to 0$ and $\alpha\to 1$, we deduce that
	\begin{equation}\label{limbeta.comp}
		\limsup_{\epsilon\to 0}\frac{1}{\leps}\left|\beta_{\epsilon}(p_1^{\epsilon})-\int_{\epsilon}^{1/4}\frac{\kappa^{\epsilon}(r)}{r}\,dr\right|=0.
	\end{equation}
	But now we need only observe that
	\begin{align*}
		\int_{\epsilon}^{1/4}\frac{\kappa^{\epsilon}(r)}{r}\,dr&=\int_{\epsilon}^{1/4}\frac{1}{r}\cdot{\textstyle\sum_{r_j^{\epsilon}<r}}\kappa_j^{\epsilon}\,dr\\
		&=\int_{\epsilon}^{1/4}\frac{\kappa_1^{\epsilon}}{r}\,dr+{\textstyle\sum_{j=2}^k}\int_{r_j^{\epsilon}}^{1/4}\frac{\kappa_j^{\epsilon}}{r}\,dr\\
		&=\kappa_1^{\epsilon}(\log(1/\epsilon)+\log(1/4))+{\textstyle\sum_{j=2}^k}\kappa_j^{\epsilon}(\log(1/r_j^{\epsilon})+\log(1/4))\\
		&=\kappa_1^{\epsilon}\leps+{\textstyle\sum_{j=2}^k}\kappa_j^{\epsilon}\lvert\log |p_1^{\epsilon}-p_j^{\epsilon}|\rvert+O(1).
	\end{align*}
	Together with \eqref{limbeta.comp}, this gives the desired identity \eqref{beta.lim.char}, completing the proof.
\end{proof}

\section{Solutions of density $<2$}

Denote by $\mathcal{D}\subset\mathbb{R}$ the collection of densities $\theta$ arising as in the statement of \cref{dens.gap}, and set
$$\theta_{min}:=\inf \mathcal{D}.$$
Note that $\theta_{min}>0$, by virtue of \cref{asymp}.
By a simple diagonal sequence argument, we see that $\theta_{min}\in \mathcal{D}$. 
%
In terms of the minimum density $\theta_{min}$, it is clear that \cref{dens.gap} is equivalent to the following proposition.

\begin{proposition}\label{low.dens.prop} Under the hypotheses of \cref{dens.gap}, if $\theta<2\theta_{min}$, then $\theta=1$.
\end{proposition}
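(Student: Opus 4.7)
The strategy, indicated in the introduction, is to combine the preliminary energy identity of \cref{pre.ener.id}, the slice-wise potential bound of \cref{w.deg.prop}, and the monotonicity formula for \cref{gl.eqn}, together with a Jerrard--Soner--type integrality constraint, to force the vortex degree structure on a good slice to consist of a single vortex of degree $\pm 1$. I would start from a good slice $y_\epsilon\in\mathcal{G}_{\epsilon,\delta_\epsilon}$ and the vortex points $p_1^\epsilon,\dots,p_k^\epsilon$ with degrees $\kappa_j^\epsilon$ coming from \cref{cover.claim}.

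The first key step is a \emph{clustering reduction}: leveraging the hypothesis $\theta<2\theta_{min}$, I would show via a blow-up argument that, after an appropriate translation, all $p_j^\epsilon$ lie within distance $\epsilon^\delta$ of the origin of the $2$-disk for some fixed small $\delta>0$. The dichotomy runs as follows: if the vortices split into two or more sub-clusters separated at some intermediate scale, a compactness/rescaling argument produces at least two limiting densities each lying in $\mathcal{D}$, hence each $\ge\theta_{min}$, forcing $\theta\ge 2\theta_{min}$ and contradicting the hypothesis; otherwise everything clusters at a single scale, as claimed. Applying \cref{pre.ener.id} both at unit scale and, via the rescaling $\tilde u_{\tilde\epsilon}(x):=u_\epsilon(\epsilon^\delta x)$ with $\tilde\epsilon=\epsilon^{1-\delta}$, at scale $\epsilon^\delta$, and writing $\kappa:=\sum_j\kappa_j^\epsilon$, a direct telescoping of the cross terms (using $|p_j^\epsilon-p_\ell^\epsilon|\le\epsilon^\delta$) yields the energy drop identity
$$E_\epsilon(u_\epsilon;B_1(0))-(\epsilon^\delta)^{2-n}E_\epsilon(u_\epsilon;B_{\epsilon^\delta}(0))=\pi\omega_{n-2}|\kappa|^2\delta\leps+o(\leps).$$

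The monotonicity formula for \cref{gl.eqn} bounds the LHS below by $\int_{\epsilon^\delta}^1\frac{2}{r^{n-1}}\int_{B_r(0)}\frac{W(u_\epsilon)}{\epsilon^2}\,dr$, and averaging in the logarithmic scale on $[\epsilon^\delta,1]$ then extracts a radius $r_\epsilon\in[\epsilon^\delta,1]$ with
$$\frac{2}{r_\epsilon^{n-2}}\int_{B_{r_\epsilon}(0)}\frac{W(u_\epsilon)}{\epsilon^2}\le\pi\omega_{n-2}|\kappa|^2+o(1).$$
On the other hand, \cref{many.good.slices} applied at scale $r_\epsilon$, combined with the fact that the vortex configuration remains essentially the same on most slices $y\in B_{r_\epsilon}^{n-2}(0)$ (with identical degree multiset $\{\kappa_j^\epsilon\}$), allows me to integrate the slice-wise bound of \cref{w.deg.prop} and apply Fubini to obtain
$$\frac{2}{r_\epsilon^{n-2}}\int_{B_{r_\epsilon}(0)}\frac{W(u_\epsilon)}{\epsilon^2}\ge\frac{\pi\omega_{n-2}}{2}\sum_j|\kappa_j^\epsilon|-o(1).$$
The two bounds together yield the key integer inequality $\sum_j|\kappa_j^\epsilon|\le 2|\kappa|^2$.

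To conclude, by monotonicity the small-scale density $|\kappa|^2$ satisfies $|\kappa|^2\le\theta<2\theta_{min}\le 2$ (using that $1\in\mathcal{D}$, hence $\theta_{min}\le 1$), while by \cite{JSoner} the limit Jacobian has integer density, so $|\kappa|\in\{0,1\}$. The case $\kappa=0$ would force the blown-down density to vanish, contradicting \cref{clearing.flat} applied at the origin; hence $|\kappa|=1$ and $\sum_j|\kappa_j^\epsilon|\le 2$. A short check on integer configurations rules out $\sum_j|\kappa_j^\epsilon|=2$, since every such configuration gives $\sum_j\kappa_j^\epsilon\in\{0,\pm 2\}$ rather than $\pm 1$. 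Thus exactly one $\kappa_j^\epsilon=\pm 1$ and the rest vanish, and \cref{pre.ener.id} forces $\theta=1$. The main obstacle I anticipate is the clustering reduction: cleanly extracting an intermediate scale and producing sub-densities in $\mathcal{D}$ is a delicate diagonal blow-up argument, and it is precisely this step that explains why the hypothesis is formulated in terms of $2\theta_{min}$ rather than $2$; the Fubini-type slice-wise lower bound also requires careful bookkeeping of how the degree multiset varies transversally.
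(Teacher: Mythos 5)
Your proposal is correct and follows essentially the same route as the paper: the clustering reduction is the paper's \cref{loc.vort.lem} (with $\tau_0$ chosen from $2\theta_{min}-\theta$), the energy-drop plus monotonicity extraction of $r_\epsilon$ is \cref{ann.en.lem} and \cref{w.upper.bd}, the slice-wise potential lower bound is \cref{w.deg.prop} combined with the parity argument of Section 5 (packaged there as the quantitative \cref{unit.w.cor} rather than a direct Fubini at scale $r_\epsilon$), and the final computation of $\theta$ is \cref{pre.ener.id}. The only imprecision is your appeal to monotonicity for $|\kappa|^2\le\theta$, which is not immediate; but the weaker bound $|\kappa|\le\theta<2$ from the Jerrard--Soner Jacobian estimate, together with integrality of $\kappa$, already gives $|\kappa|\le 1$, which is all the argument needs.
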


In particular, having established \cref{low.dens.prop}, it follows immediately that $\theta_{min}=1$, and that $\mathcal{D}\cap [1,2)=\{1\}$.

To begin the proof of \cref{low.dens.prop}, we note that the assumption $\theta<2\theta_{min}$ allows us to make the following reduction, showing roughly that the vorticity set lies close to a single $(n-2)$-plane at all scales $\geq \epsilon^{\tau_0}$ for some fixed $\tau_0>0$. Recall that $\mathcal{G}_{\epsilon,\delta}$ is the set of $\delta$-good slices for $u_\epsilon$, introduced in the previous section.

\begin{lemma}\label{loc.vort.lem} Let $\tau_0:=\frac{2\theta_{min}-\theta}{4\theta_{min}}\in(0,\mz)$. Under the assumptions of \cref{low.dens.prop}, for any $\eta\in (0,1]$ there exists $c(\eta)>0$ independent of $\epsilon$ such that, for any sequences $\delta_{\epsilon}\to 0$ and $y_{\epsilon}\in \mathcal{G}_{\epsilon,\delta_{\epsilon}}$, there exists
	$$p_{\epsilon}=(y_{\epsilon},z_{\epsilon})\in P_{y_{\epsilon}}^{\perp}$$
	such that
	\begin{equation}\label{close.to.vort}
		B_{\epsilon^{1/2}}(p_\epsilon)\cap\mathcal{V}(u_\epsilon)\neq\emptyset
	\end{equation}
	and, for any 
	$x=(y,z)\in Q$,
	we have
	\begin{equation}\label{vort.dist}
		\dist(x,\mathcal{V}(u_{\epsilon})) \ge c|z-z_{\epsilon}|,
	\end{equation}
	provided that $|z-z_{\epsilon}|\ge\eta\max\{\epsilon^{\tau_0},|y-y_\epsilon|\}$ and that
	$\epsilon$ is sufficiently small (depending on $\eta$).
\end{lemma}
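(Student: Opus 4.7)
The plan is a blow-up at the scale of a hypothetical ``secondary vortex'', where the gap $\theta<2\theta_{min}$ will force a contradiction. The first conclusion $B_{\epsilon^{1/2}}(p_\epsilon)\cap\mathcal{V}(u_\epsilon)\neq\emptyset$ is easy: applying \cref{cover.claim} with $\delta=1/2$, the slice vorticity $\mathcal{V}(u_\epsilon)\cap P_{y_\epsilon}^\perp$ is nonempty for $\epsilon$ small (else \cref{pre.ener.id} would give $\theta=0$) and contained in a bounded number of disjoint disks $B_{C\epsilon}(p_j^\epsilon)$, and we simply set $p_\epsilon:=p_1^\epsilon$. All the real work lies in the cone-type estimate.

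Argue by contradiction: if the conclusion fails for some fixed $\eta_0\in(0,1]$, then after passing to a subsequence there exist $x_\epsilon=(y_\epsilon^*,z_\epsilon^*)\in Q$ with $|z_\epsilon^*-z_\epsilon|\geq \eta_0\max\{\epsilon^{\tau_0},|y_\epsilon^*-y_\epsilon|\}$ and vortex points $q_\epsilon\in\mathcal{V}(u_\epsilon)$ with $|q_\epsilon-x_\epsilon|\leq c_\epsilon r_\epsilon$, where $r_\epsilon:=|z_\epsilon^*-z_\epsilon|\geq \eta_0\epsilon^{\tau_0}$ and $c_\epsilon\to 0$. If $r_\epsilon$ stays bounded below along a subsequence, then $|z_\epsilon|\to 0$ together with the Hausdorff convergence $\mathcal{V}(u_\epsilon)\to P$ from \cref{asymp} gives $\dist(x_\epsilon,\mathcal{V}(u_\epsilon))\geq r_\epsilon/4$ eventually, contradicting $c_\epsilon\to 0$; hence $r_\epsilon\to 0$. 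We rescale $\tilde u(x):=u_\epsilon(p_\epsilon+r_\epsilon x)$, noting $\tilde\epsilon:=\epsilon/r_\epsilon\leq\eta_0^{-1}\epsilon^{1-\tau_0}\to 0$, and apply \cref{asymp} to $\tilde u$ (after a further subsequence) to obtain the blow-up limit $\tilde\mu=|\tilde V|+\tilde f\,\operatorname{vol}$ on every fixed ball $B_R$.

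Since $V=\theta\mathcal{H}^{n-2}\mrestr P$ is translation-invariant along $P$, a translation-invariance argument paralleling the one in the proof of \cref{lim.jac.lem} forces $\tilde V$ to decompose as a locally finite sum $\tilde V=\sum_j a_j[P+v_j]$ of planes parallel to $P$, with $v_j\in P^\perp$ and $a_j>0$. The Hausdorff-limit characterization of $\operatorname{spt}|\tilde V|$ in \cref{asymp}, combined with the presence of vortex points at $0$ (inherited from $p_\epsilon$) and at $\tilde q:=\lim(q_\epsilon-p_\epsilon)/r_\epsilon$ (whose $P^\perp$-component has magnitude $1$ by construction of $r_\epsilon$, while its $y$-component is bounded by $1/\eta_0$), forces two distinct planes in the decomposition: $P$ itself and $P+v_{i'}$ with $v_{i'}=(0,\pm 1)\neq 0$.

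The contradiction now comes from a mass comparison on large balls. By GL monotonicity applied at $p_\epsilon$ together with the concentration $\mu_\epsilon\rightharpoonup\theta\mathcal{H}^{n-2}\mrestr P$, one obtains $\tilde\mu(B_R)\leq s\theta\omega_{n-2}R^{n-2}$ for every fixed $R$ (with $\epsilon$ small enough that $Rr_\epsilon$ remains in the original domain), where $s:=\lim\leps/\lteps\in[1,1/(1-\tau_0)]$. On the other hand, at a generic smooth point $w$ of each plane $P+v_j$, a further diagonal blow-up of $\tilde u$ centered at $w$ at an auxiliary scale $r''(\tilde\epsilon):=1/\lteps$ (chosen so that $r''/\tilde\epsilon\to\infty$ and $|\log(\tilde\epsilon/r'')|/\lteps\to 1$) produces a new GL sequence whose concentration varifold is exactly $a_j[P]$; this places $a_j$ in $\mathcal{D}$, hence $a_j\geq\theta_{min}$. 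For $R$ large this gives $|\tilde V|(B_R)\geq(a_i+a_{i'})\omega_{n-2}R^{n-2}(1-O(R^{-2}))\geq 2\theta_{min}\omega_{n-2}R^{n-2}(1-o(1))$, and letting $R\to\infty$ yields
$$2\theta_{min}\leq s\theta\leq\frac{\theta}{1-\tau_0}=\frac{4\theta\theta_{min}}{2\theta_{min}+\theta},$$
which is strictly less than $2\theta_{min}$ precisely because $\theta<2\theta_{min}$, the desired contradiction. The main obstacle is the density lower bound $a_j\geq\theta_{min}$: we cannot invoke \cref{dens.cor} (a consequence of the theorem we are proving), so the particular scale $r''=1/\lteps$ is engineered specifically to make $|\log(\tilde\epsilon/r'')|\sim\lteps$, thereby preserving the density $a_j$ in the doubly-rescaled limit and placing it in $\mathcal{D}$ by definition.
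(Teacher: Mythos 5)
Your overall strategy is the same as the paper's: choose $p_\epsilon$ essentially on the vorticity set in the good slice, argue by contradiction, blow up at the scale $r_\epsilon=|z^*_\epsilon-z_\epsilon|$ of the offending point, use the good-slice control on $|du_\epsilon(P)|^2$ to force the blow-up varifold to be a sum of planes parallel to $P$ of multiplicity $\ge\theta_{min}$ containing two distinct planes, and then contradict $\theta<2\theta_{min}$ via the monotonicity formula and the choice of $\tau_0$. The differences from the paper are cosmetic: the paper takes $p_\epsilon$ to be the closest point of the slice to $\mathcal{V}(u_\epsilon)$ and proves \cref{close.to.vort} by a separate blow-up, whereas your $p_\epsilon=p_1^\epsilon\in U_{1/2,\epsilon}\subseteq\mathcal{V}(u_\epsilon)$ makes \cref{close.to.vort} trivial; and the paper rescales so the second plane sits at distance $\delta\to 0$ and works on $B_1$, whereas you place it at distance $1$ and send $R\to\infty$. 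Both variants of the mass comparison give $2\theta_{min}\le\theta/(1-\tau_0)$, which is the same contradiction.

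The one step that does not work as written is your justification of the multiplicity lower bound $a_j\ge\theta_{min}$ via a blow-up centered at a \emph{fixed} point $w\in P+v_j$ at the \emph{shrinking} scale $r''=1/\lteps$. The choice of $r''$ only addresses the harmless issue $|\log(\tilde\epsilon/r'')|/\lteps\to 1$ (which a fixed scale $\rho$ already achieves); it does not address the real issue, namely that the convergence $\frac{\rho^{2-n}}{\pi\lteps}E_{\tilde\epsilon}(\tilde u;B_\rho(w))\to a_j\omega_{n-2}+O(\rho^2)$ holds for each \emph{fixed} $\rho$ with no quantified rate, and likewise the Hausdorff convergence $\mathcal{V}(\tilde u_{\tilde\epsilon})\to\operatorname{spt}|\tilde V|$ has no rate. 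Since monotonicity only transfers upper bounds from large scales to small ones, at the prescribed scale $r''\to 0$ the density ratio at $w$ may drop below $a_j\omega_{n-2}$, and $B_{r''}(w)$ may contain no vorticity at all, so the doubly rescaled concentration varifold $\hat V$ need not be $a_j[P]$ — it can be supported away from the origin, have smaller total mass, or vanish. (The helical solutions of \cref{kap.tau.sols} illustrate the danger: their vortex components sit at mutual distance $\sim\lteps^{-1/2}\gg 1/\lteps$, so a ball of radius $1/\lteps$ about the concentration axis misses the vorticity entirely.) The repair is standard and is what the paper implicitly invokes: work at a fixed scale $\rho$, where the restricted family concentrates on $a_j\mathcal{H}^{n-2}\mrestr P$ up to a diffuse part of size $O(\rho^2)$, and then extract a diagonal subsequence as $\rho\to 0$ (with the rate chosen a posteriori, not prescribed in advance) to exhibit $a_j\in\mathcal{D}$ and hence $a_j\ge\theta_{min}$; alternatively, recenter the blow-up at actual vorticity points $w_{\tilde\epsilon}\to w$. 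With that step corrected, your argument goes through.
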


\begin{proof}
	
	Given $y_\epsilon\in \mathcal{G}_{\epsilon,\delta_{\epsilon}}$, choose $z_\epsilon\in D_1^2$ such that $p_\epsilon=(y_\epsilon,z_\epsilon)$ is the closest point in the slice $P_{y_{\epsilon}}^{\perp}$ to the vorticity set $\mathcal{V}(u_\epsilon)$.
	If \cref{close.to.vort} fails along a subsequence, then setting $r_\epsilon:=\dist(p_\epsilon,\mathcal{V}(u_\epsilon))\ge\epsilon^{1/2}$ we can consider the rescaled solutions
	$$\tilde{u}_{\tilde\epsilon}(x):=u_{\epsilon}(p_{\epsilon}+r_{\epsilon}x),$$
	which solve the $\tilde{\epsilon}$-Ginzburg--Landau equation on $B_2(0)$ with $\tilde{\epsilon}:=\epsilon/r_{\epsilon}\le\epsilon^{1/2}$ (note also that $z_\epsilon\to 0$ and $r_\epsilon\to 0$, since $\mathcal{V}(u_\epsilon)\to P\cap B_2(0)$ in the Hausdorff sense).
	By our assumption $y_{\epsilon}\in\mathcal{G}_{\epsilon,\delta_\epsilon}$, we then see that the rescaled solutions $\tilde{u}_{\tilde\epsilon}$ satisfy
	\begin{align}\label{still.van}
		&\lim_{\tilde\epsilon\to 0}\frac{1}{\lteps }\int_{B_2(0)}|d\tilde{u}_{\tilde\epsilon}(P)|^2=0.
	\end{align}
	By \cref{asymp}, the concentrated part of the limiting energy measure
	$$\mu=\lim_{\epsilon\to 0}\frac{e_{\tilde{\epsilon}}(\tilde{u}_{\tilde\epsilon})}{\pi\lteps }\,dx$$
	is a stationary rectifiable varifold $V$, and by reversing the proof of \cref{par.van.lem}, it follows that its tangent planes coincide with $P$, and we deduce that $V$ is given by a locally finite union of $(n-2)$-planes parallel to $P$, each with multiplicity at least $\theta_{min}$.
	
	By assumption, $\mathcal{V}(\tilde u_{\tilde\epsilon})$ does not intersect $B_1(0)$, and since $p_\epsilon$ was chosen to be the closest point in $P_{y_{\epsilon}}^{\perp}$ to the vorticity set,
	we see that there exists $q_{\tilde\epsilon}\in\R^{n-2}$ with $|q_{\tilde\epsilon}|=1$ such that $(q_{\tilde\epsilon},0)\in\mathcal{V}(\tilde u_{\tilde\epsilon})$.
	Since the support of $|V|$ is the Hausdorff limit of $\mathcal{V}(\tilde u_{\tilde\epsilon})$, it follows that it is disjoint from $B_1(0)$, but at the same time it contains the whole $(n-2)$-plane $P+(q_0,0)=P$ (intersected with $B_2$),
	for a subsequential limit $q_0$ of $q_{\tilde\epsilon}$, a contradiction.
	
	Now let us verify \cref{vort.dist}. We proceed by a similar contradiction argument: suppose to the contrary that there exists $x_{\epsilon}=(y'_{\epsilon},z_{\epsilon}')\in 
	Q$ such that $|z_\epsilon'-z_{\epsilon}|\ge\eta\max\{\epsilon^{\tau_0},|y_\epsilon'-y_\epsilon|\}$, but
	\begin{align}\label{almost.vort}
		&\lim_{\epsilon\to 0}\frac{\dist(x_{\epsilon},\mathcal{V}(u_{\epsilon}))}{|z_{\epsilon}'-z_{\epsilon}|}=0.
	\end{align}
	Evidently, since $\dist(x_\epsilon,\mathcal{V}(u_\epsilon))\to 0$, we must have $z_\epsilon'\to 0$, and hence
	$$s_{\epsilon}:=|z_{\epsilon}'-z_{\epsilon}|\to 0.$$
	For fixed small $\delta\in(0,1)$, we can consider the rescaled solutions
	$$\tilde{u}_{\tilde\epsilon}(x):=u_{\epsilon}(p_{\epsilon}+s_{\epsilon}x/\delta),$$
	which solve the $\tilde{\epsilon}$-Ginzburg--Landau equation on $B_2(0)$ with $\tilde{\epsilon}:=\delta\epsilon/s_{\epsilon}\in(\delta\epsilon,\eta^{-1}\epsilon^{1-\tau_0})$ (since $s_\epsilon\ge\eta\epsilon^{\tau_0}$). 
	Again, the rescaled solutions $\tilde{u}_{\tilde\epsilon}$ satisfy \cref{still.van}.
	Also, by \cref{close.to.vort}, since $\tau_0<\mz$ we have
	$$\lim_{\epsilon \to 0}\frac{\dist(p_\epsilon,\mathcal{V}(u_\epsilon))}{s_\epsilon}\le\lim_{\epsilon \to 0}\frac{\epsilon^{1/2}}{\eta\epsilon^{\tau_0}}=0.$$
	After passing to a subsequence, by the last observation and \cref{almost.vort}, the Hausdorff limit of $\mathcal{V}(\tilde{u}_{\tilde\epsilon})\cap B_1(0)$ must contain $0$ and the point
	$$\xi:=\lim_{\epsilon\to 0}\delta \cdot \frac{x_{\epsilon}-p_{\epsilon}}{s_{\epsilon}},$$
	which exists thanks to the assumption that $|y_\epsilon'-y_\epsilon|\le\eta^{-1}|z_\epsilon'-z_\epsilon|=\eta^{-1}s_\epsilon$, and satisfies
	$$|P^{\perp}(\xi)|=\lim_{\epsilon\to 0}\frac{\delta}{s_{\epsilon}}|z_{\epsilon}'-z_{\epsilon}|=\delta.$$
	As observed above, from \cref{still.van} it follows that in $B_1(0)$ the support of the energy concentration varifold contains the parallel $(n-2)$-planes $P$ and $(P+\xi)$, and therefore the limit energy measure $\mu=\lim_{\epsilon\to 0}\frac{e_{\tilde{\epsilon}}(\tilde{u}_{\tilde{\epsilon}})}{\pi\lteps}\,dx$ satisfies
	$$\mu\geq \theta_{min} \mathcal{H}^{n-2}\mrestr P+\theta_{min}\mathcal{H}^{n-2}\mrestr (P+\xi).$$
	Hence,
	$$\liminf_{\epsilon\to 0}\int_{B_1(0)}\frac{e_{\tilde{\epsilon}}(\tilde{u}_{\tilde\epsilon})}{\pi\lteps }\geq \mu(B_1(0))\geq \theta_{min}\omega_{n-2}[1+(1-\delta^2)^{\frac{n-2}{2}}].$$
	Now, returning to the original family of solutions $u_{\epsilon}$, by the monotonicity formula \cref{monotonicity} we have
	\begin{align*}
		\theta&=\lim_{\epsilon\to 0}\frac{1}{\omega_{n-2}}\int_{B_{1}(p_{\epsilon})}\frac{e_{\epsilon}(u_{\epsilon})}{\pi\leps }\\
		&\geq \liminf_{\epsilon\to 0}\frac{1}{\omega_{n-2}(s_{\epsilon}/\delta)^{n-2}}\int_{B_{s_{\epsilon}/\delta}(p_{\epsilon})}\frac{e_{\epsilon}(u_{\epsilon})}{\pi\leps }\\
		&=\liminf_{\epsilon\to 0}\frac{\lteps }{\leps }\frac{1}{\omega_{n-2}}\int_{B_1(0)}\frac{e_{\tilde{\epsilon}}(\tilde{u}_{\tilde\epsilon})}{\pi\lvert\log{\tilde{\epsilon}}\rvert}\\
		&\geq (1-\tau_0)\theta_{min}[1+(1-\delta^2)^{\frac{n-2}{2}}],
	\end{align*}
	and since $\delta>0$ was arbitrary, it follows that
	$$\theta\geq 2(1-\tau_0)\theta_{min}.$$
	However, this cannot hold since we have chosen $\tau_0$ such that $\tau_0<\frac{2\theta_{min}-\theta}{2\theta_{min}}$.
	We thus reach a contradiction, concluding the proof.
\end{proof}

Next, choosing a radial cutoff function $\phi\in C^1_c(D_1)$ with $0\le\phi\le 1$ and $\phi\equiv 1$ on $D_{1/2}$, and applying \cite[Theorem~2.1]{JSoner},
we see that
$$\limsup_{\epsilon\to 0}\left|\int_{D_1}\phi J_\epsilon\right|
\le\lim_{\epsilon\to 0}\int_{P_{y_\epsilon}^{\perp}}\frac{e_\epsilon(u_\epsilon)}{\leps }
=\pi\theta,$$
where $J_\epsilon:=\mz dju_\epsilon$ is the Jacobian of $u_\epsilon$ along the slice $P_{y_\epsilon}^{\perp}\cong D_1$
(in the last equality we used the fact that $y_\epsilon\in\mathcal{G}_{\epsilon,\delta_\epsilon}$,
which gives $\lvert\int_{P_{y_\epsilon}^{\perp}}\frac{e_\epsilon(u_\epsilon)}{\leps }-\pi\theta\rvert\le\delta_\epsilon$).
But, integrating by parts and using polar coordinates, we have
\begin{align*}
	&\int_{D_1}\phi J_\epsilon
	=-\frac{1}{2}\int_0^1\de_r\phi\left[\int_{\{y_\epsilon\}\times\de D_r}ju_\epsilon(d\theta)\right]\,dr,
\end{align*}
and since $|u_\epsilon|\to 1$ on $\operatorname{spt}(d\phi)\subset D_1\setminus D_{1/2}$, the last expression is the same as
\begin{align*}
	-\frac{1}{2}\int_{1/2}^1\de_r\phi\left[\int_{\{y_\epsilon\}\times\de D_r}jv_\epsilon(d\theta)\right]\,dr
	=\pi\deg(v_{\epsilon},\{0\}\times S^1),
\end{align*}
up to an infinitesimal error. We deduce that the degree
$$\kappa_\epsilon:=\deg(v_{\epsilon},\{0\}\times S^1)\in \mathbb{Z}$$
eventually satisfies
$$|\kappa_\epsilon|\le\theta<2\theta_{min}\le 2.$$
Thus, passing to a subsequence, we may assume that $\kappa_\epsilon=\kappa$ is constant and
\begin{align}\label{kappa.at.most.1}
	&|\kappa|\le 1.
\end{align}
Applying \cref{loc.vort.lem} (for fixed $\eta$), note also that
\begin{align}\label{deg.all.scales}
	&\kappa=\deg(v_\epsilon,\{y_\epsilon\}\times\de D_r(z_\epsilon))\quad\text{for all }r\in[\eta\epsilon^{\tau_0},\textstyle{\mz}],
\end{align}
since \cref{vort.dist} implies that $|u_\epsilon|>\mz$ on the annulus $\{y_\epsilon\}\times[D_1(0)\setminus D_{\eta\epsilon^{\tau_0}}(z_\epsilon)]$.

Ultimately, we wish to show that $|\kappa|=1=\theta$. First, we compute the energy contribution from an annular region centered at $p_{\epsilon}$.

\begin{lemma}\label{ann.en.lem} For any fixed $\eta\in (0,1]$ and $p_{\epsilon}$, $\tau\in (0,\tau_0]$ as in \cref{loc.vort.lem}, we have
	$$\lim_{\epsilon\to 0}\frac{1}{\leps (\epsilon^{\tau})^{n-2}}\int_{\mathcal{A}^{\eta}_{\epsilon^{\tau}}(p_{\epsilon})}e_{\epsilon}(u_{\epsilon})=\pi\omega_{n-2} \kappa^2\tau,$$
	where we set
	\begin{align*}
		&\mathcal{A}^{\eta}_{r}(p_{\epsilon}):=B_r^{n-2}(y_\epsilon)\times[D_1^2(0)\setminus D_{\eta r}^2(z_\epsilon)].
	\end{align*}
	More generally, for any family of radii $r_{\epsilon}\in [\epsilon^{\tau_0},1)$ for which $\lim_{\epsilon\to 0}\frac{\log(1/r_{\epsilon})}{\log(1/\epsilon)}=\tau$, we have
	\begin{equation}\label{ann.en.conc}
		\lim_{\epsilon\to 0}\frac{1}{\leps r_{\epsilon}^{n-2}}\int_{\mathcal{A}^{\eta}_{r_{\epsilon}}(p_{\epsilon})}e_{\epsilon}(u_{\epsilon})=\pi \omega_{n-2}\kappa^2\tau.
	\end{equation}
\end{lemma}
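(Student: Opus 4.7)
The strategy is to compute the energy on $\mathcal{A}^\eta_{r_\epsilon}(p_\epsilon)$ by slicing along the $P$-direction and exploiting that $v_\epsilon=u_\epsilon/|u_\epsilon|$ is an $S^1$-valued map with uniform winding number $\kappa$ around the translated axis through $p_\epsilon$. By \cref{loc.vort.lem}, every $x=(y,z)\in\mathcal{A}^\eta_{r_\epsilon}(p_\epsilon)$ satisfies $|z-z_\epsilon|\geq\eta r_\epsilon\geq\eta\max\{\epsilon^{\tau_0},|y-y_\epsilon|\}$ (since $r_\epsilon\geq\epsilon^{\tau_0}$), so $\dist(x,\mathcal{V}(u_\epsilon))\geq c|z-z_\epsilon|>0$, whence $|u_\epsilon|>\tfrac12$ and $v_\epsilon$ is $S^1$-valued on the whole annulus. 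By continuity of integer-valued degrees together with \cref{deg.all.scales}, $\deg(v_\epsilon,\{y\}\times\partial D_\rho(z_\epsilon))=\kappa$ for every $y\in B_{r_\epsilon}^{n-2}(y_\epsilon)$ and $\rho\in[\eta r_\epsilon,1/2]$.

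Next, using the decomposition $|du_\epsilon|^2=|d|u_\epsilon||^2+|u_\epsilon|^2|dv_\epsilon|^2$ valid on $\{|u_\epsilon|\geq\tfrac12\}$, together with the good-slice bounds for the containing cylinder $B_{r_\epsilon}^{n-2}(y_\epsilon)\times D_1^2$ — in particular $\int W(u_\epsilon)/\epsilon^2\leq Kr_\epsilon^{n-2}$ from the third good-slice condition and $\int\bigl(|du_\epsilon(P)|^2+\left|e_\epsilon-\tfrac12|d^*\xi_\epsilon|^2\right|\bigr)\leq\delta_\epsilon r_\epsilon^{n-2}\leps$ from the second — I would absorb the remaining error terms $(1-|u_\epsilon|^2)|dv_\epsilon|^2$, $|d|u_\epsilon||^2$, and $|d_yv_\epsilon|^2$ into an $o(r_\epsilon^{n-2}\leps)$ correction to obtain
\[\int_{\mathcal{A}^\eta_{r_\epsilon}(p_\epsilon)}e_\epsilon(u_\epsilon)=\int_{\mathcal{A}^\eta_{r_\epsilon}(p_\epsilon)}\tfrac12|d_zv_\epsilon|^2+o(r_\epsilon^{n-2}\leps).\]
Then by Fubini and, on each slice, polar coordinates $(\rho,\phi)$ around $z_\epsilon$ with $v_\epsilon=e^{i\psi}$ locally, the degree condition $\int_0^{2\pi}\partial_\phi\psi\,d\phi=2\pi\kappa$ combined with Cauchy--Schwarz yields $\int_0^{2\pi}(\partial_\phi\psi)^2\,d\phi\geq 2\pi\kappa^2$, whence
\[\int_{D_1^2\setminus D_{\eta r_\epsilon}^2(z_\epsilon)}\tfrac12|d_zv_\epsilon(y,z)|^2\,dz\geq\pi\kappa^2\log(1/(2\eta r_\epsilon)).\]
Integrating over $y\in B_{r_\epsilon}^{n-2}(y_\epsilon)$ and using $\log(1/(\eta r_\epsilon))/\leps\to\tau$ gives the lower bound $\liminf\geq\pi\omega_{n-2}\kappa^2\tau$.

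The matching upper bound is the main obstacle. My preferred route is to exploit the structure of $\xi_\epsilon$: by \cref{xi.comp.lem} together with the localization of $djv_\epsilon$ near the vortex core (contained in $\{y_\epsilon\}\times D_{\eta\epsilon^{\tau_0}}(z_\epsilon)$ by \cref{loc.vort.lem}), the dominant component $\beta_\epsilon=(\xi_\epsilon)_{n-1,n}$ should satisfy $\beta_\epsilon(x)=\kappa\log(1/\rho(x))+O(1)$ at every $x\in\mathcal{A}^\eta_{r_\epsilon}(p_\epsilon)$ with $\rho(x):=|z-z_\epsilon|$, in close analogy with the computation of $\beta_\epsilon(p_i^\epsilon)$ in the proof of \cref{pre.ener.id}; combined with a Fubini extension of \cref{slice.perp.small} from the single good slice $y_\epsilon$ to its $r_\epsilon$-neighborhood (giving $L^2$-smallness of $|\nabla(\xi_\epsilon)_{ab}|$ for $(a,b)\neq(n-1,n)$ on the full annulus), this yields $\tfrac12|d^*\xi_\epsilon|^2\approx\tfrac{\kappa^2}{2\rho^2}$ on $\mathcal{A}^\eta_{r_\epsilon}(p_\epsilon)$, and integration gives $\pi\omega_{n-2}\kappa^2 r_\epsilon^{n-2}\log(1/(\eta r_\epsilon))(1+o(1))=\pi\omega_{n-2}\kappa^2\tau r_\epsilon^{n-2}\leps(1+o(1))$. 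An alternative is to iterate the Step-3 lower bound at nested dyadic scales $\eta^k r_\epsilon\in[\epsilon^{\tau_0},r_\epsilon]$ and subtract from the good-slice cylinder total $\pi\omega_{n-2}\theta r_\epsilon^{n-2}\leps(1+o(1))$, noting that the ``vortex core'' at scale $\epsilon^{\tau_0}$ contributes only $O((\epsilon^{\tau_0})^{n-2}\leps)=o(r_\epsilon^{n-2}\leps)$ since $\tau<\tau_0$. The hard part of either route is the uniform error analysis — controlling $\beta_\epsilon-\kappa\log(1/\rho)$ across the full $y$-slab rather than on the single good slice, or else bookkeeping the overlapping annular decompositions at the dyadic scales.
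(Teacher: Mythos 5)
Your lower bound is sound, and your first route for the upper bound starts from the right objects ($\beta_\epsilon=(\xi_\epsilon)_{n-1,n}$ and the smallness of the other components of $\nabla\xi_\epsilon$ on the whole annulus, both of which the paper also uses), but neither of your two routes closes the upper bound, which is the crux of the lemma. In the first route, the asymptotic $\beta_\epsilon(x)=\kappa\log(1/\rho(x))+O(1)$ is a statement about the \emph{value} of the potential $\beta_\epsilon$, whereas $|d^*\xi_\epsilon|^2$ involves its \emph{gradient}; knowing $\beta_\epsilon$ up to $O(1)$ gives no pointwise control of $\nabla\beta_\epsilon$, so the claimed conclusion $\tfrac12|d^*\xi_\epsilon|^2\approx\kappa^2/(2\rho^2)$ does not follow. (\cref{lim.jac.lem} together with \cref{nabla.xicomp} controls the averages $r^{2-n}\int_{B_r(x)}(djv_\epsilon)_{ab}\frac{x-x'}{|x-x'|}$ only for $(a,b)\neq(n-1,n)$; for the component $(n-1,n)$ the corresponding limit is a nonzero vector, and extracting $\nabla\beta_\epsilon\approx\kappa\,\nabla\log(1/\rho)$ would require a separate and more delicate argument.) The second route is circular: the complement of $\mathcal{A}^\eta_{r_\epsilon}(p_\epsilon)$ in the cylinder is $B_{r_\epsilon}^{n-2}(y_\epsilon)\times D_{\eta r_\epsilon}(z_\epsilon)$, whose cross-section in the $P$-directions has radius $r_\epsilon$, not $\epsilon^{\tau_0}$; it contains the entire vortex down to scale $\epsilon$ and carries energy at least $\pi\omega_{n-2}\kappa^2(1-\tau)r_\epsilon^{n-2}\leps$ \emph{plus} whatever excess makes $\theta$ possibly exceed $1$, so it is not $o(r_\epsilon^{n-2}\leps)$, and subtracting from the cylinder total $\pi\omega_{n-2}\theta r_\epsilon^{n-2}\leps$ yields the desired bound only if one already knows $\theta\le\kappa^2$ --- essentially the conclusion of \cref{low.dens.prop}. (Note also that the lemma allows $\tau=\tau_0$, in which case even $(\epsilon^{\tau_0})^{n-2}\leps$ is comparable to $r_\epsilon^{n-2}\leps$.)

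The device you are missing is an integration by parts in the $D_1$-slices. After discarding the components $(a,b)\neq(n-1,n)$ (for which the paper proves $L^2$-smallness on the full annulus, using \cref{loc.vort.lem} to bound $\dist(x,\mathcal{V}(u_\epsilon))\ge c|z-z_\epsilon|$, much as you anticipate), one rewrites $\int_{\mathcal{A}^\eta_{r_\epsilon}}|d^*\xi_\epsilon|^2$ as the boundary integral $\int_{B_{r_\epsilon}^{n-2}}\int_{\partial D_{\eta r_\epsilon}(z_\epsilon)}\beta_\epsilon\,d^*\xi_\epsilon(\partial_\theta)$ plus negligible terms, using $dd^*\xi_\epsilon=djv_\epsilon-dh_\epsilon$ and the fact that $djv_\epsilon$ vanishes on the annulus. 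This boundary term requires only the uniform value $\beta_\epsilon\approx\kappa\log(1/r_\epsilon)$ on the inner circle (which your first route correctly identifies, and which follows from \cref{vort.char} and \cref{xi.comp.lem}) together with $\int_{\partial D_{\eta r_\epsilon}}d^*\xi_\epsilon(\partial_\theta)=\int_{D_{\eta r_\epsilon}}dd^*\xi_\epsilon\approx 2\pi\kappa$ on each slice; no pointwise information on $\nabla\beta_\epsilon$ in the bulk is needed, and the computation produces the exact limit, giving the upper and lower bounds simultaneously.
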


\begin{proof}
	Let $r_{\epsilon}\in [\epsilon^{\tau_0},1)$ be a family of radii as above. For simplicity, we assume that $p_{\epsilon}=0$, and write simply $\mathcal{A}_{r_{\epsilon}}^{\eta}=\mathcal{A}^{\eta}_{r_{\epsilon}}(0)$.
	
	Let $\beta_{\epsilon}(x):=(\xi_{\epsilon})_{n-1,n}(x)$.
	To begin with, we claim that
	\begin{equation}\label{ann.en.2}
		\lim_{\epsilon\to 0}\frac{1}{\leps r_{\epsilon}^{n-2}}\int_{\mathcal{A}_{r_{\epsilon}}}e_{\epsilon}(u_{\epsilon})
		=\lim_{\epsilon\to 0}\frac{1}{2\leps r_{\epsilon}^{n-2}}\int_{B_{r_{\epsilon}}^{n-2}(0)}\int_{\partial D_{\eta r_{\epsilon}}(0)}\beta_\epsilon\,d^*\xi_{\epsilon}(\partial_{\theta}).
	\end{equation}
	Indeed, since $0\in \mathcal{G}_{\epsilon,\delta_{\epsilon}}$ for some sequence $\delta_{\epsilon}\to 0$ by assumption, it follows from the definition of $\mathcal{G}_{\epsilon,\delta_{\epsilon}}$ that
	\begin{equation}\label{ann.en.1}
		\lim_{\epsilon\to 0}\frac{1}{\leps r_{\epsilon}^{n-2}}\int_{\mathcal{A}^{\eta}_{r_{\epsilon}}}e_{\epsilon}(u_{\epsilon})
		=\lim_{\epsilon\to 0}\frac{1}{2\leps r_{\epsilon}^{n-2}}\int_{\mathcal{A}^{\eta}_{r_{\epsilon}}}|d^*\xi_{\epsilon}|^2.
	\end{equation}
	Moreover, since $0\in \mathcal{G}_{\epsilon,\delta_{\epsilon}}$, for any fixed $\delta_1>0$, it is easy to see that
	\begin{equation}\label{shifted.bd}
		r^{2-n}\int_{B_{2r}(x)}|du_{\epsilon}(P)|^2<\delta_1\leps
	\end{equation}
	for any $x=(y,z)\in \mathcal{A}_{r_{\epsilon}}$ with $|y|\le\mz$ and $\frac{1}{8}\ge r\geq c\eta r_{\epsilon}\geq c\eta \epsilon^{\tau_0}$, where we take $c>0$ to be the constant from \cref{loc.vort.lem}, for $\epsilon$ sufficiently small. As a consequence, for any given $\gamma>0$ we see that the hypotheses of \cref{lim.jac.lem} hold for every such ball $B_{2r}(x)$.
	Combining this with \cref{xi.comp.lem}, the fact that $\operatorname{spt}(djv_\epsilon)\subseteq\mathcal{V}(u_\epsilon)$, and \cref{jv.w}, we find that, for $x\in \mathcal{A}^{\eta}_{r_{\epsilon}}$ and $(a,b)\neq (n-1,n)$,
	\begin{align*}
		|\nabla (\xi_\epsilon)_{ab}|(x)
		&\leq C\int_{\dist(x,\mathcal{V}(u_{\epsilon}))}^{1/8}\frac{1}{r^2}\left|r^{2-n}\int_{B_r(x)}[djv_{\epsilon}(x')]_{ab}\frac{x-x'}{|x-x'|}\,dx'\right|\,dr+C\\
		&\leq C+C\int_{\dist(x,\mathcal{V}(u_{\epsilon}))}^{1/8}\frac{1}{r^2}\gamma\,dr
		\leq C+\frac{C\gamma}{\dist(x,\mathcal{V}(u_{\epsilon}))},
	\end{align*}
	provided $\epsilon$ is sufficiently small (if $\dist(x,\mathcal{V}(u_{\epsilon}))\ge\frac{1}{8}$, then actually $|\nabla\xi_\epsilon|\le C$). In particular, since $x\in\mathcal{A}_{r_\epsilon}^\eta$, by \cref{loc.vort.lem} we know that $\dist(x,\mathcal{V}(u_{\epsilon}))\geq c|z|$, and it follows that
	\begin{align*}
		\limsup_{\epsilon\to 0}\frac{1}{\leps r_{\epsilon}^{n-2}}\int_{\mathcal{A}^{\eta}_{r_{\epsilon}}}|\nabla (\xi_\epsilon)_{ab}|^2
		&\leq \limsup_{\epsilon\to 0}\frac{1}{\leps r_{\epsilon}^{n-2}}\int_{\mathcal{A}^{\eta}_{r_{\epsilon}}}\frac{C\gamma^2}{|z|^2}\\
		&\leq \limsup_{\epsilon\to 0}C\gamma^2\frac{\log(1/\eta r_{\epsilon})}{\log(1/\epsilon)}\\
		&\leq C\gamma^2 \tau_0
	\end{align*}
	for any $\gamma>0$; hence, 
	\begin{equation}
		\lim_{\epsilon\to 0}\frac{1}{\leps r_\epsilon^{n-2}}\int_{\mathcal{A}^{\eta}_{r_{\epsilon}}}|\nabla (\xi_{\epsilon})_{ab}|^2=0\quad\text{for }(a,b)\neq (n-1,n).
	\end{equation}
	
	Combining this with \cref{ann.en.1}, we then see that
	\begin{align*}
		&\lim_{\epsilon\to 0}\frac{1}{\leps r_{\epsilon}^{n-2}}\int_{\mathcal{A}^{\eta}_{r_{\epsilon}}}e_{\epsilon}(u_{\epsilon})
		=\lim_{\epsilon \to 0} \frac{1}{2\leps r_{\epsilon}^{n-2}}\int_{\mathcal{A}^{\eta}_{r_{\epsilon}}}[d^*\xi_{\epsilon}(\partial_{n-1})\partial_n\beta_{\epsilon}-d^*\xi_{\epsilon}(\partial_n)\partial_{n-1}\beta_{\epsilon}].
	\end{align*}
	
	A simple integration by parts shows that
	\begin{align*}
		&\int_{D_1\setminus D_{\eta r_{\epsilon}}}[d^*\xi_{\epsilon}(\partial_{n-1})\partial_n\beta_{\epsilon}-d^*\xi_{\epsilon}(\partial_n)\partial_{n-1}\beta_{\epsilon}] \\
		&=\int_{\partial D_{\eta r_{\epsilon}}}\beta_{\epsilon}\,d^*\xi_{\epsilon}(\partial_{\theta})-\int_{\partial D_1}\beta_{\epsilon}\,d^*\xi_{\epsilon}(\partial_{\theta})+\int_{D_1\setminus D_{\eta r_\epsilon}}\beta_\epsilon dd^*\xi_\epsilon[\de_{n-1},\de_n]
	\end{align*}
	on any slice $P_y^\perp\cong D_1$.
	Also, recall that $dd^*\xi_\epsilon=djv_\epsilon-dh_\epsilon$.
	Hence, using the fact that $djv_{\epsilon}=0$ on $\mathcal{A}^{\eta}_{r_{\epsilon}}$, as well as $|\beta_\epsilon|\le|\xi_\epsilon|$, we deduce from the preceding identities that
	\begin{align*}
		&\lim_{\epsilon\to 0}\frac{1}{\leps r_{\epsilon}^{n-2}}\left|\int_{\mathcal{A}^{\eta}_{r_{\epsilon}}}e_{\epsilon}(u_{\epsilon})
		-\frac{1}{2}\int_{B_{r_{\epsilon}}^{n-2}}\left(\int_{\partial D_{\eta r_{\epsilon}}}\beta_{\epsilon}\,d^*\xi_{\epsilon}(\partial_{\theta})-\int_{\partial D_1}\beta_{\epsilon}\,d^*\xi_{\epsilon}(\partial_{\theta})\right)\right| \\
		&=\lim_{\epsilon\to 0}\frac{1}{2\leps r_{\epsilon}^{n-2}}\left|\int_{B_{r_{\epsilon}}^{n-2}}\int_{D_1\setminus D_{\eta r_{\epsilon}}}\beta_{\epsilon}dd^*\xi_\epsilon[\de_{n-1},\de_n]\right| \\
		&\le\lim_{\epsilon\to 0}\frac{C}{\leps r_\epsilon^{n-2}}\|dh_\epsilon\|_{L^\infty(Q)}\int_{B_{r_\epsilon}^{n-2}\times D_1}|\xi_\epsilon| \\
		&\le\lim_{\epsilon \to 0}\frac{C}{\leps }\|dh_\epsilon\|_{L^\infty(Q)} \\
		&=0,
	\end{align*}
	where in the last two lines we used the fact that $0\in\mathcal{G}_{\epsilon,\delta_\epsilon}$ (giving $\int_{B_{r_\epsilon}^{n-2}\times D_1}|\xi_\epsilon|\le Kr_\epsilon^{n-2}$) and \cref{h.bd}.
	
	Now, by \cref{bos2} and the monotonicity formula \cref{monotonicity}, together with the pointwise bound \cref{jv.w}, it is easy to see that
	\begin{align}\label{djv.int.est}
		r^{2-n}\left|\int_{B_r(x)}\varphi(x')[djv_{\epsilon}(x')]_{n-1,n}\,dx'\right|\leq C
	\end{align}
	for all $r\geq \epsilon^{1/2}$, for some $C$ independent of $\epsilon$ (alternatively, a similar bound with $CK$ in place of $C$ follows immediately from \cref{jv.w} and the fact that $0=y_\epsilon\in\mathcal{G}_{\epsilon,\delta_\epsilon}$).
	Combining this observation with \cref{xi.comp.lem}, writing
	$$\rho_{\epsilon}(x):=\dist(x,\mathcal{V}(u_{\epsilon})),$$
	we see that
	\begin{equation}\label{pt.xi.ests}
		|\xi_{\epsilon}|\leq C\log(1/\rho_{\epsilon})+C,\quad|d^* \xi_{\epsilon}|\leq C/\rho_{\epsilon},
	\end{equation}
	whenever $\rho_\epsilon\ge\epsilon^{1/2}$.
	In particular, since $\rho_{\epsilon}\to 1$ uniformly on $B_1^{n-2}(0)\times \partial D_1(0)$ as $\epsilon\to 0$, it follows that
	$$\frac{1}{2\leps r_{\epsilon}^{n-2}}\int_{B_{r_{\epsilon}}^{n-2}}\int_{\partial D_1}|\beta_\epsilon\,d^*\xi_{\epsilon}(\partial_{\theta})|\leq \frac{C}{\leps }\to 0$$
	as $\epsilon\to 0$, which proves \cref{ann.en.2}.

	In order to estimate the right-hand side of \cref{ann.en.2}, we let $S_{\epsilon}:=B_{r_{\epsilon}}^{n-2}\times \partial D_{\eta r_{\epsilon}}$ and, for $x=(y,z)\in S_\epsilon$, we first show for any fixed $\gamma>0$ the uniform bound
	\begin{align}\label{ann.en.psi.bd}
		&|2\pi\omega_{n-2}\beta_{\epsilon}(x)-2\pi\kappa\omega_{n-2}\log(1/r_{\epsilon})|\leq C(\gamma,\eta)+\gamma \leps .
	\end{align}
	Note that by \cref{loc.vort.lem} we have
	\begin{align}\label{rho.comp.reps}
		&2r_{\epsilon}\geq \rho_{\epsilon}(x)\geq c\eta r_{\epsilon}\quad\text{for all }x\in S_{\epsilon}.
	\end{align}
	Moreover,
	again by \cref{loc.vort.lem},
	for any $x=(y,z)\in S_{\epsilon}\subset \mathcal{A}_{r_{\epsilon}}$ we have
	$$\mathcal{V}(u_{\epsilon})\cap P_y^{\perp}
	\subseteq \{y\}\times D_{\eta r_\epsilon}(0)
	\subseteq B_{2r_\epsilon}(x).$$
	Hence, fixing an arbitrary $\gamma>0$, we see that the full hypotheses of \cref{lim.jac.lem} hold on $B_{2r}(x)$ for $\epsilon>0$ sufficiently small, whenever
	$$\frac{2r_{\epsilon}}{\delta_1(\tau_0,\gamma)}<r\leq \frac{1}{8}$$
	(note that \cref{lim.jac.lem.h1} holds by \cref{shifted.bd}).
	As in \cref{deg.all.scales}, the degree $\deg(v_\epsilon,\{y\}\times \de D_r(z))=\kappa$, and from \cref{vort.char} we deduce that
	\begin{equation}\label{big.jac.comp}
		\left|2\pi\kappa\omega_{n-2}-r^{2-n}\int_{B_r(x)}[djv_{\epsilon}(x')]_{n-1,n}\right|<\gamma
	\end{equation}
	for $x\in S_\epsilon$ and $\frac{2r_{\epsilon}}{\delta_1(\tau_0,\gamma)}<r\leq \frac{1}{8}$.
	
	Next, recall from \cref{xi.comp.lem} that
	$$2\pi\omega_{n-2}\beta_{\epsilon}(x)=\int_0^{\infty}\frac{1}{r}\left(r^{2-n}\int_{B_r(x)}\varphi(x')[djv_{\epsilon}(x')]_{n-1,n}\,dx'\right) dr,$$
	and note that $\varphi|_{B_r(x)}\equiv 1$ for $r\leq \frac{1}{8}$, so that
	$$\left|2\pi\omega_{n-2}\beta_{\epsilon}(x)-\int_{\rho_{\epsilon}(x)}^{1/8}\frac{1}{r}\left(r^{2-n}\int_{B_r(x)}[djv_{\epsilon}(x')]_{n-1,n}\,dx'\right) dr\right|\leq C$$
	for a suitable $C$ independent of $\epsilon>0$. Note also that, by \cref{rho.comp.reps} and \cref{djv.int.est},
	\begin{align*}
		\left|\int_{\rho_{\epsilon}(x)}^{2r_{\epsilon}/\delta_1}\frac{1}{r}\left(r^{2-n}\int_{B_r(x)}[djv_{\epsilon}(x')]_{n-1,n}\,dx'\right) dr\right|
		&\leq \int_{c\eta r_{\epsilon}}^{2r_{\epsilon}/\delta_1}\frac{C}{r}\,dr
		\leq C\log(2/c\eta\delta_1),
	\end{align*}
	while, by \cref{big.jac.comp},
	\begin{align*}
		\int_{2r_{\epsilon}/\delta_1}^{1/8}\frac{1}{r}\left|2\pi \kappa \omega_{n-2}-r^{2-n}\int_{B_r(x)}[djv_{\epsilon}(x')]_{n-1,n}\,dx'\right|\,dr
		&<\gamma\int_{2r_{\epsilon}/\delta_1}^{1/8}\frac{dr}{r}
		<\gamma \log(1/r_{\epsilon}).
	\end{align*}
	Combining these bounds, we arrive at \cref{ann.en.psi.bd}.
	
	Turning now to the right-hand side of \cref{ann.en.2}, it follows that, for any $\gamma>0$,
	\begin{align*}
		\lim_{\epsilon\to 0}\frac{r_{\epsilon}^{2-n}}{2\leps }\left|\int_{S_{\epsilon}}(\beta_\epsilon-\kappa\lvert\log r_{\epsilon}\rvert)\,d^*\xi_{\epsilon}(\partial_{\theta})\right|
		&\leq \limsup_{\epsilon\to 0}\frac{r_{\epsilon}^{2-n}}{\leps }\int_{S_{\epsilon}}(C(\gamma,\eta)+\gamma \leps )|d^*\xi_{\epsilon}|\\
		\text{(using \cref{pt.xi.ests})}\ &\leq \limsup_{\epsilon\to 0}\frac{r_{\epsilon}^{2-n}}{\leps }\int_{S_{\epsilon}}C\cdot\frac{C(\gamma,\eta)+\gamma\leps }{c\eta r_{\epsilon}}\\
		&=C(\eta)\gamma,
	\end{align*}
	and since $\gamma>0$ was arbitrary, it follows that the limit on the left-hand side must vanish. In particular, returning to \cref{ann.en.2}, we see that
	\begin{align*}
		\lim_{\epsilon\to 0}\frac{1}{\leps r_{\epsilon}^{n-2}}\int_{\mathcal{A}^{\eta}_{r_{\epsilon}}}e_{\epsilon}(u_{\epsilon})
		&=\lim_{\epsilon\to 0}\frac{r_{\epsilon}^{2-n}}{2\leps }\int_{B_{r_{\epsilon}}^{n-2}}\int_{\partial D_{\eta r_{\epsilon}}}\kappa\lvert\log r_{\epsilon}\rvert d^*\xi_{\epsilon}(\partial_{\theta})\\
		&=\lim_{\epsilon\to 0}\frac{r_{\epsilon}^{2-n}}{2\leps }\kappa \lvert\log r_{\epsilon}\rvert\int_{B_{r_{\epsilon}}^{n-2}}\left(\int_{ D_{\eta r_{\epsilon}}} dd^*\xi_{\epsilon}\right)\\
		&=\lim_{\epsilon\to 0}\frac{r_{\epsilon}^{2-n}}{2\leps }\kappa \lvert\log r_{\epsilon}\rvert\int_{B_{r_{\epsilon}}^{n-2}}\left(\int_{ D_{\eta r_{\epsilon}}} djv_{\epsilon}\right)\\
		&=\lim_{\epsilon \to 0}\frac{r_{\epsilon}^{2-n}\kappa\lvert\log r_{\epsilon}\rvert}{2\leps }\int_{B_{r_\epsilon}^{n-2}}2\pi\deg(v_{\epsilon},\{y\}\times\partial D_{\eta r_{\epsilon}})\,dy\\
		&=\pi\omega_{n-2}\kappa^2 \lim_{\epsilon\to 0}\frac{\log(1/r_{\epsilon})}{\log(1/\epsilon)},
	\end{align*}
	where we used again \cref{deg.all.scales} and, in the third equality, the fact that $djv_\epsilon-dd^*\xi_\epsilon=dh_\epsilon$ is bounded pointwise by $C\leps ^{1/2}$.
	This concludes the proof.
\end{proof}

Next, note that for $p_{\epsilon}$ and $\tau_0$ as in \cref{loc.vort.lem}, using the fact that $y_{\epsilon}\in \mathcal{G}_{\epsilon,\delta_{\epsilon}}$ and the inclusion $B^{n-2}_{\epsilon^{\tau_0}}(y_\epsilon)\times D^2_{\eta\epsilon^{\tau_0}}(z_\epsilon)\subset B^n_{(1+\eta)\epsilon^{\tau_0}}(p_\epsilon)$, we have for any given $\eta\in (0,1]$ that
\begin{align*}
	\lim_{\epsilon\to 0}\frac{E_\epsilon(u_\epsilon;\mathcal{A}^{\eta}_{\epsilon^{\tau_0}}(p_{\epsilon}))}{\leps(\epsilon^{\tau_0})^{n-2}}
	&\ge\lim_{\epsilon\to 0}\frac{E_\epsilon(u_\epsilon;B_{\epsilon^{\tau_0}}^{n-2}(y_{\epsilon})\times D_1^2(0))}{\leps(\epsilon^{\tau_0})^{n-2}}-\limsup_{\epsilon\to 0}\frac{E_\epsilon(u_\epsilon;B_{(1+\eta)\epsilon^{\tau_0}}(p_{\epsilon}))}{\leps(\epsilon^{\tau_0})^{n-2}}\\
	&=\liminf_{\epsilon\to 0}\left(\frac{E_\epsilon(u_\epsilon;B_1(p_{\epsilon}))}{\leps }-\frac{E_\epsilon(u_\epsilon;B_{(1+\eta)\epsilon^{\tau_0}}(p_{\epsilon}))}{\leps(\epsilon^{\tau_0})^{n-2}}\right)\\
	&\geq \liminf_{\epsilon\to 0}\left(\frac{E_\epsilon(u_\epsilon;B_1(p_{\epsilon}))}{\leps }-\frac{E_\epsilon(u_\epsilon;B_{(1+\eta)\epsilon^{\tau_0}}(p_{\epsilon}))}{\leps[(1+\eta)\epsilon^{\tau_0}]^{n-2}}\right)\\
	&\quad-C\eta\limsup_{\epsilon\to 0}\frac{E_\epsilon(u_\epsilon;B_{(1+\eta)\epsilon^{\tau_0}}(p_{\epsilon}))}{\leps[(1+\eta)\epsilon^{\tau_0}]^{n-2}},
\end{align*}
where the equality comes from the fact that $\lim_{\epsilon\to 0}\frac{E_\epsilon(u_\epsilon;B_{\epsilon^{\tau_0}}^{n-2}(y_{\epsilon})\times D_1^2(0))}{\leps(\epsilon^{\tau_0})^{n-2}}=\pi\omega_{n-2}\theta$ (as $y_\epsilon\in\mathcal{G}_{\epsilon,\delta_\epsilon}$),
which in turn equals $\lim_{\epsilon\to 0}\frac{E_\epsilon(u_\epsilon;B_1(p_{\epsilon}))}{\leps }$
since energy concentrates on a plane with multiplicity $\theta$.
In particular, combining \cref{ann.en.lem} with the monotonicity formula \cref{monotonicity} (integrated between radii $(1+\eta)\epsilon^{\tau_0}$ and $1$), we see that
\begin{equation}\label{big.mono.bd}
	\limsup_{\epsilon\to 0}\frac{1}{\log(1/\epsilon^{\tau_0})}\int_{(1+\eta)\epsilon^{\tau_0}}^1\frac{1}{r^{n-1}}\int_{B_r(p_{\epsilon})}\frac{2W(u_{\epsilon})}{\epsilon^2}
	\leq\pi\omega_{n-2}\kappa^2+C\eta.
\end{equation}
As an easy consequence, we have the following proposition.

\begin{proposition}\label{w.upper.bd} For $p_{\epsilon}$ and $\tau_0$ as in \cref{loc.vort.lem}, given $\delta\in(0,1)$, there exists $r_{\epsilon}\in (\epsilon^{\tau_0},\epsilon^{\delta\tau_0})$ such that
	\begin{equation}\label{w.pre.ubds}
		\limsup_{\epsilon\to 0}\frac{1}{(r_{\epsilon}/2)^{n-2}}\int_{B_{r_{\epsilon}/2}(p_{\epsilon})}\frac{2W(u_{\epsilon})}{\epsilon^2}
		\leq \frac{\pi\omega_{n-2}\kappa^2}{1-\delta}.
	\end{equation}
	In particular, we can conclude that $|\kappa|=1$, and
	\begin{equation}\label{w.pi.bd}
		\limsup_{\epsilon\to 0}\frac{1}{(r_{\epsilon}/2)^{n-2}}\int_{B_{r_{\epsilon}/2}(p_{\epsilon})}\frac{2W(u_{\epsilon})}{\epsilon^2}
		\leq\frac{\pi\omega_{n-2}}{1-\delta}.
	\end{equation}
\end{proposition}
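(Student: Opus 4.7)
The plan is to derive \eqref{w.pre.ubds} by a logarithmic mean-value argument applied to the integrated monotonicity bound \eqref{big.mono.bd}, and then to rule out $\kappa=0$ via the clearing-out estimate from \cref{w.quant.lem}. Writing $F(r):=\int_{B_r(p_\epsilon)}\frac{2W(u_\epsilon)}{\epsilon^2}$, the inequality \eqref{big.mono.bd} reads
$$\limsup_{\epsilon\to 0}\frac{1}{\tau_0\leps}\int_{(1+\eta)\epsilon^{\tau_0}}^{1}\frac{F(r)}{r^{n-1}}\,dr\leq \pi\omega_{n-2}\kappa^2+C\eta,$$
and from this I want to extract a scale $r_\epsilon$ on which the rescaled potential mass is as small as the global energy drop permits.

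To carry this out, fix $\eta\in(0,1)$ small, set $a:=4\epsilon^{\tau_0}$ and $b:=\epsilon^{\delta\tau_0}$, and observe that the change of variable $s=r/2$ gives
$$\int_a^b\frac{F(r/2)}{r^{n-1}}\,dr=2^{2-n}\int_{a/2}^{b/2}\frac{F(s)}{s^{n-1}}\,ds,$$
with $(a/2,b/2)\subset((1+\eta)\epsilon^{\tau_0},1)$ for $\epsilon$ small. Bounding the right-hand side via \eqref{big.mono.bd} and applying the logarithmic mean-value theorem to the nonnegative function $g(r):=F(r/2)/r^{n-2}$ over $(a,b)$ (whose log-length is $(1-\delta)\tau_0\leps-\log 4$) produces some $r_\epsilon\in(a,b)\subset(\epsilon^{\tau_0},\epsilon^{\delta\tau_0})$ with
$$g(r_\epsilon)=\frac{F(r_\epsilon/2)}{r_\epsilon^{n-2}}\leq\frac{2^{2-n}(\pi\omega_{n-2}\kappa^2+C\eta)}{1-\delta}(1+o(1)).$$
Multiplying by $2^{n-2}$ cancels this factor precisely against the $2^{n-2}$ relating $(r_\epsilon/2)^{n-2}$ to $r_\epsilon^{n-2}$, yielding
$$\frac{F(r_\epsilon/2)}{(r_\epsilon/2)^{n-2}}\leq\frac{\pi\omega_{n-2}\kappa^2+C\eta}{1-\delta}(1+o(1)),$$
and a standard diagonalization sending $\eta\to 0$ delivers a choice of $r_\epsilon$ satisfying \eqref{w.pre.ubds}. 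I expect the substitution $s=r/2$ to be the one mildly subtle point: it is exactly what explains the appearance of $r_\epsilon/2$ (rather than $r_\epsilon$) in the statement, since it is the gymnastics required for the dilation factors to balance.

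For the second part, \eqref{kappa.at.most.1} gives $\kappa\in\{-1,0,1\}$, so it suffices to exclude $\kappa=0$. If $\kappa=0$, then \eqref{w.pre.ubds} would force $(r_\epsilon/2)^{2-n}\int_{B_{r_\epsilon/2}(p_\epsilon)}\frac{2W(u_\epsilon)}{\epsilon^2}\to 0$. However, since $\tau_0<\mz$ we have $\epsilon^{1/2}\leq r_\epsilon/8$ eventually, so \eqref{close.to.vort} gives $B_{r_\epsilon/8}(p_\epsilon)\cap\mathcal{V}(u_\epsilon)\neq\emptyset$; moreover, monotonicity and the global energy bound $E_\epsilon(u_\epsilon;B_2)\leq C\leps$ yield $r^{2-n}E_\epsilon(u_\epsilon;B_r(p_\epsilon))\leq \Lambda\log(r/\epsilon)$ for $r\in[\epsilon^{\tau_0},1]$ and some $\Lambda$ depending only on the initial data and $\tau_0$. \cref{w.quant.lem} then forces $(r_\epsilon/2)^{2-n}\int_{B_{r_\epsilon/2}(p_\epsilon)}\frac{W(u_\epsilon)}{\epsilon^2}>c>0$ for small $\epsilon$, contradicting the vanishing limit above. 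Hence $\kappa^2=1$, and \eqref{w.pi.bd} is just \eqref{w.pre.ubds} with $\kappa^2=1$.
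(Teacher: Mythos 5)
Your argument is correct and matches the paper's proof in essence: both extract the scale $r_\epsilon$ from \cref{big.mono.bd} by a logarithmic pigeonhole argument (the paper bounds the integral from below by the infimum of $s\mapsto s^{2-n}F(s)$ over the halved range times the log-length, which is the same device as your logarithmic mean-value step, with the factor $2^{n-2}$ cancelling identically), and both rule out $\kappa=0$ by combining \cref{kappa.at.most.1}, \cref{close.to.vort}, and \cref{w.quant.lem}. Your explicit verification of the energy-bound hypothesis of \cref{w.quant.lem} via the monotonicity formula is a fine (slightly more detailed) substitute for the paper's appeal to $y_\epsilon\in\mathcal{G}_{\epsilon,\delta_\epsilon}$.
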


\begin{proof}
	Writing 
	$$\iota_{\epsilon}:=\inf_{r\in (\epsilon^{\tau_0}/2,\epsilon^{\delta\tau_0}/2)}r^{2-n}\int_{B_r(p_{\epsilon})}\frac{2W(u_{\epsilon})}{\epsilon^2},$$
	we see that the existence of a sequence $r_{\epsilon}$ satisfying \cref{w.pre.ubds} is equivalent to the statement that $\limsup_{\epsilon\to 0}\iota_{\epsilon}\leq \frac{\pi\omega_{n-2}\kappa^2}{1-\delta}$. By \cref{big.mono.bd} we have, for every $\eta>0$,
	\begin{align*}
		\pi\omega_{n-2}\kappa^2+C\eta
		&\geq\limsup_{\epsilon\to 0}\frac{1}{\log(1/\epsilon^{\tau_0})}\int_{(1+\eta)\epsilon^{\tau_0}}^{\epsilon^{\delta\tau_0}/2}\frac{\iota_{\epsilon}}{r}\,dr\\
		&=\limsup_{\epsilon\to 0}\frac{\iota_{\epsilon}\log(\epsilon^{\delta\tau_0}/2(1+\eta)\epsilon^{\tau_0})}{\log(1/\epsilon^{\tau_0})}\\
		&=(1-\delta)\limsup_{\epsilon\to 0}\iota_{\epsilon}.
	\end{align*}
	Since $\eta>0$ was arbitrary, \cref{w.pre.ubds} clearly follows.
	
	As discussed earlier in the section, since $\theta<2\theta_{min}\leq 2$, we know that $|\kappa|\leq 1$, so to complete the proof of the proposition, we simply need to demonstrate that $|\kappa|\neq 0$. And this is straightforward: in view of the bound \cref{w.pre.ubds}, if $\kappa=0$, it would follow that
	$$\frac{1}{(r_{\epsilon}/2)^{n-2}}\int_{B_{r_{\epsilon}/2}(p_{\epsilon})}\frac{W(u_\epsilon)}{\epsilon^2}\to 0$$
	as $\epsilon\to 0$; but since $p_{\epsilon}\in\mathcal{G}_{\epsilon,\delta_{\epsilon}}$, this would contradict \cref{w.quant.lem} (which applies by \cref{close.to.vort}). Hence, we must have $\kappa=\pm 1$, as claimed.
\end{proof}

\begin{remark}\label{beta0}
	This bound gives another proof that, in \cref{asymp}, the support of $|V|$ is characterized as the limit of the zero sets $u_\epsilon^{-1}\{0\}$ (which can also be deduced from Proposition \ref{pre.ener.id}).
	To check this, it is enough to show that the energy cannot concentrate when $u_\epsilon\neq 0$ everywhere on $B_2$. And indeed, if this happened, we could define a minimal density $\theta_{min}'>0$, among all densities $\theta$ arising as in \cref{dens.gap} with the additional constraint that $u_\epsilon\neq 0$ on $B_2$. Repeating the previous arguments, since now the degree $\kappa=0$, we would reach a contradiction to \cref{w.quant.lem}.
\end{remark}

In the next section, we show that a sequence of Ginzburg--Landau solutions on $B_1^n(0)$ with energy concentrating along the $(n-2)$-plane $P$ must have energy $\approx \pi\omega_{n-2}\leps $ as $\epsilon\to 0$, provided that an additional assumption such as \cref{w.pi.bd} holds with $r_\epsilon=1$. In particular, by combining \cref{w.upper.bd} with \cref{unit.w.cor} of the next section, we can complete the proof of \cref{low.dens.prop} as follows.

\begin{proof}[Proof of \cref{low.dens.prop}] Let $p_{\epsilon}=(y_{\epsilon},z_{\epsilon})$ and $\tau_0$ be as in \cref{loc.vort.lem}. For a given $\delta\in(0,1)$ to be specified below, let $r_{\epsilon}$ be the sequence of scales satisfying \cref{w.pi.bd}, whose existence is guaranteed by \cref{w.upper.bd}.
	Passing to a subsequence so that $\frac{\log(1/r_{\epsilon})}{\log(1/\epsilon)}$ converges, let
	$$\tau:=\lim_{\epsilon\to 0}\frac{\log(1/r_{\epsilon})}{\log(1/\epsilon)},$$
	and note that $\tau\in(0,\tau_0]$ since $r_\epsilon\in(\epsilon^{\tau_0},\epsilon^{\delta\tau_0})$.
	
	Fixing an arbitrary $\gamma>0$ and recalling \cref{close.to.vort}, we then see that $B_{2r_{\epsilon}}(p_{\epsilon})$ satisfies the hypotheses of \cref{unit.w.cor} in the next section for $\epsilon$ sufficiently small, provided that we choose $\delta$ such that $\frac{\pi\omega_{n-2}}{1-\delta}<\pi\omega_{n-2}(1+\delta_2)$ (with $\delta_2$ as in \cref{unit.w.cor}), and provided that
	\begin{equation}\label{sing.vort.conc}
		\lim_{\epsilon\to 0}r_{\epsilon}^{2-n}\int_{B_{2r_{\epsilon}}(p_{\epsilon})\setminus B_{\delta_2r_{\epsilon}}(P+p_{\epsilon})}\frac{e_{\epsilon}(u_{\epsilon})}{\leps }=0
	\end{equation}
	holds. To check \cref{sing.vort.conc}, observe that
	\begin{align*}
		B_{2r_{\epsilon}}(p_{\epsilon})\setminus B_{\delta_2r_{\epsilon}}(P+p_{\epsilon})
		&\subseteq B_{2r_{\epsilon}}^{n-2}(y_{\epsilon})\times [D_{2r_{\epsilon}}^2(z_{\epsilon})\setminus D_{\delta_2r_{\epsilon}}^2(z_{\epsilon})]\\
		&=\mathcal{A}_{2r_{\epsilon}}^{\delta_2/2}(p_{\epsilon})\setminus \mathcal{A}_{2r_{\epsilon}}^1(p_{\epsilon}),
	\end{align*}
	and using \cref{ann.en.lem}, it follows that
	\begin{align*}
		\lim_{\epsilon\to 0}r_{\epsilon}^{2-n}\int_{B_{2r_{\epsilon}}(p_{\epsilon})\setminus B_{\delta_2r_{\epsilon}}(P+p_{\epsilon})}\frac{e_{\epsilon}(u_{\epsilon})}{\leps }
		&\leq \lim_{\epsilon\to 0}\left(\frac{E_\epsilon(u_\epsilon;\mathcal{A}_{2r_{\epsilon}}^{\delta_2/2}(p_\epsilon))}{\leps r_{\epsilon}^{n-2}}
		-\frac{E_\epsilon(u_\epsilon;\mathcal{A}_{2r_{\epsilon}}^{1}(p_\epsilon))}{\leps r_{\epsilon}^{n-2}}\right)\\
		&=2^{n-2}\pi\omega_{n-2}\kappa^2\tau-2^{n-2}\pi\omega_{n-2}\kappa^2\tau \\
		&=0,
	\end{align*}
	as desired.
	
	In particular, for $\epsilon$ sufficiently small, we can now apply \cref{unit.w.cor} on the ball $B_{2r_{\epsilon}}(p_{\epsilon})$ to conclude that
	$$
	\left|\frac{r_{\epsilon}^{2-n}}{\log(r_{\epsilon}/\epsilon)}\int_{B_{r_{\epsilon}}^{n-2}(y_{\epsilon})\times D_{r_{\epsilon}}(z_{\epsilon})}e_{\epsilon}(u_{\epsilon})-\pi\omega_{n-2}\right|<\gamma,$$
	which implies
	\begin{equation}\label{off.ann.en}
		\left|\frac{r_{\epsilon}^{2-n}}{\leps }\int_{B_{r_{\epsilon}}^{n-2}(y_{\epsilon})\times D_{r_{\epsilon}}(z_{\epsilon})}e_{\epsilon}(u_{\epsilon})-\pi\omega_{n-2}\frac{\log(r_{\epsilon}/\epsilon)}{\leps }\right|<\gamma.
	\end{equation}
	On the other hand, it follows from \cref{ann.en.lem} (together with $|\kappa|=1$) that, for $\epsilon$ sufficiently small,
	$$\left|\frac{r_{\epsilon}^{2-n}}{\leps }\int_{\mathcal{A}_{r_{\epsilon}}^1(p_{\epsilon})}e_{\epsilon}(u_{\epsilon})-\pi\omega_{n-2}\frac{\log(1/r_{\epsilon})}{\log(1/\epsilon)}\right|<\gamma$$
	as well. Since $B_{r_{\epsilon}}^{n-2}(y_{\epsilon})\times D_1(0)=\mathcal{A}_{r_{\epsilon}}^1(p_{\epsilon})\sqcup [B_{r_{\epsilon}}^{n-2}(y_{\epsilon})\times D_{r_{\epsilon}}(z_{\epsilon})]$ and
	$$\log(r_{\epsilon}/\epsilon)+\log(1/r_{\epsilon})=\log(1/\epsilon),$$
	we can combine these estimates to 
	conclude that
	$$\lim_{\epsilon\to 0}\frac{r_{\epsilon}^{2-n}}{\leps }\int_{B_{r_{\epsilon}}^{n-2}(y_{\epsilon})\times D_1(0)}e_{\epsilon}(u_{\epsilon})=\pi\omega_{n-2}.$$
	Since $y_{\epsilon}\in \mathcal{G}_{\epsilon,\delta_{\epsilon}}$, it follows that $\theta=1$, as desired.
\end{proof}

\section{From bounds on $W(u)/\epsilon^2$ to unit density}\label{w.ests.sec}

In this section we show that if, in addition to the hypotheses of \cref{low.dens.prop}, we have the bound
\begin{equation}\label{w.ubd}
	\limsup_{\epsilon\to 0}\int_{B_{1/2}(0)}\frac{2W(u_{\epsilon})}{\epsilon^2}\leq \frac{\pi\omega_{n-2}}{2^{n-2}}(1+\delta),
\end{equation}
for some (explicit) $\delta>0$ small enough,
then the limiting density $\theta=1$.
In other words, we are going to prove \cref{low.dens.prop} with the additional assumption \cref{w.ubd}.
As we saw above, this combines with the analysis of the preceding section to give \cref{low.dens.prop} in full generality, from which \cref{dens.gap} follows.

Since the measures $\frac{W(u_{\epsilon}(x))}{\epsilon^2}\,dx$ converge to an absolutely continuous measure with respect to $\mathcal{H}^{n-2}\mrestr P$
(by \cref{asymp}), where the plane $P=\mathbb{R}^{n-2}\times\{0\}$, the estimate \cref{w.ubd} also gives
$$\limsup_{\epsilon\to 0}\int_{B_{1/2}^{n-2}(0)\times D_1^2(0)}\frac{2W(u_{\epsilon})}{\epsilon^2}\leq \frac{\pi \omega_{n-2}}{2^{n-2}}(1+\delta).$$
In particular, this implies that
\begin{align*}
	\frac{\pi\omega_{n-2}}{2^{n-2}}(1+\delta) &\geq \limsup_{\epsilon\to 0} \int_{\mathcal{G}_{\epsilon,\delta_{\epsilon}}\times D_1^2}\frac{2W(u_{\epsilon})}{\epsilon^2}\\
	&\geq \limsup_{\epsilon\to 0}\left(|\mathcal{G}_{\epsilon,\delta_{\epsilon}}|\cdot\inf_{y\in \mathcal{G}_{\epsilon,\delta_{\epsilon}}}\int_{\{y_{\epsilon}\}\times D_1^2}\frac{2W(u_{\epsilon})}{\epsilon^2}\right),
\end{align*}
which together with \cref{good.full} implies the existence of $y_{\epsilon}\in \mathcal{G}_{\epsilon,\delta_{\epsilon}}$ for which
\begin{equation*}
	\limsup_{\epsilon\to 0}\int_{\{y_{\epsilon}\}\times D_1^2}\frac{2W(u_{\epsilon})}{\epsilon^2}\leq\frac{(\pi\omega_{n-2}/2^{n-2})(1+\delta)}{(\omega_{n-2}/2^{n-2})-(C/K)}.
\end{equation*}
We now fix $K$ large enough (e.g., $K=K(\delta)=\frac{4C}{\delta}\frac{2^{n-2}}{\omega_{n-2}}$), in such a way that the previous estimate becomes
\begin{align}\label{better.slice}
	&\limsup_{\epsilon\to 0}\int_{\{y_{\epsilon}\}\times D_1^2}\frac{2W(u_{\epsilon})}{\epsilon^2}\leq\pi(1+2\delta).
\end{align}

Applying Propositions \ref{cover.claim} and \ref{w.deg.prop}, we deduce that for these $y_{\epsilon}$ such that \eqref{better.slice} holds, the set
$$U_{\delta,\epsilon}=\{x\in P_{y_{\epsilon}}^{\perp} : |u_{\epsilon}(x)|<1-\delta\}$$
is contained in a disjoint collection of disks $D_{C\epsilon}(p_1^{\epsilon}),\ldots,D_{C\epsilon}(p_k^{\epsilon})$ such that the degrees $\kappa_j^{\epsilon}$ of $u_{\epsilon}$ around $\partial D_{C\epsilon}(p_j^{\epsilon})$ satisfy
$$\limsup_{\epsilon\to 0}\frac{\pi}{2}(1-5\delta){\textstyle\sum_{j=1}^k}|\kappa_j^{\epsilon}|\leq \pi(1+2\delta).$$
In particular, since $\kappa_j^{\epsilon}\in \mathbb{Z}$, taking $\delta$ small enough it follows that
$${\textstyle\sum_{j=1}^k}|\kappa_j^{\epsilon}|\leq 2$$
for $\epsilon$ sufficiently small.

On the other hand, by \cref{w.upper.bd}, we know that
$$|\kappa|=|{\textstyle\sum_{j=1}^k}\kappa_j^\epsilon|=1,$$
so we see by parity that the case ${\textstyle\sum_{j=1}^k}|\kappa_j^{\epsilon}|\in\{0,2\}$ is impossible, so we must have
$${\textstyle\sum_{j=1}^k}|\kappa_j^{\epsilon}|=1.$$

In other words, up to relabeling $p_1^{\epsilon},\ldots, p_k^{\epsilon}$ and possibly replacing $u_{\epsilon}$ with the conjugate solution $\bar{u}_{\epsilon}$, for $\epsilon$ sufficiently small, we must have
\begin{equation}\label{mult.1.vorts}
	\kappa_1^{\epsilon}=1,\ \kappa_2^{\epsilon}=\cdots=\kappa_k^{\epsilon}=0.
\end{equation}

Finally, combining this with Proposition \ref{pre.ener.id} immediately gives the following conclusion.

\begin{lemma}
	Suppose that the hypotheses of \cref{dens.gap} and the potential bound \cref{w.ubd} are both satisfied. Then the limiting density $\theta=1$.
\end{lemma}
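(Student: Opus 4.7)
The plan is to apply the preliminary energy identity of Proposition~\ref{pre.ener.id} to the structural information that has just been accumulated. That proposition gives, for the sequence of good slices $y_\epsilon \in \mathcal{G}_{\epsilon,\delta_\epsilon}$ with local vortex points $p_j^\epsilon$ and degrees $\kappa_j^\epsilon$ introduced above,
$$\theta = \lim_{\epsilon \to 0}\left({\textstyle\sum_j}(\kappa_j^\epsilon)^2 + {\textstyle\sum_{j<\ell}} 2\kappa_j^\epsilon \kappa_\ell^\epsilon \frac{\lvert\log|p_j^\epsilon - p_\ell^\epsilon|\rvert}{\leps}\right),$$
so the entire task reduces to inserting the values of $\kappa_j^\epsilon$ made available by the hypothesis \cref{w.ubd} on the potential.

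The key input is \cref{mult.1.vorts}: on a slice along which the refined potential bound \cref{better.slice} holds, exactly one of the degrees, say $\kappa_1^\epsilon$, equals $1$ and all the others vanish. This was just derived by combining Proposition~\ref{w.deg.prop}, which turns the integral bound \cref{better.slice} into the integer inequality ${\textstyle\sum_j}|\kappa_j^\epsilon| \le 2$, with the total-degree identity $|\kappa| = |{\textstyle\sum_j} \kappa_j^\epsilon| = 1$ from Proposition~\ref{w.upper.bd}, together with a parity argument ruling out the cases ${\textstyle\sum_j}|\kappa_j^\epsilon| \in \{0, 2\}$.

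Once these facts are in hand, substituting them into the energy identity gives $\theta = 1$ at once: the quadratic sum collapses to $(\kappa_1^\epsilon)^2 = 1$, and every cross term $\kappa_j^\epsilon \kappa_\ell^\epsilon$ with $j \neq \ell$ vanishes because at least one of the two factors is zero, so the entire interaction sum is identically zero. Crucially, the cancellation of the interaction terms is automatic from the vanishing of the degree products and does not require any control on the (possibly degenerate) ratios $\frac{\lvert\log|p_j^\epsilon - p_\ell^\epsilon|\rvert}{\leps}$ — which is precisely the leverage extracted from the good-slice construction and the sharp lower bound of Proposition~\ref{w.deg.prop}.

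There is essentially no remaining obstacle: all the analytic work (the good-slice selection, the Hodge-decomposition estimates for $\xi_\epsilon$, the $L^{2,\infty}$ bound on $\nabla\xi_\epsilon$, the potential-to-degree inequality of Proposition~\ref{w.deg.prop}, and the preliminary energy identity itself) has already been carried out; the final assembly is a one-line substitution, and the conclusion $\theta = 1$ follows.
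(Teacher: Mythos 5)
Your proposal is correct and follows exactly the paper's argument: the paper likewise derives \cref{mult.1.vorts} from \cref{better.slice}, \cref{w.deg.prop}, \cref{w.upper.bd}, and parity, and then concludes the lemma by substituting $\kappa_1^{\epsilon}=\pm 1$, $\kappa_j^{\epsilon}=0$ for $j\ge 2$ into the energy identity of \cref{pre.ener.id}, where the interaction terms vanish identically. No gaps.
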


By a simple contradiction and scaling argument, we can recast the result in the following `quantitative' form.

\begin{proposition}\label{unit.w.cor}
	For any $\gamma>0$, there exists $\delta_2(\gamma)>0$ such that if $u_{\epsilon}$ solves the Ginzburg--Landau equation on a ball $B_{2r}(x)$ (where $x=(y,z)\in \mathbb{R}^{n-2}\times \mathbb{R}^2$) with $\epsilon\le\delta_2 r$, and satisfies
	$$(r/2)^{2-n}\int_{B_{r/2}(x)}\frac{2W(u_{\epsilon})}{\epsilon^2}\leq \pi\omega_{n-2}+\delta_2,$$
	$$\mathcal{V}(u_{\epsilon})\cap B_r(x)\neq \varnothing,$$
	and (to ensure that all energy concentrates along the $(n-2)$-plane $P+x$)
	$$r^{2-n}\int_{B_{2r}(x)\setminus B_{\delta_2 r}(P+x)}e_{\epsilon}(u_{\epsilon})\le\delta_2\log(r/\epsilon),$$
	then
	$$\left|\frac{r^{2-n}}{\log(r/\epsilon)}\int_{B_r^{n-2}(y)\times D_r^2(z)}e_{\epsilon}(u_{\epsilon})-\pi\omega_{n-2}\right|<\gamma.$$
\end{proposition}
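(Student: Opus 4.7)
The plan is a contradiction-and-scaling argument, reducing matters to the lemma immediately above, which established $\theta=1$ in the setting of \cref{dens.gap} under the additional potential bound \cref{w.ubd}. Suppose the conclusion fails for some fixed $\gamma>0$. Then there are sequences $\delta_{2,k}\to 0^+$, $\epsilon_k,r_k>0$, $x_k=(y_k,z_k)$, and solutions $u_{\epsilon_k}$ on $B_{2r_k}(x_k)$ with $\epsilon_k\le\delta_{2,k}r_k$, satisfying the three hypotheses with $\delta_2$ replaced by $\delta_{2,k}$ but violating the desired estimate. Translating to $0$ and rescaling $B_{2r_k}(x_k)\to B_2(0)$, one obtains solutions $\tilde u_{\tilde\epsilon_k}$ of the Ginzburg--Landau equation on $B_2(0)$ with $\tilde\epsilon_k=\epsilon_k/r_k\le\delta_{2,k}\to 0$, satisfying the unit-scale analogues of the three hypotheses and violating the unit-scale conclusion.

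Next I would apply \cref{asymp} to $\tilde u_{\tilde\epsilon_k}$ on $B_{3/2}(0)$. Passing to a subsequence, the normalized energy measures converge to $\mu=|V|+f\operatorname{vol}$ with $V$ a stationary rectifiable $(n-2)$-varifold. The unit-scale version of the third hypothesis forces $\mu(B_{3/2}\setminus B_\eta(P))=0$ for every $\eta>0$, so $f\equiv 0$ and $\operatorname{spt}|V|\subseteq P$ on $B_{3/2}$; since a stationary rectifiable $(n-2)$-varifold supported in an $(n-2)$-plane must be a constant multiple of that plane (by the constancy theorem applied, e.g., to the associated cycle), $V=\theta\mathcal{H}^{n-2}\mrestr P$ on $B_{3/2}$ for some constant $\theta\ge 0$. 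The second (vorticity) hypothesis provides $p_k\in\mathcal{V}(\tilde u_{\tilde\epsilon_k})\cap B_1$; applying \cref{clearing.flat} at any small fixed scale $r_0$ yields $\mu(\bar B_{2r_0}(p_\infty))\ge c(n)r_0^{n-2}>0$ at any subsequential limit $p_\infty$ of $p_k$, forcing $p_\infty\in P$ and $\theta>0$. Hence $\tilde u_{\tilde\epsilon_k}$ sits in the setting of \cref{dens.gap} with density $\theta$.

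The first hypothesis now passes to the limit as
\[
\limsup_{k\to\infty}\int_{B_{1/2}(0)}\frac{2W(\tilde u_{\tilde\epsilon_k})}{\tilde\epsilon_k^2}\le\frac{\pi\omega_{n-2}}{2^{n-2}},
\]
which is strictly better than \cref{w.ubd} (taking any small $\delta>0$). The lemma immediately above therefore forces $\theta=1$, and weak convergence of the normalized energy measures yields
\[
\frac{1}{\log(1/\tilde\epsilon_k)}\int_{B_1^{n-2}(0)\times D_1^2(0)}e_{\tilde\epsilon_k}(\tilde u_{\tilde\epsilon_k})\to\pi\omega_{n-2},
\]
contradicting the failure of the unit-scale conclusion. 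The one delicate point I expect in the write-up is verifying the a priori bound $E_{\tilde\epsilon_k}(\tilde u_{\tilde\epsilon_k};B_{3/2}(0))=O(\log(1/\tilde\epsilon_k))$ needed to invoke \cref{asymp}: the third hypothesis controls the energy outside $B_{\delta_{2,k}}(P)$, while the contribution from the thin tube $B_{\delta_{2,k}}(P)\cap B_{3/2}$ should be absorbed by combining the first hypothesis, \cref{bos2}, and the GL monotonicity formula applied at a scale of order $1$ to a few well-chosen balls covering the tube.
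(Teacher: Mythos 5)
Your proposal is exactly the argument the paper has in mind: the paper's entire ``proof'' of \cref{unit.w.cor} is the sentence ``by a simple contradiction and scaling argument,'' deferring to the preceding lemma, and your write-up fleshes out precisely that reduction. The steps you carry out — rescaling the contradicting sequence to $B_2(0)$, using the third hypothesis plus the constancy theorem to place the rescaled family in the setting of \cref{dens.gap} with $V=\theta\,\mathcal{H}^{n-2}\mrestr P$, using the vorticity hypothesis and clearing-out to get $\theta>0$, passing the rescaled potential bound to the limit as \cref{w.ubd}, and invoking the lemma to force $\theta=1$ — are all correct, including the factor $2^{n-2}$ bookkeeping and the observation that $\partial(B_1^{n-2}\times D_1^2)\cap P$ is $\mathcal{H}^{n-2}$-null so the energy of the cylinder converges.

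The one genuine soft spot is the point you flag yourself: the a priori bound $E_{\tilde\epsilon_k}(\tilde u_{\tilde\epsilon_k};B_{3/2})=O(\lteps)$ needed to invoke \cref{asymp} and \cref{dens.gap}. The mechanism you sketch does not close this gap: \cref{bos2} bounds the potential \emph{by} the energy (and itself presupposes an energy bound of order $\leps$), and the monotonicity formula bounds the energy of a small ball by that of a larger one — both go in the wrong direction when what is missing is an upper bound on the Dirichlet energy inside the thin tube $B_{\delta_{2,k}}(P)$, which none of the three stated hypotheses controls (the first controls only the potential, the third only the complement of the tube). The honest resolution is that the energy bound should be treated as an additional (implicit) hypothesis, say $E_\epsilon(u_\epsilon;B_{2r}(x))\le\Lambda r^{n-2}\log(r/\epsilon)$ with $\delta_2$ allowed to depend on $\Lambda$ as well; this costs nothing, since in the only place the proposition is applied (the ball $B_{2r_\epsilon}(p_\epsilon)$ in the proof of \cref{low.dens.prop}) such a bound follows from the monotonicity formula, the global energy bound of the ambient family, and $r_\epsilon\ge\epsilon^{\tau_0}$. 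With that hypothesis added, your proof is complete and coincides with the paper's intended argument.
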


\section{Solutions concentrating with prescribed density $\theta\in \{1\}\cup [2,\infty)$}

In this section, we explain how to use the entire solutions of the Ginzburg--Landau equations constructed in \cite{DDMR} to prove \cref{prescribe.dens}. More precisely, we prove the following proposition.

\begin{proposition}\label{kap.tau.sols} For each integer $\kappa\geq 2$ and $\tau\in [0,1)$, there exists a family of solutions $(u^{\tau}_{\epsilon})_{\epsilon\in (0,\epsilon_0(\kappa,\tau))}$ in the unit $3$-ball $B^3_1(0)\subset \mathbb{R}^3$ with energy concentrating along the line $P=\{0\}\times \mathbb{R}$, degree $\kappa=\deg(u_{\epsilon},\mz S^1\times \{0\})$
	and limiting energy measure
	$$\lim_{\epsilon\to 0}\frac{e_{\epsilon}(u_{\epsilon})}{\pi\leps }=\theta(\kappa,\tau)\mathcal{H}^1\mrestr P,$$
	where
	$$\theta(\kappa,\tau):=\kappa+\kappa(\kappa-1)\frac{\tau}{1+\tau}\in \left[\kappa,\frac{\kappa(\kappa+1)}{2}\right).$$
\end{proposition}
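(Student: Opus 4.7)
The plan is to obtain $u_\epsilon^\tau$ by rescaling the entire helical solutions constructed in \cite{DDMR}. For each integer $\kappa\ge 2$ and each small parameter $\eta>0$, that paper produces an entire solution $U_\eta\colon\R^3\to\C$ of $\eta^2\Delta U_\eta=-(1-|U_\eta|^2)U_\eta$, invariant under a helical (screw) symmetry of $\R^3$, whose zero set consists of $\kappa$ degree-one helical filaments with common axis $L=\{0\}\times\R$ and cross-sectional separation $d_\eta\sim\lvert\log\eta\rvert^{-1/2}$. Moreover, $|U_\eta|\to 1$ uniformly away from any tubular neighborhood of $L$, and the natural energy growth $E_\eta(U_\eta;B_R(0))\le CR\lvert\log\eta\rvert$ holds for all $R\ge 1$.

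Given $\tau\in[0,1)$, I set $\eta:=\epsilon^{1/(1+\tau)}$ and define
$$u_\epsilon^\tau(x):=U_\eta(\eta^{-\tau}x),\quad x\in B_1^3(0).$$
A direct change of variables, using $\eta^{1+\tau}=\epsilon$, will show that $u_\epsilon^\tau$ solves the $\epsilon$-Ginzburg--Landau equation on $B_1^3(0)$ with
$$E_\epsilon(u_\epsilon^\tau;B_1^3(0))=\eta^\tau\,E_\eta(U_\eta;B_{\eta^{-\tau}}(0))\le C\leps.$$
The zero set of $u_\epsilon^\tau$ is the $\eta^\tau$-rescaling of that of $U_\eta$: a configuration of $\kappa$ helices wrapping $L$ at cross-sectional separation $\eta^\tau d_\eta\sim\epsilon^{\tau/(1+\tau)}\leps^{-1/2}\to 0$, with total degree $\kappa$ around any loop enclosing $L$.

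Applying \cref{asymp}, the normalized energy measures of $u_\epsilon^\tau$ converge (subsequentially) to $|V|+f\operatorname{vol}$, where $V$ is a stationary, rectifiable $1$-varifold. Since the zero sets of $u_\epsilon^\tau$ converge to $L\cap\bar B_1^3$ in the Hausdorff sense, $\operatorname{spt}(|V|)\subseteq L$; the inherited screw symmetry forces $V=\theta\,\mathcal{H}^1\mrestr L$ for some constant $\theta>0$, while \cref{w.nu} together with the uniform exterior convergence $|u_\epsilon^\tau|\to 1$ off any neighborhood of $L$ forces $f\equiv 0$. The hypotheses of \cref{dens.gap} are thus satisfied, and \cref{pre.ener.id} applies. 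By the helical symmetry, any good slice $y_\epsilon\in\mathcal{G}_{\epsilon,\delta_\epsilon}$ contains exactly $\kappa$ vortex points $p_1^\epsilon,\ldots,p_\kappa^\epsilon$, each of local degree $\kappa_j^\epsilon=1$, with $|p_i^\epsilon-p_j^\epsilon|\sim\epsilon^{\tau/(1+\tau)}\leps^{-1/2}$ for distinct $i,j$. Therefore
$$\lim_{\epsilon\to 0}\frac{\lvert\log|p_i^\epsilon-p_j^\epsilon|\rvert}{\leps}=\frac{\tau}{1+\tau},$$
and \cref{pre.ener.id} yields
$$\theta=\kappa+2\binom{\kappa}{2}\cdot\frac{\tau}{1+\tau}=\kappa+\kappa(\kappa-1)\frac{\tau}{1+\tau}=\theta(\kappa,\tau),$$
as required.

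The principal obstacle will be extracting the three quantitative inputs used above from \cite{DDMR}: the precise inter-helix separation $d_\eta\sim\lvert\log\eta\rvert^{-1/2}$, the linear-in-$R$ logarithmic energy growth on $B_R(0)$, and the uniform exterior convergence $|U_\eta|\to 1$. These should follow from the explicit asymptotic description of the Lyapunov--Schmidt construction in that paper; once they are in place, their stability under the $\eta^{-\tau}$-rescaling is automatic, and the remaining verification of the hypotheses of \cref{dens.gap} together with the density computation above are elementary.
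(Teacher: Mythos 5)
Your overall strategy is exactly the paper's: rescale the helical solutions of \cite{DDMR} by $\eta^{-\tau}$ (equivalently pass from parameter $\epsilon$ to $\tilde\epsilon=\epsilon^{1+\tau}$), verify the hypotheses of \cref{dens.gap}, and evaluate the density with \cref{pre.ener.id} using the inter-helix separation $\sim\epsilon^{\tau}\leps^{-1/2}$. The density computation at the end is correct. However, two of the inputs you defer are not mere citations, and one of your deductions is wrong as stated.

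First, the energy growth bound $E_\eta(U_\eta;B_R(0))\le CR\lvert\log\eta\rvert$ is not part of the statement of \cite[Theorem~1]{DDMR}, which only provides $C^0$ closeness of $U_\eta$ to a product of single-vortex profiles along the filaments. Establishing this bound is the main technical content of the paper's proof (\cref{enerbd.claim}): one must bound the potential term using the decay estimates on the corrector $\psi_\epsilon$ from \cite[Proposition~6.1]{DDMR}, control $|d|u||^2$ via \cref{bos1}, and then estimate the angular energy $|v|^{-2}|jv|^2$ on dyadic annuli around the filaments by exploiting $d^*jv=0$ and the explicit phase structure $v/|v|=\frac{z-d}{|z-d|}\cdot\frac{z+d}{|z+d|}e^{i\varphi}$. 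This occupies several pages and cannot be waved through.

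Second, your claim that the uniform exterior convergence $|U_\eta|\to1$ forces the diffuse part $f\equiv0$ is incorrect reasoning: the example $u_\epsilon(x)=\sqrt{1-\epsilon^2k_\epsilon^2}\,e^{ik_\epsilon x}$ with $k_\epsilon\sim\sqrt{\leps}$ from the introduction has $|u_\epsilon|\to1$ uniformly and a completely diffuse limit measure. Modulus convergence controls neither the phase gradient nor the harmonic part $h_0$ of the Hodge decomposition, which is what produces $f$. What is actually needed — and what the second half of \cref{enerbd.claim} supplies — is the bound $E_\epsilon(v_\epsilon;[D_{\epsilon^{-\tau}}\setminus D_{\delta\epsilon^{-\tau}}]\times S^1)\le C(\delta,\kappa,\tau)$, i.e.\ an $O(1)$ (hence $o(\leps)$ after normalization) energy bound on exterior annuli; this is absent from your list of required inputs and must be proved by the same dyadic-annulus argument. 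Once both halves of that lemma are in place, the rest of your argument goes through.
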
 

It is straightforward to check that \cref{prescribe.dens} follows from \cref{kap.tau.sols}, since we have $\bigcup_{\kappa=2}^{\infty}[\kappa,\frac{\kappa(\kappa+1)}{2})=[2,\infty)$. The solutions  described in \cref{kap.tau.sols} are obtained by rescaling families of entire solutions with $\kappa$ helical vortex filaments constructed in \cite{DDMR}. Namely, we rely on the following result. (In what follows, we make the identifications $\R^2\cong\C$ and $S^1\cong\mathbb{R}/2\pi\mathbb{Z}$.)

\begin{theorem}\label{ddmr.thm}\cite[Theorem~1]{DDMR} For $\kappa\in \{2,3,\ldots\}$ and $\epsilon<\epsilon_0(\kappa)$ sufficiently small, there exists a solution $v_{\epsilon}: \mathbb{R}^2\times S^1\to \mathbb{R}^2$ of the Ginzburg--Landau equations
	\begin{equation}\label{gl.e.5}
		\epsilon^2\Delta v_{\epsilon}=DW(v_{\epsilon})
	\end{equation}
	satisfying
	$$\left|v_{\epsilon}(z,t)-{\textstyle\prod_{j=1}^{\kappa}}w(\epsilon^{-1}[z-f_j^{\epsilon}(t)])\right|
	\leq \frac{C(\kappa)}{\leps },$$
	where $w: \mathbb{R}^2\to \mathbb{R}^2$ is the radially symmetric degree-one solution constructed in \cite{HH}, and $f_j^{\epsilon}: S^1\to \mathbb{R}^2$ satisfies
	$$\lim_{\epsilon\to 0}\left|\sqrt{\leps }f_j^{\epsilon}(t)-\sqrt{n-1}e^{it}e^{2i(j-1)\pi/\kappa}\right|=0.$$
	Moreover, these $v_{\epsilon}$ have the additional symmetry
	\begin{equation}\label{screw.sym}
		v_{\epsilon}(z,t)=e^{\kappa it}\tilde{v}_{\epsilon}(e^{-it}z)
	\end{equation}
	for some map $\tilde{v}_{\epsilon}: \mathbb{R}^2\to \mathbb{R}^2$.
\end{theorem}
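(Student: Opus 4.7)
The plan is to obtain $u^\tau_\epsilon$ by rescaling the entire solutions produced in \cref{ddmr.thm}. By the screw symmetry \eqref{screw.sym}, each $v_{\epsilon'}$ lifts from $\mathbb{R}^2 \times S^1$ to an entire solution on $\mathbb{R}^3 \cong \mathbb{R}^2 \times \mathbb{R}$ of the $\epsilon'$-Ginzburg--Landau equation. For $\tau \in [0, 1)$, I set $\epsilon := (\epsilon')^{1+\tau}$ and
\begin{equation*}
u^\tau_\epsilon(x) := v_{\epsilon'}(x/(\epsilon')^\tau), \quad x \in B_2^3(0),
\end{equation*}
which by the chain rule solves \cref{gl.eqn} with parameter $\epsilon$. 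The equivariance $v_{\epsilon'}(e^{i\alpha}z, t+\alpha) = e^{i\kappa\alpha} v_{\epsilon'}(z, t)$ implied by \eqref{screw.sym} translates into equivariance of $u^\tau_\epsilon$ under $(z', t') \mapsto (e^{i\alpha}z', t' + \alpha(\epsilon')^\tau)$, whose translation component in $t'$ becomes arbitrarily fast as $\epsilon \to 0$; in particular $|u^\tau_\epsilon|$ and $e_\epsilon(u^\tau_\epsilon)$ are invariant under this $S^1$-action.

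Next I verify that $u^\tau_\epsilon$ satisfies the hypotheses of \cref{dens.gap} with some density $\theta$. The energy bound $E_\epsilon(u^\tau_\epsilon; B_2) = O(\leps)$ and the Hausdorff concentration of $\{|u^\tau_\epsilon| \leq 1/2\}$ on $P := \{0\} \times \mathbb{R}$ both follow from the asymptotic formula in \cref{ddmr.thm}: the helices $f^{\epsilon'}_j$ have radius $\sim 1/\sqrt{\log(1/\epsilon')}$, which becomes $\sim (\epsilon')^\tau/\sqrt{\log(1/\epsilon')} \to 0$ after rescaling. By \cref{asymp} the limit energy measure is of the form $\theta \mathcal{H}^1 \mrestr P + f \operatorname{vol}_{g_0}$, where by the $S^1$-invariance $\theta$ is constant and $f = f(|z'|)$. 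In the far-field regime $|z'| \gg (\epsilon')^\tau/\sqrt{\log(1/\epsilon')}$ one has $v_{\epsilon'} \approx e^{i\kappa \arg(y)}$ (using that $|f^{\epsilon'}_j| \ll |y|$ and the degree-one standard vortex $w$ satisfies $w(y) \to y/|y|$ at infinity), which gives $|du^\tau_\epsilon|^2(x') \lesssim \kappa^2/|z'|^2$ and hence $\int_{B_1 \setminus B_{r_0}(P)} e_\epsilon(u^\tau_\epsilon) = O(\log(1/r_0)) = o(\leps)$, forcing $f \equiv 0$.

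To compute $\theta$, I apply \cref{pre.ener.id} to any horizontal slice $\{y_\epsilon\} \times D_1^2$. By equivariance, all such slices are $\delta_\epsilon$-good for some $\delta_\epsilon \to 0$ (the integrals defining $\mathcal{G}_{\epsilon, \delta_\epsilon}$ are independent of $y_\epsilon$). On such a slice, the zero set of $u^\tau_\epsilon$ consists of $\kappa$ points $p^\epsilon_j$, each a small perturbation of $(\epsilon')^\tau f^{\epsilon'}_j(y_\epsilon/(\epsilon')^\tau)$, with local degrees $\kappa^\epsilon_j = 1$. Since $|f^{\epsilon'}_i(t) - f^{\epsilon'}_j(t)| \sim |e^{2\pi i(i-1)/\kappa} - e^{2\pi i(j-1)/\kappa}|/\sqrt{\log(1/\epsilon')}$ for $i \neq j$, we obtain
\begin{equation*}
\lim_{\epsilon \to 0} \frac{|\log|p^\epsilon_i - p^\epsilon_j||}{\log(1/\epsilon)} = \lim_{\epsilon' \to 0} \frac{\tau \log(1/\epsilon') + O(\log\log(1/\epsilon'))}{(1+\tau)\log(1/\epsilon')} = \frac{\tau}{1+\tau},
\end{equation*}
so \cref{pre.ener.id} yields $\theta = \kappa + 2\binom{\kappa}{2}\tau/(1+\tau) = \theta(\kappa, \tau)$. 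The degree equality $\deg(u^\tau_\epsilon, \tfrac{1}{2} S^1 \times \{0\}) = \kappa$ follows since the corresponding circle in $v_{\epsilon'}$-coordinates has radius $\tfrac{1}{2}(\epsilon')^{-\tau} \gg \max_j |f^{\epsilon'}_j(0)|$ for $\epsilon$ small, and so encloses all $\kappa$ unit-degree vortex cores.

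The main obstacle will be the careful verification of the $\delta_\epsilon$-good-slice conditions and the vanishing of the diffuse part $f$; both rely on propagating the fast $S^1$-equivariance through the Section 2 Hodge machinery used to define $\xi_\epsilon$ and $h_\epsilon$, and should follow from the explicit pointwise structure of $v_{\epsilon'}$ supplied by \cref{ddmr.thm}, though some bookkeeping is needed to track how the various cutoffs interact with the rescaling by $(\epsilon')^\tau$.
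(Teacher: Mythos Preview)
There is a mismatch between the statement you were given and the proof you wrote. The stated theorem (\cref{ddmr.thm}) is a \emph{cited} result from \cite{DDMR}; the paper does not prove it and merely invokes it. What you have written is a proof sketch of \cref{kap.tau.sols} (equivalently, the content of \cref{prescribe.dens}), which \emph{uses} \cref{ddmr.thm} as a black box. If your intended target was indeed \cref{kap.tau.sols}, read on; if you genuinely meant to re-prove \cref{ddmr.thm}, then the proposal is off-topic, since that requires the full Lyapunov--Schmidt reduction carried out in \cite{DDMR}.

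Assuming the target is \cref{kap.tau.sols}, your overall strategy --- rescale $v_{\epsilon'}$ by $(\epsilon')^\tau$, use the screw symmetry to reduce to a single slice, and apply \cref{pre.ener.id} with $\kappa$ degree-one vortices at mutual distance $\sim(\epsilon')^\tau/\sqrt{\log(1/\epsilon')}$ --- is exactly the paper's. The computation of $\theta=\kappa+\kappa(\kappa-1)\tau/(1+\tau)$ via \cref{pre.ener.id} is also the same.

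The genuine gap is your justification of the energy bound $E_\epsilon(u^\tau_\epsilon;B_2)=O(\leps)$ and of the concentration $\int_{B_1\setminus B_{r_0}(P)}e_\epsilon(u^\tau_\epsilon)=o(\leps)$. You write that these ``follow from the asymptotic formula in \cref{ddmr.thm}'', but that formula is only a $C^0$ estimate $|v_{\epsilon'}-\prod_j w(\cdot)|\le C/\log(1/\epsilon')$; it says nothing about $|dv_{\epsilon'}|$ or $W(v_{\epsilon'})/\epsilon'^2$, and one cannot pass from $C^0$ closeness to energy closeness without further input. The paper handles this via \cref{enerbd.claim}, whose proof is not a bookkeeping exercise with cutoffs but requires going back into the \cite{DDMR} construction to extract pointwise bounds on the correction $\psi_\epsilon$ (specifically \eqref{re.psi.est}--\eqref{im.psi.est}), and then running a dyadic-annulus argument using $d^*jv_\epsilon=0$ to control $\int\chi_R^2|jv_\epsilon|^2$. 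Your closing paragraph anticipates difficulty here, but misidentifies its source: the issue is not propagating equivariance through the Hodge decomposition of Section~2, but rather that \cref{ddmr.thm} as stated simply does not contain enough information to bound the energy.
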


By looking closely at the construction of these solutions and keeping track of a few key estimates in \cite{DDMR}, we are able to check that the following estimate holds.

\begin{lemma}\label{enerbd.claim} For every $\tau\in [0,1)$, there exists a constant $C(\kappa)<\infty$ such that the solutions $v_{\epsilon}:\mathbb{R}^2\times S^1\to \mathbb{R}^2$ from \cref{ddmr.thm} satisfy
	\begin{equation}
		\int_{D_{\epsilon^{-\tau}}\times S^1}e_{\epsilon}(v_{\epsilon})\leq C(\kappa)\log(1/\epsilon^{\tau+1})
	\end{equation}
	for $\epsilon<\epsilon_0(\kappa,\tau)$, and, for every $\delta\in(0,1)$, there exists moreover a constant $C(\delta,\kappa,\tau)<\infty$ such that
	\begin{equation}
		\int_{[D_{\epsilon^{-\tau}}\setminus D_{\delta\epsilon^{-\tau}}]\times S^1}e_{\epsilon}(v_{\epsilon})\leq C(\delta,\kappa,\tau)
	\end{equation}
	for $\epsilon<\epsilon_1(\delta,\kappa,\tau)$.
\end{lemma}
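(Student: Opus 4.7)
I would exploit the screw symmetry \eqref{screw.sym} to reduce each three-dimensional integral on $\R^2\times S^1$ to a weighted two-dimensional integral on $\R^2$, and then compare $\tilde v_\epsilon(z) := v_\epsilon(z,0)$ to the explicit ansatz
\[
V_\epsilon^{\text{an}}(z) := {\textstyle\prod_{j=1}^\kappa} w(\epsilon^{-1}[z-a_j^\epsilon]), \qquad a_j^\epsilon := f_j^\epsilon(0),
\]
invoking the classical planar vortex energy estimates.

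Differentiating \eqref{screw.sym} and setting $z' := e^{-it}z$ yields $|\nabla_z v_\epsilon|^2 = |D\tilde v_\epsilon(z')|^2$ and $|\partial_t v_\epsilon|^2 = |i\kappa\tilde v_\epsilon(z') - D\tilde v_\epsilon(z')\cdot Jz'|^2$, where $J$ denotes the $\pi/2$-rotation in $\R^2$. Hence $e_\epsilon(v_\epsilon)$ depends on $(z,t)$ only through $z'$, and the rotational change of variables together with the $t$-integration give, for any rotationally symmetric $A\subset\R^2$,
\[
\int_{A\times S^1}e_\epsilon(v_\epsilon) \;=\; 2\pi\int_{A}\Bigl[\tfrac{1}{2}|D\tilde v_\epsilon|^2 + \tfrac{1}{2}|i\kappa\tilde v_\epsilon - D\tilde v_\epsilon\cdot Jz|^2 + \tfrac{W(\tilde v_\epsilon)}{\epsilon^2}\Bigr]\,dz.
\]
So it suffices to bound these planar integrals on $D_{\epsilon^{-\tau}}$ and on the annulus $D_{\epsilon^{-\tau}}\setminus D_{\delta\epsilon^{-\tau}}$, with the two standard Ginzburg--Landau terms and the extra ``screw'' term treated separately.

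For the standard terms I would upgrade the $C^0$-closeness $\tilde v_\epsilon = V_\epsilon^{\text{an}} + O(1/\leps)$ of \cref{ddmr.thm} to pointwise gradient control at the $\epsilon$-scale via interior elliptic estimates for \eqref{gl.eqn} (as in \cref{bound.general.critical}), which reduces the problem to computing the ansatz energy. Splitting $D_{\epsilon^{-\tau}}$ into disks $B_{\rho}(a_j^\epsilon)$ of radius $\rho\sim 1/\sqrt{\leps}$ around the cores and the complement, each core inside $D_{\epsilon^{-\tau}}$ contributes $\pi\log(\rho/\epsilon) + O(1)\sim\tfrac{1}{2}\pi\leps$ from the standard single-vortex formula, while on the complement $V_\epsilon^{\text{an}}$ is $S^1$-valued up to $O(1/\leps)$ and behaves like a harmonic degree-$\kappa$ map, with energy $\pi\kappa^2\log(\epsilon^{-\tau}/\rho)+O(1)$ when $\epsilon^{-\tau}\gg\rho$ (i.e., $\tau$ not too small). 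When $\epsilon^{-\tau}<\rho$ (i.e., $\tau$ small), the symmetric placement of the $a_j^\epsilon$ gives $\sum_j 1/a_j^\epsilon = 0$; a Taylor expansion of $\sum_j \arg(z-a_j^\epsilon)$ then yields $|DV_\epsilon^{\text{an}}|\lesssim |z|^{\kappa-1}|a_j^\epsilon|^{-\kappa}$ on $D_{\epsilon^{-\tau}}$, whose contribution is $O(1)$. Summing gives the $C(\kappa)(1+\tau)\leps$ bound; and, since $|a_j^\epsilon|\sim 1/\sqrt{\leps}\ll \delta\epsilon^{-\tau}$ for $\tau>0$ and $\epsilon$ small (or $|a_j^\epsilon|\to\infty$ if $\tau=0$), the annular region avoids the cores entirely, so the same decomposition yields the bounded estimate $C(\delta,\kappa,\tau)$.

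The screw term is controlled similarly: away from the cores, the identity $D\tilde v_\epsilon\cdot Jz = \partial_\theta\tilde v_\epsilon$ combined with $\tilde v_\epsilon\approx V_\epsilon^{\text{an}}\approx e^{i\kappa\theta}\cdot G(z)$ (with slowly varying $G$) gives $|i\kappa\tilde v_\epsilon - D\tilde v_\epsilon\cdot Jz|^2 = O(\sum_j|a_j^\epsilon|^2/|z|^2)$, which integrates to $O(1)$ over either region; inside each core the crude bound $|Jz\cdot D\tilde v_\epsilon|^2\leq|z|^2|D\tilde v_\epsilon|^2\lesssim |a_j^\epsilon|^2\epsilon^{-2}$ contributes only $O(|a_j^\epsilon|^2) = O(1/\leps)$. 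The main obstacle is precisely the upgrade from the $C^0$ remainder bound of \cref{ddmr.thm} to the sharp pointwise gradient control on $\tilde v_\epsilon - V_\epsilon^{\text{an}}$ needed to match the two maps inside the vortex cores at the $\epsilon$-scale; this should follow by applying interior gradient estimates to the rescaled difference on $\epsilon$-balls, using that both $\tilde v_\epsilon$ and $V_\epsilon^{\text{an}}$ solve \eqref{gl.eqn} up to a controllably small source.
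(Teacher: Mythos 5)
Your reduction via the screw symmetry and your treatment of the vortex cores (crude bound $|dv_\epsilon|\le C/\epsilon$ over area $O(\epsilon^2)$, which is all the paper uses there too) are fine. The genuine gap is exactly the step you flag at the end, and it cannot be closed the way you propose. The only closeness to the ansatz that \cref{ddmr.thm} provides is the $C^0$ bound $|\tilde v_\epsilon-V_\epsilon^{\mathrm{an}}|\le C/\leps$, and the natural scale for interior estimates for \cref{gl.eqn} is $\epsilon$ (the linearization is $\epsilon^2\Delta+O(1)$), so elliptic regularity applied to the difference $e:=\tilde v_\epsilon-V_\epsilon^{\mathrm{an}}$ yields at best $|\nabla e|\lesssim\epsilon^{-1}\leps^{-1}$. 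Squaring and integrating this over any region of area $\gtrsim 1$ (let alone $D_{\epsilon^{-\tau}}$, of area $\epsilon^{-2\tau}$) gives $\gtrsim\epsilon^{-2}\leps^{-2}$, which is astronomically larger than the target $C\leps$. So a $C^0$ remainder of size $\leps^{-1}$ simply does not propagate to energy-norm closeness by generic elliptic estimates, and your plan of ``computing the ansatz energy and transferring it'' collapses away from the cores, where all of the logarithmic energy lives.

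What the paper does instead is go back inside the construction of \cite{DDMR} and extract quantitative bounds on the corrector $\psi_\epsilon$ itself: $|\Re\psi_\epsilon|\le C/\leps$ and $|\Im\psi_\epsilon|\le C(\sigma)(\rho^{\sigma-2}+\epsilon^{2-\sigma})$ away from the cores. The imaginary part controls $1-|v_\epsilon|$ and hence (with \cref{bos1}, \cref{bos2}) the potential and radial energy, both $O(1)$. For the dominant angular part $|v_\epsilon|^{-2}|jv_\epsilon|^2$, the exact product representation gives $|j(v_\epsilon/|v_\epsilon|)-d\varphi_\epsilon|\le C/\rho_\epsilon$ with $\varphi_\epsilon=\Re\psi_\epsilon$, and — crucially — the term involving $d\varphi_\epsilon$ is handled \emph{without any pointwise bound on $d\Re\psi_\epsilon$}: one tests against $\chi_R^2$ on dyadic annuli and uses $d^*jv_\epsilon=0$ to integrate by parts, so that only the $C^0$ bound $|\varphi_\epsilon|\le C/\leps$ enters. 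Each annulus then contributes $O(1)$, and summing over the $\sim\log(1/\epsilon^{1+\tau})$ (resp.\ $O(\log(1/\delta))$) dyadic scales gives the two stated bounds. If you want to salvage your outline, you must replace the ``elliptic upgrade of the $C^0$ bound'' by these finer estimates on $\psi_\epsilon$ from \cite[Proposition~6.1]{DDMR} together with some device (such as the divergence-free structure of $jv_\epsilon$) that avoids differentiating the $O(\leps^{-1})$ phase error.
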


We postpone the proof of \cref{enerbd.claim} to the end of the section; next, we show how the results of \cref{ddmr.thm} and \cref{enerbd.claim} can be used to prove \cref{kap.tau.sols}.

\begin{proof}[Proof of \cref{kap.tau.sols}] 
	
	As in \cite{DDMR}, we identify the solutions $v_{\epsilon}:\mathbb{R}^2\times S^1\to \mathbb{R}^2$ given by \cref{ddmr.thm} with solutions on $\mathbb{R}^3$ that are $2\pi$-periodic in the third variable. Under this identification, note that \cref{enerbd.claim} gives
	\begin{align*}
		\int_{B^3_{\epsilon^{-\tau}}}e_{\epsilon}(v_{\epsilon})
		&\leq\int_{D_{\epsilon^{-\tau}}\times [-\epsilon^{-\tau},\epsilon^{-\tau}]}e_{\epsilon}(v_{\epsilon})\\
		&\leq C\epsilon^{-\tau}\int_{D_{\epsilon^{-\tau}}\times S^1}e_{\epsilon}(v_{\epsilon})\\
		&\leq  C(\kappa)\epsilon^{-\tau}\log(\epsilon^{-\tau-1}),
	\end{align*}
	and similarly
	\begin{align*}
		\int_{B^3_{\epsilon^{-\tau}}(0)\setminus B_{\delta\epsilon^{-\tau}}(P)}e_{\epsilon}(v_{\epsilon})
		&\leq C\epsilon^{-\tau}\int_{[D_{\epsilon^{-\tau}}\setminus D_{\delta\epsilon^{-\tau}}]\times S^1}e_{\epsilon}(v_{\epsilon})\\
		&\leq C(\delta,\kappa,\tau)\epsilon^{-\tau}.
	\end{align*}
	
	It is then straightforward to see that, for $\tilde{\epsilon}=\epsilon^{1+\tau}$, the rescaled maps
	$$u_{\tilde{\epsilon}}^{\tau}: \mathbb{R}^3\to \mathbb{R}^2,\quad u^{\tau}_{\tilde{\epsilon}}(x):=v_{\epsilon}(x/\epsilon^{\tau})$$
	solve the $\tilde{\epsilon}$-Ginzburg--Landau equations
	$$\tilde{\epsilon}^2\Delta u_{\tilde{\epsilon}}^{\tau}=DW(u_{\tilde{\epsilon}}^{\tau})$$
	on $\mathbb{R}^3$, and satisfy
	\begin{align*}
		\int_{B_1^3(0)}e_{\tilde{\epsilon}}(u^{\tau}_{\tilde{\epsilon}})&=\epsilon^{\tau}\int_{B_{\epsilon^{-\tau}}^3(0)}e_{\epsilon}(v_{\epsilon})
		\leq C(\kappa,\tau)\log(1/\tilde{\epsilon})
	\end{align*}
	and, for any $\delta>0$,
	\begin{align*}
		\int_{B_1^3(0)\setminus B_{\delta}(P)}e_{\tilde{\epsilon}}(u_{\tilde{\epsilon}}^{\tau})&=\epsilon^{\tau}\int_{B^3_{\epsilon^{-\tau}}(0)\setminus B_{\delta\epsilon^{-\tau}}(P)}e_{\epsilon}(v_{\epsilon})
		\leq C(\delta,\kappa,\tau).
	\end{align*}
	In particular, it follows that the maps
	$$u^{\tau}_{\tilde{\epsilon}}:B_1^3(0)\to \mathbb{R}^2,\quad\text{for }\tilde\epsilon\in(0,\epsilon_0(\kappa)^{1+\tau}),$$
	give a family of solutions to the Ginzburg--Landau equations on $B_1^3(0)$ (or similarly, any fixed compact subset of $\mathbb{R}^3$) with energy of order $\log(1/\tilde{\epsilon})$ concentrating along $P$ as $\tilde{\epsilon}\to 0$; i.e.\ (up to subsequences),
	$$\lim_{\tilde{\epsilon}\to 0}\frac{e_{\tilde{\epsilon}}(u_{\tilde{\epsilon}}^{\tau})}{\pi \lteps }\,dx = \theta \mathcal{H}^1 \mrestr P$$
	for some $\theta>0$.
	
	To compute $\theta$, we appeal to \cref{pre.ener.id}, together with \cref{ddmr.thm}. Fixing a small (but arbitrary) $\delta\in (0,\mz)$, consider $\delta_{\tilde{\epsilon}}\to 0$ and $t_{\tilde{\epsilon}}\in \mathcal{G}_{\tilde{\epsilon},\delta_{\tilde{\epsilon}}}\subseteq (-\mz,\mz)$ a family of $\delta_{\tilde{\epsilon}}$-good slices for $u_{\tilde{\epsilon}}^{\tau}$ as in the preceding sections. By virtue of the symmetry \cref{screw.sym} of $v_{\epsilon}$, note that we can simply take $t_{\tilde{\epsilon}}=0$.
	Following the analysis of the preceding section, consider the set
	$$U_{\delta,\tilde{\epsilon}}:=\{z\in \mathbb{R}^2 : |u_{\tilde{\epsilon}}^{\tau}(z,0)|<1-\delta\}.$$
	By \cref{ddmr.thm}, we see that if $|v_{\epsilon}(z,0)|<1-\delta$, then
	$$\left|{\textstyle\prod_{j=1}^{\kappa}} w(\epsilon^{-1}[z-f_j^{\epsilon}(0)])\right|\leq \frac{C(\kappa)}{\leps }+1-\delta\leq 1-\frac{\delta}{2}$$
	for $\epsilon$ sufficiently small, and since the model single-vortex solution $w$ satisfies $|w(z)|\to 1$ as $|z|\to\infty$, it follows that
	$$z\in \bigcup_{j=1}^{\kappa}B_{C(\delta)\epsilon}(f_j^{\epsilon}(0)).$$
	Moreover, note that for $1\leq j<l\leq \kappa$, \cref{ddmr.thm} gives
	\begin{align*}
		|f_j^{\epsilon}(0)-f_l^{\epsilon}(0)|&\geq \frac{\sqrt{n-1}}{\sqrt{\leps }}|e^{2i(j-1)\pi/\kappa}-e^{2i(l-1)\pi/\kappa}|+o(\leps ^{-1/2})\\
		&\geq \frac{c(\kappa)}{\sqrt{\leps }}
	\end{align*}
	for $\epsilon$ sufficiently small and $c(\kappa)>0$, and a similar upper bound also holds. In particular, it follows that the balls $B_{C(\delta)\epsilon}(f_j^{\epsilon}(0))$ are mutually disjoint for $\epsilon$ sufficiently small, and it follows from the $C^0$ closeness
	$$\left|v_{\epsilon}(z,0)-{\textstyle\prod_{j=1}^k}w(\epsilon^{-1}[z-f_j^{\epsilon}(t)])\right|\leq \frac{C(\kappa)}{\leps }$$
	that 
	$$\deg(v_{\epsilon}, \partial D_{C(\delta)\epsilon}(f_j^{\epsilon}(0)))=1.$$
	
	In particular, for our rescaled solutions $u_{\tilde{\epsilon}}^{\tau}(z,0)=v_{\epsilon}(z/\epsilon^{\tau},0),$ writing
	$$p_j^{\tilde{\epsilon}}:=\epsilon^{\tau}f_j^{\epsilon}(0),$$
	it follows that
	\begin{align*}
		U_{\delta,\tilde{\epsilon}}&=\{(\epsilon^{\tau}z,0) : |v_{\epsilon}(z,0)|<1-\delta\}
		\subseteq \bigcup_{j=1}^{\kappa}B_{C(\delta)\epsilon^{1+\tau}}(\epsilon^{\tau}f_j^{\epsilon}(0))
		=\bigcup_{j=1}^{\kappa}B_{C(\delta)\tilde{\epsilon}}(p_j^{\tilde{\epsilon}}),
	\end{align*}
	where the balls $B_{C(\delta)\tilde{\epsilon}}(p_j^{\tilde{\epsilon}})$ are mutually disjoint, $u_{\tilde{\epsilon}}^{\tau}$ has degree 
	$$\kappa_j^{\tilde{\epsilon}}:=\deg(u_{\tilde{\epsilon}}^{\tau}, \partial D_{C(\delta)\tilde{\epsilon}}(\tilde p_j^{\epsilon}))=1,$$
	and 
	$$\frac{c(\delta,\kappa)\tilde{\epsilon}^{\tau/(1+\tau)}}{\sqrt{\leps }}=\frac{c(\delta,\kappa)\epsilon^{\tau}}{\sqrt{\leps }}
	\leq |\tilde p_j^{\epsilon}-\tilde p_l^{\epsilon}|\leq \frac{C(\delta,\kappa)\epsilon^{\tau}}{\sqrt{\leps }}
	=\frac{C(\delta,\kappa)\tilde{\epsilon}^{\tau/(1+\tau)}}{\sqrt{\leps }}$$
	for $1\leq j<l\leq \kappa$. Thus, applying \cref{pre.ener.id}, we deduce that
	\begin{align*}
		\theta&=\lim_{\tilde{\epsilon}\to 0}\left({\textstyle\sum_{j=1}^{\kappa}}1+2{\textstyle\sum_{j<l}}1\cdot 1\cdot\frac{\lvert\log |p_j^{\tilde{\epsilon}}-p_l^{\tilde{\epsilon}}|\rvert}{\lteps }\right)\\
		&=\kappa+\kappa(\kappa-1)\frac{\tau}{1+\tau},
	\end{align*}
	completing the proof of \cref{kap.tau.sols}.
\end{proof}

It remains now to prove \cref{enerbd.claim}, verifying that natural energy growth conditions hold for the solutions constructed in \cite{DDMR}.

\begin{proof}[Proof of \cref{enerbd.claim}]
	For simplicity, we specialize to the case $\kappa=2$, for which the construction in \cite{DDMR} is carried out in detail. In this case, the solutions $v_{\epsilon}: \mathbb{R}^2\times S^1\to \mathbb{R}^2$ of \cref{ddmr.thm} have the form
	$$v_{\epsilon}(z,t)=e^{2 i t}V_{\epsilon}(e^{-it}z/\epsilon),$$
	for a map $V_{\epsilon}$ of the form
	$$V_{\epsilon}(z):=w(z-\tilde{d}_{\epsilon})w(z+\tilde{d}_{\epsilon})[\eta\cdot(1+i\psi_{\epsilon})+(1-\eta)\cdot e^{i\psi_{\epsilon}}]$$
	(see \cite[eq.~(3.2)]{DDMR}), where $w: \mathbb{R}^2\to \mathbb{R}^2$ is the radially symmetric solution of $$\Delta v+(1-|v|^2)v=0$$ with degree one constructed in \cite{HH}, $\tilde{d}_{\epsilon}\in \mathbb{R}^2$ are points in the plane with 
	$$|\tilde{d}_{\epsilon}|\leq \frac{C}{\epsilon\sqrt{\leps }},$$
	$\eta$ is a cutoff function of the form
	$$\eta(z)=\eta_1(|z-\tilde{d}_{\epsilon}|)+\eta_1(|z+\tilde{d}_{\epsilon}|),$$
	with $\eta_1(t)=1$ for $t\leq 1$ and $\eta_1(t)=0$ for $t\geq 2$, and $\psi_{\epsilon}: \mathbb{R}^2\to \mathbb{C}$ is an unknown function whose implicit construction (with estimates) is the content of the proof of \cref{ddmr.thm}. 
	
	It follows from \cite[Proposition~6.1]{DDMR} (see also \cite[p.~18]{DDMR} for the definition of $\|\psi\|_*$) that the real part $\Re(\psi_{\epsilon})$ of $\psi_{\epsilon}$ satisfies
	\begin{equation}\label{re.psi.est}
		|\Re(\psi_{\epsilon})|\leq \frac{C}{\leps }\quad\text{where }\min\{|z\pm \tilde{d}_{\epsilon}|\}>2
	\end{equation}
	and, for any fixed $\sigma\in (0,1]$, the imaginary part satisfies\footnote{Note that the first occurrence of $\epsilon^{\sigma-2}$ in the definition of $\|\psi_2\|_{2,*}$ from \cite[p.~18]{DDMR} should read $\epsilon^{2-\sigma}$.}
	\begin{equation}\label{im.psi.est}
		|\Im(\psi_{\epsilon})|\leq C(\sigma)(|z-\tilde{d}_{\epsilon}|^{\sigma-2}+|z+\tilde{d}_{\epsilon}|^{\sigma-2}+\epsilon^{2-\sigma})\quad\text{where }\min\{|z\pm\tilde{d}_{\epsilon}|\}>2.
	\end{equation}
	In particular, where $\min\{|z\pm\tilde{d}_{\epsilon}|\}>2$, we have
	\begin{align*}
		|1-|V_{\epsilon}(z)||&=|1-|w(z-\tilde{d}_{\epsilon})||w(z+\tilde{d}_{\epsilon})|e^{-\Im(\psi_{\epsilon})}|\\
		&\leq |1-|w(z-\tilde{d}_{\epsilon})||+|1-|w(z+\tilde{d}_{\epsilon})||+|1-e^{-\Im(\psi_{\epsilon})}|\\
		&\leq C(\sigma)(|z-\tilde{d}_{\epsilon}|^{\sigma-2}+|z+\tilde{d}_{\epsilon}|^{\sigma-2}+\epsilon^{2-\sigma}),
	\end{align*}
	where we used that the model single-vortex solution $w(z)$ satisfies $0\leq 1-|w(z)|\leq \frac{C}{|z|^2}$ (cf.\ \cite[Lemma~7.1]{DDMR}). In particular, scaling back down to the solutions $v_{\epsilon}=e^{2it}V_{\epsilon}(e^{-it}z/\epsilon)$, it follows that
	\begin{align*}
		|1-|v_{\epsilon}(z,t)||&\leq C(\sigma)\epsilon^{2-\sigma}(|z-e^{it}\epsilon\tilde{d}_{\epsilon}|^{\sigma-2}+|z+e^{it}\epsilon\tilde{d}_{\epsilon}|^{\sigma-2}+1)
	\end{align*}
	where $|z\pm e^{it}\epsilon\tilde{d}_{\epsilon}|>2\epsilon$.
	Also, from the bound \cref{bound.general.critical} in the appendix (and a trivial rescaling), we have
	$|u(x)|\le 1+C\epsilon^2R^{-2}$ on $B_{R/2}(0)$, for all $R\ge 1$, which implies that $|v_\epsilon|\le 1$ everywhere.
	
	In particular, by the preceding estimates, setting
	$$\rho_{\epsilon}(z,t):=\min\{|z-e^{it}\epsilon\tilde{d}_{\epsilon}|,|z+e^{it}\epsilon\tilde{d}_{\epsilon}|\},$$ we see that for $\sigma,\tau\in(0,1)$
	\begin{align*}
		\int_{D_{2\epsilon^{-\tau}}\times S^1}\frac{(1-|v_{\epsilon}|^2)^2}{\epsilon^2}
		&\leq \int_{\{\rho_{\epsilon}\leq 2\epsilon\}}\frac{(1-|v_{\epsilon}|^2)^2}{\epsilon^2}\\
		&\quad+\int_{\{4\epsilon^{-\tau}\geq\rho_{\epsilon}>2\epsilon\}}\frac{(1-|v_{\epsilon}|^2)^2}{\epsilon^2}\\
		&\leq C\epsilon^2\cdot\frac{C}{\epsilon^2}+C(\sigma)\int_{\{4\epsilon^{-\tau}\geq\rho_{\epsilon}>2\epsilon\}}\frac{1}{\epsilon^2}[\epsilon^{2-\sigma}(\rho_{\epsilon}^{\sigma-2}+1)]^2\\
		&\leq C+\frac{C(\sigma)}{\epsilon^2}\cdot \epsilon^{2(2-\sigma)}(\epsilon^{2\sigma-2}+\epsilon^{-2\tau})\\
		&\leq C(\sigma)(1+\epsilon^{2(1-\sigma-\tau)}),
	\end{align*}
	where we used the coarea formula to bound $\int_{\{4\epsilon^{-\tau}\geq\rho_{\epsilon}>2\epsilon\}}\rho_{\epsilon}^{2(\sigma-2)}\le C(\sigma)\epsilon^{2\sigma-2}$
	(as each level set $\{\rho_\epsilon=r\}$ has length at most $Cr$);
	hence, taking $\sigma=1-\tau$ gives
	\begin{equation}\label{v.w.bd}
		\int_{D_{2\epsilon^{-\tau}}\times S^1}\frac{(1-|v_{\epsilon}|^2)^2}{\epsilon^2}\leq C(\tau)
	\end{equation}
	for all $\tau\in(0,1)$, and thus also for all $\tau\in[0,1)$.
	
	With the bound \cref{v.w.bd} in place, it follows from \cref{bos1} (after a suitable rescaling) that 
	\begin{equation}\label{d.abs.delpino}
		\int_{D_{\epsilon^{-\tau}}\times S^1}|d|v_{\epsilon}||^2\leq C(\tau)
	\end{equation}
	as well, so to obtain the desired energy estimates for $v_{\epsilon}$, it remains to estimate the contribution from
	$$|dv_{\epsilon}|^2-|d|v_{\epsilon}||^2=|v_{\epsilon}|^{-2}|jv_{\epsilon}|^2,$$
	recalling that
	$$jv_{\epsilon}=v_{\epsilon}^*(r^2\,d\theta)=v_{\epsilon}^1\,dv_{\epsilon}^2-v_{\epsilon}^2\,dv_{\epsilon}^1.$$
	To start, observe that on $\{\rho_{\epsilon}\leq 4\epsilon\}$, we have
	\begin{equation}
		\int_{\{\rho_{\epsilon}\leq 4\epsilon\}} |dv_{\epsilon}|^2\leq C\epsilon^2\cdot \frac{C}{\epsilon^2}\leq C,
	\end{equation}
	so we only need to estimate the energy contribution from the region
	$$A:=[D_{\epsilon^{-\tau}}\times S^1]\cap\{ \rho_{\epsilon}\geq 4\epsilon\}.$$
	To this end, for $R\in [2\epsilon,2\epsilon^{-\tau}]$, consider the annular regions
	$$\Omega_R:=\{R < \rho_{\epsilon}(z,t) < 5R\}$$
	and
	$$\Omega_R':=\{2R\leq \rho_{\epsilon}(z,t)\leq 4R\},$$
	so that, for $\epsilon<\epsilon_0(\tau)$ sufficiently small (since $|e^{it}\epsilon\tilde{d}_{\epsilon}|\leq C/\sqrt{\leps }\le\epsilon^{-\tau}$), we have
	\begin{equation}\label{a.subset}
		A\subseteq \bigcup_{j=1}^{J_{\epsilon,\tau}}\Omega_{2^j\epsilon}'
	\end{equation}
	where $J_{\epsilon,\tau}:=\lceil \log(\epsilon^{-\tau-1})/\log 2\rceil$ and, for $\epsilon<\epsilon_1(\delta,\tau)$ (since $|e^{it}\epsilon\tilde{d}_{\epsilon}|\leq C/\sqrt{\leps }\le\mz\delta\epsilon^{-\tau}$),
	\begin{equation}\label{smallann.subset}
		[D_{\epsilon^{-\tau}}\setminus D_{\delta\epsilon^{-\tau}}]\times S^1\subseteq \bigcup_{j=I_{\epsilon,\tau,\delta}}^{J_{\epsilon,\tau}}\Omega_{2^j\epsilon}'
	\end{equation}
	where $I_{\epsilon,\tau,\delta}:=\lfloor \log(\delta\epsilon^{-\tau-1})/\log(2)\rfloor-3$. Now, given $R\in [2\epsilon,2\epsilon^{-\tau}],$ let $\chi_R$ be a cutoff function such that
	$$0\leq \chi_R\in C_c^{\infty}(\Omega_R),\quad\chi_R\equiv 1\text{ on }\Omega_R',\quad|d\chi_R|\leq \frac{C}{R}.$$
	
	Next, observe that where $\rho_{\epsilon}\geq 2\epsilon$, $v_{\epsilon}$ has the form
	$$v_{\epsilon}(z,t)=e^{2it}w(e^{-it}\epsilon^{-1}z-\tilde{d}_{\epsilon})w(e^{-it}\epsilon^{-1}z+\tilde{d}_{\epsilon})e^{i\psi_{\epsilon}(e^{-it}\epsilon^{-1}z)},$$
	and since the model single-vortex solution $w$ satisfies $\frac{w(z)}{|w(z)|}=\frac{z}{|z|}$, it follows that
	$$\frac{v_{\epsilon}(z,t)}{|v_{\epsilon}|(z,t)}=\frac{z-e^{it}\epsilon\tilde{d}_{\epsilon}}{|z-e^{it}\epsilon\tilde{d}_{\epsilon}|}\cdot \frac{z+e^{it}\epsilon\tilde{d}_{\epsilon}}{|z+e^{it}\epsilon\tilde{d}_{\epsilon}|}\cdot e^{i\varphi_{\epsilon}},$$
	where we set
	$$\varphi_{\epsilon}(z,t):=\Re(\psi_{\epsilon}(e^{-it}\epsilon^{-1}z)).$$
	It is straightforward to check that
	$$\left\lvert d\left(\frac{z\pm e^{it}\epsilon\tilde{d}_{\epsilon}}{|z\pm e^{it}\epsilon\tilde{d}_{\epsilon}|}\right)\right\rvert\leq \frac{C}{|z\pm e^{it\epsilon}\tilde{d}_{\epsilon}|},$$
	and as a consequence,
	\begin{align*}
		|j(v_{\epsilon}/|v_{\epsilon}|)-d\varphi_{\epsilon}|&=|j(e^{-i\varphi_{\epsilon}}v_{\epsilon}/|v_{\epsilon}|)|
		\leq  \frac{C}{\rho_{\epsilon}}
	\end{align*}
	where $\rho_{\epsilon}\geq 2\epsilon$. Moreover, recall that, since $v_{\epsilon}$ solves the Ginzburg--Landau equations, we have as always $d^*jv_{\epsilon}=0$; as a consequence, for any $R\in [2\epsilon,2\epsilon^{-\tau}]$, we see that
	$$\int \chi_R^2\langle jv_{\epsilon},d\varphi_{\epsilon}\rangle=-\int \varphi_{\epsilon}\langle jv_{\epsilon},d(\chi_R^2)\rangle,$$
	and therefore
	\begin{align*}
		\int \chi_R^2|v_{\epsilon}|^{-2}|jv_{\epsilon}|^2&=\int \chi_R^2\langle jv_{\epsilon},j(v_{\epsilon}/|v_{\epsilon}|)\rangle\\
		&=\int \chi_R^2\langle jv_{\epsilon},j(v_{\epsilon}/|v_{\epsilon}|)-d\varphi_{\epsilon}\rangle+\int \chi_R^2\langle jv_{\epsilon},d\varphi_{\epsilon}\rangle\\
		&\leq \int \chi_R^2|jv_{\epsilon}|\cdot \frac{C}{\rho_{\epsilon}}-\int \varphi_{\epsilon}\langle jv_{\epsilon},d(\chi_R^2)\rangle\\
		&\leq C\|\chi_R\,jv_{\epsilon}\|_{L^2} \left(\int \rho_{\epsilon}^{-2}\chi_R^2+\|\varphi_{\epsilon}\,d\chi_R\|_{L^{\infty}}^2\cdot |\Omega_R|\right)^{1/2}.
	\end{align*}
	Now, since $\chi_R$ is supported on the set $\Omega_R=\{R< \rho_{\epsilon}< 5R\}$, whose area is $\leq CR^2$, we see that
	$$\int \rho_{\epsilon}^{-2}\chi_R^2\leq \int_{\Omega_R}\frac{C}{R^2}\leq C.$$
	Moreover, it follows from \cref{re.psi.est} that $|\varphi_{\epsilon}|\leq \frac{C}{\leps }$ on $\Omega_R$ for $R\geq 2\epsilon$, so that
	$$\|\varphi_{\epsilon}\,d\chi_R\|_{L^{\infty}}^2\cdot |\Omega_R|\leq \frac{C}{\leps ^2}\cdot \frac{C}{R^2}\cdot CR^2\leq \frac{C}{\leps ^2}.$$
	Finally, since $|v_{\epsilon}|\leq 1$, we have
	$$\|\chi_R\,jv_{\epsilon}\|_{L^2}\leq \|\chi_R|v_{\epsilon}|^{-1}\,jv_{\epsilon}\|_{L^2},$$
	and putting together the preceding computations gives
	$$\|\chi_R|v_{\epsilon}|^{-1}\,jv_{\epsilon}\|_{L^2}^2\le C\|\chi_R|v_{\epsilon}|^{-1}\,jv_{\epsilon}\|_{L^2},$$
	and hence
	\begin{equation}\label{ann.bd.delpino}
		\int_{\Omega_R'}|v_{\epsilon}|^{-2}|jv_{\epsilon}|^2\leq \int \chi_R^2|v_{\epsilon}|^{-2}|jv_{\epsilon}|^2\leq C.
	\end{equation}
	
	Now, applying \cref{a.subset}, it follows that
	\begin{align*}
		\int_{D_{\epsilon^{-\tau}}\times S^1}|v_{\epsilon}|^{-2}|jv_{\epsilon}|^2&=\int_{\{\rho_{\epsilon}\leq 4\epsilon\}}|v_{\epsilon}|^{-2}|jv_{\epsilon}|^2+\int_A|v_{\epsilon}|^{-2}|jv_{\epsilon}|^2\\
		&\leq C+{\textstyle\sum_{j=1}^{J_{\epsilon,\tau}}}\int_{\Omega_{2^j\epsilon}'}|v_{\epsilon}|^{-2}|jv_{\epsilon}|^2\\
		&\leq C+C J_{\epsilon,\tau}\\
		&\leq C\log(1/\epsilon^{\tau+1}),
	\end{align*}
	and since we have already shown (in \cref{v.w.bd} and \cref{d.abs.delpino}) that
	$$\int_{D_{\epsilon^{-\tau}}\times S^1}(e_{\epsilon}(u_{\epsilon})-|v_{\epsilon}|^{-2}|jv_{\epsilon}|^2)\leq C(\tau),$$
	it follows that
	$$\int_{D_{\epsilon^{-\tau}}\times S^1}e_{\epsilon}(u_{\epsilon})\leq C\log(1/\epsilon^{\tau+1})+C(\tau)\le C\log(1/\epsilon^{\tau+1})$$
	for $\epsilon<\epsilon_0(\tau)$ sufficiently small, as claimed.
	
	Moreover, for any $\delta\in(0,1)$ and $\epsilon<\epsilon_0(\delta)$ sufficiently small, it follows from \cref{smallann.subset} and \cref{ann.bd.delpino} that
	\begin{align*}
		\int_{[D_{\epsilon^{-\tau}}\setminus D_{\delta\epsilon^{-\tau}}]\times S^1}|v_{\epsilon}|^{-2}|jv_{\epsilon}|^2
		&\leq {\textstyle\sum_{j=I_{\epsilon,\tau,\delta}}^{J_{\epsilon,\tau}}}\int_{\Omega'_{2^j\epsilon}}|v_{\epsilon}|^{-2}|jv_{\epsilon}|^2\\
		&\leq {\textstyle\sum_{j=I_{\epsilon,\tau,\delta}}^{J_{\epsilon,\tau}}}C\\
		&=C(\lceil \log(\epsilon^{-\tau-1})/\log 2\rceil-(\lfloor \log(\delta \epsilon^{-\tau-1})/\log(2)\rfloor-3))\\
		&\leq C(5-\log(\delta)),
	\end{align*}
	hence
	$$\int_{[D_{\epsilon^{-\tau}}\setminus D_{\delta\epsilon^{-\tau}}]\times S^1}e_{\epsilon}(u_{\epsilon})\leq C(\delta,\tau),$$
	completing the proof of the claim.
\end{proof}

\appendix
\section*{Appendix.}
\renewcommand{\thesection}{A}
\setcounter{theorem}{0}
\setcounter{equation}{0}

In this appendix we collect some fundamental estimates for maps $u:B_1^n(0)\to\C$ which are critical for the Ginzburg--Landau energy
$$E_\epsilon(u)=\int_{B_1}\left(\frac{|du|_g^2}{2}+\frac{W(u)}{\epsilon^2}\right) d\,\operatorname{vol}_g,$$
with respect to a smooth Riemannian metric $g$, defined on the closure $\bar B_1$.
Recall that $u$ solves the nonlinear elliptic equation
\begin{align*}
	&\epsilon^2\Delta_g u+(1-|u|^2)u=0.
\end{align*}
In the proof of these results a central ingredient, which also appears in our arguments, is the following \emph{monotonicity formula} (see, e.g., \cite[Proposition~A.1]{Stern.jdg}).

\begin{proposition}
	For any $x\in B_1^n(0)$, denoting by $\mathcal{B}_s(x)$ the geodesic ball with respect to $g$, we have
	\begin{align}\label{monotonicity}
		&\frac{d}{ds}\left(e^{C(g)s^2}\frac{E_\epsilon(u;\mathcal{B}_s(x))}{s^{n-2}}\right)
		\ge\frac{1}{s^{n-2}}\int_{\de\mathcal{B}_s(x)}|\de_\nu u|_g^2+\frac{1}{s^{n-1}}\int_{\mathcal{B}_s(x)}\frac{2W(u_\epsilon)}{\epsilon^2}
	\end{align}
	for all $s\in(0,\operatorname{inj}_g(x))$, where we omit the volume element of $g$. In particular,
	\begin{align*}
		&s\mapsto e^{C(g)s^2}\frac{E_\epsilon(u;\mathcal{B}_s(x))}{s^{n-2}}
	\end{align*}
	is an increasing function of the radius $s\in(0,\operatorname{inj}_g(x))$.
\end{proposition}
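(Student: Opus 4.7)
My plan is to derive the monotonicity formula from a Pohozaev-type identity obtained by integrating the divergence-free stress--energy tensor against a radial vector field, and to absorb the curvature errors via an integrating factor; this factor is precisely the exponential $e^{C(g)s^2}$ appearing in the statement. I would begin by recalling that the Ginzburg--Landau equation $\epsilon^2\Delta_g u + (1-|u|^2)u = 0$ implies $\operatorname{div}_g T = 0$ for the stress--energy tensor
$$T_{ij} := e_\epsilon(u)\, g_{ij} - \langle \de_i u, \de_j u\rangle,\qquad \operatorname{tr}_g T = (n-2)e_\epsilon(u) + \tfrac{2W(u)}{\epsilon^2}.$$
Fixing $x\in B_1$ and writing $r(y) := \operatorname{dist}_g(y,x)$, I would set $X := r\,\nabla_g r$ on the punctured geodesic ball. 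Applying the divergence theorem to $X^i T_{ij}$ on $\mathcal{B}_s(x)$ and using $\operatorname{div}_g T = 0$ yields
$$\int_{\mathcal{B}_s(x)} \langle T, \nabla X\rangle_g = \int_{\de\mathcal{B}_s(x)} T(X,\nu) = s\int_{\de\mathcal{B}_s(x)}\bigl(e_\epsilon(u) - |\de_\nu u|_g^2\bigr),$$
since $X|_{\de\mathcal{B}_s} = s\nu$.

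Next, I would analyze $\nabla X = \nabla r\otimes \nabla r + r\nabla^2 r$. By the Hessian comparison for the distance function on a manifold with bounded sectional curvature, $r\nabla^2 r = g - \nabla r\otimes\nabla r + E$ with $|E|_g \leq C(g) r^2$ throughout $\bar B_1$, so $\nabla X = g + E$ and $\langle T, \nabla X\rangle_g = \operatorname{tr}_g T + \langle T, E\rangle_g$. Substituting into the preceding identity and writing $F(s) := E_\epsilon(u; \mathcal{B}_s(x))$, one obtains
$$s\int_{\de\mathcal{B}_s}\bigl(e_\epsilon(u) - |\de_\nu u|_g^2\bigr) = (n-2)F(s) + 2\int_{\mathcal{B}_s}\tfrac{W(u)}{\epsilon^2} + \mathcal{R}(s),$$
where $|\mathcal{R}(s)| \leq C(g)s^2 F(s)$. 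Combining this with the coarea identity $F'(s) = \int_{\de\mathcal{B}_s} e_\epsilon(u)$ (valid since $|\nabla r|_g = 1$ in the punctured ball) and rearranging, I obtain
$$\frac{d}{ds}\!\left(\frac{F(s)}{s^{n-2}}\right) \geq \frac{1}{s^{n-2}}\int_{\de\mathcal{B}_s}|\de_\nu u|_g^2 + \frac{2}{s^{n-1}}\int_{\mathcal{B}_s}\tfrac{W(u)}{\epsilon^2} - C(g)\, s\,\frac{F(s)}{s^{n-2}}.$$

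To finish, I would multiply through by the integrating factor $e^{C(g)s^2/2}$; since $\tfrac{d}{ds}e^{C(g)s^2/2} = C(g)s\, e^{C(g)s^2/2}$, the curvature error on the right is exactly absorbed, yielding the inequality \cref{monotonicity} after relabeling the constant in the exponent. Monotonicity of $s\mapsto e^{C(g)s^2}F(s)/s^{n-2}$ is then immediate, since the right-hand side is pointwise nonnegative. The main technical point to watch will be ensuring that the curvature constant $C(g)$ depends only on uniform bounds for the sectional curvature and injectivity radius of $g$ on $\bar B_1$, so that the formula applies with a single constant along a smoothly converging family $g_\epsilon \to g_0$ (as needed elsewhere in the paper when one rescales around a point of the concentration varifold).
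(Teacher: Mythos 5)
Your argument is correct and is essentially the standard proof of this monotonicity formula: the paper does not prove the proposition itself but cites \cite[Proposition~A.1]{Stern.jdg}, where the same route is taken — integrate the divergence-free stress--energy tensor against the radial field $r\nabla_g r$, use the Hessian comparison $\nabla_g X = g + O(r^2)$ to reduce to the Euclidean Pohozaev identity, and absorb the $O(s)\,F(s)/s^{n-2}$ error with the exponential integrating factor. The only point to keep explicit is the uniform bound $|\nabla^2_g(r^2/2) - g|\le C(g)r^2$ up to $\operatorname{inj}_g(x)$, which follows from smoothness of $g$ on $\bar B_1$ and is stable under smooth convergence $g_\epsilon\to g_0$, exactly as you note.
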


Note that the constant $C(g)\to 1$, when we let $g$ converge smoothly to the Euclidean metric.
We also record some useful pointwise bounds for $u$ and its differential.

\begin{proposition}
	Assuming $\epsilon\le 1$, on the smaller ball $B_{1/2}^n(0)$ we have
	\begin{align}\label{bound.general.critical}
		&|u(x)|\le 1+C(g,n)\epsilon^2,\quad |du|_g\le \frac{C(g,n)}{\epsilon}.
	\end{align}
	Also, if the energy $E_\epsilon(u)\le\Lambda\leps $, then on $B_{1/2}^n(0)$
	\begin{align}\label{bound.general.critical2}
		&|du|_g^2\le\frac{1-|u|^2}{\epsilon^2}+C(g,\Lambda,n).
	\end{align}	
\end{proposition}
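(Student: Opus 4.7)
The three bounds are proved by maximum-principle and elliptic-regularity arguments, essentially as in \cite{BBH, BOS}; I outline each in turn. For the sup bound $|u|\le 1+C(g,n)\epsilon^2$ on $B_{1/2}$, the plan is to compute from the Ginzburg--Landau equation
$$\epsilon^2\Delta_g|u|^2=2\epsilon^2|du|_g^2-2(1-|u|^2)|u|^2,$$
so that on the open set $\{|u|^2>1\}$ the function $w:=(|u|^2-1)_+$ satisfies the subsolution inequality $\epsilon^2\Delta_g w\ge 2w|u|^2\ge 2w$. The strong maximum principle rules out interior maxima of $|u|^2$ strictly above $1$. Combining this with a barrier of the form $A\cosh(\operatorname{dist}_g(\cdot,\partial B_1)/\epsilon)$, which is a supersolution of the corresponding linear equation, yields exponential decay of $w$ at rate $1/\epsilon$ away from $\partial B_1$ (with boundary values of $w$ controlled a priori by a rescaling argument), hence $w\le C(g,n)\epsilon^2$ on $B_{1/2}$.

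For the gradient bound $|du|_g\le C(g,n)/\epsilon$, I would rescale. Fixing $x_0\in B_{1/2}$ and setting $v(y):=u(\exp_{x_0}^g(\epsilon y))$, the map $v$ solves the Ginzburg--Landau equation with parameter $1$ in the rescaled metric $\tilde g:=\epsilon^{-2}(\exp_{x_0}^g)^*g$, which is $C^k$-close to the Euclidean metric on $B_1(0)$ as $\epsilon\to 0$. The first step gives $|v|\le 1+C\epsilon^2\le 2$ on $B_1(0)$, so standard $C^{1,\alpha}$ Schauder estimates applied to the semilinear equation $\Delta_{\tilde g}v=-(1-|v|^2)v$ give $|\nabla v(0)|\le C(g,n)$; unrescaling yields the claimed pointwise bound on $|du|_g(x_0)$.

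Finally, for the refined bound \cref{bound.general.critical2}, the plan is to consider the Bochner-type auxiliary function $w:=|du|_g^2-(1-|u|^2)/\epsilon^2$. A direct computation using the Ginzburg--Landau equation, together with the Weitzenb\"ock formula for $\Delta_g|du|_g^2$, yields a pointwise identity of the schematic form
$$\Delta_g w=2|\nabla du|_g^2+\frac{4}{\epsilon^2}(u\cdot du)^2+\frac{2|u|^2}{\epsilon^2}w+2\operatorname{Ric}_g(du,du),$$
so that, wherever $|u|^2\ge\tfrac12$ (which, by the first step, holds outside an $\epsilon$-neighborhood of $u^{-1}\{0\}$), $w_+$ is a subsolution of a linear elliptic equation with favorable reaction coefficient of order $\epsilon^{-2}$, modulo a curvature error bounded by $C(g)|du|_g^2$. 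The energy hypothesis $E_\epsilon(u)\le\Lambda\leps$, combined with the monotonicity formula \cref{monotonicity} (which gives $E_\epsilon(u;B_r(x))\le C\Lambda r^{n-2}\leps$ on all small $B_r(x)\subset B_{3/4}$), bounds the number and size of the $\epsilon$-scale vortex clusters in $B_{3/4}$ in terms of $\Lambda$. A barrier comparison then yields exponential decay of $w_+$ at rate $1/\epsilon$ away from these clusters, while the second step gives the direct bound $w_+\le C/\epsilon^2$ inside each cluster; combining, one obtains $w\le C(g,\Lambda,n)$ on $B_{1/2}$, as desired. The main technical obstacle is the Bochner computation itself, which must be organized carefully to exhibit the cancellation between the reaction term $-(2/\epsilon^2)(1-|u|^2)|du|_g^2$ coming from $\Delta_g|du|_g^2$ and the term $(2/\epsilon^2)|du|_g^2$ coming from $\Delta_g(|u|^2)/\epsilon^2$, leaving the favorable coefficient $2|u|^2/\epsilon^2$ in front of $w$; once this structural inequality is in place, the rest is a routine maximum-principle and mean-value iteration.
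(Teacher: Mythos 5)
Your second step (the gradient bound via rescaling by $\epsilon$ and Schauder estimates) is exactly the paper's argument, but the other two steps have genuine gaps. For the sup bound, your barrier comparison needs an a priori bound on $w=(|u|^2-1)_+$ near $\partial B_1$ as input, and none is available: the statement assumes nothing about $u$ on $\partial B_1$ (for this part, not even an energy bound), the interior maximum principle only rules out interior maxima of $|u|$ strictly above $1$, and ``boundary values of $w$ controlled a priori by a rescaling argument'' is circular, since any local elliptic estimate for the semilinear equation already presupposes a sup bound on $u$. Whatever bound $M$ you extract will depend on the solution, and $Me^{-c/\epsilon}$ is not $\le C(g,n)\epsilon^2$ uniformly in $u$. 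The paper sidesteps this by comparing $\rho=|u|$ with the supersolution $b_\gamma(x)=1+\gamma\epsilon^2 s^2(s^2-|x|^2)^{-2}$, which blows up at $\partial B_s$, so no boundary information is needed; a sliding argument in $\gamma$ (taking the least $\gamma$ with $\rho\le b_\gamma$ and invoking the maximum principle) then gives the bound.

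For \eqref{bound.general.critical2}, your Bochner computation and the favorable coefficient $\frac{2|u|^2}{\epsilon^2}$ are on target (the paper takes $f=\mz|du|_g^2-(1+\epsilon^2\|\operatorname{Ric}_g\|_{L^\infty})\frac{1-|u|^2}{2\epsilon^2}$, the extra factor absorbing the curvature term so that $\Delta_g f\ge\frac{2|u|^2}{\epsilon^2}f$ holds \emph{everywhere}, not just where $|u|^2\ge\mz$), but the endgame fails. First, in dimension $n\ge 3$ the vorticity set $\{|u|\le\mz\}$ is an $(n-2)$-dimensional tube, not a family of boundedly many $\epsilon$-balls, so ``the number and size of the $\epsilon$-scale vortex clusters'' is not controlled by $\Lambda$. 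Second, and decisively, the vorticity set passes through $B_{1/2}$, and on and near it your scheme yields only $w_+\le C/\epsilon^2$; exponential decay \emph{away from} the vorticity set cannot improve the estimate \emph{at} points of that set, which is precisely where \eqref{bound.general.critical2} is a nontrivial assertion. The paper instead uses that $f^+$ is globally subharmonic together with an $\epsilon$-independent $L^1$ bound $\int_{B_{3/4}}f^+\le C(g,\Lambda,n)$, obtained by splitting into $\{|u|\le\mz\}$ --- where $f^+\le C|du|^2\le CW(u)/\epsilon^2$ and the sharp potential estimate $\int W(u)/\epsilon^2\le C(g,\Lambda,n)$ of \cref{bos2} applies --- and $\{|u|\ge\mz\}$ --- where a Caccioppoli inequality exploiting the zeroth-order term $\frac{|u|^2}{\epsilon^2}(f^+)^2$ gives a contribution $O(\epsilon\leps)$; the mean value inequality for $f^+$ then delivers the pointwise bound. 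The missing ingredient in your argument is exactly \cref{bos2}: without it, or an equivalent uniform bound on the potential, no version of this strategy produces a constant independent of $\epsilon$.
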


\begin{proof}
	The function $\rho:=|u|$ satisfies
	\begin{align*}
		&-\Delta_g\rho+\frac{(\rho+1)(\rho-1)\rho}{\epsilon^2}\le 0,
	\end{align*}
	while it is easy to check that, for any fixed $s\in(\mz,1)$, $b_\gamma(x):=1+\gamma\epsilon^2\frac{s^2}{(s^2-|x|^2)^2}$ is a supersolution on $B_s=B_s^n(0)$, for $\gamma\ge\bar\gamma(g,n)>0$ large enough.
	
	On $B_s$, we have $\rho\le b_\gamma$ for some least $\gamma\ge 0$.
	However, we cannot have $\gamma\ge\bar\gamma$, since then the supersolution $b_\gamma$ would touch the subsolution $\rho$ from above (at an interior point), violating the maximum principle for semilinear equations.
	
	Thus, we must have $\rho\le b_{\bar\gamma}$ on $B_s$, and letting $s\to 1$ we get
	$$|u(x)|\le 1+C(g,n)\frac{\epsilon^2}{(1-|x|^2)^2}$$
	on $B_1$, from which the first half of \cref{bound.general.critical} follows.
	Using also the equation, it follows that $|u|\le C(g,n)$ and $|\Delta_g u|\le\frac{C(g,n)}{\epsilon^2}$ on $B_{15/16}$,
	which easily imply the bound
	$$|du|_g\le\frac{C(g,n)}{\epsilon}$$
	on $B_{7/8}$. Indeed, the bounds $|\tilde u|+|\Delta_{\tilde g}\tilde u|\le C(g,n)$ for $\tilde u(x):=u(\epsilon x)$ on $B_{15/16\epsilon}$ (with the rescaled metric $\tilde g$)
	easily give the desired bound $|d\tilde u|_{\tilde g}\le C(g,n)$ on any ball $B_{1/8}(x)\subseteq B_{7/8\epsilon}(0)$, hence on $B_{7/8\epsilon}$ (as $\epsilon\le 1$).
	
	It is interesting to observe that, even without assumptions on the energy of $u$ on $B_1$, the previous inequalities give $E_\epsilon(u;B_{1/2})\le \frac{C(g,n)}{\epsilon^2}$
	(which is sharp, for the trivial unstable solution $u\equiv 0$).
	
	In order to improve on the previous pointwise bound for $|du|=|du|_g$, we observe that
	\begin{align*}
		&\Delta_g\frac{|du|^2}{2}
		\ge \langle du,d\Delta_g u\rangle+\operatorname{Ric}_g(du,du)
		\ge -\frac{1-|u|^2}{\epsilon^2}|du|^2-\|\operatorname{Ric}_g\|_{L^\infty}|du|^2
	\end{align*}
	by Bochner's formula, and
	\begin{align*}
		&\Delta_g\frac{1-|u|^2}{2\epsilon^2}
		=\frac{|u|^2}{\epsilon^2}\cdot\frac{1-|u|^2}{\epsilon^2}-\frac{|du|^2}{\epsilon^2}.
	\end{align*}
	As a consequence, the difference
	$$f:=\frac{|du|^2}{2}-(1+\epsilon^2\|\operatorname{Ric}_g\|_{L^\infty})\frac{1-|u|^2}{2\epsilon^2}$$
	satisfies
	$$\Delta_g f\ge\frac{2|u|^2}{\epsilon^2}f.$$
	In particular, the positive part $f^+$ is subharmonic, and it follows that
	$$f\le C(g,\Lambda,n)\leps $$
	on $B_{7/8}$.
	Also, by the bound $|du|\le\frac{C(g,n)}{\epsilon}$ and \cref{bos2} below, we have
	\begin{align*}
		&\int_{B_{3/4}\cap\{|u|\le\mz\}}f^+
		\le C(g,n)\int_{B_{3/4}}\frac{W(u)}{\epsilon^2}
		\le C(g,\Lambda,n).
	\end{align*}
	On the other hand, the subequation for $f$ easily implies that
	\begin{align*}
		&\int_{B_{7/8}}\varphi^2\left[|df^+|^2+\frac{|u|^2}{\epsilon^2} (f^+)^2\right]
		\le C\int_{B_{7/8}} (f^+)^2|d\varphi|^2,
	\end{align*}
	for any $\varphi\in C^\infty_c(B_{7/8}^n)$. In particular, by Cauchy--Schwarz,
	\begin{align*}
		&\int_{B_{3/4}\cap\{|u|\ge\mz\}}f^+
		\le C(g,n)\left[\int_{B_{7/8}\cap\{|u|\ge\mz\}}|u|^2(f^+)^2\right]^{1/2}
		\le C(g,\Lambda,n)\epsilon\leps ,
	\end{align*}
	where we used the bound $0\le f^+\le C(g,\Lambda,n)\leps $ on $B_{7/8}$.
	Together with the previous bound, using again the subharmonicity of $f^+$, we arrive at
	\begin{align*}
		&f
		\le C(g,n)\int_{B_{3/4}}f^+
		\le C(g,\Lambda,n)(1+\epsilon\leps )
	\end{align*}
	on $B_{1/2}$, which gives \cref{bound.general.critical2}.
\end{proof}

In the asymptotic analysis, the most fundamental tool is the \emph{clearing-out} for the vorticity, which we state here for arbitrary metrics (the proof is a simple localization of the arguments from \cite[Section~4.3]{Stern.thesis}).

\begin{theorem}\label{clearing}
	Given $\beta\in(0,1)$, there exist constants $\eta(\beta,n)$ and $c(\beta,g,n)$ such that, for a geodesic ball $\mathcal{B}_r(x)\subseteq B_1^n(0)$ with $\epsilon\le r\le c$, if $E_\epsilon(u;\mathcal{B}_r(x))\le \eta r^{n-2}\log(r/\epsilon)$, then $|u(x)|>\beta$.
\end{theorem}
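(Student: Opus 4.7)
My approach is a contradiction/rescaling argument. If the statement fails for some $\beta\in(0,1)$, I extract sequences of critical points $u_k$, centers $x_k$, radii $\epsilon_k\le r_k\to 0$, and $\eta_k\to 0$ with $E_{\epsilon_k}(u_k;\mathcal{B}_{r_k}(x_k))\le \eta_k r_k^{n-2}\log(r_k/\epsilon_k)$ and $|u_k(x_k)|\le\beta$. Rescaling by $r_k$ in geodesic normal coordinates about $x_k$ yields solutions $\tilde u_k$ of the $\tilde\epsilon_k$-Ginzburg--Landau equation on a fixed unit ball (with metrics converging smoothly to Euclidean as $r_k\to 0$), with $\tilde\epsilon_k=\epsilon_k/r_k\in(0,1]$, $E_{\tilde\epsilon_k}(\tilde u_k;\tilde{\mathcal B}_1(0))\le\eta_k\log(1/\tilde\epsilon_k)$, and $|\tilde u_k(0)|\le\beta$. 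If $\liminf\tilde\epsilon_k>0$, then the total energy tends to zero; combined with \cref{bound.general.critical} and standard elliptic theory, a subsequence converges in $C^1_{\mathrm{loc}}$ to a zero-energy Ginzburg--Landau solution $\tilde u_\infty$ with $|\tilde u_\infty|\equiv 1$, contradicting $|\tilde u_\infty(0)|\le\beta$.

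The substantive case is $\tilde\epsilon_k\to 0$, where the goal is to prove the macroscopic lower bound $E_{\tilde\epsilon_k}(\tilde u_k;\tilde{\mathcal B}_1(0))\ge c(\beta,n)\log(1/\tilde\epsilon_k)$, which directly contradicts the energy hypothesis for $k$ large. The starting point is microscopic: the gradient bound $|d\tilde u_k|\le C/\tilde\epsilon_k$ from \cref{bound.general.critical}, combined with $|\tilde u_k(0)|\le\beta$, forces $|\tilde u_k|\le(1+\beta)/2$ on $\tilde{\mathcal B}_{c\tilde\epsilon_k}(0)$; hence $\int_{\tilde{\mathcal B}_{c\tilde\epsilon_k}}\tilde\epsilon_k^{-2}W(\tilde u_k)\ge c_\beta\tilde\epsilon_k^{n-2}$. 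Integrating the monotonicity formula \cref{monotonicity} from $c\tilde\epsilon_k$ to $1$ then propagates this into the uniform macroscopic bound $\int_{\tilde{\mathcal B}_1}\tilde\epsilon_k^{-2}W(\tilde u_k)\ge c(\beta,n)>0$ as $k\to\infty$.

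To convert this uniform $W$-mass into the required logarithmic lower bound on the energy, I would use a two-dimensional slicing argument, following \cite[Section~4.3]{Stern.thesis} (itself modeled on \cite{LR,BBO}). Choosing coordinates so $\tilde{\mathcal B}_1\subseteq\R^{n-2}\times D_1^2$, a Markov/Hardy--Littlewood averaging argument produces a set $A\subseteq B_1^{n-2}$ with $\mathcal{H}^{n-2}(A)\ge c(\beta,n)>0$ such that, for each $y\in A$, the two-dimensional slice $\{y\}\times D_1^2$ carries potential mass $\tilde\epsilon_k^{-2}\int_{\{y\}\times D_1^2}W(\tilde u_k)\ge c(\beta,n)>0$ and, in particular, contains a point with $|\tilde u_k|\le\tfrac{1}{2}$. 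A planar vortex lower bound in the spirit of \cite{BBH} then yields $\int_{\{y\}\times D_1^2}e_{\tilde\epsilon_k}(\tilde u_k)\ge \pi\log(1/\tilde\epsilon_k)-O(1)$ on each such slice; Fubini delivers the desired estimate $E_{\tilde\epsilon_k}(\tilde u_k;\tilde{\mathcal B}_1)\ge c(\beta,n)\log(1/\tilde\epsilon_k)$.

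The main obstacle is the planar lower bound in the last step: the two-dimensional slices are restrictions rather than genuine Ginzburg--Landau solutions, and they need not have $|u|$ close to $1$ near the boundary of the disk, so the classical planar lower bound must be applied in an "almost-2D" regime. This requires a more refined selection of good slices on which the tangential (i.e., $y$-direction) derivatives $|\partial_y\tilde u_k|$ are controlled in $L^2$ over the slice and on which the vorticity is trapped near an interior point, so that the intrinsic planar energy of the slice is comparable to its full energy and the vortex lower bound applies. The overall energy upper bound $E\le\eta_k\log(1/\tilde\epsilon_k)$, combined with Fubini and the Hardy--Littlewood maximal inequality, is exactly what produces this selection; once the tangential derivatives are controlled, the sharp planar vortex lower bound goes through essentially verbatim.
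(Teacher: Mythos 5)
Note first that the paper does not actually prove \cref{clearing}: it is quoted from \cite{LR,BBO}, with the proof deferred to a localization of \cite[Section~4.3]{Stern.thesis}, so your proposal must be judged against those arguments. Your setup is fine (contradiction, rescaling to the unit ball, the easy case $\liminf\tilde\epsilon_k>0$, and the microscopic bound $\int_{\mathcal{B}_{c\tilde\epsilon_k}}\tilde\epsilon_k^{-2}W(\tilde u_k)\ge c(\beta)\tilde\epsilon_k^{n-2}$), but the two steps that carry all the weight have genuine gaps. First, integrating \cref{monotonicity} from $c\tilde\epsilon_k$ to $1$ converts the microscopic potential mass into a lower bound on the normalized \emph{energy}, $s^{2-n}E_{\tilde\epsilon_k}(\tilde u_k;\mathcal{B}_s)\ge c(\beta,n)$ for $s\in[c\tilde\epsilon_k,1]$; it does \emph{not} yield $\int_{B_1}\tilde\epsilon_k^{-2}W(\tilde u_k)\ge c(\beta,n)>0$ --- the only lower bound available for the potential at unit scale is the trivial $c\,\tilde\epsilon_k^{n-2}\to0$. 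The uniform potential lower bound you are asserting is precisely \cref{w.quant.lem}, whose proof uses \cref{clearing} itself together with the integration-by-parts identity $E(B_1)\lesssim E(B_2)\int_{B_2}W/\epsilon^2$, so invoking it here is circular. (Also, a slice carrying potential mass $\ge c$ need not contain a point with $|u|\le\mz$: take $1-|u|^2\sim\epsilon$ on a set of unit area.)

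The more fundamental gap is the last step. The planar lower bound $\int_{\{y\}\times D_1}e_{\tilde\epsilon}(\tilde u)\ge\pi\lteps-O(1)$ requires the restriction of $\tilde u$ to the slice to have \emph{nonzero degree} around its vorticity, and the hypothesis $|u(x)|\le\beta$ at one point carries no topological information. Degree-zero configurations --- e.g.\ a tight vortex--antivortex pair at separation $d$ with $\epsilon\ll d\ll1$, whose slice energy is only $O(\log(d/\epsilon))$ --- are exactly what make the theorem hard, and no control of tangential derivatives or refinement of the good-slice selection addresses this; moreover, the selection mechanism of \cref{many.good.slices} relies on Allard's constancy lemma and on the energy already concentrating on a plane, a structure that \cref{clearing} is needed to establish in the first place. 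The proofs in \cite{LR,BBO} and \cite{Stern.thesis} avoid slicing degrees altogether: they combine the microscopic bound and monotonicity with the Hodge decomposition of $ju_\epsilon$, the pointwise bound $|\xi_\epsilon|\le CE_\epsilon$, the resulting inequality $E(B_{1/2})\lesssim E(B_1)\int_{B_1}W/\epsilon^2+\cdots$, and potential estimates in the spirit of \cref{bos2}, through a delicate iteration over scales that in particular rules out zero-degree vorticity. As written, your route does not close.
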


As we saw in \cref{ju.main}, the logarithmic growth of the energy exhibited by typical solutions $u$ is caused solely by the angular part $ju=u^*(r^2\,d\theta)=u^1\,du^2-u^2\,du^1$ of the differential. This fact relies on two inequalities: first of all, we can bound the radial part $d|u|$ in terms of the potential as follows (see, e.g., the argument from \cite[pp.~329--331]{BOS}, which readily generalizes to arbitrary metrics).

\begin{proposition}\label{bos1}
	On the smaller ball $B_{1/2}=B_{1/2}^n(0)$ we have
	\begin{align}
		&\int_{B_{1/2}}|d|u||^2
		\le C(g,n)\int_{B_{1}}\frac{(1-|u|^2)^2}{4\epsilon^2}+C(g,n)\epsilon^2,
	\end{align}
	provided that $\epsilon\le 1$.
\end{proposition}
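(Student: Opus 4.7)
The plan is to derive the estimate from the scalar identity
\[
-\Delta w + \tfrac{2|u|^2 w}{\epsilon^2} = 2|\nabla u|^2, \qquad w := 1-|u|^2,
\]
obtained by pairing $\epsilon^2\Delta u=-(1-|u|^2)u$ with $\bar u$. Kato's inequality $|\nabla|u||^2\le|\nabla u|^2$ converts this into the differential inequality $-\Delta w + \tfrac{2|u|^2 w}{\epsilon^2}\ge 2|\nabla|u||^2$, which I would then test against a cutoff $\varphi^2$ with $\varphi\in C_c^\infty(B_1)$, $\varphi\equiv 1$ on $B_{1/2}$, and $|\nabla\varphi|\le C(n)$, following the approach of \cite[pp.~329--331]{BOS}.

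Integrating by parts yields, on the one hand, an upper bound for $\int\varphi^2|\nabla|u||^2$ in terms of the potential quantity $\int\varphi^2|u|^2 w/\epsilon^2$ plus a boundary term. The algebraic identity $|u|^2 w = w - w^2$ rewrites this as the quadratic-in-$w$ piece $-\int\varphi^2 w^2/\epsilon^2 = -4\int\varphi^2 W/\epsilon^2$ plus a linear-in-$w$ remainder $\int\varphi^2 w/\epsilon^2$ that is not a priori controlled by $\int W/\epsilon^2$. To eliminate this remainder one combines with a second integration-by-parts identity obtained by pairing the vector equation against $\varphi^2\bar u$, which, after using the same algebraic identity $|u|^2 w = w - w^2$, expresses $\int\varphi^2 w/\epsilon^2$ in terms of $\int\varphi^2|\nabla u|^2$, $\int\varphi^2 w^2/\epsilon^2$, and boundary contributions; substituting this back and absorbing the boundary terms via Cauchy--Schwarz together with the a priori bounds $|u|\le 1+C\epsilon^2$ and $|\nabla u|\le C(g,n)/\epsilon$ from \cref{bound.general.critical} produces the desired bound up to an error supported where $w<0$.

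This final error term produces the $C(g,n)\epsilon^2$ correction. Indeed, since $|u|\le 1+C\epsilon^2$, the function $f:=(|u|^2-1)_+$ is pointwise bounded by $C\epsilon^2$, and the scalar identity above shows that it is subharmonic on $\{|u|\ge 1\}$ (the right-hand side $2|\nabla u|^2+\tfrac{2|u|^2(|u|^2-1)}{\epsilon^2}$ being nonnegative there). A standard Caccioppoli estimate applied to $f$ then gives $\int_{B_{7/8}}|\nabla f|^2\le C\int_{B_1} f^2\le C\epsilon^4$, and since $|\nabla f|=2|u||\nabla|u||\ge 2|\nabla|u||$ on $\{|u|>1\}$, this controls the contribution of this region to $\int|\nabla|u||^2$ by $C\epsilon^2$, as needed.

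The main obstacle is the delicate algebraic cancellation of the linear-in-$w$ terms produced by the two integration-by-parts identities, which is the essential content of the BOS argument; this is what forces one to combine two different test functions rather than use a single one. Generalization to smooth non-Euclidean metrics introduces only lower-order corrections (curvature terms in the Bochner-type computations and metric-derivative contributions from integration by parts), which are absorbed into the constants $C(g,n)$ and $C(g,n)\epsilon^2$ by routine estimates.
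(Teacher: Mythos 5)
The paper does not reproduce a proof of this proposition; it simply cites the argument of \cite[pp.~329--331]{BOS}. Judging your proposal on its own terms, there is a genuine gap at its central step: your ``two integration-by-parts identities'' are the \emph{same} identity. Pairing $\epsilon^2\Delta u=-(1-|u|^2)u$ with $\bar u$ is exactly how one derives the scalar identity $-\Delta w+\tfrac{2|u|^2w}{\epsilon^2}=2|\nabla u|^2$ (since $\langle \Delta u,u\rangle=-\tfrac12\Delta w-|\nabla u|^2$), so testing the scalar identity against $\varphi^2$ and pairing the system against $\varphi^2\bar u$ both yield the single relation
\[
\frac{2}{\epsilon^2}\int\varphi^2(w-w^2)\;=\;-\int w\,\Delta(\varphi^2)+2\int\varphi^2|\nabla u|^2.
\]
When you substitute the ``second'' identity back into the ``first'' to remove the linear term $\tfrac{2}{\epsilon^2}\int\varphi^2 w$, the quadratic terms $\tfrac{2}{\epsilon^2}\int\varphi^2w^2$ cancel as well, and what remains is $2\int\varphi^2|\nabla|u||^2\le 2\int\varphi^2|\nabla u|^2+O(1)$ --- i.e.\ Kato's inequality again, with the full Dirichlet energy (of order $\leps$, not of order $\int W/\epsilon^2+\epsilon^2$) on the right. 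No ``delicate algebraic cancellation'' is available from this pair of multipliers, because they are not independent.

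What the BOS argument actually supplies, and what your proposal is missing, is a mechanism to decouple the \emph{angular} part of the energy. Writing $u=\rho\sigma$ and testing the modulus equation $\Delta\rho=\rho|d\sigma|^2-\tfrac{(1-\rho^2)\rho}{\epsilon^2}$ against $(1-\rho)\varphi^2$, the dangerous term is $\int\varphi^2(1-\rho)\rho|d\sigma|^2$; it is controlled only by invoking the refined pointwise bound $|du|^2\le\tfrac{1-|u|^2}{\epsilon^2}+C$ (\cref{bound.general.critical2}, the second inequality in \cref{u.bd}), which gives $(1-\rho)\rho|d\sigma|^2\le C\tfrac{(1-\rho^2)^2}{\epsilon^2}+C(1-\rho)_+$ on $\{\rho\ge\tfrac12\}$, after which Cauchy--Schwarz yields $\int(1-\rho)_+\le C\epsilon^2+C\int\tfrac{(1-\rho^2)^2}{\epsilon^2}$. (This Cauchy--Schwarz step, not the region $\{|u|>1\}$, is the source of the additive $C\epsilon^2$ in the statement; your subharmonicity argument for $(|u|^2-1)_+$ is fine but only handles the region $\{|u|>1\}$, whose contribution is in fact $O(\epsilon^4)$.) Note also that, as stated in this paper, \cref{bound.general.critical2} carries an energy hypothesis which \cref{bos1} does not, so if you repair the proof along these lines you must either work in the Euclidean setting of \cite{BOS} (where the refined gradient bound needs no energy hypothesis) or track that assumption. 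Without the refined gradient estimate, or some substitute for it, the angular energy cannot be removed from the right-hand side, and the argument as you have written it does not close.
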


Also, we have
the following sharp bound, which constitutes one of the main contributions of \cite{BOS}, and allows to deduce the same bound for the previous integral of $|d|u||^2$.

\begin{proposition}\label{bos2}
	On the smaller ball $B_{1/2}=B_{1/2}^n(0)$ we have
	\begin{align}
		&\int_{B_{1/2}}\frac{(1-|u|^2)^2}{4\epsilon^2}
		\le C(g,n)\frac{E_\epsilon(u;B_1)}{\leps }\log\left(2+\frac{E_\epsilon(u;B_1)}{\leps }\right),
	\end{align}
	provided that $\epsilon\le c$ and $E_\epsilon(u;B_1)\le\epsilon^{-\alpha_0}$, for some $c=c(g,n)$ and $\alpha_0=\alpha_0(n)$.
	In particular, assuming $E_\epsilon(u;B_1)\le\Lambda\leps $, it follows that
	\begin{align*}
		&\int_{B_{1/2}}\frac{(1-|u|^2)^2}{4\epsilon^2}
		\le C(g,\Lambda,n)
	\end{align*}
	for $\epsilon$ small enough.
\end{proposition}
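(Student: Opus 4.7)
The plan is to control the potential $\int W(u)/\epsilon^2$ by exploiting its concentration on an $O(\epsilon)$-thickening of the vorticity set $\mathcal{V}(u)=\{|u|\le 1/2\}$ and to bound the size of this set via the clearing-out theorem. Write $M:=E_\epsilon(u;B_1)/\leps$.

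First I would cover $\mathcal{V}(u)\cap B_{3/4}$ efficiently: for a scale $r_0\in[\epsilon,c]$ to be optimized, a Vitali extraction yields disjoint balls $\{B_{r_0}(x_i)\}_{i=1}^N$ with $x_i\in\mathcal{V}(u)\cap B_{3/4}$ such that $\bigcup_i B_{5r_0}(x_i)\supseteq\mathcal{V}(u)\cap B_{3/4}$. Applying \cref{clearing} (with $\beta=1/2$) contrapositively at each $x_i$ gives $E_\epsilon(u;B_{r_0}(x_i))\ge\eta r_0^{n-2}\log(r_0/\epsilon)$, so summing over $i$ and using disjointness together with the total energy bound yields
$$N\le\frac{CM\leps}{r_0^{n-2}\log(r_0/\epsilon)}.$$

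Next I would split the target integral as $\int_{B_{1/2}}W/\epsilon^2=I_{\mathrm{in}}+I_{\mathrm{out}}$, depending on whether the point lies in $\bigcup_i B_{5r_0}(x_i)$ or not. For $I_{\mathrm{out}}$ I would exploit that $|u|>1/2$ on the complement: the function $\rho:=1-|u|^2$ satisfies $-\Delta_g\rho+(2|u|^2/\epsilon^2)\rho=2|du|^2$, with a strictly positive mass coefficient of order $\epsilon^{-2}$. Testing against an appropriate cutoff (or invoking the Green's function for $-\Delta+c/\epsilon^2$, which decays like $|x|^{2-n}e^{-c|x|/\epsilon}$) produces pointwise exponential decay of $\rho$ at scale $\epsilon$ away from the vorticity, yielding $I_{\mathrm{out}}\le C\epsilon^{\alpha}M\leps$ for some $\alpha>0$ once $r_0\gtrsim\epsilon\log(2+M)$. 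For $I_{\mathrm{in}}$ I would rescale each ball: $\tilde u(y):=u(x_i+r_0 y)$ solves the $\tilde\epsilon$-Ginzburg--Landau equation on $B_5$ with $\tilde\epsilon=\epsilon/r_0$ and renormalized energy of order $r_0^{2-n}E_\epsilon(u;B_{5r_0}(x_i))/\log(1/\tilde\epsilon)$. Iterating the desired bound inductively across dyadic scales between $\epsilon$ and $1$ (anchored by the trivial base case $r_0\sim\epsilon$, where the pointwise control $W/\epsilon^2\le C/\epsilon^2$ from \cref{bound.general.critical} together with volume $|B_{5r_0}(x_i)|\lesssim\epsilon^n$ gives a direct estimate) shows $\int_{B_{5r_0}(x_i)}W/\epsilon^2\le C[1+\log(2+M)]$ per ball, so $I_{\mathrm{in}}\le CN\log(2+M)$.

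Combining these bounds and optimizing $r_0$ to balance $I_{\mathrm{in}}$ and $I_{\mathrm{out}}$ produces the stated $CM\log(2+M)$. The main obstacle is closing the induction-on-scales for $I_{\mathrm{in}}$ without accumulating extra logarithmic factors: one must use the hypothesis $E_\epsilon(u;B_1)\le\epsilon^{-\alpha_0}$ to control the number of dyadic scales involved and keep the renormalized energies manageable at each step. Apart from this subtlety, the argument is the codimension-$2$ analogue of the standard codimension-$1$ potential bounds for Allen--Cahn: the potential concentrates on vorticity, clearing-out controls the size of the vorticity, and hence the total potential is controlled.
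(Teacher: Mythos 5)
Note first that the paper does not actually prove \cref{bos2}; it is quoted from \cite{BOS} (the covering argument on pp.~323--328 there together with the ``improved estimate off the vorticity set'' of \cite[Theorem~2.1]{BOS}), with remarks on adapting the argument to non-flat metrics. Your skeleton --- a clearing-out covering argument for the part of the potential near $\mathcal{V}(u)$, plus decay of the potential away from $\mathcal{V}(u)$ --- is the same strategy, but both halves of your sketch have genuine gaps, and they are located exactly where the real work of \cite{BOS} happens.

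First, the claimed exponential decay of $\rho:=1-|u|^2$ away from the vorticity set is false. Your equation $-\Delta_g\rho+\tfrac{2|u|^2}{\epsilon^2}\rho=2|du|^2$ is correct, but the source $2|du|^2$ does not vanish (or even become small) off $\mathcal{V}(u)$: essentially all of the $O(\leps)$ energy lives there. Convolving the exponentially decaying kernel of $-\Delta+c\epsilon^{-2}$ against this source yields only $\rho\lesssim\epsilon^2(|du|^2+\cdots)$, i.e.\ polynomial decay $\rho\sim\epsilon^2\dist(\cdot,\mathcal{V}(u))^{-2}$. This is exactly the behavior of the model vortex $w$, for which $0\le 1-|w(z)|\le C|z|^{-2}$ (as used in the proof of \cref{enerbd.claim}), and it is the reverse side of \cref{bound.general.critical2}. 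With polynomial rather than exponential decay, controlling $I_{\mathrm{out}}$ requires pointwise control of $|du|$ off the vorticity set and is precisely the content of \cite[Theorem~2.1]{BOS}; it is also where the hypothesis $E_\epsilon(u;B_1)\le\epsilon^{-\alpha_0}$ enters, a hypothesis your argument never uses in any essential way. Second, the bookkeeping for $I_{\mathrm{in}}$ does not close. With $N\le CM\leps/(r_0^{n-2}\log(r_0/\epsilon))$ and a per-ball bound of order $\log(2+M)$, the product is $\le CM\log(2+M)$ only if $r_0^{n-2}\log(r_0/\epsilon)\gtrsim\leps$, which forces $r_0$ to be of unit size; but then the ``per-ball bound'' is essentially the proposition itself, and moreover it is false as stated when a unit ball carries energy $\gg\leps$ (if $\sim M$ unit-degree vortex lines cross the ball, the potential there is $\sim M$, not $\log(2+M)$ --- compare \cref{w.deg.prop}). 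So either the covering scale makes the counting too lossy by a factor $\leps/\log(r_0/\epsilon)$, or the argument becomes circular. The dyadic induction that is supposed to supply the per-ball bound is only asserted, and the multiplicative growth in the number of child balls across the $\approx\leps$ scales does not obviously telescope. As it stands, the proposal reproduces the outline of the cited proof but not the two estimates that make it work.
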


The proof relies on a covering argument using \cref{clearing} (see \cite[pp.~323--328]{BOS}\footnote{Note that (2.3) in \cite[Proposition~2.2]{BOS} should read $|u_\epsilon(x)|\le 1+\frac{C\epsilon^2}{\operatorname{dist}(x,\de\Omega)^2}$ (which follows from the bound \cref{bound.general.critical} in the present paper, by scaling) and that the assumption in \cite[Proposition~2.2 and Proposition~2.3]{BOS} should be $\operatorname{dist}(x,\de\Omega)>\epsilon$.}),
and adapts to arbitrary metrics with straightforward modifications,
using balls with respect to $g$ in the statement of \cite[Proposition~2.4]{BOS} (see \cite[Thereom~2.8.14]{Federer} for a proof of the Besicovitch covering theorem on Riemannian manifolds).

The conclusion then follows from an estimate off the vorticity set $\{|u|\le 1-\sigma_0\}$, for some $\sigma_0$ small enough
(see \cite[Theorem~2.1]{BOS}\footnote{We point out the following misprints: in (A.5), $a=\frac{(1-\theta_\epsilon)(2-\theta_\epsilon)}{\epsilon^2}\ge\frac{1}{2\epsilon^2}$ (we assume $|u_\epsilon|\ge\mz$);
	in equations (A.11)--(A.12) some signs are wrong, but this does not affect the argument; most importantly, in (A.21) the right-hand side is just $C\|e_\epsilon(u_\epsilon)\|_{L^1(B_1)}^{1/2}$
	but, assuming (without loss of generality) $\frac{q}{q-2}\ge 2$, the last estimate on p.~347 still implies
	(A.23) with $\beta_q=(2-\alpha_0)\frac{q-2}{q}\in(0,1)$, as well as (A.25) with the same $\beta_q$ (by (A.20) with $q=2$).}).

On an unrelated note, we also record the following useful Lorentz estimate for a Riesz potential, which is used in the proof of \cref{slice.perp.small}. Recall that, for a function $f:\R^m\to\R$, its $L^{2,\infty}$-quasinorm is defined as
\begin{align*}
	&\|f\|_{L^{2,\infty}(\R^m)}=\sup_{\lambda>0}\lambda|\{|f|>\lambda\}|^{1/2}.
\end{align*}

\begin{proposition}\label{lorentz}
	If $f,g:\R^n\to\R$ satisfy
	\begin{align*}
		&|f|\le\frac{1}{|x|^{n-1}}*|g|,
	\end{align*}
	then for any $y\in\R^{n-2}$ we have
	\begin{align}\label{lor.bd}
		&\|f(y,\cdot)\|_{L^{2,\infty}(\R^2)}
		\le C(n)\sup_{r>0}\frac{1}{r^{n-2}}\int_{B_r^{n-2}(y)\times\R^2}|g|.
	\end{align}
\end{proposition}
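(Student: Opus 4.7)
The plan is to reduce to a standard Morrey–Riesz potential setup and then carry out a single Calderón--Zygmund-style split-truncation argument. By translation I may assume $y = 0$, and since only $|g|$ enters I replace $g$ by $h := |g| \geq 0$ and bound the Riesz potential $Ih(x) := \int_{\R^n} h(y')|x-y'|^{-(n-1)}\,dy'$. The hypothesis rewrites as the Morrey-type bound $\mu(B_\rho^{n-2}(0)\times\R^2) \leq M\rho^{n-2}$ for the measure $\mu := h\,\mathcal{L}^n$, and the goal is to prove $\mathcal{L}^2(\{z\in\R^2 : Ih(0,z) > \lambda\}) \leq CM^2/\lambda^2$ for every $\lambda > 0$, which is equivalent to \cref{lor.bd}.

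For a parameter $R > 0$ to be chosen, I would split $Ih(0,z) = J_R(z) + K_R(z)$ according to whether $|(y',z'-z)| > R$ or $|(y',z'-z)| \leq R$. For the far piece $J_R$, the key observation is that every Euclidean ball $B_\rho^n((0,z))$ is contained in the cylinder $B_\rho^{n-2}(0) \times \R^2$, so $\mu(B_\rho^n((0,z))) \leq M\rho^{n-2}$. A dyadic decomposition of the exterior region into annuli $\{2^kR \leq |(y',z'-z)| < 2^{k+1}R\}$, $k\geq 0$, then yields the uniform pointwise bound
\[
J_R(z) \leq CM \sum_{k\geq 0} (2^kR)^{-(n-1)} (2^{k+1}R)^{n-2} = \frac{CM}{R}.
\]

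For the near piece $K_R$, I would estimate its $L^1(\R^2)$ norm via Fubini and then apply Chebyshev. For fixed $(y',z')$ with $|y'| \leq R$, the $z$-integral of $|(y',z'-z)|^{-(n-1)}$ over the set $\{|(y',z'-z)| \leq R\}$ reduces, in polar coordinates, to
\[
\int_0^{\sqrt{R^2-|y'|^2}} \frac{2\pi s\,ds}{(|y'|^2+s^2)^{(n-1)/2}},
\]
which is bounded by $C|y'|^{3-n}$ when $n \geq 4$ and by $C\log(R/|y'|)$ when $n = 3$. Denoting $\tilde H(y') := \int_{\R^2} h(y',z')\,dz'$, which satisfies $\int_{B_r^{n-2}(0)} \tilde H \leq Mr^{n-2}$, a second dyadic decomposition of $\{|y'|\leq R\}$ into shells $\{R/2^{k+1} < |y'| \leq R/2^k\}$ then gives $\|K_R\|_{L^1(\R^2)} \leq CMR$ in both cases (each shell contributing $\leq CM R/2^k$ or $\leq CM R(k+1)/2^k$, respectively, which are geometrically summable).

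Finally, choosing $R = 2CM/\lambda$ with $C$ the constant from the $J_R$ bound forces $J_R \leq \lambda/2$ pointwise, so $\{Ih(0,\cdot) > \lambda\} \subseteq \{K_R > \lambda/2\}$, and Chebyshev together with the $K_R$ bound yields $\mathcal{L}^2(\{Ih(0,\cdot) > \lambda\}) \leq (2/\lambda)\cdot CMR = C'M^2/\lambda^2$, as required. I do not expect any serious obstacle; the only mildly non-routine point is the $n = 3$ case, where the slice kernel integral diverges logarithmically, but the dyadic summation absorbs the extra $\log$ factor cleanly, so the entire argument goes through uniformly in $n \geq 2$.
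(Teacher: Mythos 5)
Your proof is correct: the near/far splitting of the Riesz potential at scale $R\sim M/\lambda$, with the far part controlled pointwise by the Morrey bound on balls (contained in the cylinders) via dyadic annuli and the near part controlled in $L^1$ of the slice via Fubini and dyadic shells in $y'$, is exactly the standard argument, and your treatment of the logarithmic kernel integral when $n=3$ is handled correctly by the geometric summation. The paper does not write out a proof but simply cites \cite[Lemma~A.2]{LR} for $n=3$ and asserts the general case is a straightforward adaptation; what you have written is precisely that adaptation, so there is nothing to add.
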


Thus, the exponent $\frac{n}{n-1}$ in the classical Sobolev bound $\|f\|_{L^{n/(n-1),\infty}}\le C(n)\|g\|_{L^1}$ can be improved to $2$ (the exponent that we have on the plane), on a slice $\{y\}\times\R^2$, provided that we control the maximal function on the right-hand side of \cref{lor.bd}.

The proof is presented in \cite[Lemma~A.2]{LR} when $n=3$, but it is straightforward to adapt it to the case of general $n$.

Finally, we briefly show how one can obtain precise asymptotics for the (local) Green function of $\Delta_H$, the Hodge Laplacian on $k$-forms,
even when the metric is not Euclidean. Let $U\subset\R^n$ be a bounded smooth domain ($n\ge 3$), together with a smooth metric $g$ on $\bar U$.
Let us fix an orthonormal frame $(\omega_i)_{i\in I}$ for the bundle of $k$-forms on $\bar U$.

\begin{proposition}\label{green}
	Given a compact subset $K\subset U$,
	there exists $G_{i,p}\in\Omega^k(U\setminus\{p\})$ for every $p\in K$, satisfying
	$$\Delta_H G_{i,p} = \delta_p\cdot\omega_{i}(p)$$
	on $U$, in the distributional sense, and such that the difference $$H_{i,p}(q):=G_{i,p}(q)-\bar G(\dist(p,q))\,\omega_{i,p}(q)$$ obeys the bounds
	$$|H_{i,p}(q)|\le C\dist(p,q)^{3-n},\quad|\nabla H_{i,p}(q)|\le C\dist(p,q)^{2-n}$$
	for $q\in U$,
	for some constant $C=C(g,K,U)$, where $\bar G(r):=\frac{1}{n(n-2)\omega_{n}r^{n-2}}$ is the standard Green function on $\R^n$ and $\dist(p,q)$ is the geodesic distance induced by $g$ (the constant $C\to 0$ when $g$ converges to the Euclidean metric in the smooth topology).
\end{proposition}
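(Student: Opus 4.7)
The plan is to build $G_{i,p}$ as a parametrix plus a correction solving an elliptic PDE. Fix $r_0 > 0$ smaller than the injectivity radius of $g$ uniformly for $p \in K$, and a cutoff $\chi \in C_c^\infty([0, r_0))$ with $\chi \equiv 1$ on $[0, r_0/2]$. For $\dist(p, q) < r_0$, let $\omega_{i,p}(q)$ be the parallel transport of $\omega_i(p)$ along the minimizing $g$-geodesic from $p$ to $q$, extended trivially outside using the cutoff. Define the parametrix
$$P_{i,p}(q) := \chi(\dist(p,q))\, \bar G(\dist(p,q))\, \omega_{i,p}(q).$$

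In $g$-normal coordinates centered at $p$, with $r = \dist(p,q) = |q|$, one has $g_{ij} = \delta_{ij} + O(r^2)$ and $\Gamma^k_{ij} = O(r)$, while radial parallelism gives $\nabla_{\de_r}\omega_{i,p} = 0$, hence $|\nabla \omega_{i,p}|(q) = O(r)$ and $|\nabla^2 \omega_{i,p}|(q) = O(1)$. The product rule and the Weitzenb\"ock identity $\Delta_H = \nabla^*\nabla + \mathcal{R}$ give
$$\Delta_H(\bar G\, \omega_{i,p}) = (\Delta_H \bar G)\omega_{i,p} + \bar G\, \Delta_H \omega_{i,p} - 2\langle \nabla \bar G, \nabla \omega_{i,p}\rangle,$$
where the cross term vanishes identically (since $\nabla \bar G(r)$ is purely radial and $\omega_{i,p}$ is radially parallel), $\Delta_H \bar G(r) = \delta_p + O(r^{2-n})$ (from $-\Delta_{\mathrm{Eucl}} \bar G(|q|) = \delta_0$ plus the metric perturbation of order $O(r^2) \cdot O(r^{-n}) + O(r) \cdot O(r^{1-n}) = O(r^{2-n})$), and $\bar G\, \Delta_H \omega_{i,p} = O(r^{2-n})$. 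Including the smooth bounded contributions from $\nabla \chi$ and $\Delta_H \chi$ supported on the annulus $\{\chi' \neq 0\}$, this yields
$$\Delta_H P_{i,p} = \delta_p \cdot \omega_i(p) + R_{i,p},$$
with $R_{i,p}$ smooth off $\{p\}$ and $|R_{i,p}(q)| \leq C r^{2-n}$ near $p$.

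After extending $g$ smoothly to agree with the Euclidean metric outside a large compact set, I would take $H_{i,p}$ to be the unique decaying solution of $\Delta_H H_{i,p} = -R_{i,p}$ on $\R^n$, constructed by iterating the Euclidean parametrix for $\Delta_H$ on compactly supported data (valid for $n \ge 3$), and set $G_{i,p} := P_{i,p} + H_{i,p}$. Since the Green kernel of $\Delta_H^{-1}$ is dominated by $C|q - q'|^{2-n}$, the classical convolution inequality for Riesz potentials yields
$$|H_{i,p}(q)| \leq C \int_{\R^n} |q - q'|^{2-n}\, |q' - p|^{2-n}\, dq' \leq C r^{4-n} \leq C r^{3-n}$$
(with a logarithmic modification when $n = 4$, still dominated by $r^{3-n}$ on the bounded set $U$), and the analogous computation using $|\nabla_q |q - q'|^{2-n}| \le C|q - q'|^{1-n}$ yields $|\nabla H_{i,p}(q)| \leq C r^{3-n} \leq C r^{2-n}$. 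The main technical point I expect to require care is the Weitzenb\"ock bookkeeping to secure the genuine order $O(r^{2-n})$ of $R_{i,p}$ (rather than the worse $O(r^{1-n})$ that a less careful extension of $\omega_i(p)$ would produce); this also makes transparent the final claim that $C \to 0$ as $g \to g_{\mathrm{Eucl}}$ smoothly, since every term in $R_{i,p}$ originates from the metric perturbation and vanishes in this limit.
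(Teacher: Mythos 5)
Your parametrix is fine -- indeed the radial parallel transport choice is slightly cleaner than the paper's (which only imposes $\omega_{i,p}(p)=\omega_i(p)$, $\nabla\omega_{i,p}(p)=0$), and your Weitzenb\"ock bookkeeping correctly yields $|R_{i,p}|\le C\dist(p,q)^{2-n}$, matching the paper's bound on its error term $\varphi_{i,p}$. The genuine gap is in the inversion step. You assert that $\Delta_H$ for the extended metric admits a global inverse on $\R^n$ whose kernel is dominated by $C|q-q'|^{2-n}$ (and whose gradient by $C|q-q'|^{1-n}$), "constructed by iterating the Euclidean parametrix." For a metric that is not a small perturbation of the Euclidean one, that Neumann series has no reason to converge; a rigorous construction requires invertibility on weighted spaces for asymptotically Euclidean operators together with the absence of $L^2$-harmonic $k$-forms, plus Schauder-type interior estimates to pass from mapping properties to pointwise kernel bounds for a *system*. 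None of this is free, and it is essentially a global version of the very statement being proved -- so as written the step is circular in spirit. The paper sidesteps all of this by solving a Dirichlet problem for $\Delta_H$ on the bounded domain $U$ by energy minimization (using a Gaffney inequality from \cite{ISS} for coercivity), obtaining only $L^s$ bounds on the correction $\alpha_{i,p}$, and then upgrading these to the pointwise bounds $|\nabla\alpha_{i,p}(q)|\le C\dist(p,q)^{2-n}$ by rescaled interior elliptic estimates on dyadic annuli around $p$. If you want to keep your route you must either supply the global Green-matrix theory or replace it with such a boundary-value-problem argument.

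Two smaller points. First, your justification of the parenthetical "$C\to 0$ as $g\to g_{\mathrm{Eucl}}$" is not correct as stated: $R_{i,p}$ contains the cutoff terms involving $\chi'$ and $\chi''$, supported on the annulus $\{r_0/2\le\dist(p,\cdot)\le r_0\}$, and these do \emph{not} vanish in the Euclidean limit; only the Weitzenb\"ock/metric-perturbation part of $R_{i,p}$ does. To get a constant tending to zero one must compare $G_{i,p}$ with the \emph{uncut} $\bar G(\dist(p,\cdot))\,\omega_{i,p}$ and solve for the correction with small data (interior and boundary), rather than fold the cutoff error into $R_{i,p}$. Second, the displayed convolution bound $\int_{\R^n}|q-q'|^{2-n}|q'-p|^{2-n}\,dq'$ diverges at infinity for $n\in\{3,4\}$; the integral must be restricted to the (compact) support of $R_{i,p}$, after which your conclusion $|H_{i,p}|\le C\dist(p,q)^{3-n}$ on the bounded set $U$ does hold. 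These are fixable, but the inversion step above is the substantive missing piece.
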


It is clear from the proof that $G_{i,p}(q)$ and $\nabla G_{i,p}(q)$ depend continuously on the couple $(p,q)$, away from the diagonal $\{p=q\}$. With this proposition in hand, we can then easily invert the Hodge Laplacian (locally): given $\eta\in\Omega^k(\bar U)$, the convolution
$$\beta(q):={\textstyle\sum_{i\in I}}\int_K G_{i,p}(q)\ang{\eta(p),\omega_{i}(p)}\,d\operatorname{vol}_g(p)$$
then satisfies $\Delta_H\beta=\eta$ on the interior of $K$, and the previous bounds for $H_{i,p}$ imply that $\beta$ resembles the usual convolution with the Euclidean Green function (at small scales, or when $g$ is almost flat).

\begin{proof}
	For any (smooth) differential form $\omega\in\Omega^k(\bar U)$ we can find a unique $\alpha\in\Omega^k(\bar U)$ such that
	$\Delta_H\alpha=\omega$, with each component of $\alpha$ vanishing at $\de U$. Such $\alpha$ can be obtained by minimizing the energy
	$$\alpha\mapsto\int_U\left(\frac{|d\alpha|^2}{2}+\frac{|d^*\alpha|^2}{2}-\ang{\alpha,\omega}\right)$$
	in the space $W^{1,2}_0(U,\Lambda^k\R^n)$ (note that $\|\alpha\|_{W^{1,2}}\le C(g,U)(\|d\alpha\|_{L^2}+\|d^*\alpha\|_{L^2})$ for $\alpha$ in this space, by \cite[Theorem~4.8]{ISS} and a simple compactness and contradiction argument).
	We have $\|\alpha\|_{W^{1,2}(U)}\le C(g,U)\|\omega\|_{L^2(U)}$ and, by standard elliptic regularity for systems,
	\begin{align}\label{lp.plain}
		&\|\alpha\|_{L^s(U)}\le C(g,s,U)\|\omega\|_{L^t(U)}
	\end{align}
	for all $s,t\in(1,\infty)$ such that $\frac{1}{s}>\frac{1}{t}-\frac{2}{n}$.
	
	Fix a cutoff function $\chi\in C^\infty_c(U)$ with $\chi\equiv 1$ near $K$, and let
	$$\tilde G_{i,p}(q):=\chi(q)\bar G(\dist(p,q))\,\omega_{i,p}(q)$$
	for any fixed $p\in K$, where $\omega_{i,p}\in\Omega^k(\bar U)$ is such that $\omega_{i,p}(p)=\omega_i(p)$ and $\nabla\omega_{i,p}(p)=0$.
	Using normal coordinates centered at $p$, it is easy to check that
	$$|\Delta_H \tilde G_{i,p}|\le C(g,K,U)\dist(p,q)^{2-n};$$
	hence, $\Delta_H \tilde G_{i,p}$ coincides with a $k$-form $\varphi_{i,p}\in L^t(U)$ on $U\setminus\{p\}$,
	where $t\in(1,\frac{n}{n-2})$.
	
	On the other hand, an integration by parts shows that
	$$\Delta_H \tilde G_{i,p}=\delta_{p}\cdot\omega_{i,p}(p)+\varphi_{i,p}=\delta_p\cdot\omega_i(p)+\varphi_{i,p}$$
	on $U$, in the distributional sense.
	As explained above, by approximating $\varphi_{i,p}$ with smooth $k$-forms, we can then find $\alpha=\alpha_{i,p}$ such that $\Delta_H\alpha_{i,p}=\varphi_{i,p}$ and \cref{lp.plain} holds (with $\omega:=\varphi_{i,p}$).
	To conclude the proof, we show that $|\nabla\alpha(q)|\le C\dist(p,q)^{2-n}$ for some $C=C(g,K,U)$; the conclusion will follow by taking $G_{i,p}:=\tilde G_{i,p}-\alpha_{i,p}$.
	
	But indeed, considering the rescaled $k$-form $\alpha_r(x):=\alpha(p+rx)$, we see that
	$$\|\Delta_H\alpha_r\|_{L^\infty(A)}\le Cr^2\|\varphi_{i,p}\|_{L^\infty(B_r\setminus B_{r/2})}\le Cr^{4-n},\quad\|\alpha_r\|_{L^s(A)}\le C(s)r^{-n/s}$$
	whenever $\frac{1}{s}>1-\frac{4}{n}$, for the annular region $A:=B_1\setminus B_{1/2}$, provided that $r$ is small enough (with $C$ depending also on $g,K,U$).
	By standard elliptic regularity, we then obtain $|\nabla\alpha_r|\le C(s)(r^{4-n}+r^{-n/s})$ on $A$,
	which gives
	$$|\nabla\alpha(q)|\le C(s)(\dist(p,q)^{3-n}+\dist(p,q)^{-1-n/s}).$$
	Taking $s$ sufficiently close to $\frac{n}{n-4}$ (if $n>4$, or to $\infty$ if $n=4$) gives the claim for $n\ge 4$;
	when $n=3$, from $\Delta_H\alpha=\varphi_{i,p}$ we can immediately conclude that $|\alpha|\le C$, and we can take $s:=\infty$ in the previous bound to conclude that $|\nabla\alpha(q)|\le C\dist(p,q)^{-1}$.
\end{proof}


\frenchspacing


\begin{thebibliography}{199}
	\bibitem{Allard}
	W. K. Allard.
	On the first variation of a varifold.
	\emph{Ann. of Math. (2)} \textbf{95} (1972), 417--491.
	
	\bibitem{Allard.constancy}
	W. K. Allard.
	An integrality theorem and a regularity theorem for surfaces whose first variation with respect to a parametric elliptic integrand is controlled. 
	Chapter in \emph{Geometric measure theory and the calculus of variations},
	vol. 44 in \emph{Proc. Sympos. Pure Math.}
	American Mathematical Society, Providence, RI, 1986.
	
	\bibitem{AS}
	L. Ambrosio and H. M. Soner.
	A measure-theoretic approach to higher codimension mean curvature flows.
	\emph{Ann. Scuola Norm. Sup. Pisa Cl. Sci. (4)} \textbf{25} (1997), no. 1--2, 27--49.
	
	\bibitem{Bar}
	S. Baraket.
	Critical points of the Ginzburg--Landau system on a Riemannian surface.
	\emph{Asymptot. Anal.} \textbf{13} (1996), no. 3, 277--317.
	
	\bibitem{BBH}
	F. Bethuel, H. Brezis and F. H\'{e}lein.
	\emph{Ginzburg--Landau vortices}, vol. 13 in \emph{Progress in Nonlinear Differential Equations and their Applications}.
	Birkh\"{a}user, Boston, 1994.
	
	
	\bibitem{BBO}
	F. Bethuel, H. Brezis and G. Orlandi.
	Asymptotics for the Ginzburg--Landau equation in arbitrary dimensions.
	\emph{J. Funct. Anal.} \textbf{186} (2001), no. 2, 432--520.
	(Erratum: \emph{J. Funct. Anal.} \textbf{188} (2002), no. 2, 548--549.)
	
	\bibitem{BOS}
	F. Bethuel, G. Orlandi and D. Smets.
	Improved estimates for the Ginzburg--Landau equation: the elliptic case.
	\emph{Ann. Scuola Norm. Sup. Pisa Cl. Sci. (5)} \textbf{4} (2005), no. 2, 319--355.
	
	\bibitem{BOSpar}
	F. Bethuel, G. Orlandi and D. Smets.
	Convergence of the parabolic Ginzburg--Landau equation to motion by mean curvature.
	\emph{Ann. of Math. (2)} \textbf{163} (2006), no. 1, 37--163.
	
	\bibitem{Cheng}
	D.~R. Cheng.
	\emph{Geometric variational problems: regular and singular behavior.}
	Dissertation, Stanford University, 2017.
	
	\bibitem{CM}
	M. Comte and P. Mironescu.
	Remarks on nonminimizing solutions of a Ginzburg--Landau type equation.
	\emph{Asymptot. Anal.} \textbf{13} (1996), no. 2, 199--215. 
	
	\bibitem{DDMR}
	J. Davila, M. Del Pino, M. Medina and R. Rodiac.
	Interacting helical vortex filaments in the 3-dimensional Ginzburg--Landau equation.
	\emph{arXiv preprint 1901.02807} (2019). To appear in \emph{J. Eur. Math. Soc. (JEMS)}.
	
	
	\bibitem{Federer}
	H. Federer.
	\emph{Geometric measure theory},
	vol. 153 in \emph{Die Grundlehren der mathematischen Wissenschaften}.
	Springer--Verlag New York Inc., New York, 1969.
	
	\bibitem{HH}
	R.-M. Herv\'e and M. Herv\'e.
	{\'E}tude qualitative des solutions r{\'e}elles d'une {\'e}quation différentielle li{\'e}e {\`a} l'{\'e}quation de Ginzburg--Landau.
	\emph{Ann. Inst. H. Poincar{\'e} Anal. Non Linéaire} \textbf{11} (1994), no. 4, 427--440.
	
	\bibitem{HT}
	J. E. Hutchinson and Y. Tonegawa.
	Convergence of phase interfaces in the van der Waals--Cahn--Hilliard theory.
	\emph{Calculus of Variations and Partial Differential Equations} \textbf{10} (2000), no. 1, 49--84. 
	
	\bibitem{ISS}
	T. Iwaniec, C. Scott and B. Stroffolini.
	Nonlinear Hodge theory on manifolds with boundary.
	\emph{Ann. Mat. Pura Appl. (4)} \textbf{177} (1999), 37--115.
	
	\bibitem{JSoner}
	R. Jerrard and H. M. Soner.
	The Jacobian and the Ginzburg--Landau energy.
	\emph{Calc. Var. Partial Differential Equations} \textbf{14} (2002), no. 2, 151--191.
	
	\bibitem{LR.jems}
	F. Lin and T. Rivi\`ere.
	Complex Ginzburg--Landau equations in high dimensions and codimension two area minimizing currents.
	\emph{J. Eur. Math. Soc. (JEMS)} \textbf{1} (1999), no. 3, 237--311.
	
	\bibitem{LR}
	F. Lin and T. Rivi\`ere.
	A quantization property for static Ginzburg--Landau vortices.
	\emph{Comm. Pure Appl. Math.} \textbf{54} (2001), no. 2, 206--228.
	
	\bibitem{LR.hm}
	F. Lin and T. Rivi\`ere.
	Energy quantization for harmonic maps.
	\emph{Duke Math. J.} \textbf{111} (2002), no. 1, 177--193.
	
	\bibitem{PS}
	A. Pigati and D. Stern.
	Minimal submanifolds from the abelian Higgs model.
	\emph{Invent. Math.} \textbf{223} (2021), no. 3, 1027--1095.
	
	\bibitem{Riv}
	T. Rivi\`{e}re.
	Line vortices in the $U(1)$-Higgs model.
	\emph{ESAIM: Control, Optimisation and Calculus of Variations} \textbf{1} (1996), 77--167.
	
	\bibitem{Riv.ym}
	T. Rivi\`ere.
	Interpolation spaces and energy quantization for Yang--Mills fields.
	\emph{Comm. Anal. Geom.} \textbf{10} (2002), no. 4, 683--708.
	
	\bibitem{SS}
	E. Sandier and I. Shafrir.
	Small energy Ginzburg--Landau minimizers in $\R^3$.
	\emph{J. Funct. Anal.} \textbf{272} (2017), no. 9, pp. 3946--3964.
	
	
	\bibitem{Stern.thesis}
	D. Stern.
	\emph{Variational theory and asymptotic analysis for the Ginzburg--Landau equations and $p$-harmonic maps}. Dissertation, Princeton University, 2019.
	
	\bibitem{Stern.jdg}
	D. Stern.
	Existence and limiting behavior of min-max solutions of the Ginzburg--Landau equations on compact manifolds.
	\emph{J. Differential Geom.} \textbf{118} (2021), no. 2, 335--371.
	
	\bibitem{Struwe}
	M. Struwe.
	On the asymptotic behavior of minimizers of the Ginzburg--Landau model in 2 dimensions.
	\emph{Differential Integral Equations} \textbf{7} (1994), no. 5--6, 1062--1082.
	
\end{thebibliography}
\end{document}